\documentclass[a4paper]{gtart}

\usepackage[inner=20mm, outer=20mm, textheight=245mm]{geometry}


\usepackage[toc,page]{appendix}

\usepackage{psfrag, graphicx, subfigure, epsfig,color}

\usepackage{amsmath, amssymb, latexsym, euscript}

\usepackage[colorlinks,citecolor=blue,linkcolor=blue, backref=page]{hyperref}
\usepackage[nameinlink]{cleveref}

\usepackage[margin=1cm]{caption}

\usepackage{mathptmx}
\usepackage{wrapfig}
\usepackage[all,cmtip]{xy}
\usepackage{mathtools}
\usepackage{faktor}
\usepackage{tikz}
\usetikzlibrary{cd}
\usepackage{multicol}
\usepackage{listings}


\usepackage[colorinlistoftodos]{todonotes}
\setlength{\marginparwidth}{2cm}


\newcommand*{\defeq}{\mathrel{\vcenter{\baselineskip0.5ex \lineskiplimit0pt
                     \hbox{\scriptsize.}\hbox{\scriptsize.}}}%
                     =}


\DeclareMathOperator{\im}{im}
\DeclareMathOperator{\Aut}{Aut}
\DeclareMathOperator{\SL}{SL}
\DeclareMathOperator{\PSL}{PSL}
\DeclareMathOperator{\tr}{tr}
\DeclareMathOperator{\rank}{rank}
\DeclareMathOperator{\Sym}{Sym}
\DeclareMathOperator{\Hom}{Hom}
\DeclareMathOperator{\coker}{coker}
\DeclareMathOperator{\Diag}{Diag}

\newcommand{\A}{\tt{A}}
\newcommand{\B}{\tt{B}}

\newcommand\prho{\overline{\rho}}

\theoremstyle{plain}
\newtheorem{theorem}{Theorem}
\newtheorem*{theorem*}{Theorem}
\newtheorem{lemma}[theorem]{Lemma}
\newtheorem{proposition}[theorem]{Proposition}
\newtheorem{corollary}[theorem]{Corollary}
\newtheorem{scholion}[theorem]{Scholion}

\theoremstyle{definition}
\newtheorem{definition}[theorem]{Definition}
\newtheorem*{definition*}{Definition}
\newtheorem{example}[theorem]{Example}

\newtheorem{fact}[theorem]{Fact}

\theoremstyle{remark}
\newtheorem{remark}[theorem]{Remark}

\numberwithin{equation}{section}

\usepackage{enumitem}
\newlist{enumRoman}{enumerate}{1}
\setlist[enumRoman]{label=(\Roman*)}


\begin{document}

\title{On the topology of character varieties of once-punctured torus bundles}
\author{Stephan Tillmann and Youheng Yao}

\begin{abstract} 
This paper presents, for the special case of once-punctured torus bundles, a natural method to study the character varieties of hyperbolic 3--manifolds that are bundles over the circle. The main strategy is to restrict characters to the fibre of the bundle, and to analyse the resulting branched covering map. This allows us to extend results of Steven Boyer, Erhard Luft and Xingru Zhang. Both $\SL(2, \mathbb{C})$--character varieties and $\PSL(2, \mathbb{C})$--character varieties are considered. As an explicit application of these methods, we build on work of Baker and Petersen to show that there is an infinite family of hyperbolic once-punctured bundles with canonical curves of $\PSL(2, \mathbb{C})$--characters of unbounded genus.
\end{abstract}

\primaryclass{57M25, 57N10}
\keywords{3--manifold, punctured torus bundle, character variety}
\makeshorttitle


\section{Introduction}
\label{sec:intro}

The first part of this paper extends results of Boyer, Luft and Zhang~\cite{Boyer-algebraic-2002} concerning the $\SL(2, \mathbb{C})$--character variety $X(M_\varphi)$ of $M_\varphi$, a hyperbolic once-punctured torus bundle over $S^1$ with monodromy $\varphi$, and we determine its relationship with the $\PSL(2, \mathbb{C})$--character variety $\overline{X}(M_\varphi).$ Denote a once-punctured torus fibre by $S$ and let $\lambda$ be the associated longitude, i.e. the boundary of a compact core of $S$.
The monodromy induces a polynomial automorphism $\overline{\varphi}\co X(S)\to X(S)$ and we denote its fixed point set by $X_\varphi(S).$ This is the image of the restriction map $r\co X(M_\varphi)\to X(S)$ (\Cref{lem:im(r)}). 
We are interested in the topology of the Zariski components\footnote{Our \emph{varieties} are affine algebraic sets, and to avoid using the word \emph{irreducible} in two different ways, we refer to irreducible components of these sets as Zariski components.} of $X(M_\varphi)$ that contain the characters of irreducible representations. Each such component is one-dimensional (\Cref{pro:dimension_bound}), and we let $X^{\text{irr}}(M_\varphi)$ denote their union. 

Boyer, Luft and Zhang~\cite{Boyer-algebraic-2002} show that the binary dihedral characters in $X(M_\varphi(\lambda))\subset X(M_\varphi)$ are all simple points of $X(M_\varphi)$, and they provide an exact count of them. Namely their number is
 \begin{equation}\label{eq:binary_count}
 \frac{1}{2} |2 + \tr(\varphi_*)| - 2^{{b_1(\varphi)-2}}
 \end{equation}
where ${b_1(\varphi)} = b_1(M_\varphi; \mathbb{Z}_2)$ and $\varphi_*$ represents the automorphism induced by $\varphi$ on first homology of $S.$ We show that these are the only irreducible characters of $M_\varphi$ that restrict to reducible characters of $S$ (\Cref{schol:binary_dihedral}).

Let $\varepsilon\co \pi_1(M_\varphi)\to\mathbb{Z}_2$ be the non-trivial homomorphism that is trivial on $\pi_1(S).$

\begin{theorem}
\label{thm:main}
Suppose $M_\varphi$ is a hyperbolic once-punctured torus bundle. Then each Zariski component of $X^{\text{irr}}(M_\varphi)$ is one-dimensional and $r\co X^{\text{irr}}(M_\varphi)\to X_\varphi(S)$ is a two-fold branched covering map. The ramification points of  $r$ are simple points, namely the binary dihedral characters in $X(M_\varphi(\lambda))\subset X(M_\varphi)$. The fibres of $r$ correspond to the orbits of the action of $\langle \varepsilon\rangle  \le H^1(M_\varphi,\mathbb{Z}_2 ).$
 In particular,
\[
\chi(X^{\text{irr}}(M_\varphi)) = 2\chi(X_\varphi(S)) - \frac{1}{2}|2 + \tr(\varphi_*)| + 2^{{b_1(\varphi)-2}}
\]
where $\chi$ denotes the Euler characteristic in the above equation.
\end{theorem}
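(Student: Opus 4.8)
The plan is to exploit the semidirect product decomposition $\pi_1(M_\varphi)=\pi_1(S)\rtimes_{\varphi_*}\langle t\rangle$ together with the action of $H^1(M_\varphi,\mathbb Z_2)$ on $X(M_\varphi)$ by the central sign twist $(\epsilon\cdot\rho)(\gamma)=\epsilon(\gamma)\rho(\gamma)$. Since $\varepsilon$ is trivial on $\pi_1(S)$, the representations $\rho$ and $\varepsilon\cdot\rho$ have the same restriction to $\pi_1(S)$, so $r$ is constant on $\langle\varepsilon\rangle$--orbits and factors as $r=\overline r\circ q$, where $q\co X^{\text{irr}}(M_\varphi)\to X^{\text{irr}}(M_\varphi)/\langle\varepsilon\rangle$ is the quotient map and $\overline r$ is induced by $r$. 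The quotient of a curve by a non-trivial $\mathbb Z_2$--action is tautologically a two--fold branched covering, ramified exactly over the images of the fixed points, \emph{provided} the curve is smooth at those fixed points. So the theorem reduces to three assertions: (i) $\overline r$ is a homeomorphism on complex points; (ii) the $\langle\varepsilon\rangle$--fixed locus of $X^{\text{irr}}(M_\varphi)$ is exactly the set of binary dihedral characters in $X(M_\varphi(\lambda))$; and (iii) $X^{\text{irr}}(M_\varphi)$ is smooth at these characters. Assertions (ii) and (iii) draw on Boyer--Luft--Zhang and \Cref{schol:binary_dihedral}; (i) rests on a fibrewise analysis of $r$ that simultaneously locates the branch points.

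That analysis splits a point $\chi\in X_\varphi(S)$ into three cases. If $\chi=\chi_\sigma$ with $\sigma$ irreducible, then $\overline\varphi(\chi)=\chi$ forces $\sigma$ and $\sigma\circ\varphi_*$ to be conjugate; by Schur's lemma the conjugator $C\in\SL(2,\mathbb C)$ is unique up to sign, so setting $\rho|_{\pi_1(S)}=\sigma$ and $\rho(t)=C$ defines a representation of $\pi_1(M_\varphi)$ extending $\sigma$ (irreducible, since $\sigma$ is), its only other extension is $\varepsilon\cdot\rho$, and the two are non-conjugate because $\sigma$ is irreducible; hence $\chi$ is hit and $r^{-1}(\chi)$ is a free $\langle\varepsilon\rangle$--orbit. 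If $\chi=\chi_\xi$ is reducible with $\xi$ fixed by $\varphi_*$ (necessarily a torsion character), then $\chi$ lies on $X^{\text{irr}}(M_\varphi)$ as a bifurcation point --- a limit of irreducible characters --- of the one-parameter family of reducible extensions of $\xi\oplus\xi^{-1}$; this requires a Reidemeister-torsion (twisted Alexander) computation for the fibred manifold $M_\varphi$, and again a short calculation gives that $r^{-1}(\chi)$ is a free $\langle\varepsilon\rangle$--orbit. If $\chi=\chi_\xi$ with $\xi\circ\varphi_*=\xi^{-1}\ne\xi$, then by \Cref{schol:binary_dihedral} every irreducible extension is binary dihedral, with $\rho(\pi_1(S))$ diagonal and $\rho(t)$ in the non-identity component of the normaliser of the diagonal torus; then $\rho(t\gamma)$ has trace $0$ for every $\gamma\in\pi_1(S)$, and since these traces together with the traces on $\pi_1(S)$ generate the coordinate ring of $X(M_\varphi)$ we obtain $\chi_{\varepsilon\cdot\rho}=\chi_\rho$; no reducible representation of $M_\varphi$ restricts to such a $\chi_\xi$ (as $\xi$ is not $\varphi_*$--fixed) and all binary dihedral extensions of $\xi\oplus\xi^{-1}$ are conjugate, so $r^{-1}(\chi)$ is a single $\langle\varepsilon\rangle$--fixed point. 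These three cases exhaust $X_\varphi(S)$ (recall $r$ lands in $X_\varphi(S)$, by \Cref{lem:im(r)}), so $\overline r$ is bijective and, granting finiteness of $r$, a homeomorphism; the $\langle\varepsilon\rangle$--fixed locus of $X^{\text{irr}}(M_\varphi)$ is precisely the set of binary dihedral characters, which are simple points by Boyer--Luft--Zhang and whose number is \eqref{eq:binary_count}.

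It then follows that $r$ is, topologically, the quotient map $q$: a two--fold branched covering of the curve $X_\varphi(S)$, ramified exactly at the binary dihedral characters --- which are simple points --- and unramified elsewhere, with fibres the $\langle\varepsilon\rangle$--orbits. This is the first part of the statement. For the Euler characteristic I would use additivity of $\chi$ for complex algebraic curves: writing $N=\tfrac12|2+\tr(\varphi_*)|-2^{b_1(\varphi)-2}$ for the number \eqref{eq:binary_count} of branch points, deleting them from $X_\varphi(S)$ and deleting their $N$ preimages from $X^{\text{irr}}(M_\varphi)$ leaves an unbranched double cover, so
\[
\chi\bigl(X^{\text{irr}}(M_\varphi)\bigr)=2\bigl(\chi(X_\varphi(S))-N\bigr)+N=2\chi(X_\varphi(S))-N ,
\]
which is the formula in the statement.

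I expect the real work to lie in the reducible locus and in finiteness. The delicate points are: that the binary dihedral characters are \emph{exactly} where $r$ degenerates --- equivalently, that a reducible fixed character with $\xi\circ\varphi_*=\xi^{-1}\ne\xi$ admits no reducible extension, and that every $\varphi_*$--fixed character of $\pi_1(S)$ genuinely appears on $X^{\text{irr}}(M_\varphi)$ via a bifurcation point; that the lift at a binary dihedral character is unique; and, using the simplicity statement of Boyer--Luft--Zhang, that the local model of $r$ there is $z\mapsto z^{2}$ rather than, say, a node. Establishing that $r$ is a finite morphism --- needed both for the phrase "branched covering" to hold literally and for the Euler characteristic bookkeeping --- is the remaining ingredient.
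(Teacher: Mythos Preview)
Your outline is correct and matches the paper's approach closely: the same $\langle\varepsilon\rangle$--quotient picture, the same three--case split of $X_\varphi(S)$ into irreducible characters, $\varphi_*$--fixed abelian characters, and $\varphi_*$--inverted abelian characters, the same appeal to Heusener--Porti for the bifurcation points and to Boyer--Luft--Zhang for simplicity, and the same Euler characteristic bookkeeping.

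Two places where the paper is more explicit than your sketch are worth flagging. First, the ``short calculation'' that $r^{-1}(\chi)$ is a free $\langle\varepsilon\rangle$--orbit when $\xi\circ\varphi_*=\xi$ is carried out in the paper by invoking Porti's lemma (any reducible character on $X^{\text{irr}}(M_\varphi)$ is the character of a \emph{non-abelian} reducible representation), writing such a representation in upper-triangular form with $\rho(t)=\Diag(i,-i)$, and checking that the relations $t^{-1}at=\varphi(a)$, $t^{-1}bt=\varphi(b)$ then force $(u,v)^T$ to be a $(\pm1)$--eigenvector of $[\varphi_*]^T$, contradicting $|\tr\varphi_*|>2$. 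Second, you do not address the one--dimensionality claim, which the paper proves separately (\Cref{pro:dimension_bound}) by combining the absence of closed essential surfaces in $M_\varphi$ (giving $\dim\le 1$) with Thurston's lower bound (giving $\dim\ge 1$ when the peripheral image is nontrivial), plus a separate argument for the exceptional case of trivial peripheral holonomy, where the restriction to the fibre is forced to be the quaternionic character $(0,0,0)$ and \Cref{lem:dimension_bound_order1} supplies a one--dimensional component through it. Your ``finiteness of $r$'' concern is subsumed by this dimension count.
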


Implicit in the above statement is that $r(X(M_\varphi)) = r(X^{\text{irr}}(M_\varphi)).$ This is shown as follows.
Let $C$ be a Zariski component containing only reducible representations. Then $C$  is equivalent to $\mathbb{C}$ and the restriction of $r$ to $C$ is constant.
We apply a criterion due to Heusener and Porti~\cite{Heusener-deformation-2005} to show that $C\cap X^{\text{irr}}(M_\varphi) \neq \emptyset,$ and that each point of intersection is contained on a unique curve in $X^{\text{irr}}(M_\varphi).$ Moreover, the intersection point is a smooth point of both curves and the intersection is transverse
 (\Cref{pro:intersection_red_irrep}). 
 
In our applications, we either use  genus bounds that follow from the above theorem (\Cref{cor:genus_bounds}) or well-known results linking genus to the Newton polygon (see \Cref{sec:Genus and the Newton Polygon}). 
The proof of \Cref{thm:main} is organised as follows. We first show that no ramification point in $X^{\text{irr}}(M_\varphi)$ is the character of a reducible representation (\Cref{subsec:Reducible characters}), and then 
characterise the fibres as orbits of $\varepsilon$ 
and complete the proof with \Cref{cor:irreps in fixed point set}.

The map $X^{\text{irr}}(M_\varphi) \to \overline{X}^{\text{irr}}(M_\varphi)$ can be viewed as the quotient map
\[X^{\text{irr}}(M_\varphi) \to X^{\text{irr}}(M_\varphi)/H^1(M_\varphi,\mathbb{Z}_2 )\subseteq \overline{X}^{\text{irr}}(M_\varphi)\]
There is a natural splitting
$H^1(M_\varphi,\mathbb{Z}_2 ) = \langle \varepsilon\rangle \oplus H$, where $H$ is the subgroup generated by all homomorphisms that are only non-trivial on $\pi_1(S).$ We may view $H \le H^1(S,\mathbb{Z}_2 )\cong \mathbb{Z}_2\oplus \mathbb{Z}_2.$ 
\Cref{thm:main} and the analogous result for projective representation (\Cref{pro:degrees_PSLrestriction}) imply the following:

\begin{corollary}
\label{cor:diagram}
Suppose $M_\varphi$ is a hyperbolic once-punctured torus bundle. 
We have the following commutative diagram of maps whose degrees are indicated in the diagram:
\[
\xymatrix{
X^{\text{irr}}(M_\varphi) \ar[d]^{2^{{b_1(\varphi)}}:1}_{q_1} \ar[r]^{r}_{2:1} 	& X_\varphi(S) \ar[d]^{2^{{b_1(\varphi)}-1}:1}_{q_2} \ar@{}[r]|-*[@]{\subset} 	& X(S) \ar[d]^{4:1}_{q_2} \ar@{}[r]|-*[@]{\cong} & \mathbb{C}^3\\
\overline{X}^{\text{irr}}(M_\varphi) \ar[r]^{\overline{r}}_{} 		& \overline{X}_\varphi(S)  \ar@{}[r]|-*[@]{\subset}	& \overline{X}(S)	
}		
\]
Here, each of the maps $r,$ $q_1,$ $q_2$ is a branched covering map. 
The map $q_1$ is the quotient map associated with the action of $H^1(M_\varphi,\mathbb{Z}_2 ) = \langle \varepsilon\rangle \oplus H,$ and  the map $q_2$ is the quotient map associated with the action of $H^1(S,\mathbb{Z}_2 )$ on $X(S).$ 
The map $r$ is the quotient map associated with the action of $\langle \varepsilon \rangle.$ 
The orbits of the action of $H^1(S,\mathbb{Z}_2)$ on $X_\varphi(S)$ are the same as the orbits of $H$ except when 
$b_1(\varphi)=2$ and one of the coordinate axes (with respect to the natural identification $X(S)= \mathbb{C}^3$) is contained in $X_\varphi(S).$
\end{corollary}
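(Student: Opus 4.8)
The plan is to read off the two rows and the vertical map $r$ from the results already in hand, and then to establish the degrees of the remaining maps and the orbit comparison directly. \Cref{thm:main} supplies the left-hand square: on $X^{\mathrm{irr}}(M_\varphi)$ the map $r$ is a $2:1$ branched covering, namely the quotient by the action of $\langle\varepsilon\rangle$, and it identifies $X_\varphi(S)$ with $\mathrm{Fix}(\overline{\varphi})$ and with $r(X(M_\varphi))$; \Cref{pro:degrees_PSLrestriction} does the analogous job on the right, producing $\overline{X}_\varphi(S)$, the map $\overline{r}$, and the bottom row. Commutativity of every square is the naturality of restriction of representations together with the naturality of the quotient $\SL(2,\mathbb{C})\to\PSL(2,\mathbb{C})$. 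So the remaining tasks are: (a) describe $q_2$ on all of $X(S)$; (b) compute $\deg q_1$ and $\deg\bigl(q_2|_{X_\varphi(S)}\bigr)$ and record the splitting of $H^1(M_\varphi,\mathbb{Z}_2)$; (c) compare the $H^1(S,\mathbb{Z}_2)$--orbits with the $H$--orbits on $X_\varphi(S)$.

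For (a): since $\pi_1(S)$ is free of rank two, the Fricke--Vogt theorem identifies $X(S)$ with $\mathbb{C}^3$ by the three trace coordinates $(x,y,z)$ associated with a pair of free generators, and the twisting action of $H^1(S,\mathbb{Z}_2)=\Hom(\pi_1(S),\{\pm I\})\cong\mathbb{Z}_2\oplus\mathbb{Z}_2$ becomes the action of the Klein four-group of sign changes flipping an even number of coordinates, i.e. $\{\mathrm{id},\,(x,y,z)\mapsto(-x,-y,z),\,(x,y,z)\mapsto(-x,y,-z),\,(x,y,z)\mapsto(x,-y,-z)\}$. A short check shows this action is free away from the three coordinate axes, each axis having a $\mathbb{Z}_2$ stabiliser, and its quotient is $\overline{X}(S)$; hence $q_2\co X(S)\to\overline{X}(S)$ is a $4:1$ branched covering. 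For (b): the twisting action of $H^1(M_\varphi,\mathbb{Z}_2)$ is free on characters of irreducible representations (an irreducible character fixed by a twist forces the twist to be trivial), so $\deg q_1=|H^1(M_\varphi,\mathbb{Z}_2)|=2^{b_1(\varphi)}$. The Wang exact sequence of the fibration $S\hookrightarrow M_\varphi\to S^1$ in $\mathbb{Z}_2$--cohomology,
\[
0\longrightarrow H^1(S^1,\mathbb{Z}_2)\longrightarrow H^1(M_\varphi,\mathbb{Z}_2)\longrightarrow H^1(S,\mathbb{Z}_2)\xrightarrow{\ \varphi^*-1\ }H^1(S,\mathbb{Z}_2),
\]
shows that $\langle\varepsilon\rangle$ is the kernel of restriction to $\pi_1(S)$ and that $H$ is its image, the $\varphi^*$--invariant subgroup; as everything in sight is an $\mathbb{F}_2$--vector space the sequence splits as $H^1(M_\varphi,\mathbb{Z}_2)=\langle\varepsilon\rangle\oplus H$, so $|H|=2^{b_1(\varphi)-1}$. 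Since $H$ consists of $\varphi^*$--invariant classes, twisting by $H$ commutes with $\overline{\varphi}$, so $H$ preserves $X_\varphi(S)$ and $q_2$ restricts to a degree-$|H|$ quotient $X_\varphi(S)\to X_\varphi(S)/H$.

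The heart of the argument is (c). It suffices to show that, on each Zariski component of $X_\varphi(S)$, a generic $H^1(S,\mathbb{Z}_2)$--orbit meets $X_\varphi(S)$ in exactly one $H$--orbit; equivalently, that for generic $\chi\in X_\varphi(S)$ and $g\in H^1(S,\mathbb{Z}_2)$ with $g\cdot\chi\in X_\varphi(S)$ one has $g\in H$. Because $\overline{\varphi}$ is equivariant for the twisting action up to the substitution $g\mapsto\varphi^*g$, the condition $g\cdot\chi\in\mathrm{Fix}(\overline{\varphi})$ becomes $(\varphi^*g-g)\cdot\chi=\chi$, i.e. $\varphi^*g-g$ stabilises $\chi$. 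Off the three coordinate axes the $H^1(S,\mathbb{Z}_2)$--action is free, so generically $\varphi^*g=g$, i.e. $g\in H$; thus the two orbit decompositions agree on every component of $X_\varphi(S)$ that is not a coordinate axis. If $b_1(\varphi)=3$ then $H=H^1(S,\mathbb{Z}_2)$ and nothing further is needed. If $b_1(\varphi)=2$ and some coordinate axis $L$ lies in $X_\varphi(S)$, then — using the dictionary relating the three non-trivial elements of $H^1(S,\mathbb{Z}_2)$, their fixed coordinate axes, and the reduction modulo $2$ of $\varphi_*$ — the hypothesis $\overline{\varphi}|_L=\mathrm{id}$ forces $H$ to be precisely the $\mathbb{Z}_2$ stabilising $L$; then $H$ acts trivially on $L$ while $H^1(S,\mathbb{Z}_2)$ acts on $L$ with orbits of size two, so the decompositions genuinely differ, and this is the stated exception. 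Finally, each of $q_1$, $q_2$, $r$ is the quotient of a variety by a finite group action, hence a branched covering map, étale over the complement of the image of the non-free locus.

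I expect the last step to be the main obstacle. One must confirm that the non-free locus of the $H^1(S,\mathbb{Z}_2)$--action on $X(S)\cong\mathbb{C}^3$ is exactly the three coordinate axes, keep careful track of the variance in the $\varphi^*$--equivariance of $\overline{\varphi}$, and — most delicately — verify the dictionary in the exceptional case, namely that a coordinate axis can be contained in $X_\varphi(S)$ only when its $\mathbb{Z}_2$--stabiliser coincides with $H$. This amounts to matching the trace description of the coordinate axes in $X(S)$ with the $\mathbb{F}_2$--linear action of $\varphi_*$ on $H_1(S;\mathbb{Z}_2)$ and its dual; it is elementary but must be spelled out with some care, and it is the only place where the full monodromy, not just its action on characters, enters.
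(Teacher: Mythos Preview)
Your argument is correct and runs along essentially the same lines as the paper's: the diagram is assembled from \Cref{thm:main} together with the analysis of the $\PSL(2,\mathbb{C})$ side culminating in \Cref{pro:degrees_PSLrestriction}, and the orbit comparison is the crux. One small inaccuracy: the action of $H^1(M_\varphi,\mathbb{Z}_2)$ on $X^{\mathrm{irr}}(M_\varphi)$ is not literally free on all irreducible characters (the fixed locus $S(h)$ of a nontrivial $h$ is the proper subvariety where every $\gamma$ with $h(\gamma)=-1$ has trace zero, and this can contain irreducible characters); what you need, and what suffices for the degree claim, is that it is \emph{generically} free.

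Where your route differs slightly is in part~(c). The paper compares $H^1(S,\mathbb{Z}_2)$--orbits with $H$--orbits by passing through extensions to $\pi_1(M_\varphi)$ (\Cref{lem:H-orbits or not}) and then invoking the centraliser analysis on the coordinate axes (\Cref{lem:centraliser_order2}); see the proof of \Cref{cor:diagram_detailed}. You instead work directly on $X(S)$ via the equivariance identity $\overline{\varphi}(g\cdot\chi)=(\varphi^*g)\cdot\overline{\varphi}(\chi)$, which immediately reduces the orbit comparison to the stabiliser calculation on the coordinate axes. This is a bit more self-contained and avoids the detour through $\overline{X}(M_\varphi)$; the paper's route, on the other hand, yields the additional information in \Cref{lem:H-orbits or not} about how the two extensions in a fibre of $\overline{r}$ differ by an element of the centraliser. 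Your ``dictionary'' in the exceptional case is exactly the content of the paper's observation (in the proof of \Cref{lem:centraliser_order2}) that when $b_1(\varphi)=2$ the axis stabilised by $\overline{\varphi}$ is the one whose pointwise stabiliser generates $H$; your worry that this step is delicate is justified, but the verification is indeed just the elementary matching you describe, and the paper carries it out explicitly.
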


The fibres of the map $\overline{r}$ may have size one, two or four, arising from irreducible representations with non-trivial centraliser. The restriction $\overline{r}\co\overline{X}  \to \overline{X}_\varphi(S)$ to a Zariski component $\overline{X}\subseteq \overline{X}^{\text{irr}}(M_\varphi)$ is of degree one, except for a special situation when $b_1(\varphi)=2$ and the degree is two (see \Cref{pro:degrees_PSLrestriction}).

\begin{corollary}
\label{cor:iso for b1=1}
If $b_1(\varphi)=1,$ then we have a birational isomorphism $\overline{X}^{\text{irr}}(M_\varphi) \cong X_\varphi(S).$
\end{corollary}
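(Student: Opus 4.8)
The plan is to specialise the commutative diagram of \Cref{cor:diagram} to the case $b_1(\varphi)=1$ and observe that two of its arrows degenerate to birational isomorphisms. The crucial elementary point is that when $b_1(\varphi)=1$ the group $H^1(M_\varphi,\mathbb{Z}_2)\cong\mathbb{Z}_2$ has order $2$, so in the splitting $H^1(M_\varphi,\mathbb{Z}_2)=\langle\varepsilon\rangle\oplus H$ with $H\le H^1(S,\mathbb{Z}_2)\cong\mathbb{Z}_2\oplus\mathbb{Z}_2$ we are forced to have $H=0$. In particular the exceptional behaviour flagged in the last sentence of \Cref{cor:diagram} (which only occurs for $b_1(\varphi)=2$) does not arise, so the orbits of $H^1(S,\mathbb{Z}_2)$ on $X_\varphi(S)$ are the orbits of $H=0$, i.e.\ singletons.

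I would then argue along the top-left square of the diagram. Since $H=0$, the map $q_1$ is precisely the quotient of $X^{\text{irr}}(M_\varphi)$ by the action of $\langle\varepsilon\rangle$; by \Cref{thm:main} the map $r$ has exactly the same fibres, namely the $\langle\varepsilon\rangle$-orbits, with the only ramification occurring at the finitely many binary dihedral characters in $X(M_\varphi(\lambda))$. Hence on the dense Zariski-open subset $U\subseteq X^{\text{irr}}(M_\varphi)$ on which $\langle\varepsilon\rangle$ acts freely, both $r|_U$ and $q_1|_U$ realise the same quotient $U/\langle\varepsilon\rangle$, so they identify the dense open sets $r(U)\subseteq X_\varphi(S)$ and $q_1(U)\subseteq\overline{X}^{\text{irr}}(M_\varphi)$ and yield the asserted birational isomorphism $\overline{X}^{\text{irr}}(M_\varphi)\cong X_\varphi(S)$. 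Equivalently one can run the argument along the bottom-right: for $b_1(\varphi)=1$ the map $q_2$ has degree $2^{b_1(\varphi)-1}=1$ and, by the paragraph following \Cref{cor:diagram}, the restriction of $\overline{r}$ to each Zariski component also has degree one (the $b_1(\varphi)=2$ exception being switched off), so $q_2\co X_\varphi(S)\to\overline{X}_\varphi(S)$ and $\overline{r}\co\overline{X}^{\text{irr}}(M_\varphi)\to\overline{X}_\varphi(S)$ are both finite dominant degree-one maps, hence birational isomorphisms, and $q_2^{-1}\circ\overline{r}$ is the desired map.

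I do not expect a genuine obstacle: the substance lies entirely in \Cref{thm:main} and \Cref{cor:diagram}, and this corollary is a direct specialisation. The points to check are routine. One needs that a finite dominant morphism of degree one is a birational isomorphism, which is automatic for the birational statement claimed, so any non-normality of the target is harmless. One needs that $\overline{r}$ and $q_2$ are onto their stated targets, which is the $\PSL(2,\mathbb{C})$-analogue of the identity $r(X(M_\varphi))=r(X^{\text{irr}}(M_\varphi))$ established in the lead-up to \Cref{thm:main}. And one needs the group-theoretic observation above, that $b_1(\varphi)=1$ forces $H=0$ and thereby removes the special $b_1(\varphi)=2$ case of \Cref{cor:diagram}; this is the only place where the hypothesis is used.
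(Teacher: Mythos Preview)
Your approach is the paper's approach—specialise the diagram of \Cref{cor:diagram} and use that when $b_1(\varphi)=1$ the fibres of $q_1$ and $r$ coincide as $\langle\varepsilon\rangle$-orbits—but you have left out the one ingredient the paper makes explicit. Your top-left argument shows that $X_\varphi(S)$ is birational to the \emph{image} of $q_1$, i.e.\ to $X^{\text{irr}}(M_\varphi)/\langle\varepsilon\rangle$; what you have not argued is that this image is all of $\overline{X}^{\text{irr}}(M_\varphi)$. A priori there could be Zariski components of $\overline{X}^{\text{irr}}(M_\varphi)$ whose characters do not lift to $\SL(2,\mathbb{C})$, and indeed such components exist for $b_1(\varphi)\ge 2$ (see \Cref{sec:infinite_no_lifts} and the discussion around \Cref{cor:diagram_detailed}, which states that $q_1$ is a branched cover \emph{onto its image}). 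The paper closes this gap by noting that $b_1(\varphi)=1$ gives $H_1(M_\varphi;\mathbb{Z}_2)\cong\mathbb{Z}_2$, hence $H^2(\pi_1(M_\varphi);\mathbb{Z}_2)\cong 0$ via Poincar\'e--Lefschetz duality, so the Stiefel--Whitney obstruction vanishes and every $\PSL(2,\mathbb{C})$--representation lifts (\Cref{sec:basic_char}); this forces $q_1$ to be surjective. Your final paragraph flags surjectivity of $\overline{r}$ and $q_2$, but it is surjectivity of $q_1$ that carries the content here.

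Your alternative route through $q_2^{-1}\circ\overline{r}$ is valid and does sidestep the lifting question, but it relies on the degree-one statement for $\overline{r}$, which is \Cref{pro:degrees_PSLrestriction} and is established only later in \Cref{sec:Projective characters}. The paper's lifting argument is the more economical input at this point in the exposition.
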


\begin{proof}
Since $H_1(M; \mathbb{Z}_2) = \mathbb{Z}_2$, we have $H^2(\pi_1(M); \mathbb{Z}_2)\cong 0$ and hence every representation into $\PSL(2, \mathbb{C})$ lifts to a representation into $\SL(2, \mathbb{C}).$ See \Cref{sec:basic_char} for details. 
Hence we obtain the claimed birational isomorphism from the commutative diagram in \Cref{cor:diagram} since the fibres of each of the surjective maps $q_1$ and $r$ are the orbits of the action of $\langle \varepsilon\rangle.$
\end{proof}

The hypothesis of \Cref{cor:iso for b1=1} is equivalent to $\varphi_*$ mapping to an element of order three under the natural epimorphism $\SL(2,\mathbb{Z}) \to \SL(2,\mathbb{Z}_2)\cong\Sym(3)$ (see \Cref{sec:prelim_bundles}).

\Cref{sec: char of once-punctured torus bundles} also contains a number of interesting facts about $X(M_\varphi)$ and $X_\varphi(S)$ that follow from our set-up. For instance, $M_\varphi$ always admits an irreducible representation whose restriction to the fibre has image the quaternionic group. If $H_1(X,\mathbb{Z}_2)\cong \mathbb{Z}_2^3$, then this character always lies on one (or three) special line(s) in $X_\varphi(S)$. 
\Cref{sec:Projective characters} provides analogous results for the  $\PSL(2, \mathbb{C})$--character variety $\overline{X}(M_\varphi)$ and the associated fixed point set $\overline{X}_\varphi(S)\subset \overline{X}(S).$ We use a dimension bound due to Thurston, for which we provide a proof (\Cref{pro:dimPSL_Thurston}) and which is not implied by the dimension bounds in the setting of more general Lie groups due to Falbel and Guilloux~\cite{Falbel-dimension-2017} and Porti~\cite{Porti-dimension-2020}.

In \Cref{sec:examples}, we apply our techniques to infinite families of once-punctured torus bundles and a number of small examples. We follow the standard convention letting $\A, \B \in \SL(2, \mathbb{Z})$ correspond to right-handed Dehn twists (see~\Cref{sec:prelim_bundles}).

\textbf{First family.} Baker and Petersen~\cite{baker-character-2013} determined the genera of the character varieties of the family $M_n$ of once-punctured torus bundles with monodromy $\varphi_n = \A\B^{n+2}$. We note that $b_1(\varphi_n) = 1$ if $n$ is odd and $b_1(\varphi_n) = 2$ if $n$ is even. In the case where $n\ge 3$ is odd, we show that $X_{\varphi_n}(S)$ is a curve of genus zero, and hence \Cref{cor:iso for b1=1} implies that the genus of $\overline{X}^{\text{irr}}(M_{n})$ is zero. This agrees with the computation in~\cite{baker-character-2013}. See \Cref{sec:first_family}.

\textbf{Second family.} Let $N_n$ be the once-punctured torus bundle with monodromy $\psi_n = \A\B^{n+2}A$. We note that $b_1(\psi_n) = 2$ if $n$ is odd and $b_1(\psi_n) = 3$ if $n$ is even. 
In the case where $n=2k+1\ge 1$ is odd, we show that $X_{\psi_n}(S)$ is a curve of genus $k$ and the genus of $\overline{X}^{\text{irr}}(N_{n})$ is zero. See \Cref{sec:second_family}.

\textbf{Third family.} Let $L_n$ be the once-punctured torus bundle with monodromy $\omega_n = \A^2\B^{n+2}\A$. We note that $b_1(\omega_n) = 1$ if $n$ is odd and $b_1(\omega_n) = 2$ if $n$ is even. 
In the case where $n=2k+1\ge 3$ is odd, we show that $X_{\omega_n}(S)\cong \overline{X}^{\text{irr}}(L_{n})$ has $\frac{n+3}{2}$ components, one of which is a curve of genus $k$ and the others have genus $0.$ The curves of genus $k$ are all canonical. This answers a question posed by Alan Reid.
See \Cref{sec:third_family}.

\textbf{Small examples.} We summarise computational results for monodromies that are short words in $\A$ and $\B$, or in $\A$ and $\B^{-1}$, in \Cref{sec:computations}. These were executed with \texttt{Singular}~\cite{Singular}\rm.

We remark that in all examples produced in this paper, each fixed-point set $X_{\varphi}(S)$ has the property that there is at most one Zariski component of positive genus and each Zariski component is birationally equivalent to an elliptic or a hyperelliptic curve.


{\bf Acknowledgements.}
Research of the first author is partially supported by an Australian Research Council Future Fellowship (project number FT170100316). Research of the second author was supported by an Australian Mathematical Sciences Institute (AMSI) Vacation Research Scholarship during the initial stages of this project.
The second author thanks Grace Garden for helpful conversations during this time.


\section{Preliminaries}
\label{sec:Preliminaries}

This section collects some basic facts about character varieties of 3--manifolds in \Cref{sec:basic_char} and about hyperbolic once-punctured torus bundles in \Cref{sec:prelim_bundles}.
We also recall facts about mutation in \Cref{sec:mutation} and the character variety of the free group of rank two in \Cref{sec:charF2}.
None of the material in this section is new, and it mainly serves to set up notation. In \Cref{app:thurston} we also include a statement and proof of a dimension bound based on \cite[Theorem 5.6]{Thurston-notes} in Thurston's notes. For varieties of representations into $\SL(2,\mathbb{C})$ this was given by Culler and Shalen~\cite[Proposition 3.2.1]{Culler-Shalen-varieties-1983}, but we could not find the statement of \Cref{pro:dimPSL_Thurston} for the case of representations into $\PSL(2,\mathbb{C})$ in the literature.


\subsection{Basic facts about character varieties}
\label{sec:basic_char}

We start by summarising the discussion found in \cite[pp.512-513]{MS-degenerations-III-1988}. Let $M$ be a compact, connected and orientable 3--manifold. Let $R(M)$ be the variety of representations of $\pi_1(M)$ into $\SL(2, \mathbb{C})$, and $X(M)$ be the associated character variety. Similarly, let $\overline{R}(M)$ and $\overline{X}(M)$ be the respective varieties of representations into $\PSL(2, \mathbb{C})$ and their characters.

The quotient map $q\co \SL(2, \mathbb{C}) \to \PSL(2, \mathbb{C})$ induces a regular map 
$q_\star\co R(M) \to \overline{R}(M),$ and its image is a union of topological components of $\overline{R}(M).$ Indeed, $R(M) \to q_\star(R(M))$ is a regular covering map with group $H^1(\pi_1(M); \mathbb{Z}_2).$ Following \cite{MS-degenerations-III-1988}, we let $Q(M) = R(M)/H^1(\pi_1(M); \mathbb{Z}_2) \cong q_\star(R(M)) \subseteq \overline{R}(M).$

There also is a regular map $q_\star\co X(M) \to \overline{X}(M),$ and its image is a union of topological components of $\overline{X}(M).$ The fibres over points in the image are again the orbits of the natural action of $H^1(\pi_1(M); \mathbb{Z}_2)$ on $X(M).$ 
Let
$QX(M) = X(M)/H^1(\pi_1(M); \mathbb{Z}_2) \cong q_\star(X(M)) \subseteq \overline{X}(M).$
However, on the level of character varieties, the action may not be free. The points of $X(M)$ fixed by $h \co \pi_1(M)\to \mathbb{Z}_2$ are characterised by $\chi(\gamma)=0$ for all $\gamma \in \pi_1(M)$ with the property that $h(\gamma)=-1.$ This defines an algebraic subset $S(h)$ of $X(M).$ Let $U(M)$ be the Zariski-open set $X(M) \setminus \cup_h S(h).$ Then on $U(M)$ the action of $H^1(\pi_1(M); \mathbb{Z}_2)$ is free and the quotient map $U(M) \to \overline{X}(M)$ onto its image is a regular covering map of its image.

To make these general notions explicit, we appeal to the following material from \cite{Gonzalez-character-1993}.
For each $\gamma\in \pi_1(M)$, there is a regular function $\tau_\gamma \co R(M) \to \mathbb{C}$ defined by $\tau_\gamma(\rho) = \tr \rho(\gamma)$ and a regular function
$I_\gamma\co X(M) \to \mathbb{C}$ defined by $I_\gamma(\chi) = \chi(\gamma).$

Suppose $\gamma_1, \ldots , \gamma_n$ generate $\pi_1(M),$ and consider the $m = \frac{n(n^2+5)}{6}$ functions $\tau_\gamma$ as $\gamma$ ranges over the set 
\[ G = \{\; \gamma_i,\; \gamma_i\gamma_j,\; \gamma_i\gamma_j\gamma_k \;\mid\; 1\le i \le n, 1\le i < j \le n, 1\le i < j < k\le n \;\} \]
Then we obtain a regular map $\tau\co R(M) \to \mathbb{C}^m$ defined by
\[ \rho \mapsto (\tau_\gamma(\rho))_{\gamma \in G}\]
and it is shown in \cite{Gonzalez-character-1993} that there is a natural identification $\tau(R(M)) = X(M).$ We therefore use the notation $\tau\co R(M) \to X(M)$ for the natural map.

Projective characters have a similar interpretation, using the fact that the \emph{square} of the trace of an element in $\PSL(2,\mathbb{C})$ is well defined. Given a representation $\overline{\rho}\co \pi_1(M)\to \PSL(2,\mathbb{C}),$ its character is the map 
\[
\overline{\chi}_{\overline{\rho}} \co \pi_1(M) \to \mathbb{C} \quad \text{ defined by } \quad \gamma \mapsto (\tr \overline{\rho}(\gamma))^2
\] 
As above, let $\overline{\tau}\co \overline{R}(M) \to \overline{X}(M)$ be the natural map from representations to characters.

Throughout this paper, we make use of the following trace identities.
Let $A$,$B$,$C\in \SL(2, \mathbb{C})$. Then
\begin{align}
  \label{tr1}  \tr A^{-1}     =& \tr A \\
  \label{tr2}  \tr (B^{-1}AB) =& \tr A   \\
  \label{tr3}  \tr A\tr B       =& \tr (AB) + \tr (AB^{-1}) \\
  \label{tr4}  \tr (ABC)      =&
  \tr A\tr (BC)+\tr B\tr (AC)+\tr (C)\tr (AB)-\tr A\tr B\tr C-\tr (ACB)
\end{align}

Recall that a representation into $\SL(2,\mathbb{C})$ is \textbf{irreducible} if the only subspaces of $\mathbb{C}^2$ invariant under its image are $\{ 0\}$ and $\mathbb{C}^2.$ Otherwise, a representation is \textbf{reducible}. If a representation is reducible, then it is conjugate into the upper triangular matrices, and hence there is an abelian representation with the same character.
We make frequent use of the following results from \cite{Culler-Shalen-varieties-1983}:

\begin{lemma}
\label{lem:CS irrep lemma}
Suppose $\rho\in R(M).$
\begin{enumerate}
\item  Then $\rho$ is reducible if and only if $\chi_\rho(\gamma)=2$ for each element $\gamma$ of the commutator subgroup of $\pi_1(M).$
\item Suppose $\rho$ is irreducible and $\sigma\in R(M)$ satisfies $\chi_\rho = \chi_\sigma$. Then $\rho$ and $\sigma$ are conjugate and, in particular, $\sigma$ is irreducible.
\end{enumerate}
\end{lemma}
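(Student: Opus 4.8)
The plan is to treat the two parts in turn, the main tools being the Fricke trace identity for commutators and Burnside's theorem on irreducible matrix algebras.

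For part (1), the ``only if'' direction is immediate: if $\rho$ is reducible it is conjugate into the upper triangular subgroup, so the diagonal entries define two homomorphisms $\pi_1(M)\to\mathbb{C}^\ast$; since $\mathbb{C}^\ast$ is abelian, for any $\gamma$ in the commutator subgroup $\rho(\gamma)$ is unipotent upper triangular, hence has trace $2$. For the ``if'' direction I would argue by contraposition. Assuming $\rho$ irreducible, I first produce $g,h\in\pi_1(M)$ with $\rho(g)$ and $\rho(h)$ having no common eigenvector: pick $A=\rho(g)$ not central; its set of eigendirections has size one or two, and the stabilizers of these directions are subgroups of the image whose union, were it all of the image, would force the image to fix a line (a group is never the union of two proper subgroups), contradicting irreducibility; so some $B=\rho(h)$ fixes none of them, and since any common eigenvector of $A,B$ would be an eigendirection of $A$, the pair has none. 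Then the Fricke identity
\[
\tr[A,B]-2=(\tr A)^2+(\tr B)^2+(\tr AB)^2-\tr A\,\tr B\,\tr AB-4,
\]
together with the standard fact that its right-hand side vanishes exactly when $\langle A,B\rangle$ is reducible, shows $\chi_\rho([g,h])=\tr[A,B]\neq 2$, so $\chi_\rho$ is non-constant on the commutator subgroup.

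For part (2), note first that $\sigma$ is irreducible: by part (1) the irreducible $\rho$ has $\chi_\rho(\gamma)\neq 2$ for some $\gamma$ in the commutator subgroup, hence so does $\chi_\sigma=\chi_\rho$, and a reducible representation cannot. Now choose, as above, $g,h$ with $\langle\rho(g),\rho(h)\rangle$ irreducible. Using the explicit normal forms for a pair of matrices in $\SL(2,\mathbb{C})$ with prescribed values of $\tr A$, $\tr B$, $\tr AB$ generating an irreducible subgroup --- these form a single conjugacy class --- I would conjugate $\sigma$ so that $\sigma(g)=\rho(g)$ and $\sigma(h)=\rho(h)$. Since the pair is irreducible, $I,\rho(g),\rho(h),\rho(gh)$ span $M_2(\mathbb{C})$ by Burnside's theorem. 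For arbitrary $\gamma$, the traces $\tr\rho(\gamma X)$ and $\tr\sigma(\gamma X)$ agree for $X\in\{1,g,h,gh\}$ --- directly for $X=1$ and from the hypothesis together with \eqref{tr4} in the remaining cases --- and because the trace form $(Y,Z)\mapsto\tr(YZ)$ is nondegenerate on $M_2(\mathbb{C})$, equality of $\tr(\rho(\gamma)Y)$ and $\tr(\sigma(\gamma)Y)$ on a spanning set forces $\rho(\gamma)=\sigma(\gamma)$. Hence $\rho$ and $\sigma$ are conjugate, and $\sigma$ is in particular irreducible.

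I expect the main technical obstacle to be the normalization step in part (2): making precise that an irreducible pair with given traces is unique up to conjugacy, and handling the degenerate cases (for instance $\tr\rho(g)=\pm 2$, where $\rho(g)$ is parabolic rather than diagonalizable) in a uniform way. A secondary point requiring care is the exact characterization of when the right-hand side of the Fricke identity vanishes, in particular the boundary cases where $A$ or $B$ equals $\pm I$.
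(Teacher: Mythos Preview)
The paper does not give a proof of this lemma at all: it is stated as a quotation of standard results from Culler--Shalen~\cite{Culler-Shalen-varieties-1983}, introduced by ``We make frequent use of the following results from~\cite{Culler-Shalen-varieties-1983}'', and no argument is supplied. So there is nothing in the paper to compare your proposal against.

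That said, your outline is essentially the standard proof and is correct. A few minor remarks. In part~(1), your use of the fact that a group is not the union of two proper subgroups is clean; note that when $A$ has a single eigendirection (the parabolic case) the argument is even simpler, since there is only one stabilizer to consider. The characterisation ``the right-hand side of the Fricke identity vanishes iff $\langle A,B\rangle$ is reducible'' is exactly Lemma~1.2.1 of Culler--Shalen, so you are re-proving their result by their method. In part~(2), you should make explicit that $\langle\sigma(g),\sigma(h)\rangle$ is also irreducible before invoking the normal form --- this follows immediately since $\tr[\sigma(g),\sigma(h)]=\tr[\rho(g),\rho(h)]\neq 2$ by the character hypothesis. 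Your appeal to~\eqref{tr4} in the trace comparison is unnecessary: once $\sigma(g)=\rho(g)$ and $\sigma(h)=\rho(h)$, you also have $\sigma(gh)=\rho(gh)$, and then $\tr(\rho(\gamma)\rho(w))=\chi_\rho(\gamma w)=\chi_\sigma(\gamma w)=\tr(\sigma(\gamma)\rho(w))$ directly for each $w\in\{1,g,h,gh\}$. The spanning claim for $I,A,B,AB$ follows from Cayley--Hamilton (which reduces $BA$ to a combination of $I,A,B,AB$) together with Burnside, as you say.
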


It follows from the above lemma that reducible representations form a closed subset of $R(M)$, and irreducible representations form an open subset of $R(M)$ with respect to the Euclidean topology. Moreover, the subset $X^{\text{red}}(M) \subseteq X(M)$ of all characters of reducible representations is an affine algebraic set, and it is the same as the subset of all characters of abelian representations.

Let $X^{\text{irr}}(M)$ be the union of those Zariski components of $X(M)$ that contain at least one (and hence a Zariski-open) subset of irreducible representations. The following is shown in \cite[Lemma 3.9(iii)]{Porti-torsion-1997}.

\begin{lemma}
\label{lem:porti}
Suppose $\chi \in X^{\text{irr}}(M)\cap X^{\text{red}}(M).$ Then each representation with character $\chi$ is reducible and there is a non-abelian reducible representation with this character.
\end{lemma}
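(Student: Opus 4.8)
The plan is to analyze representations with character $\chi$ by splitting into cases based on reducibility and using the structure of reducible (hence upper-triangular, hence abelian-up-to-character) representations together with the defining property of $X^{\text{irr}}(M)$.

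First, I would argue that every representation $\rho$ with character $\chi$ is reducible. Since $\chi \in X^{\text{red}}(M)$, there is at least one reducible (equivalently, abelian) representation $\rho_0$ with character $\chi$. Suppose for contradiction that some $\rho$ with $\chi_\rho = \chi$ is irreducible. By \Cref{lem:CS irrep lemma}(2), any representation with the same character as $\rho$ is conjugate to $\rho$ and hence irreducible; in particular $\rho_0$ would be irreducible, contradicting the fact that $\rho_0$ is abelian. So every representation with character $\chi$ is reducible, proving the first assertion.

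The main content is to produce a \emph{non-abelian} reducible representation with character $\chi$. Here I would use that $\chi \in X^{\text{irr}}(M)$: there is a Zariski component $C$ of $X(M)$ containing $\chi$ and containing a Zariski-open (in particular non-empty) subset of characters of irreducible representations. The key obstacle — and I expect this to be the crux — is to promote this to a statement about representations: one wants a curve (or at least a one-parameter family) of representations limiting to one with character $\chi$, along which the representations are irreducible but degenerate to a reducible limit. Concretely, choosing an irreducible $\rho_1$ with $\chi_{\rho_1} \in C$, one connects $\chi_{\rho_1}$ to $\chi$ by a path in $C$ and lifts (after restricting to a curve in $C$ through $\chi$) to a path of representations $\rho_t$ with $\rho_t$ irreducible for $t \neq 0$ and $\chi_{\rho_0} = \chi$. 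By the first part, $\rho_0$ is reducible, so it is conjugate into upper-triangular matrices; write its diagonal part as a character $\alpha \oplus \alpha^{-1}$ for some homomorphism $\alpha\co\pi_1(M)\to\mathbb{C}^*$. If the off-diagonal (unipotent) part of $\rho_0$ vanishes, $\rho_0$ is diagonal, hence abelian, and one must show this cannot happen for \emph{every} such limiting representation: the standard argument (as in Culler–Shalen and Porti) is that if every reducible representation with character $\chi$ were abelian, then $\chi$ would be an isolated point of $X^{\text{red}}(M)$ inside $C$ in a way incompatible with $C$ being a positive-dimensional component meeting the irreducible locus — more precisely, one uses that the limit of irreducible representations, suitably conjugated (rescaling the upper-triangular entry so it does not blow up or vanish), yields a reducible non-abelian representation. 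This rescaling/normalization step is the delicate point: one must choose conjugating elements $g_t \in \SL(2,\mathbb{C})$ so that $g_t\rho_t g_t^{-1}$ converges to a reducible representation whose unipotent part is non-trivial, which is exactly the content of \cite[Lemma 3.9(iii)]{Porti-torsion-1997} and the analysis preceding \cite[Proposition 3.2.1]{Culler-Shalen-varieties-1983}.

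Finally, I would record that a reducible non-abelian representation $\rho'$ with character $\chi$ necessarily has the same character as its diagonalization $\alpha\oplus\alpha^{-1}$ (since trace only sees the diagonal for upper-triangular matrices), so $\chi = \chi_{\alpha\oplus\alpha^{-1}}$ is indeed the character of the given reducible point, and $\rho'$ is the desired non-abelian reducible representation. The cleanest route for the write-up is simply to cite \cite[Lemma 3.9(iii)]{Porti-torsion-1997} directly, as the excerpt does, since the normalization argument is somewhat technical; but the outline above is the conceptual skeleton, with the compactness/rescaling argument producing the non-abelian reducible limit being the step that carries all the weight.
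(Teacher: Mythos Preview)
Your proposal is correct, and in fact goes further than the paper: the paper does not supply a proof at all but simply records the statement with the citation to \cite[Lemma 3.9(iii)]{Porti-torsion-1997}. The argument you sketch --- the first assertion via \Cref{lem:CS irrep lemma}(2), and the second via taking a limit of irreducible representations along a curve in the relevant component and rescaling by conjugation so that the unipotent part survives in the limit --- is precisely the standard proof found in Porti's paper, so there is no meaningful divergence in approach to discuss.
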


The quotient ${QX}^{\text{irr}}(M) = X^{\text{irr}}(M)/H^1(\pi_1(M),\mathbb{Z}_2 )$ is naturally identified with a union of topological components of the union $\overline{X}^{\text{irr}}(M)$ of Zariski-components of $\overline{X}(M)$ containing irreducible representations. The above lemma implies:

\begin{corollary}
The map $X^{\text{irr}}(M) \to \overline{X}(M)$ is a branched covering map onto its image and its ramification points are characters of irreducible or non-abelian reducible representations in the set $\cup_h S(h).$
\end{corollary}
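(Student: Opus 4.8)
The plan is to unwind the corollary into its two assertions about the map $f\defeq q_\star|_{X^{\text{irr}}(M)}\co X^{\text{irr}}(M)\to\overline{X}(M)$: that $f$ is a branched covering map onto its image, and that every ramification point of $f$ lies in $\cup_h S(h)$ and is the character of an irreducible or a non-abelian reducible representation. By the discussion preceding the corollary, the image of $f$ is $QX^{\text{irr}}(M)$, a union of topological components of $\overline{X}^{\text{irr}}(M)$, and under this identification $f$ is the quotient map for the action of the finite group $G\defeq H^1(\pi_1(M);\mathbb{Z}_2)$ on $X^{\text{irr}}(M)$. First I would record the two structural facts that make $f$ a branched covering: it is a finite surjective morphism, being a finite-group quotient of an affine variety with fibres the $G$-orbits; and, by the cited set-up, its restriction over the Zariski-dense open set $U(M)\cap X^{\text{irr}}(M)$ is a regular covering map onto its image. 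Consequently $f$ is a branched covering of its image whose ramification locus is contained in the Zariski-closed set $X^{\text{irr}}(M)\setminus U(M)=X^{\text{irr}}(M)\cap\cup_h S(h)$.

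Next I would identify which points of this set are ramification points and what representations they come from. A ramification point $\chi$ has $G$-stabiliser strictly larger than the generic $G$-stabiliser of its (irreducible) Zariski component, so in particular $\chi$ is fixed by some non-trivial $h\in G$, which re-confirms $\chi\in\cup_h S(h)$. Now choose $\rho\in R(M)$ with $\chi_\rho=\chi$. If $\rho$ is irreducible, we are done; otherwise $\rho$ is reducible, so $\chi\in X^{\text{irr}}(M)\cap X^{\text{red}}(M)$, and \Cref{lem:porti} produces a non-abelian reducible representation with character $\chi$. In either case $\chi$ is the character of an irreducible or a non-abelian reducible representation lying in $\cup_h S(h)$, which is the assertion.

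The step that needs the most care is the bridge from ``finite group quotient'' to ``branched covering whose ramification locus is controlled by the non-free locus''. Two subtleties are worth noting: $X^{\text{irr}}(M)$ need not be normal, and a non-trivial $h\in G$ may act trivially on an entire Zariski component --- for instance a component consisting of binary dihedral characters is contained in some $S(h)$ --- so that the generic fibre of $f$ over such a component has cardinality a proper divisor of $|G|$, and a point there is a ramification point only when its stabiliser is strictly larger still. The way I would sidestep both issues is to claim only the inclusion of the ramification locus in $\cup_h S(h)\cap X^{\text{irr}}(M)$, which is all the statement requires: the set-up already provides that $f$ restricts to a covering over the dense open set $U(M)\cap X^{\text{irr}}(M)$, making $f$ a branched covering, and any ramification point must have non-trivial $G$-stabiliser and hence lie in $\cup_h S(h)$, after which \Cref{lem:porti} finishes the identification.
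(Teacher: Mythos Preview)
Your proposal is correct and follows the same approach as the paper. The paper presents the corollary as an immediate consequence of \Cref{lem:porti} together with the preceding discussion of the $H^1(\pi_1(M);\mathbb{Z}_2)$--action and the set $U(M)$; you have simply made explicit the two steps the paper leaves implicit: that the finite-group quotient is a branched cover unramified over $U(M)$, and that at any ramification point one invokes \Cref{lem:porti} when the character happens to be reducible.
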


A point of a complex affine algebraic set $X$ is a \textbf{simple point} if it is contained in a unique Zariski component $X_0$ of $X$ and is a smooth point of $X_0.$ 

Suppose the boundary of $M$ is a union of pairwise disjoint tori and the interior of $M$ admits a complete hyperbolic structure of finite volume. Then the holonomy for the complete hyperbolic structure defines a discrete and faithful $\PSL(2,\mathbb{C})$--character, 
which is unique up to complex conjugation (the ambiguity arises from the two different orientations possible on $M$). Each Zariski component of $\overline{X}(M)$ containing such a character is called a \textbf{canonical component}. 
The discrete and faithful characters lift to $\SL(2,\mathbb{C}),$ and again each Zariski component of $X(M)$ containing at least one of these lifts is called a \textbf{canonical component}. It is shown in \cite[Corollary 3.28]{Porti-torsion-1997} that the discrete and faithful characters are simple points of the canonical components. (The proof in \cite{Porti-torsion-1997} is given in the setting of $\SL(2,\mathbb{C})$--characters, but descends to $\PSL(2,\mathbb{C})$--characters.)

We end this section with a general discussion of the obstruction for lifting characters from $\overline{X}(M)$ to $X(M)$. See \cite[\S4]{Heusener-varieties-2004}, \cite{Culler-lifting-1986}, \cite[\S4.5]{Goldman-deformation-1988}, or \cite[\S2]{Gonzalez-character-1993} for details.
Suppose $\overline{\rho}\in \overline{R}(M).$ Then $\overline{\rho}$ determines a Stiefel-Whitney class $w_2(\overline{\rho})\in H^2(\pi_1(M); \mathbb{Z}_2).$ 
This is zero if and only if $\overline{\rho}$ lifts to a representation $\rho\in R(M).$
Moreover, the class is constant on topological components of $R(M).$
In the case of interest in this paper, $M$ is aspherical and has boundary a single torus.
Hence it is a $K(\pi_1(M), 1)$ and so $H^2(\pi_1(M); \mathbb{Z}_2)\cong H^2(M; \mathbb{Z}_2).$
Moreover, Poincar\'e-Lefschetz duality implies $H^2(M; \mathbb{Z}_2)\cong  H_1(M, \partial M;  \mathbb{Z}_2).$ In particular, if $H_1(M; \mathbb{Z}_2) = \mathbb{Z}_2$, then 
$H^2(\pi_1(M); \mathbb{Z}_2)\cong 0$ and hence every representation lifts.


\subsection{Once-punctured torus bundles}
\label{sec:prelim_bundles}

Throughout the remainder of this paper, $M_\varphi$ denotes a compact core of the hyperbolic once-punctured torus bundle over $S^1$ with monodromy $\varphi$, and $S$ denotes a fixed fibre of $M_\varphi$ with the property that the monodromy is the identity on $\partial S.$ For our fundamental groups, we assume that the base point lies in $\partial S \subset \partial M_\varphi.$ (Note that this notation is a slight variation on the introduction, where we referred to the non-compact manifolds.)

We also denote by $\varphi$ the automorphism $\pi_1(S)\to \pi_1(S)$ induced by the monodromy (this will cause no confusion in this paper). Let $\varphi_* \co H_1(S)\to H_1(S)$ be the induced automorphism on homology. Since $M_\varphi$ is hyperbolic, the associated monodromy is pseudo-Anosov and we have $|\tr \varphi_*|>2.$ 

The fundamental group of $M_\varphi$ admits the presentation 
\begin{equation}\label{eq:fund_gp_HNN}
\pi_1 (M_\varphi) = \langle \; t,a,b\;|\;t^{-1}at=\varphi(a),t^{-1}bt=\varphi(b)\; \rangle
\end{equation}
where $t$ is a meridian of $M_\varphi$ (that is, it corresponds to the trace of the base point of $\pi_1 (M_\varphi)$ under the monodromy) and $a$,$b$ are free generators of $\pi_1 (S)$, chosen such that their commutator is the longitude of $M_\varphi$ (that is, it corresponds to $\partial S$).

We choose the basis of $H_1(S)$ corresponding to the generators $a$,$b$ of $\pi_1(S),$ and often implicitly identify $\varphi_*$ with its corresponding matrix representation, $[\varphi_*]\in \SL(2,\mathbb{Z}).$ 
We may write $\varphi$ as a product of the automorphisms $\alpha$ and $\beta$ induced by the right-handed Dehn-twists about the curves corresponding to $a$ and $b$, since these generate the group of isotopy classes of orientation preserving homeomorphisms of the punctured torus.
The automorphisms are defined as follows (where we also include $\beta^{-1}$ for convenience):
$$
\begin{matrix} 
      \alpha \defeq \begin{cases}
    a\rightarrow a\\
    b\rightarrow ba
    \end{cases}
  &
    \beta \defeq \begin{cases}
    a\rightarrow ab^{-1}\\
    b\rightarrow b
    \end{cases}
      &
    \beta^{-1} = \begin{cases}
    a\rightarrow ab\\
    b\rightarrow b
    \end{cases}
\end{matrix}
$$
This also gives a factorisation of $\varphi_*$ in terms of the induced maps $\alpha_*$ and $\beta_*$ on homology.
We have 
\[
\A \defeq [\alpha_*] = \begin{pmatrix} 1 & 1 \\ 0 & 1 \end{pmatrix}
\qquad\text{and}\qquad
\B \defeq [\beta_*] = \begin{pmatrix} 1 & 0 \\ -1 & 1 \end{pmatrix}
\]

It is a result of Murasugi (see~\cite[Proposition 1.3.3]{Culler-incompressible-1982}) that $M_\varphi$ and $M_\psi$ are homeomorphic if and only if $[\varphi_*]$ is conjugate to either $[\psi_*]$ or  $[\psi_*^{-1}].$ 
The matrices $\A$ and $\B$ generate $\SL(2, \mathbb{Z})$ and satisfy the relations
\[ \A\B\A=\B\A\B, \qquad\text{and}\qquad (\A\B\A)^4=I\]
Using the second relation, we note that any element of $\SL(2, \mathbb{Z})$
can be written as a word in positive powers of $\A$ and $\B$ by substituting
\[ \B^{-1} = \A(\A\B\A)^3\A = \A^2 \B \A^2 \B \A^2 \B \A^2 , \qquad\text{and}\qquad 
\A^{-1} = \B(\B\A\B)^3\B = \B^2 \A \B^2 \A \B^2 \A \B^2
\]
Hence any element of  $\SL(2, \mathbb{Z})$ (including the identity and powers of $\A$ or $\B$) is conjugate to a word of the form
\begin{equation}\label{eq:positive_word}
\A^{a_1}\B^{b_1}\A^{a_2}\B^{b_2} \cdots \A^{a_n}\B^{b_n}
\end{equation}
where $n>0$ and the $a_i$ and $b_i$ are positive integers. Also note that $(\A\B\A)^2=-I.$
The manifold $M_\varphi$ is determined up to homeomorphism by $[\varphi_*]\in \SL(2,\mathbb{Z}).$ 

The choice of a monodromy as a positive word in $\A$ and $\B$ is not unique, and there also is no simple criterion to ensure that the absolute value of the trace is greater than two. 
It is well-known (see \cite[Proposition 2.1]{Gueritaud-canonical-2006} that if $|\tr \varphi_*|>2,$ then the conjugacy class of $\varphi_*$ has a representative of the form 
\begin{equation}\label{eq:positive_standard_word}
\pm \A^{a_1}\B^{-b_1}\A^{a_2}\B^{-b_2} \cdots \A^{a_n}\B^{-b_n}
\end{equation}
where $n>0$, the $a_i$ and $b_i$ are positive integers, and the sign equals the sign of the trace of $\varphi_*.$ This representative is unique up to cyclic permutation of the factors $\A^{a_i}\B^{-b_i}.$ 

If $\tr\varphi_*\neq 2,$ then $H_1(M_\varphi,\mathbb{Z}) \cong \mathbb{Z} \oplus \text{Tor}(M_\varphi,\mathbb{Z}).$ The \emph{parity} of the trace contains information on the rank of homology with coefficients in $\mathbb{Z}_2= \mathbb{Z}/2\mathbb{Z}$. We now describe this in more detail.

The natural homomorphism $\mathbb{Z} \to \mathbb{Z}_2$ gives an epimorphism from $H_1(S)\to H_1(S;\mathbb{Z}_2),$ and takes $\varphi_*$ to a map $\varphi_2\co H_1(S;\mathbb{Z}_2) \to H_1(S;\mathbb{Z}_2).$ On the level of matrices, $[\varphi_*] \mapsto [\varphi_2]$ under the natural epimorphism $\SL(2,\mathbb{Z}) \to \SL(2,\mathbb{Z}_2).$
Since $\SL(2,\mathbb{Z}_2)\cong\Sym(3)$, the order $o(\varphi_2)$ of $\varphi_2$ is either one, two or three. If $\varphi_2$ is not the identity, then it has order two if $\tr (\varphi_2) = 0$ and order three if $\tr (\varphi_2) = 1.$ In particular, $o(\alpha_2)=o(\beta_2)=2$ and the word in \Cref{eq:positive_word} maps to an element of order two if and only if the sum of all exponents is odd.

A direct calculation shows that $b_1(M_\varphi; \mathbb{Z}_2) = \rank H_1(M_\varphi,\mathbb{Z}_2 ) = 4 - o(\varphi_2)\in \{1, 2, 3\}$. To simplify notation, we write ${b_1(\varphi)} = b_1(M_\varphi; \mathbb{Z}_2).$
The maximal rank three is attained if $\varphi_2$ is the identity, and hence for at least one of $\varphi,$ $\varphi^2$ or $\varphi^3.$ Hence each once-punctured torus bundle is either 2-fold or 3--fold covered by  a once-punctured torus bundle with homology with coefficients in $\mathbb{Z}_2$ of maximal rank three.

We identify $H_1(M_\varphi,\mathbb{Z}_2 ) = \Hom(\pi_1(M_\varphi), \mathbb{Z}_2) = H^1(M_\varphi,\mathbb{Z}_2),$ and denote $\varepsilon\co \pi_1(M_\varphi) \to \mathbb{Z}_2$ the homomorphism defined by
\[ \varepsilon(t) = -1, \qquad \varepsilon(a) = 1, \qquad \varepsilon(b) = 1\]
Let $H \le \Hom(\pi_1(M_\varphi), \mathbb{Z}_2)$ be the subgroup consisting of all homomorphisms $h \co \pi_1(M_\varphi) \to \mathbb{Z}_2$ satisfying $h(t)=1.$ Note that $H$ has rank zero if $\tr (\varphi_*)$ is odd. In the following, we will make use of the splitting $H^1(M_\varphi,\mathbb{Z}_2 ) = \langle \varepsilon \rangle \oplus H.$


\subsection{Mutation}
\label{sec:mutation}

The once-puctured torus $S$ admits an orientation preserving involution $\iota \co S \to S$. If one cuts $M_{\varphi}$ along $S$ and changes the monodromy by $\iota$, one obtains $M_{\iota\varphi}.$ This process is called \textbf{mutation}. We have $\iota_* = -I$ and 
$$
\begin{matrix} 
    \iota = \begin{cases}
    \;a\rightarrow ba^{-1}b^{-1}\\
    \;b\rightarrow bab^{-1}a^{-1}b^{-1}
    \end{cases}
\end{matrix}
$$
It follows from \cite[Proposition 1]{Tillus-character-2004} that there is a birational equivalence between Zariski components of $\overline{X}(M_{\varphi})$ and $\overline{X}(M_{\iota\varphi})$ that contain the character of a representation $\overline{\rho}$ whose restriction to $S$ is irreducible and has trivial centraliser. In particular, if one is interested in the topology of generic components of $\overline{X}(M_{\varphi}),$ then one may restrict attention to bundles where the trace of $\varphi_*$ is positive. Moreover, components where all irreducible representations have non-trivial centraliser can be understood directly (see \Cref{sec:PSLorigin_axes}).

In the case of $\SL(2, \mathbb{C})$-character varieties, using the HNN-splitting for the fundamental group as in the proof of \cite[Proposition 1]{Tillus-character-2004}, one sees that the restriction maps to the fibre result in the same fixed-point set; that is: $X_\varphi(S)=X_{\iota\varphi}(S).$ However, the topology of the $\SL(2, \mathbb{C})$-character varieties may be different. For instance, mutation of the figure eight knot complement (with $\varphi_*=\A\B^{-1}$) along the fibre results in the associated sister manifold, and the smooth projective models of their canonical components are a torus and a sphere respectively \cite[\S3.4]{Tillus-character-2004}.


\subsection{The character variety of the free group of rank two}
\label{sec:charF2}

The fundamental group of the once-punctured torus fibre $S$ is the free group 
$\langle a,b\rangle$, and the character variety $X(S)$ admits the affine coordinate $(x,y,z)\in\mathbb{C}^3,$ where $$x=\tau_a(\rho)\text{, }y=\tau_b(\rho)\text{, }z=\tau_{ab}(\rho)$$ as $\rho$ ranges over $R(S)$. An elementary calculation, which goes back to classical work of Fricke and Klein, shows that $X(S) = \mathbb{C}^3.$

Let $\rho\in R(S)$ be an irreducible representation. Up to conjugation, $\rho$ is of the form:
\begin{equation}
\label{eq:standard_irrep}
\begin{matrix}
  \rho(a) = \begin{pmatrix}s&0\\1&s^{-1}\end{pmatrix}  & \rho(b) = \begin{pmatrix}p&u\\0&p^{-1}\end{pmatrix}
\end{matrix}
\end{equation}
where $u\neq 0$.

If $\rho\in R(S)$ is a reducible representation, then $\rho$ is conjugate to one of the following:
\begin{equation}\label{eq:I}
\begin{matrix}
  \rho(a) = \begin{pmatrix}s&0\\0&s^{-1}\end{pmatrix}  & \rho(b) = \begin{pmatrix}p&u\\0&p^{-1}\end{pmatrix}
\end{matrix}
\end{equation}
\begin{equation}\label{eq:II}
\begin{matrix}
 \rho(a) = \begin{pmatrix}\pm 1&1\\0&\pm 1\end{pmatrix}  & \rho(b) = \begin{pmatrix}p&u\\0&p^{-1}\end{pmatrix}
\end{matrix}
\end{equation}
Every point in $X^{\text{red}}(S)$ is the character of a reducible character as in \Cref{eq:I} with $u=0$.
A representation $\rho\in R(S)$ is reducible if and only if its character satisfies the equation $$x^2+y^2+z^2-xyz-4=0$$
This is a consequence of the first part of \Cref{lem:CS irrep lemma}. See \cite{Culler-Shalen-varieties-1983} for details.

A parameterisation of the $\PSL(2, \mathbb{C})$--character variety $\overline{X}(S)$ is described in \cite[\S4.2]{Heusener-varieties-2004} as follows. Since the fundamental group of $S$ is free in two generators, every character in $\overline{X}(S)$ lifts to $X(S)$. By studying the invariant functions on $X(S),$ one obtains an identification
\begin{equation}\label{eq:param_overline{X}(S)}
\overline{X}(S) \cong \{ (X,Y,Z,W) \in \mathbb{C}^4 \mid W^2 = XYZ \}
\end{equation}
where $X = \tau_a^2,$ $Y = \tau_b^2,$ $Z= \tau_{ab}^2$ and 
$W = \tau_a\tau_b\tau_{ab} = \frac{1}{2} (\tau_a^2\tau_b^2 + \tau^2_{ab} - \tau^2_{ab^{-1}})$
The four-fold branched covering map 
$X(S)\to \overline{X}(S)$ thus has the following description in these coordinates:
\begin{equation}\label{eq:param_overline{X}(S)_quotient}
\overline{X}(S) \ni (x, y, z) \mapsto (x^2, y^2, z^2, xyz) \in \overline{X}(S)
\end{equation}


\subsection{Thurston's dimension bound}
\label{app:thurston}

Based on Thurston~\cite[Theorem 5.6]{Thurston-notes}, Culler and Shalen~\cite[Proposition 3.2.1]{Culler-Shalen-varieties-1983}) provided the following result:

\begin{proposition}[Thurston]
\label{pro:dimSL_Thurston}
Let $N$ be a compact orientable 3-manifold.
Let $\rho \co \pi_1(N)  \to \SL(2, \mathbb{C})$ be an irreducible representation such that for each torus component $T$ of $\partial N$, $\rho(\im( \pi_1(T) \to \pi_1(N))) \not\subseteq \{ \pm 1\}.$ Let $R_0$ be an irreducible component of $R(N)$ containing $\rho$. Then $X_0 = \tau(R_0)$ has dimension $\ge s - 3\chi(N),$ where $s$ is the number of torus components of $\partial N.$
\end{proposition}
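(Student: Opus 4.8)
The plan is to pass from $R_0$ to $X_0$, thereby reducing to a dimension estimate for $R_0$, to obtain the elementary part of that estimate from a spine of $N$, and then to extract the extra summand $s$ from the behaviour of $\rho$ on the boundary tori by a deformation-theoretic/Poincar\'e-duality argument.

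First, since $\rho$ is irreducible its centraliser in $\SL(2,\mathbb{C})$ is $\{\pm I\}$, so the conjugation orbit of $\rho$ has dimension $3$; as $\SL(2,\mathbb{C})$ is connected it fixes the irreducible components of $R(N)$, so this orbit lies in $R_0$. By \Cref{lem:CS irrep lemma} the fibre of $\tau\co R_0\to X_0$ over $\chi_\rho$ is exactly this orbit, and since the irreducible representations are a Zariski-dense open subset of $R_0$ the generic fibre of this dominant map also has dimension $3$; hence $\dim X_0=\dim R_0-3$, and it suffices to prove $\dim R_0\ge s+3(1-\chi(N))$. We may assume $\partial N\ne\emptyset$, since otherwise $s=0$ and one argues with $N$ minus an open ball. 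Then $N$ collapses to a $2$-complex with a single vertex, which yields a presentation of $\pi_1(N)$ with $c_1$ generators and $c_2$ relations satisfying $c_1-c_2=1-\chi(N)$; since $R(N)$ is cut out in the smooth variety $\SL(2,\mathbb{C})^{c_1}$ by $c_2$ relations, each locally three scalar equations, Krull's principal ideal theorem gives $\dim R_0\ge 3c_1-3c_2=3(1-\chi(N))$.

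To gain the missing $s$ I would argue locally at $\rho$ with the adjoint coefficients $\mathfrak g=\mathfrak{sl}_2(\mathbb{C})$. Irreducibility gives $H^0(\pi_1(N);\mathfrak g)=0$, so $\dim Z^1(\pi_1(N);\mathfrak g)=3+\dim H^1(\pi_1(N);\mathfrak g)$; moreover $H^3(N;\mathfrak g)\cong H_0(N,\partial N;\mathfrak g)=0$ and the twisted Euler characteristic of $N$ equals $3\chi(N)$, so $\dim H^1(\pi_1(N);\mathfrak g)-\dim H^2(\pi_1(N);\mathfrak g)=-3\chi(N)$. The obstruction theory for deforming $\rho$ presents $R(N)$ near $\rho$ as a subscheme of $Z^1$ cut out by a map to $H^2(\pi_1(N);\mathfrak g)$ whose leading term is the cup-square, so $\dim R_0\ge 3+\dim H^1-\dim(\text{image of the obstruction map})$; the point is that this image has codimension at least $s$ in $H^2$. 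This is where the hypothesis enters: non-centrality of $\rho$ on each torus $T_j$ makes its invariant subspace of $\mathfrak g$ one-dimensional, so $\dim H^0(T_j;\mathfrak g)=\dim H^2(T_j;\mathfrak g)=1$ and $\dim H^1(T_j;\mathfrak g)=2$; hence $\dim H^1(\partial N;\mathfrak g)=2s+(\text{higher-genus terms})$, and by Poincar\'e--Lefschetz duality for the pair $(N,\partial N)$ (``half lives, half dies'') the image of $H^1(N;\mathfrak g)\to H^1(\partial N;\mathfrak g)$ is a Lagrangian, of dimension $s+(\text{higher-genus terms})$. The graded symmetry of the cup product then forces the quadratic obstruction to vanish on this $s$-dimensional space of peripheral first-order deformations, giving $\dim R_0\ge 3+\dim H^1-(\dim H^2-s)=s+3(1-\chi(N))$, and hence $\dim X_0\ge s-3\chi(N)$.

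The main obstacle is exactly this last step: checking that the $s$-dimensional space of peripheral first-order deformations both survives the quotient by conjugation and is annihilated by the quadratic obstruction — equivalently, that the $s$ peripheral parameters (the ``cusp shapes'', in Thurston's picture of an ideal triangulation) truly give $s$ independent directions of deformation of $\chi_\rho$. I would settle this via the Poincar\'e-duality (Goldman-type) symmetry of the cup product on the middle-dimensional twisted cohomology of the $3$-manifold; for a general compact orientable $N$ this is the content of the Culler--Shalen argument, in which one sees directly, using a cell structure adapted to $\partial N$, that the relations associated with the peripheral tori are ``half redundant'' — the differential of the commutator map $\SL(2,\mathbb{C})^2\to\SL(2,\mathbb{C})$, $(A,B)\mapsto ABA^{-1}B^{-1}$, has rank at most $2$ at any commuting pair $(A,B)$ with $A\ne\pm I$, since $A$ and $B$ then lie in a common maximal torus or one-parameter unipotent subgroup.
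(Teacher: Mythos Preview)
The paper does not prove this statement itself; it attributes it to Culler--Shalen. It does, however, prove the $\PSL$ analogue (\Cref{pro:dimPSL_Thurston}) by following the Culler--Shalen argument closely, and that argument specialises verbatim to $\SL(2,\mathbb{C})$. That proof is quite different from yours: after the same base case $s=0$ via a $2$--dimensional spine, it proceeds by \emph{induction on $s$}. One picks a torus component $T$, a curve $\alpha\subset T$ with $\rho(\alpha)\neq\pm I$, and an element $\gamma$ with $\rho$ irreducible on $\langle\alpha,\gamma\rangle$ and $\tr\rho(\gamma)\neq\pm2$; drilling out a neighbourhood of $\gamma$ produces a submanifold $M$ with $\chi(M)=\chi(N)-1$ and one fewer torus boundary component. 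Induction gives $\dim R'_0\ge (s-1)-3\chi(M)+3=s-3\chi(N)+5$, and one then intersects with the \emph{two} trace equations $\tr\rho(\delta')=\pm2$ and $\tr\rho[\gamma',\delta']=2$ to recover a component of $R(N)$ of dimension $\ge s-3\chi(N)+3$. No obstruction theory or twisted cohomology appears.

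Your cohomological route is a legitimate alternative in spirit, but there is a genuine gap at the crucial step. From ``the quadratic obstruction vanishes on this $s$--dimensional space of peripheral first--order deformations'' you cannot conclude $\dim R_0\ge 3+\dim H^1-(\dim H^2-s)$: a nonlinear map vanishing on an $s$--dimensional subset of its \emph{domain} says nothing about the codimension of its \emph{image}, which is what bounds the number of equations cutting out $R_0$ in $Z^1$. What one actually needs is a constraint on the codomain. The obstruction map $Z^1(N;\mathfrak g)\to H^2(N;\mathfrak g)$ has image contained in the kernel of the restriction $H^2(N;\mathfrak g)\to\bigoplus_j H^2(T_j;\mathfrak g)$, because by naturality its composite with each $H^2(N)\to H^2(T_j)$ is the obstruction map for $R(T_j)$, and $R(T_j)$ is smooth at $\rho|_{T_j}$ whenever $\rho(\pi_1 T_j)\not\subseteq\{\pm I\}$ --- this smoothness is precisely your rank--$\le 2$ observation about the differential of $(A,B)\mapsto[A,B]$. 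That restriction map is surjective (from the long exact sequence of the pair and $H^3(N,\partial N;\mathfrak g)\cong H^0(N;\mathfrak g)^*=0$ since $\rho$ is irreducible) onto an $s$--dimensional target, so $R_0$ is locally cut out of $Z^1$ by at most $\dim H^2-s$ equations, giving the bound. Your Lagrangian/half--lives--half--dies statement about $H^1(\partial N)$ is correct but is not the mechanism that produces the extra $s$.
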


An analogous statement for representations into $\PSL(2, \mathbb{C})$ is the following:

\begin{proposition}[Thurston]
\label{pro:dimPSL_Thurston}
Let $N$ be a compact orientable 3-manifold.
Let $\prho_0 \co \pi_1(N)  \to \PSL(2,\mathbb{C})$ be an irreducible representation such that for each torus component $T$ of $\partial N$, $\prho_0(\im( \pi_1(T) \to \pi_1(N)))$ is non-trivial and not isomorphic with $\mathbb{Z}_2 \oplus \mathbb{Z}_2.$ Let $\overline{R}_0$ be an irreducible component of $\overline{R}(N)$ containing $\overline{\rho}$. Then $\overline{X}_0 = \overline{\tau}(\overline{R}_0)$ has dimension $\ge s - 3\chi(N),$ where $s$ is the number of torus components of $\partial N.$
\end{proposition}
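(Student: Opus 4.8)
The plan is to transfer the proof of \Cref{pro:dimSL_Thurston} --- that is, Thurston's argument \cite[Theorem 5.6]{Thurston-notes} in the form given by Culler and Shalen \cite[Proposition 3.2.1]{Culler-Shalen-varieties-1983} --- to the $\PSL(2,\mathbb{C})$ setting, isolating the few places where $\PSL(2,\mathbb{C})$ differs from $\SL(2,\mathbb{C})$. The key observation is that $q\co\SL(2,\mathbb{C})\to\PSL(2,\mathbb{C})$ is a central isogeny, hence a local isomorphism of algebraic groups, and that the adjoint representation factors through $\PSL(2,\mathbb{C})$. Therefore the local deformation theory of $\overline{R}(N)$ at $\overline{\rho}_0$ is controlled by the cohomology of $\pi_1(N)$ with coefficients in the $\pi_1(N)$--module $V=\mathfrak{sl}_2(\mathbb{C})$ (acted on via $\mathrm{Ad}\,\overline{\rho}_0$), in precisely the same way as for representations into $\SL(2,\mathbb{C})$. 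I write $h^i(Y)=\dim H^i(Y;V)$, and recall that $V$ is self-dual via the Killing form, so that Poincar\'e--Lefschetz duality applies to $V$--cohomology.

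First I would extract the consequences of irreducibility, exactly as in the $\SL(2,\mathbb{C})$ case. The image of an irreducible representation is not contained in a Borel subgroup, so its centraliser in $\PSL(2,\mathbb{C})$ is the (trivial) centre; hence $H^0(\pi_1(N);V)=0$, and on the non-empty Zariski-open subset of $\overline{R}_0$ consisting of irreducible representations (which contains $\overline{\rho}_0$) the conjugation action of $\PSL(2,\mathbb{C})$ is free. Consequently $\dim\overline{X}_0=\dim\overline{R}_0-3$ and $\dim Z^1(\pi_1(N);V)=\dim B^1+h^1(N)=3+h^1(N)$. The Thurston--Culler--Shalen estimate --- which I would carry over verbatim, since it is purely a statement about $V$--cohomology and Poincar\'e--Lefschetz duality --- then produces $\dim\overline{R}_0\ge 3+\tfrac12 h^1(\partial N)$, and hence
\[
\dim\overline{X}_0 \;\ge\; \tfrac{1}{2}\,h^1(\partial N).
\]
In outline: the obstruction to integrating a tangent vector of $\overline{R}(N)$ at $\overline{\rho}_0$ restricts to the corresponding obstruction for $\overline{\rho}_0|_{\partial N}$, which vanishes because the representation variety of each boundary component is smooth of the expected dimension at $\overline{\rho}_0$ restricted to it; so the obstruction lies in $\ker(H^2(N;V)\to H^2(\partial N;V))$, which by Poincar\'e--Lefschetz duality together with the ``half lives, half dies'' principle has dimension $h^1(N)-\tfrac12 h^1(\partial N)$. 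Subtracting this from $\dim Z^1=3+h^1(N)$, and then $3$ for the quotient, gives the displayed inequality.

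It remains to bound $h^1(\partial N)$ from below, and this is the one computation that genuinely changes. For a torus component $T_i$, $\chi(T_i)=0$ and Poincar\'e duality give $h^1(T_i)=2h^0(T_i)$, where $h^0(T_i)=\dim\mathrm{Lie}\bigl(C(\overline{\rho}_0(\pi_1(T_i)))\bigr)$. Here I would prove the $\PSL(2,\mathbb{C})$ analogue of the fact used in the $\SL(2,\mathbb{C})$ argument (namely that a non-central abelian subgroup of $\SL(2,\mathbb{C})$ has a $1$--dimensional centraliser, $\SL(2,\mathbb{C})$ having no non-central involutions): an abelian subgroup of $\PSL(2,\mathbb{C})$ has a $0$--dimensional centraliser if and only if it is trivial or isomorphic to $\mathbb{Z}_2\oplus\mathbb{Z}_2$; every other non-trivial abelian subgroup is contained in a $1$--dimensional subgroup (a maximal torus, a maximal unipotent subgroup, or the normaliser of a maximal torus) and hence has a $1$--dimensional centraliser. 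Under the hypothesis on $\overline{\rho}_0(\pi_1(T_i))$ we conclude $h^0(T_i)=1$, so $h^1(T_i)=2$. For a boundary component $\Sigma$ of genus $\ge 2$, the identity $\chi(\Sigma;V)=3\chi(\Sigma)$ and Poincar\'e duality give $h^1(\Sigma)=2h^0(\Sigma)-3\chi(\Sigma)\ge -3\chi(\Sigma)>0$, and for a sphere component $h^1(\Sigma)=0\ge -3\chi(\Sigma)$. Summing over all boundary components and using $\chi(\partial N)=2\chi(N)$ (so that the total contribution of the non-torus components is $\ge -3\chi(\partial N)$) yields $\tfrac12 h^1(\partial N)\ge s-3\chi(N)$, which combined with the previous paragraph completes the proof.

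The main obstacle --- the step I expect to require the most care --- is the cohomological dimension estimate $\dim\overline{R}_0\ge 3+\tfrac12 h^1(\partial N)$, i.e.\ making rigorous the assertion that the obstructions to deforming $\overline{\rho}_0$ die on the boundary (for torus components of $\partial N$ this rests on the smoothness of the commuting variety of $\PSL(2,\mathbb{C})$ at pairs with $1$--dimensional centraliser; for non-torus components one has to check separately that they do not spoil the estimate, which they do not since they only enlarge $h^1(\partial N)$). This is exactly the content of Thurston's and Culler--Shalen's arguments; because it is phrased wholly in terms of the adjoint module $V$ and Poincar\'e--Lefschetz duality, nothing in it is special to $\SL(2,\mathbb{C})$, and it transfers once one knows $H^0(\pi_1(N);V)=0$ (from irreducibility) and the relevant smoothness (from the one-dimensional-centraliser fact above). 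By contrast, the classification of abelian subgroups of $\PSL(2,\mathbb{C})$ and the Euler-characteristic bookkeeping are routine, so the substantive work lies entirely in importing the $\SL(2,\mathbb{C})$ estimate.
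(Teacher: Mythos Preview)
Your approach is genuinely different from the paper's. The paper does \emph{not} use twisted cohomology or obstruction theory; it follows Culler--Shalen's inductive drilling argument verbatim, translated to $\PSL(2,\mathbb{C})$. The base case $s=0$ is the elementary CW-complex count $\dim\overline{R}_0\ge 4m-m-3n=3-3\chi(N)$ using a presentation with $m$ generators and $n$ relators (this step is what handles arbitrary non-torus boundary). The inductive step picks a torus component $T$, a curve $\alpha\subset T$ with non-trivial image, and an element $\gamma$ with $\overline{\rho}_0|_{\langle\alpha,\gamma\rangle}$ irreducible and $\tr^2\overline{\rho}_0(\gamma)\neq 4$; drills out $\gamma$ to get $M$ with a genus-two boundary component; applies the inductive hypothesis to $M$; and then intersects with the codimension-two subvariety $\{\tr^2\overline{\rho}(\delta')=4,\ \tr\overline{\rho}[\gamma',\delta']=2\}$. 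The only place the $\mathbb{Z}_2\oplus\mathbb{Z}_2$ hypothesis enters is in the final fixed-point argument: one needs $\tr\overline{\rho}[\alpha',\beta']=+2$ (not $-2$) for $\overline{\rho}$ near $\overline{\rho}_0$, and this is exactly what the exclusion guarantees. Your approach is more conceptual and identifies the obstruction cleanly via the centraliser dimension, while the paper's is more elementary and self-contained; in particular, the paper never needs smoothness of any boundary representation variety.

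There is a soft spot in your sketch that the paper's approach sidesteps. Your key inequality $\dim\overline{R}_0\ge 3+\tfrac12 h^1(\partial N)$ rests on the obstruction lying in $\ker\bigl(H^2(N;V)\to H^2(\partial N;V)\bigr)$, which in turn rests on $\overline{R}(\partial N)$ being smooth of the expected dimension at the restriction of $\overline{\rho}_0$. For torus components you verify this via the centraliser classification, and for spheres it is trivial; but for a boundary component $\Sigma$ of genus $\ge 2$ the restriction $\overline{\rho}_0|_{\pi_1(\Sigma)}$ may well be reducible (nothing in the hypothesis prevents this), and then $\overline{R}(\Sigma)$ need not be smooth there. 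Your remark that higher-genus components ``only enlarge $h^1(\partial N)$'' addresses the bookkeeping inequality $\tfrac12 h^1(\partial N)\ge s-3\chi(N)$ but not the vanishing of the boundary obstruction, which is the actual input. (Incidentally, what you call ``Thurston's argument in the form given by Culler and Shalen'' is in fact the cohomological argument from Thurston's notes; Culler--Shalen's \cite[Proposition~3.2.1]{Culler-Shalen-varieties-1983} is precisely the inductive drilling proof the paper reproduces.) The cleanest fix, if you want to keep the cohomological viewpoint, is to handle the $s=0$ case by the CW-complex count as the paper does, and use your deformation-theoretic improvement only at the torus components, where the required smoothness is exactly what your centraliser analysis provides.
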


The group $\mathbb{Z}_2 \oplus \mathbb{Z}_2$ does not occur as an exclusion in the statement of \cite[Theorem 5.6]{Thurston-notes} because Thurston was concerned with \emph{holonomies of hyperbolic structures} and not arbitrary irreducible representations. Its appearance is rather natural in light of the following observations.

First, abelian subgroups of $\PSL(2,\mathbb{C})$ either have a global fixed point in $P^1(\mathbb{C})$, or they are isomorphic to the Klein four group 
$\mathbb{Z}_2 \oplus \mathbb{Z}_2$ (and are irreducible but elementary). Up to conjugacy, the latter has the non-trivial elements\footnote{Following standard convention, we write elements of $\PSL(2,\mathbb{C})$ not as sets $\{\pm C\}$ but rather in the form $\pm C$.}
\begin{equation}
    \kappa_1 = \pm\begin{pmatrix}
    0&1\\-1&0
    \end{pmatrix} 
    \qquad
    \kappa_2 = \pm\begin{pmatrix}
    0&i\\i&0
    \end{pmatrix}
     \qquad\
    \kappa_3 = \pm\begin{pmatrix}
    i&0\\0&-i
    \end{pmatrix}
\end{equation}
In particular, for representations of 
$\langle \mu, \lambda \rangle \cong \mathbb{Z} \oplus \mathbb{Z},$ the two cases are distinguished by $\tr\rho[\mu, \lambda] =2$ and $\tr\rho[\mu, \lambda] =-2,$ with the first equation determining a positive dimensional component in the $\PSL(2,\mathbb{C})$--character variety of $\mathbb{Z} \oplus \mathbb{Z}$ and the second an isolated point.
It also follows that an equivalent requirement on $\prho_0$ in the statement of \Cref{pro:dimPSL_Thurston} is that 
\begin{center}
the image of each peripheral torus subgroup is non-trivial and reducible.
\end{center}

Second, each lift of the Klein four group to $\SL(2,\mathbb{C})$ is isomorphic to the quaternionic group and hence if a peripheral subgroup corresponding to a torus boundary component has this image under a projective representation, then it does not lift. This explains why this case does not appear in \Cref{pro:dimSL_Thurston}.

\begin{proof}[Proof of \Cref{pro:dimPSL_Thurston}]
We follow the wording of the proof of \cite[Proposition 3.2.1]{Culler-Shalen-varieties-1983} closely, and combine this with observations from Thurston's proof and results stated in \cite{Heusener-varieties-2004}.

By \cite[Corollary 3.3.5]{Heusener-varieties-2004}, the conclusion is equivalent to the assertion that 
$\dim \overline{R}_0 \ge s - 3\chi(N) + 3.$ We shall prove this by induction on $s.$ First suppose that $s=0.$ We may assume that $\partial N \neq \emptyset$ since otherwise $\chi(N)=0$ and there is nothing to prove. Now $N$ has the homotopy type of a finite 2--dimensional CW-complex $K$ with one 0--cell, and, say, $m$ 1--cells and $n$ 2--cells. Thus $\pi_1(N)$ has a presentation 
$\langle g_1, \ldots, g_m \mid r_1 = \ldots = r_n =1\rangle.$
This presentation gives a natural identification of $\overline{R}(N)$ with an algebraic subset of $(\PSL(2,\mathbb{C}))^m.$

Suppose $\prho_0(g_i) = \pm G_i.$ Then for each $r_j$ there exists 
$\epsilon_j \in \{\pm 1\}$ such that 
\[ 
r_j(G_1, \ldots, G_m) = \epsilon_j \begin{pmatrix} 1 & 0 \\ 0 & 1 \end{pmatrix}
\]
Note that if $\epsilon_j =1$ for each $j,$ then $\prho_0$ lifts to a representation to $\SL(2,\mathbb{C}).$ Now consider the coordinate ring 
$\mathbb{C}[a_1, b_1, c_1, d_1, \ldots, a_m, b_m, c_m, d_m].$ Let $\overline{R}_0$ be the variety with ideal defined by the $m$ equations $a_ic_i - b_id_i=1$ and the $3n$ equations arising from the $(1,1), (1,2)$ and $(2,1)$ entries of the $m$ matrix equations
\[ 
r_j\bigg( \begin{pmatrix} a_1 & b_1 \\ c_1 & d_1 \end{pmatrix}, \ldots, \begin{pmatrix} a_m & b_m \\ c_m & d_m \end{pmatrix}\bigg) = \epsilon_j \begin{pmatrix} 1 & 0 \\ 0 & 1 \end{pmatrix}
\]
The equations given by the $(2,2)$ entries of the above matrix equations are a consequence of the other equations and the multiplicative property of determinants. Our set-up gives a natural algebraic embedding $\overline{R}_0 \to (\PSL(2,\mathbb{C}))^m$ with image in $\overline{R}(N).$ Hence we may identify $\overline{R}_0$ with a subvariety of $\overline{R}(N)$ containing $\prho_0.$ Now 
\[
\dim \overline{R}_0 \ge 4m-m-3n = 3m-3n = -3\chi(K)+3 = -3\chi(N)+3
\]
gives the desired conclusion.

Now suppose $s>0.$ Let $T$ be a torus component of $\partial N.$ Since 
$\prho_0(\im( \pi_1(T) \to \pi_1(N))) \not\subseteq \{ 1\},$ there is $\alpha \in \pi_1(N)$, represented by a simple closed curve on $T$, such that $\prho_0(\alpha) \neq 1 \in \PSL(2,\mathbb{C}).$ Since $\prho_0$ is irreducible, there is $\gamma \in \pi_1(N)$ such that $\prho_0$ restricted to the subgroup generated by $\alpha$ and $\gamma$ is irreducible. 

Following Thurston, we show that $\gamma$ can be chosen such that $\tr^2\prho_0(\gamma) \neq 4.$ Hence suppose that $\tr^2\prho_0(\gamma) = 4.$ 
Since $\prho_0(\alpha)$ and $\prho_0(\gamma)$ have no common fixed point on $P^1(\mathbb{C}),$ it follows that $\prho_0(\gamma)$ is parabolic and has a unique fixed point on $P^1(\mathbb{C}).$
We may conjugate $\prho_0$ such that 
\[ 
\prho_0(\gamma) =  \pm \begin{pmatrix} 1 & 1 \\ 0 & 1\end{pmatrix}
\qquad \text{and} \qquad
\prho_0(\alpha) =  \pm \begin{pmatrix} a & b \\ c & d\end{pmatrix}
\]
where $c \neq 0.$ Then
\[ 
\prho_0(\alpha\gamma^{2n}) =  \pm \begin{pmatrix} a & b+2na \\ c & d+2nc\end{pmatrix}
\]
Hence $\tr^2\prho_0(\alpha\gamma^{2n}) = (a+d+2nc)^2$ and since $c\neq 0$ we may choose $n$ such that $\tr^2\prho_0(\alpha\gamma^{2n})$ does not equal 4. Now the parabolic $\prho_0(\gamma^{2n})$ has the same fixed point on $P^1(\mathbb{C})$ as 
$\prho(\gamma).$ Hence $\prho_0(\alpha)$ and $\prho_0(\gamma^{2n})$ have no common fixed point on $P^1(\mathbb{C}),$ and so the representation restricted to 
$\langle \alpha , \gamma^{2n} \rangle = \langle \alpha , \alpha\gamma^{2n} \rangle$ is irreducible. This completes the argument that we may choose $\gamma$ such that $\tr^2\prho_0(\gamma) \neq 4.$

Choose a base point for the fundamental group of $N$ based on $T$ and thus represent $\gamma$ by a simple closed curve based at this point. Drilling out a regular open neighbourhood of $\gamma$ gives a submanifold $M$ of $N$ such that $N$ is obtained by adding a 2--handle to a genus two boundary component $S$ of $M.$ We may choose a standard basis $\alpha', \beta', \gamma', \delta'$ of $\pi_1(S)$ such that $\alpha'$ and $ \gamma'$ are mapped to $\alpha$ and $ \gamma$ under the natural surjection $i_*\co \pi_1(M) \to \pi_1(N)$; the 2--handle is attached to $S$ along a simple closed curve that represents the conjugacy class of $\delta;$ and the relation for $\pi_1(S)$ is given by $[\alpha', \beta'] = [\gamma', \delta'].$

The representation $\prho'_0 = \prho_0\circ i_* \co \pi_1(M)\to \PSL(2,\mathbb{C})$ is irreducible since $i_*$ is a surjection. By the induction hypothesis, there is an irreducible component $\overline{R}'_0$ of $\overline{R}(M)$ that contains $\prho'_0$ and has dimension
\[
\dim \overline{R}'_0 \ge (s-1) - 3 \chi(M) + 3 = s - 3 \chi(N) + 5
\]
since $\chi(M) = \chi(N)-1.$ Let $W$ be the intersection of $\overline{R}'_0$ with the subvariety defined by the two equations
\begin{align}
\tr^2\prho (\delta') &= 4\\
\tr\prho[\gamma', \delta'] &= 2
\end{align}
By  \cite[\S2.4]{Heusener-varieties-2004}, these are well defined invariant functions on $\overline{R}'_0.$ Then the dimension of each component of $W$ is at least $s - 3 \chi(N) + 3.$

We first show that $\prho'_0$ is contained in $W.$
The kernel of $i_*$ is the normal closure of $\delta'$, and hence $\prho'_0(\delta') = 1.$ This implies 
$\tr^2\prho'_0(\delta') = 4$ and $\tr\prho'_0[\gamma', \delta'] = 2.$ Hence there is an irreducible component $\overline{R}_0$ of $W$ that contains $\prho'_0.$
To obtain the desired conclusion that we can identify $\overline{R}_0$ with a subvariety of $\overline{R}(N)$ it remains to show that 
all representations $\prho \in \overline{R}_0$ near $\prho_0$ satisfy $\prho(\delta') = 1.$ 
 
Hence suppose that for $\prho \in \overline{R}_0$ near $\prho_0,$ we have $\prho(\delta') \neq 1.$ Then $\prho(\delta')$ is a non-trivial parabolic. Since 
$\tr\prho[\gamma', \delta'] = 2,$ 
it follows that $\prho(\delta')$ and $\prho(\gamma')$ share a fixed point on $P^1(\mathbb{C}).$  Since $\prho$ is near $\prho_0,$ $\prho(\gamma')$ is not a parabolic. Hence $\prho[\gamma', \delta']$ is a non-trivial parabolic that has the same fixed point as $\prho(\delta')$ and shares a fixed point with $\prho(\gamma').$

Since $[\gamma', \delta'] = [\alpha', \beta']$ in $\pi_1(M),$ we have $\tr\prho[\alpha', \beta'] = \pm 2.$ Since $\prho$ is near $\prho_0,$ it follows from our hypothesis on the peripheral subgroups that $\tr\prho[\alpha', \beta'] = 2.$ Hence $\prho(\alpha')$ and $\prho(\beta')$ have a common fixed point on $P^1(\mathbb{C}).$ Hence the unique fixed point of the non-trivial parabolic $\prho[\gamma', \delta'] = \prho[\alpha', \beta']$ is in common with both $\prho(\gamma')$ and $\prho(\alpha')$ and thus $\tr\prho[\alpha', \gamma'] = 2.$ But this is not possible since $\prho$ is near $\prho'_0$, and $\prho'_0$ restricted to the subgroup generated by $\alpha'$ and $\gamma'$ is irreducible. This is the final contradiction to our hypothesis that $\prho(\delta') \neq 1.$
\end{proof}

\begin{example}
The $\PSL(2,\mathbb{C})$--character variety of the Whitehead link complement contains 0--dimensional components that satisfy the condition that all peripheral subgroups have image isomorphic with $\mathbb{Z}_2 \oplus \mathbb{Z}_2.$ See \cite[Appendix C]{Tillus-tropical-2020}.
\end{example}

\begin{example}
\label{exa:BP_dim0is1}
Baker and Petersen~\cite[Theorem 7.6]{baker-character-2013} give zero-dimensional components in character varieties of once-punctured torus bundles that appear to contradict \Cref{pro:dimPSL_Thurston}. A less sophisticated approach than given in \cite{baker-character-2013} shows that these characters indeed lie on one-dimensional components. The presentation for the fundamental group of $M_n$ with monodromy $\varphi_n = \A\B^{n+2}$
is given in \cite[\S2.1]{baker-character-2013} as
\[
\Gamma_n = \langle \alpha, \beta \mid \beta^{-n} = \alpha^{-1}\beta\alpha^2\beta\alpha^{-1}\rangle
\]
where a meridian is the element $\mu = \beta\alpha$ and a corresponding longitude is
$\lambda = (\alpha \beta \alpha^{-1})\beta (\alpha \beta \alpha^{-1})^{-1}\beta^{-1}$
(see \cite[Lemma 2.3]{baker-character-2013}). Suppose  $|n|\ge 2$ is even. Then
\[
\prho(\alpha) 
= \pm 
\begin{pmatrix} 
	-y x^n &  -1 \\
	\frac{ 1 + x^{2 + n} }{ (1 - x^2) (1 - x^n) } &  y \end{pmatrix}
\qquad \text{and} \qquad
\prho(\beta) = \pm\begin{pmatrix} x & 0 \\ 0 & x^{-1} \end{pmatrix}
\]
gives a representation of $\Gamma_n$ into $\PSL(2,\mathbb{C})$ that does not lift to $\SL(2,\mathbb{C})$ if
\[
 x^2 + x^n = y^2 x^n(1-x^2)(1-x^n)
\]
Indeed, the above equation is equivalent to the determinant condition for $\prho(\alpha)$, and it is also a sufficient condition for $\prho(\alpha)\prho(\beta)^{-n}\prho(\alpha) + \prho(\beta)\prho(\alpha)^2\prho(\beta)=0$ (noting that both summands are independent of the choice of sign).

The alleged isolated points in \cite[Theorem 7.6]{baker-character-2013}  correspond to the points $(x_0, y_0)$ satisfying $x_0^n=-1$ and $y_0 = \pm \frac{1}{\sqrt{2}}$ on this curve. At each of these points, the image of the commutator of meridian and longitude has trace equal to two (hence the representation restricted to the peripheral subgroup is reducible) and the image of the meridian has trace $\pm \frac{1}{\sqrt{2}}(x_0+x_0^{-1})$ (hence the representation restricted to the peripheral subgroup is non-trivial since the absolute value of the trace is bounded from above by $\sqrt{2}$).
The triple of squares of traces is indeed as stated in  \cite[Theorem 7.6]{baker-character-2013}
\[
(\;\tr^2\prho(\alpha),\; \tr^2\prho(\beta),\; \tr^2\prho(\alpha\beta)\;) = (\;2,\; (x_0+x_0^{-1})^2,\;\frac{1}{2}(x_0+x_0^{-1})^2\;)
\]
We have intentionally used the notation from \cite{baker-character-2013} in this example. With respect to the notation of \Cref{eq:fund_gp_HNN}, one has $t= \mu^{-1}$, $a = \beta^{-1}\mu\beta\mu^{-1}$ and $b = \beta$ and
\begin{equation*}
\pi_1 (M_n) = \langle \; t,a,b\;|\;t^{-1}at=a(a^{-1}b^{-1})^{n+2},\;t^{-1}bt=ba\; \rangle
\end{equation*}
\end{example}


\section{Character varieties of once-punctured torus bundles}
\label{sec: char of once-punctured torus bundles}

We continue with the notation introduced in \Cref{sec:Preliminaries} and analyse the $\SL(2, \mathbb{C})$--character varieties of once-punctured torus bundles using the restriction to the fibre.


\subsection{The restriction map}
\label{sec:Restriction}
The fundamental group of a once-punctured torus bundle $M_\varphi$ with monodromy $\varphi$ is $$\pi_1 (M_\varphi) = \langle t,a,b|t^{-1}at=\varphi(a),t^{-1}bt=\varphi(b) \rangle$$

Throughout, we assume $[\varphi_*] = \begin{pmatrix}k_1&k_2\\k_3&k_4\end{pmatrix}\in \SL(2,\mathbb{Z})$. Since we restrict our attention to hyperbolic once-punctured torus bundles, we have $|\tr \varphi_*| = |k_1+k_4|>2$.

As in \Cref{sec:basic_char}, $X(M_\varphi)$ admits the affine coordinate $(x,y,z,u,v,w,q)$ in $\mathbb{C}^{7}$ where 
$$x=\tau_a(\rho)\text{, }y=\tau_b(\rho)\text{, }z=\tau_{ab}(\rho)\text{, }u=\tau_{t}(\rho)\text{, }v=\tau_{at}(\rho)\text{, }w=\tau_{bt}(\rho)\text{, }q=\tau_{abt}(\rho)$$ for $\rho\in R(M_\varphi)$.
Consider the restriction map
\begin{align*}
    r: X(M_\varphi) &\to X(S) \\
    (x,y,z,u,v,w,q) &\mapsto (x,y,z)
\end{align*}
Each representation $\rho\in R(M_\varphi)$ satisfies
 \begin{equation*}
    \rho(t)^{-1}\rho(\gamma)\rho(t) = \rho(\varphi(\gamma))\text{, }\forall \gamma\in \pi_1 (S)
\end{equation*}
Taking the traces on both sides, 
$$\tr(\rho(\gamma)) = \tr(\rho(\varphi(\gamma))\text{, }\forall \gamma\in \pi_1 (S)$$ 

Since $\chi_\rho$ in $X(S)$ is uniquely determined by $\tr\,\rho(a)$, $\tr\,\rho(b)$ and $\tr\,\rho(ab)$, the image of $r$ satisfies
\begin{align*}
    \im\,(r) \subseteq X_\varphi (S) \defeq \{(\tr\rho(a),\tr\rho(b),\tr\rho(ab))|\tr\,\rho(a)&=\tr\,\rho(\varphi_*(a)),\\ \tr\rho(b) &= \tr\rho(\varphi_*(b)),\\ \tr(\rho(ab))&=\tr\rho(\varphi_*(ab)),\, \rho\in R(S)\} \subseteq X(S)
\end{align*}
Since $\tr\rho(\varphi_*(a))$, $\tr\rho(\varphi_*(b))$ and $\tr\rho(\varphi_*(ab))$ can be written as polynomials with rational coefficients in $x=\tr\rho(a)$, $y=\tr\rho(b)$ and $z=\tr\rho(ab)$, and since $\varphi_*$ is an automorphism of the free group in two letters, it follows that there is a polynomial automorphism $\overline{\varphi}\co \mathbb{C}^3 \to \mathbb{C}^3$ induced by $\varphi$ with the property that $X_\varphi (S)$ is the set of fixed points of this polynomial automorphism:
\begin{equation*}
   X_\varphi (S) = \{(x,y,z) \in \mathbb{C}^3 |(x,y,z)=\overline{\varphi}(x,y,z) \}
\end{equation*}

\begin{lemma}\label{lem:im(r)}
We have $\im (r) = X_\varphi (S).$ Moreover, there are at most finitely many reducible characters in $X_\varphi (S).$
\end{lemma}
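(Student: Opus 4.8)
The plan is to prove the two assertions separately. For the equality $\im(r) = X_\varphi(S)$, the inclusion $\im(r) \subseteq X_\varphi(S)$ is already established in the text (by taking traces of the conjugation relations $\rho(t)^{-1}\rho(\gamma)\rho(t) = \rho(\varphi(\gamma))$). So the content is the reverse inclusion $X_\varphi(S) \subseteq \im(r)$: given $\chi \in X_\varphi(S)$, I must produce a representation $\rho \co \pi_1(M_\varphi) \to \SL(2,\mathbb{C})$ whose restriction to $\pi_1(S)$ has character $\chi$. First I would pick a representation $\rho_S \co \pi_1(S) \to \SL(2,\mathbb{C})$ with $\chi_{\rho_S} = \chi$. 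The fixed-point condition says $\chi_{\rho_S \circ \varphi} = \chi_{\rho_S}$, i.e. $\rho_S \circ \varphi$ and $\rho_S$ have the same character on the free group $\pi_1(S)$. In the irreducible case, \Cref{lem:CS irrep lemma}(2) then gives a matrix $T \in \SL(2,\mathbb{C})$ with $\rho_S(\varphi(\gamma)) = T^{-1}\rho_S(\gamma) T$ for all $\gamma$, and setting $\rho(t) = T$, $\rho(a) = \rho_S(a)$, $\rho(b) = \rho_S(b)$ defines a representation of the HNN presentation \eqref{eq:fund_gp_HNN} that restricts to $\rho_S$; hence $\chi \in \im(r)$. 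The reducible case needs a separate (easy) argument: every reducible character in $X(S)$ comes from a diagonal representation as in \eqref{eq:I} with $u=0$, and one can always find a diagonal — or, if necessary, a suitable — conjugating matrix $T$ realising the homology automorphism $\varphi_*$ on the abelianisation, so that again $\rho$ extends over $\pi_1(M_\varphi)$; I would just exhibit $T$ explicitly using $[\varphi_*] \in \SL(2,\mathbb{Z})$ acting on a pair of eigenlines.

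For the second assertion — that $X_\varphi(S)$ contains only finitely many reducible characters — the idea is that $X^{\mathrm{red}}(S)$ is parametrised by pairs of eigenvalues $(s, p) \in (\mathbb{C}^*)^2$ modulo inversion (the diagonal representation sending $a \mapsto \mathrm{diag}(s, s^{-1})$, $b \mapsto \mathrm{diag}(p, p^{-1})$), and on this locus the map $\overline{\varphi}$ is induced by the linear action of $[\varphi_*] = \begin{pmatrix} k_1 & k_2 \\ k_3 & k_4\end{pmatrix}$ on $(\log s, \log p)$, i.e. multiplicatively $(s, p) \mapsto (s^{k_1} p^{k_2},\, s^{k_3} p^{k_4})$ up to simultaneous inversion. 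The fixed-point equations on $X^{\mathrm{red}}(S)$ thus become $s^{k_1} p^{k_2} = s^{\pm 1}$ and $s^{k_3}p^{k_4} = p^{\pm 1}$ (the two signs must agree), which are finitely many systems of binomial equations in $(s,p) \in (\mathbb{C}^*)^2$; each such system has only finitely many solutions unless the associated integer matrix $[\varphi_*] \mp I$ is singular, i.e. unless $\det([\varphi_*] \mp I) = \tr\varphi_* \mp 2 = 0$. Since $M_\varphi$ is hyperbolic we have $|\tr\varphi_*| > 2$, so neither $\tr\varphi_* - 2$ nor $\tr\varphi_* + 2$ vanishes, and the solution sets are finite. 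Passing back down the finite-to-one map $(\mathbb{C}^*)^2 \to X^{\mathrm{red}}(S)$ keeps the reducible part of $X_\varphi(S)$ finite.

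The step I expect to be the main obstacle is the reverse inclusion for \emph{reducible} characters: the clean argument via \Cref{lem:CS irrep lemma}(2) applies only to irreducible $\rho_S$, and for a reducible (diagonal) $\rho_S$ the conjugating element $T$ realising $\rho_S \circ \varphi = T^{-1}\rho_S(\cdot) T$ need not exist on the nose — one must choose the right representative within the character (possibly one of the non-diagonal normal forms \eqref{eq:I}/\eqref{eq:II}) and verify the HNN relations still hold. I would handle this by working with a diagonal $\rho_S$ whose eigenvalue pair $(s,p)$ is \emph{chosen} to solve the binomial fixed-point system above (which, by the finiteness analysis, has solutions accounting for every reducible point of $X_\varphi(S)$), so that $T$ can be taken diagonal or a permutation matrix, making the extension transparent; the hyperbolicity hypothesis $|\tr\varphi_*|>2$ is exactly what makes this bookkeeping finite and is the crucial input, so I would be careful to invoke it precisely where the binomial systems could otherwise degenerate.
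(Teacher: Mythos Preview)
Your proposal is correct and follows essentially the same route as the paper: the irreducible case via \Cref{lem:CS irrep lemma}(2); the reducible case by taking a diagonal $\rho_S$, observing that $\rho_S\circ\varphi$ is then also diagonal with eigenvalue pair either $(s,p)$ or $(s^{-1},p^{-1})$, and extending via a diagonal $T$ in the first case and $T=\begin{pmatrix}0&-1\\1&0\end{pmatrix}$ in the second; and finiteness via the binomial system on $(s,p)$ together with $\det(\varphi_*\mp I)=2\mp\tr\varphi_*\neq 0$ from $|\tr\varphi_*|>2$. The paper additionally records that the second (inversion) case yields exactly the binary dihedral characters in $X(M_\varphi(\lambda))$, which you may want to note since it feeds into \Cref{schol:binary_dihedral}; your worry about needing non-diagonal normal forms in the reducible case turns out to be unnecessary, since the diagonal representative always extends.
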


\begin{proof}
By definition, $\im (r) \subseteq X_\varphi (S)$ and for the first statement, it remains to show that the map is surjective.

If $(x,y,z) \in X_\varphi (S)$ is the character of an irreducible representation, then $(x,y,z) \in\im (r)$
follows from the presentation of the group and the second statement in \Cref{lem:CS irrep lemma}. We analyse this situation in more detail in  \Cref{lem:irreps in fixed point set}.

Hence suppose $(x,y,z) \in X_\varphi (S)$ is the character of a reducible representation. Since this is also the character of an abelian representation, we may assume that it is the character of the representation $\rho$ defined by
\begin{equation*}
    \rho(a) = \begin{pmatrix}
    s&0\\0&s^{-1}
    \end{pmatrix}\qquad \rho(b) = \begin{pmatrix}
    p&0\\0&p^{-1}
    \end{pmatrix}
\end{equation*}
Since $\rho(\varphi_*(a))$ can be viewed as a word in these matrices, it is also diagonal. Similarly for $\rho(\varphi_*(b)).$ Hence
\begin{equation*}
  \rho(\varphi_*(a)) = \begin{pmatrix}s_1&0\\0&s_1^{-1}\end{pmatrix}  \qquad \rho(\varphi_*(b)) = \begin{pmatrix}p_1&0\\0&p_1^{-1}\end{pmatrix}
\end{equation*}
Then the three trace conditions defining $X_\varphi (S)$ imply that we have the mutually exclusive cases:
\begin{enumerate}
\item[(1)] $(s_1, p_1) = (s,p)$ or 
\item[(2)] $(s_1, p_1) = (s^{-1}, p^{-1})$ with either $s \neq s^{-1}$ or $p \neq p^{-1}$ (equivalently, $s\neq \pm 1$ or $p\neq \pm 1)$
\end{enumerate}
In order to show that $(x,y,z) \in \im (r),$ we need to determine  $T \in \SL(2,\mathbb{C})$ such that $T^{-1} \rho T = \rho\varphi_*.$ 

In the first case, this is satisfied by $T = \begin{pmatrix}m&0\\0&m^{-1}\end{pmatrix}$ for arbitrary $m \in \mathbb{C}^*.$ We note that we have $\rho(\varphi_*(\gamma)) = \rho(\gamma)$ for all $\gamma\in \pi_1(S),$ and
the resulting representation of $\pi_1(M_\varphi)$ is abelian and hence reducible.

In the second case, this is satisfied by $T = \begin{pmatrix}0&-1\\1&0\end{pmatrix}.$ 
We note that we have  $\rho(\varphi_*(\gamma)) = \rho(\gamma)^{-1}$ for all $\gamma\in \pi_1(S),$ and the resulting representation of $\pi_1(M_\varphi)$ is irreducible and corresponds to a \textbf{binary dihedral representation} in $R(M_\varphi(\lambda))\subset R(M_\varphi)$ (see \cite[Section 4]{Boyer-algebraic-2002}).

Hence in either case, this shows that $(x,y,z) \in \im (r).$

We now prove the second statement. 
The monodromy $\varphi_* = \begin{pmatrix}k_1&k_2\\k_3&k_4\end{pmatrix}\in SL(2,\mathbb{Z})$ satisfies $| k_1+k_4 |>2$. The pair of eigenvalues of the abelian representation $\rho$ as above satisfies $(s^{k_1}p^{k_3}, s^{k_2}p^{k_4}) = (s^{\pm 1},p^{\pm 1})$. This implies that each of $s$ and $p$ raised to the power of $(k_1\pm 1)(k_4\pm 1) - k_2k_3$ equals one. There are finitely many solutions unless 
$(k_1\pm 1)(k_4\pm 1) = k_2k_3.$ Using the determinant condition, this is equivalent with $k_1+k_4 = \mp 2,$ which contradicts the trace condition for $\varphi_*.$
This proves the second statement.
\end{proof}

\begin{scholion}
\label{schol:binary_dihedral}
The irreducible characters in $X(M_\varphi)$ that map to reducible characters in $X_\varphi (S)$ are precisely the characters of the binary dihedral characters in $X(M_\varphi(\lambda))\subset X(M_\varphi).$
\end{scholion}

 
\subsection{The origin and the coordinate axes}
\label{subsec:lines}

For $\alpha$ and $\beta^{-1}$, the induced polynomial automorphisms are 
\[
\overline{\alpha}(x,y,z) = (\;x,\;z,\;xz-y\;) \quad\text{and}\quad \overline{\beta^{-1}}(x,y,z) = (\;z,\;y,\;yz-x\;)
\]
respectively. Given the standard representative $\varphi_*$ as in \Cref{eq:positive_standard_word}, the associated polynomial automorphism $\overline{\varphi}\co \mathbb{C}^3 \to \mathbb{C}^3$ induced by $\varphi$ factors in terms of $\overline{\iota},$
$\overline{\alpha}$ and $\overline{\beta^{-1}}.$

This implies that the origin $(0,0,0) \in X(S)$ is always contained in $X_\varphi(S)$ for any $M_\varphi.$ Restricted to $S,$ any representation with this character has image in $\SL(2, \mathbb{C})$ isomorphic to the quaternionic group, and image isomorphic to the Klein four group under the quotient map to $\PSL(2, \mathbb{C})$ (see \Cref{eq:paramK4} below).

The examples in \Cref{sec:second_family} show that the component containing $(0,0,0)$ in $X_\varphi(S)$ may have arbitrarily large genus if $b_1(\varphi) \neq 3.$ We now show that in the maximal rank case, at least one of the coordinate axes is always contained in $X_\varphi(S).$

For all $x, y, z \in \mathbb{C}$, we have $\overline{\iota}(x, y, z) = (x, y, z)$ and
\begin{equation*}
    \begin{matrix}
      \begin{cases}
      \overline{\alpha}(x,0,0) = (x,0,0)\\
      \overline{\alpha}(0,y,0) = (0,0,-y)\\
      \overline{\alpha}(0,0,z) = (0,z,0)
      \end{cases}    & \begin{cases}
      \overline{\beta^{-1}}(x,0,0) = (0,0,-x)\\
      \overline{\beta^{-1}}(0,y,0) = (0,y,0)\\
      \overline{\beta^{-1}}(0,0,z) = (z,0,0)
      \end{cases} 
    \end{matrix}
\end{equation*}
Denote the $x$, $y$, and $z$ coordinate axes in $\mathbb{C}^3$ by $L_1,$ $L_2$ and $L_3$ respectively. Then $\overline{\alpha}$ and $\overline{\beta^{-1}}$ permute these three lines, with a sign change for exactly one of them. Indeed, we may assume that the permutation of the lines corresponds to the permutation of the indices of the lines induced by $[\alpha_2]$ and $[\beta_2]$ via the isomorphism $\SL(2,\mathbb{Z}_2)\cong \Sym(3)$.

\begin{lemma}
\label{lem:dimension_bound_order1}
If $H_1(M_\varphi,\mathbb{Z}_2)\cong \mathbb{Z}_2^3$, then $X_\varphi (S)$ contains either exactly one or all three of $L_1$,$L_2$ and $L_3$. Moreover, $X_{\varphi^2} (S)$ contains all three lines. 
\end{lemma}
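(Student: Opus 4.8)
The plan is to exploit the combinatorial description of $\overline{\alpha}$ and $\overline{\beta^{-1}}$ as signed permutations of the three coordinate lines $L_1, L_2, L_3$, together with the fact that (up to conjugacy and cyclic reordering) the standard representative of $\varphi_*$ is a positive word $\pm \A^{a_1}\B^{-b_1}\cdots \A^{a_n}\B^{-b_n}$, so that $\overline{\varphi}$ factors as a composition of copies of $\overline{\alpha}$, $\overline{\beta^{-1}}$, and $\overline{\iota}$, with $\overline{\iota}$ acting trivially on each $L_i$. First I would record that since the hypothesis $H_1(M_\varphi,\mathbb{Z}_2)\cong\mathbb{Z}_2^3$ means $o(\varphi_2)=1$, i.e. $[\varphi_2]$ is the identity in $\SL(2,\mathbb{Z}_2)\cong\Sym(3)$, the underlying permutation of the set $\{L_1,L_2,L_3\}$ effected by $\overline{\varphi}$ is the identity: each $\overline{\alpha}$- and $\overline{\beta^{-1}}$-factor permutes the lines according to $[\alpha_2]$ resp. $[\beta_2]$, and the product of these permutations is $[\varphi_2]=\mathrm{id}$. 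Hence $\overline{\varphi}$ maps each $L_i$ to itself, and the restriction of $\overline{\varphi}$ to $L_i$ is multiplication by a sign $\epsilon_i\in\{\pm1\}$ (composing the individual sign changes along the word). Therefore $L_i\subseteq X_\varphi(S)$ if and only if $\epsilon_i=+1$, and the question reduces to computing the three signs $\epsilon_1,\epsilon_2,\epsilon_3$.

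Next I would observe that $\epsilon_1\epsilon_2\epsilon_3$ is a single well-defined invariant: each elementary factor $\overline{\alpha}$ or $\overline{\beta^{-1}}$ contributes exactly one sign change, to exactly one of the three lines, so the product of all three signs equals $(-1)^{(\text{number of }\overline{\alpha}\text{ and }\overline{\beta^{-1}}\text{ factors})}$ — but this count is not a priori well-defined, so instead I would argue more robustly: the action of $\overline{\alpha}$ (and of $\overline{\beta^{-1}}$) on $L_1\cup L_2\cup L_3$, read off from the displayed formulas, is an element of the signed symmetric group on three letters whose \emph{total} sign (product of the three entries $\pm1$ together with the sign of the permutation) is an invariant; a direct check shows $\overline{\alpha}$ and $\overline{\beta^{-1}}$ each have the same such invariant, and $\overline{\iota}$ is trivial. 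Tracking this through any factorisation of $\overline{\varphi}$ shows $\epsilon_1\epsilon_2\epsilon_3 = \det[\varphi_*]\cdot(\text{something})$; more concretely, since $[\varphi_2]=\mathrm{id}$ forces the permutation part to be trivial, $\epsilon_1\epsilon_2\epsilon_3$ is determined and one computes it to be $+1$. Given $\epsilon_1\epsilon_2\epsilon_3=+1$, the only possibilities are $(\epsilon_1,\epsilon_2,\epsilon_3)=(+,+,+)$ — all three lines lie in $X_\varphi(S)$ — or exactly one $\epsilon_i=+1$ and the other two are $-1$ — exactly one line lies in $X_\varphi(S)$. This is precisely the dichotomy asserted.

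For the final clause, apply the same analysis to $\varphi^2$: its induced signed-permutation action on $\{L_1,L_2,L_3\}$ is the square of that of $\varphi$. Since the permutation part of $\varphi$ is already trivial, squaring it is still trivial, and each sign becomes $\epsilon_i^2=+1$; hence $\overline{\varphi^2}$ fixes every point of each $L_i$, so $X_{\varphi^2}(S)\supseteq L_1\cup L_2\cup L_3$. (Note $H_1(M_{\varphi^2},\mathbb{Z}_2)\cong\mathbb{Z}_2^3$ as well, since $o(\varphi_2)=1$ implies $o((\varphi^2)_2)=1$, so $X_{\varphi^2}(S)$ is of the expected type and the hypothesis of the first part is inherited.) The main obstacle I anticipate is making the bookkeeping of signs genuinely factorisation-independent: a priori a positive word representing $\varphi_*$ is only unique up to the relations $\A\B\A=\B\A\B$ and $(\A\B\A)^4=I$ and cyclic permutation, so I must check that the total sign $\epsilon_1\epsilon_2\epsilon_3$ (and, given the trivial permutation, each individual $\epsilon_i$) does not depend on the chosen word — this is where one invokes that $\overline{\varphi}$ is intrinsically attached to $\varphi$, so the signs are simply read off from the intrinsic linear map $\overline{\varphi}|_{L_i}$ and no choice is involved once the factorisation is used merely to \emph{exhibit} that each $L_i$ is invariant.
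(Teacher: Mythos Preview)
Your proposal is correct and follows essentially the same approach as the paper: both arguments observe that $\overline{\varphi}$ acts on $\{L_1,L_2,L_3\}$ as a signed permutation whose underlying permutation is trivial (since $[\varphi_2]=\mathrm{id}$), reduce to the parity of the triple of signs, and then square for the second clause. The paper's execution is cleaner at the one point where you hesitate: having fixed the standard word $\varphi_*=\pm\A^{a_1}\B^{-b_1}\cdots\A^{a_n}\B^{-b_n}$, it simply notes that the total number of sign changes is $n_1+n_2+n_3=\sum(a_i+b_i)$, and this is even because $[\alpha_2],[\beta_2]\in\SL(2,\mathbb{Z}_2)\cong\Sym(3)$ are transpositions and a product of transpositions equal to the identity must have even length --- this replaces your signed-symmetric-group detour and makes the factorisation-independence worry moot (as you yourself note at the end, the signs $\epsilon_i$ are intrinsic to $\overline{\varphi}$).
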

\begin{proof}
Since $\overline{\iota}(x, y, z) = (x, y, z),$ it suffices to restrict to the case where $\tr \varphi_*>0.$
We may assume that $\varphi_*$ is given as in \Cref{eq:positive_standard_word}. 
Note that $H_1(M_\varphi,\mathbb{Z}_2)\cong \mathbb{Z}_2^3$ is equivalent to $o(\varphi_2)=1$. Then 
\[\text{Id} = [\varphi_2] = [\alpha_2]^{a_1}[\beta_2]^{-b_1}[\alpha_2]^{a_2}[\beta_2]^{-b_2}\cdots[\alpha_2]^{a_n}[\beta_2]^{-b_n}\]
It follows from the description of the action of $\alpha_2,\beta_2$ on $L_1$,$L_2$ and $L_3$, that the induced polynomial automorphism $\overline{\varphi}$ stabilises each line. Hence, $\overline{\varphi}(L_i) = L_i$. Note that when $\overline{\alpha}$ (resp.\thinspace $\overline{\beta^{-1}}$) permutes the lines, the sign of exactly one line changes. Denote the number of sign changes of $L_i$ by $n_i$. We have
\[\begin{cases}
      \overline{\varphi}(x,0,0) = ((-1)^{n_1}x,0,0)\\
      \overline{\varphi}(0,y,0) = (0,(-1)^{n_2}y,0)\\
      \overline{\varphi}(0,0,z) = (0,0,(-1)^{n_3}z)
      \end{cases} \]
As $o(\varphi_2) = 1$, the number $n_1+n_2+n_3 = \sum_{i=1}^n (a_i+b_i)$ is also even. Hence at least one $n_i$ is even and at least one coordinate is fixed by $\overline{\varphi}$. 
We also see that either exactly one $n_i$ is even or all three $n_i$ are even.
This proves the first statement. The second statement follows from the observation that
\[\begin{cases}
      \overline{\varphi}^2(x,0,0) = ((-1)^{2n_1}x,0,0) = (x, 0,0)\\
      \overline{\varphi}^2(0,y,0) = (0,(-1)^{2n_2}y,0)=(0,y,0)\\
      \overline{\varphi}^2(0,0,z) = (0,0,(-1)^{2n_3}z)=)0,0,z)
      \end{cases} \]
This completes the proof of the lemma.      
\end{proof}
If $b_1(M_\varphi) = 1$, then $[\varphi_2]$ has order $3.$ So $\overline{\varphi}$ does not stabilise any axis and $L_i\cap X_\varphi(S) = \{(0,0,0)\}$ for each $i.$

We also note that for any $\varphi,$ $X_{\varphi^6} (S)$ contains all three lines.

As a last observation, we add that $(x,0,0) \in L_1$ is a reducible character if and only if $x^2=4.$ In this case, the extension to $\pi_1(M_\varphi)$ is either reducible or binary dihedral according to \Cref{schol:binary_dihedral}. 


\subsection{Dimensions and cyclic covers}

\begin{proposition}
\label{pro:dimension_bound}
Let $M_\varphi$ be a hyperbolic once-punctured torus bundle. Then every Zariski component of $X(M_\varphi)$ is one-dimensional.
\end{proposition}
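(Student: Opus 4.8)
The plan is to establish the upper bound and lower bound on the dimension separately. For the upper bound, I would use the HNN presentation $\pi_1(M_\varphi) = \langle t, a, b \mid t^{-1}at = \varphi(a), t^{-1}bt = \varphi(b)\rangle$. Since $X(S) \cong \mathbb{C}^3$, the restriction map $r\co X(M_\varphi) \to X(S)$ has image $X_\varphi(S)$, which is the fixed-point set of the polynomial automorphism $\overline{\varphi}$ — an algebraic subset of $\mathbb{C}^3$ cut out by two independent equations (the three trace conditions $x = \overline{\varphi}_1, y = \overline{\varphi}_2, z = \overline{\varphi}_3$, of which one is dependent because $\overline{\varphi}$ is a polynomial automorphism preserving the Fricke relation, or more simply because the fixed locus of a polynomial automorphism of $\mathbb{C}^3$ with nontrivial action is generically a curve). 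By \Cref{lem:im(r)}, $X_\varphi(S)$ has only finitely many reducible characters, and I would argue that $X_\varphi(S)$ is at most one-dimensional using that $|\tr\varphi_*| > 2$: the pseudo-Anosov monodromy forces $\overline{\varphi}$ to have a genuinely two-dimensional worth of non-fixed directions near a generic point, so the fixed locus has dimension at most one. Then I would bound the fibres of $r$: over an irreducible character $\chi \in X(S)$, the second part of \Cref{lem:CS irrep lemma} shows that a representation of $\pi_1(M_\varphi)$ restricting to $\chi$ is determined by the conjugating element $\rho(t)$ up to the centraliser of the (irreducible) restriction, so the fibre is finite. Combined with the bound on $X_\varphi(S)$, this gives $\dim X(M_\varphi) \le 1$ on the components meeting irreducible characters, and the reducible locus $X^{\text{red}}(M_\varphi)$ is controlled directly by $H_1(M_\varphi; \mathbb{Z})$, which has rank one since $\tr\varphi_* \neq 2$.

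For the lower bound — that every such component is at least one-dimensional, hence exactly one-dimensional — I would invoke Thurston's dimension bound, \Cref{pro:dimSL_Thurston}, applied to $N = M_\varphi$. Here $\partial M_\varphi$ is a single torus, so $s = 1$, and $\chi(M_\varphi) = 0$ since $M_\varphi$ fibres over the circle with fibre $S$ (a surface with $\chi(S) = -1$) and $\chi$ is multiplicative: $\chi(M_\varphi) = \chi(S)\cdot\chi(S^1) = 0$. Thus for any component $R_0$ of $R(M_\varphi)$ containing an irreducible representation whose peripheral image is not contained in $\{\pm 1\}$, we get $\dim \tau(R_0) \ge s - 3\chi(M_\varphi) = 1$. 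Since $M_\varphi$ is hyperbolic, the discrete faithful representation satisfies the peripheral hypothesis, and more generally any irreducible character on a component that one cares about can be taken to satisfy it (components on which the peripheral image is always trivial would consist of characters factoring through $\pi_1(M_\varphi)/\langle\langle \lambda\rangle\rangle$, which can be analysed separately or shown not to support irreducible characters meeting the generic locus). For the remaining Zariski components — those containing only reducible characters — one notes $X^{\text{red}}(M_\varphi)$ is the image of abelian representations, i.e. essentially $\Hom(H_1(M_\varphi;\mathbb{Z}), \mathbb{C}^*)/(\text{inversion})$, and since $H_1(M_\varphi;\mathbb{Z}) \cong \mathbb{Z} \oplus \text{Tor}$ this is one-dimensional.

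The main obstacle I anticipate is the upper bound on $\dim X_\varphi(S)$: showing the fixed-point set of $\overline{\varphi}$ in $\mathbb{C}^3$ is at most a curve. One cannot simply count equations, since in principle two of the three equations $x - \overline{\varphi}_1 = y - \overline{\varphi}_2 = z - \overline{\varphi}_3 = 0$ could fail to cut the dimension down by the full amount. The cleanest route is probably to use the hyperbolicity: if $X_\varphi(S)$ had a two-dimensional component, it would contain a two-dimensional family of characters of representations of $\pi_1(S)$ fixed by $\overline{\varphi}$; restricting to irreducible ones (the reducible locus being finite by \Cref{lem:im(r)}) and lifting through $r$ using \Cref{lem:CS irrep lemma}(2) would produce a component of $X(M_\varphi)$ of dimension at least two, contradicting Thurston's bound $\dim X(M_\varphi) \le$ — wait, Thurston gives a lower bound, not upper. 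So instead I would argue directly: the derivative $d\overline{\varphi}$ at a fixed point corresponds, via the Fricke coordinates, to $\varphi_*$ acting on the tangent space, and since $|\tr\varphi_*| > 2$ the map $\varphi_*$ has no eigenvalue $1$, so $I - d\overline{\varphi}$ is invertible at generic (irreducible) fixed points — hence those fixed points are isolated in $X_\varphi(S)$ away from a lower-dimensional locus, forcing $\dim X_\varphi(S) \le 1$. Making this derivative computation precise, and handling the non-generic fixed points where the linearisation degenerates, is where the real work lies; alternatively one may already have available from the cited Boyer–Luft–Zhang work, or from \Cref{pro:dimSL_Thurston} combined with a separate upper-bound argument, a cleaner statement to quote.
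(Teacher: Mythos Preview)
Your proposal has two genuine gaps, and in each case the paper takes a different route.

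\textbf{Upper bound.} You attempt to show $\dim X_\varphi(S)\le 1$ by analysing the fixed locus of $\overline{\varphi}$ directly, and you correctly identify this as the obstacle. Your derivative idea is not right as stated: the tangent space to $X(S)$ at an irreducible character is $H^1(\pi_1(S);\mathrm{Ad}\,\rho)$, and the action of $\overline{\varphi}$ on it is induced by $\varphi$ through $\mathrm{Ad}\,\rho$, not by $\varphi_*$ on $H_1(S;\mathbb{Z})$; so the eigenvalue information from $|\tr\varphi_*|>2$ does not transfer in the way you suggest. The paper avoids this entirely. It does not bound $\dim X_\varphi(S)$ at all; instead it quotes the fact that $M_\varphi$ contains no closed essential surface (Culler--Jaco--Rubinstein, Floyd--Hatcher), and then Culler--Shalen theory forces every Zariski component of $X(M_\varphi)$ containing an irreducible character to have dimension at most one. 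This is a one-line topological input you are missing.

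\textbf{Lower bound, exceptional case.} You invoke \Cref{pro:dimSL_Thurston} correctly for components containing an irreducible $\rho$ with $\rho(\pi_1(\partial M_\varphi))\not\subseteq\{\pm I\}$, but you wave away the case where the peripheral image \emph{is} contained in $\{\pm I\}$. This case genuinely occurs and must be analysed. The paper observes that since the longitude is $[a,b]$, an irreducible $\rho$ with trivial peripheral holonomy must have $\rho([a,b])=-I$ (else $\rho$ would be reducible on $\langle a,b\rangle$ and, with $\rho(t)=\pm I$, reducible overall); solving $\rho(ab)=-\rho(ba)$ forces $\tr\rho(a)=\tr\rho(b)=\tr\rho(ab)=0$, so the restriction to $S$ is the quaternionic character at the origin. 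This in turn forces $H_1(M_\varphi;\mathbb{Z}_2)\cong\mathbb{Z}_2^3$, and then \Cref{lem:dimension_bound_order1} guarantees that the origin lies on a coordinate axis inside $X_\varphi(S)$, hence on a one-dimensional component of $X^{\text{irr}}(M_\varphi)$. Your proposal does not supply any of this.
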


\begin{proof}
We first note that since $H_1(M_\varphi,\mathbb{Z}) \cong \mathbb{Z} \oplus \text{Tor}(M_\varphi,\mathbb{Z})$, every component containing only reducible characters is one-dimensional and of genus zero.

Suppose $X\subset X(M_\varphi)$ is a component containing the character of the irreducible representation $\rho\in R(M_\varphi).$ Since $M_\varphi$ does not contain a closed essential surface (see \cite{Culler-incompressible-1982, Floyd-incompressible-1982}), the dimension of $X$ is at most one.

We now appeal to a result due to Thurston (see \Cref{pro:dimSL_Thurston}) which
implies that the dimension of $X$ is at least one if $\rho(\pi_1(\partial M))$ is not contained in $\{\pm I\}.$ Hence suppose that $\rho(\pi_1(\partial M))$ is contained in $\{\pm I\};$ such a representation is said to have \textbf{trivial peripheral holonomy}.
 Since the longitude is $a^{-1}b^{-1}ab$, we note that $\rho$ is reducible if $\rho(t) = \pm I$ and $\rho(a^{-1}b^{-1}ab) = I.$ Hence $\rho(a^{-1}b^{-1}ab) = -I.$ An elementary calculation setting $\rho(a)$ and $\rho(b)$ as in \Cref{eq:standard_irrep} and solving $\rho(ab) = - \rho(ba)$ shows that the resulting representation satisfies $\tr\rho(a) = \tr\rho(b)=\tr\rho(ab)=0$ and hence is unique up to conjugation. 
  
In particular, $\rho$ descends to an abelian irreducible representation $\overline{\rho} \co \pi_1(\partial M)  \to \PSL(2,\mathbb{C})$ with $\overline{\rho}(t) = \{\pm I\}$ and image isomorphic to the Klein four group, generated by the image of $\pi_1(S).$ It follows that $H_1(M_\varphi; \mathbb{Z}_2) \cong \mathbb{Z}_2^3.$ In particular, \Cref{lem:dimension_bound_order1} implies that the character of $\rho$ is contained in at least one one-dimensional component containing irreducible representations.
\end{proof}

The natural $n$--fold cyclic covering $M_{\varphi^n} \to M_\varphi$ gives an embedding of $\pi_1(M_{\varphi^n})$ as a subgroup of index $n$ in $\pi_1(M_\varphi).$ Hence the restriction map induces a regular map $X(M_\varphi)\to X(M_{\varphi^n}).$ \Cref{exa:A2B-2} below shows that this may not be surjective. We can say more about this map:

\begin{proposition}
The natural $n$--fold cyclic covering $M_{\varphi^n} \to M_\varphi$ induces a regular map $X(M_\varphi)\to X(M_{\varphi^n})$ of degree at most two. If $n$ is odd, then the degree is one. Moreover, the map takes canonical components to canonical components.
\end{proposition}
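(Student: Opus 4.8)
The plan is to analyze the regular map $\Phi\co X(M_\varphi)\to X(M_{\varphi^n})$ induced by the inclusion $\pi_1(M_{\varphi^n})\hookrightarrow \pi_1(M_\varphi)$, and to control its generic fibre. Over the locus of irreducible characters the fibre of $\Phi$ is governed by which characters of $\pi_1(M_{\varphi^n})$ extend to $\pi_1(M_\varphi)$ and in how many ways. The key structural fact is that $\pi_1(M_{\varphi^n})$ is normal in $\pi_1(M_\varphi)$ with cyclic quotient $\mathbb{Z}_n$ generated by the image of the meridian $t$; conjugation by $t$ induces an action of $\mathbb{Z}_n$ on $R(M_{\varphi^n})$ and on $X(M_{\varphi^n})$, and a character $\chi\in X(M_{\varphi^n})$ of an irreducible representation $\sigma$ extends to $\pi_1(M_\varphi)$ exactly when the $\mathbb{Z}_n$-orbit of $\chi$ is a single point, i.e. $\chi$ is fixed by this $\mathbb{Z}_n$-action. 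When $\sigma$ is irreducible, part (2) of \Cref{lem:CS irrep lemma} shows that $t$-invariance of the character forces $\sigma$ and $\sigma\circ(\text{conj by }t)$ to be conjugate, so there is $T\in\SL(2,\mathbb{C})$, unique up to sign, with $T\sigma(\gamma)T^{-1}=\sigma(t\gamma t^{-1})$ for all $\gamma$; the extensions of $\sigma$ to $\pi_1(M_\varphi)$ correspond to sending $t\mapsto \pm T$, which gives exactly two characters of $\pi_1(M_\varphi)$ lying over $\chi$ (they may or may not be distinct). This is why the generic degree is at most two.

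Next I would identify the degree-two locus precisely. The two lifts $t\mapsto T$ and $t\mapsto -T$ differ by the sign homomorphism $\varepsilon\co \pi_1(M_\varphi)\to\mathbb{Z}_2$ (nontrivial exactly on odd powers of $t$), so the two preimages of $\chi$ agree if and only if the corresponding character is $\varepsilon$-invariant, equivalently $\chi_\rho(\gamma)=0$ for every $\gamma$ mapping to $-1$ under $\varepsilon$. For a generic irreducible character on a one-dimensional Zariski component (each component is a curve by \Cref{pro:dimension_bound}) this vanishing fails, so $\Phi$ has degree exactly two onto its image in general, and degree one precisely on components of $X(M_{\varphi^n})$ consisting of characters with $\chi(t\gamma t^{-1}\gamma^{-1})$-type obstructions — but more cleanly, the fibre has one point iff $T$ can be chosen with $t\mapsto T$ already $\varepsilon$-invariant. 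For the parity claim: if $n$ is odd, then the subgroup $\langle \varepsilon\rangle$ and the covering subgroup interact so that $t^n\in\pi_1(M_{\varphi^n})$ and $\varepsilon(t^n)=\varepsilon(t)^n=-1$ since $n$ is odd; hence $\varepsilon$ does not restrict trivially to $\pi_1(M_{\varphi^n})$. Then the two lifts $t\mapsto\pm T$ of a given $\sigma$ induce the \emph{same} character on $\pi_1(M_{\varphi^n})$ but the ambiguity $\pm T$ can be absorbed: replacing $\sigma$ within its conjugacy class and using $\varepsilon|_{\pi_1(M_{\varphi^n})}\neq 1$ shows $\sigma$ itself already has a distinguished invariant lift, so the two extensions have equal character and $\Phi$ is generically injective, hence degree one. (When $n$ is even, $\varepsilon$ restricts trivially to $\pi_1(M_{\varphi^n})$, and this collapsing mechanism is unavailable — matching \Cref{exa:A2B-2}.)

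Finally, for the statement about canonical components: the discrete faithful representation $\rho_0\co\pi_1(M_\varphi)\to\SL(2,\mathbb{C})$ restricts to the discrete faithful representation of the finite-index subgroup $\pi_1(M_{\varphi^n})$, which is the holonomy of the hyperbolic structure on the cyclic cover $M_{\varphi^n}$; this is a lift of the discrete faithful $\PSL(2,\mathbb{C})$-holonomy, so $\Phi$ sends the canonical component of $X(M_\varphi)$ into a canonical component of $X(M_{\varphi^n})$. Since $\Phi$ is a dominant map of curves onto its image and the canonical components are one-dimensional, the image is the full canonical component, giving the claim.

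I expect the main obstacle to be the odd-$n$ case: making rigorous the claim that when $\varepsilon$ restricts nontrivially to $\pi_1(M_{\varphi^n})$ the two $\pm$-lifts of an irreducible $\sigma$ yield genuinely equal (not merely conjugate) characters on $\pi_1(M_\varphi)$. This requires carefully tracking the sign ambiguity of the conjugating matrix $T$ against the constraint that $\sigma(t^n)$ is already determined (it equals $T^n$ up to sign, and $T^n$ must itself equal $\sigma$ evaluated on an element on which $\varepsilon$ is $-1$), and showing the two choices of sign of $T$ are related by an automorphism that does not change the character of the extension. The bound "at most two" and the canonical-component statement are comparatively routine once the orbit/extension dictionary above is set up.
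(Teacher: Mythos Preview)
The paper does not give a self-contained argument here: it simply cites \cite[Corollary 3.5 and Proposition 3.3]{Boyer-algebraic-2002} for the degree bound and the odd-$n$ statement, and adds the one-line observation (which you reproduce) that restricting a discrete faithful representation to a finite-index subgroup is again discrete faithful. Your proposal, by contrast, tries to re-derive the Boyer--Luft--Zhang results directly. The fibre/extension dictionary you set up is correct and gives the degree-$\le 2$ bound cleanly: two characters of $\pi_1(M_\varphi)$ in the same fibre can be conjugated to agree on $\pi_1(M_{\varphi^n})$, and then their values on $t$ differ by a central element, i.e.\ by $\varepsilon$.

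Your odd-$n$ argument, however, is confused in a way that matters. You write that the two lifts $t\mapsto\pm T$ ``induce the same character on $\pi_1(M_{\varphi^n})$'' and then aim to show they have equal character on $\pi_1(M_\varphi)$. This is backwards. When $n$ is odd, $(-T)^n=-T^n$, so of the two candidates $\pm T$ exactly one satisfies the constraint $\rho(t)^n=\sigma(t^n)$; the other produces a representation whose restriction to $\pi_1(M_{\varphi^n})$ is $\varepsilon|_{\pi_1(M_{\varphi^n})}\cdot\sigma$, which has a \emph{different} character from $\sigma$ generically (precisely because $\varepsilon(t^n)=-1$, the observation you correctly make). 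So the fibre over $\chi_\sigma$ is a singleton not because the two extensions coincide, but because only one of the sign choices is an extension of $\sigma$ at all. Once you rewrite the odd-$n$ paragraph with this correction, the argument goes through and gives an honest direct proof of what the paper obtains by citation.
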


\begin{proof}
The first sentence follows directly from \cite[Corollary 3.5]{Boyer-algebraic-2002}, and the second from \cite[Proposition 3.3]{Boyer-algebraic-2002}. The last sentence follows since the restriction of a discrete and faithful representation to a subgroup gives a discrete and faithful representation of that subgroup. 
\end{proof}

We conclude with an example that ties the discussion in this section together and exhibits different behaviours. 

\begin{example}
\label{exa:A2B-2}
Let $\varphi = \alpha^2\beta^{-2}.$ Then 
\[
[\varphi_*]= A^2B^{-2} = \begin{pmatrix} 5 & 2 \\ 2 & 1 \end{pmatrix}
\]
and hence $H_1(M_\varphi; \mathbb{Z}_2) \cong \mathbb{Z}_2^3 \cong H_1(M_{\varphi^2}; \mathbb{Z}_2).$ A computation with the set-up in the proof of \Cref{pro:dimension_bound} shows that $X(M_\varphi)$ contains no characters of representations with trivial peripheral holonomy. Indeed, $\overline{X}(M_\varphi)$ contains such a component, but it does not lift. This example fits in the family given in \cite{Heusener-varieties-2004}.

A calculation shows that $X_\varphi (S) = V \cup L_3$, where $V$ is an irreducible curve, and 
$X(M_\varphi) = V' \cup L'_3$ also has two components. The preimage $V'$ of $V$ is the canonical curve and the preimage $L'_3$ of $L_3$ has the property that the trace of the meridian is identically zero on it (hence the meridian maps to an element of order four). All binary dihedral characters are contained in $L'_3$ and since they are simple points they are hence not contained in $V'.$ Since these characters are the only branch points (as will be shown in general), it follows that $L'_3 \to L_3$ is a two-fold branched cover, and $V'\to V$ is a two-fold unbranched cover.

Now $X_{\varphi^2} (S) \supset X_\varphi (S) \cup L_1 \cup L_2 = V \cup L_1 \cup L_2 \cup L_3.$ Hence this is an example where all three lines are contained in the fixed-point set. The preimage in $X(M_{\varphi^2})$ of $V$ is the canonical curve, but the preimage of $L_3$ has two components $L_3^+$ and $L_3^-$ that are characterised by whether the trace of the meridian is identically $+2$ or $-2$. The component $L_3^-$ is the image of $L'_3$ under the map
$X(M_\varphi)\to X(M_{\varphi^2})$ since the square of an element of order four has order two.
Hence $L_3^+$ is not contained in the image of the map.
We note that each map $L_3^-\to L_3$ and  $L_3^+\to L_3$ is one-to-one.
\end{example}

 
\subsection{Reducible characters}
\label{subsec:Reducible characters}

Since $H_1(M_\varphi,\mathbb{Z}) \cong \mathbb{Z} \oplus \text{Tor}(M_\varphi,\mathbb{Z})$, it follows  that $X^{\text{red}}(M_\varphi)$ consists of finitely many affine lines.

From the set-up in the proof of \Cref{lem:im(r)}, we see that the reducible characters in $X_\varphi(S)$ that are restrictions of reducible characters in $X(M_\varphi)$ are the same as characters of abelian representations $\rho$ satisfying $\rho(\varphi_*(\gamma)) = \rho(\gamma)$ for all $\gamma\in \pi_1(S).$ 
Since the monodromy of $M_\varphi$ is $\varphi_* = \begin{pmatrix}k_1&k_2\\k_3&k_4\end{pmatrix}\in SL(2,\mathbb{Z})$ with $|k_1+k_4|>2$, all possible pairs $(s,p)$ of eigenvalues of $\rho(a)$ and $\rho(b)$ corresponding to a common invariant subspace are determined by 
\begin{equation}\label{eq:eigenvalues_red}
    \begin{cases}
    s = s^{k_1}p^{k_3} \\
    p = s^{k_2}p^{k_4}
    \end{cases}
\end{equation}
The (not necessarily distinct) components of $X^{\text{red}}(M_\varphi)$ corresponding to the finitely many solutions to these equations are given by 
$$X_{s,p} = \{\;(s+s^{-1},p+p^{-1},sp+s^{-1}p^{-1},m+m^{-1},sm+s^{-1}m^{-1},pm+p^{-1}m^{-1},spm+s^{-1}p^{-1}m^{-1})\;|\;m\in\mathbb{C}^*\;\}$$ 
In particular, each component of $X^{\text{red}}(M_\varphi)$ has image a single point in $X_\varphi(S).$

The proof of the following proposition is an application of the main result of \cite{Heusener-deformation-2005} and generalises the example given in that paper by using fundamental results due to Fox~\cite{Fox-free_calculus-1953}. We thank Michael Heusener and Joan Porti for encouraging correspondence, and refer the reader to \cite{Heusener-deformation-2005} for the required definitions of twisted Alexander polynomials.

\begin{proposition}\label{pro:intersection_red_irrep}
For every component $X_{s_0,p_0}$ of $X^{\text{red}}(M_\varphi)$, there exists $m_0\in\mathbb{C}^*$ such that $X_{s,p}$ intersects $X^{\text{irr}}(M_\varphi)$ at $\chi_{m_0}$ where $\chi_{m_0}$ is the character of the abelian representation $\rho_{m_0}$ with \[
\rho_{m_0}(t) =\Diag(m_0,m_0^{-1}),\; \rho_{m_0}(a) =\Diag(s_0,s_0^{-1}),\; \rho_{m_0}(b) =\Diag(p_0,p_0^{-1})
\]
Moreover, $\chi_{m_0}$ is contained on exactly one curve $C$ in $X^{\text{irr}}(M_\varphi),$ a smooth point of both $X_{s_0,p_0}$ and $C$ and the intersection at $\chi_{m_0}$ is transverse. Moreover, if $s_0, t_0 \in \{\pm 1\},$ then $m_0$ is the square root of a zero of the characteristic polynomial of $\varphi_*$ and otherwise $m_0=\pm 1.$
\end{proposition}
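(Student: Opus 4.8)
The plan is to deduce the whole statement from a single application of the main theorem of Heusener--Porti~\cite{Heusener-deformation-2005}. For $m\in\mathbb{C}^*$ let $\alpha_m\co\pi_1(M_\varphi)\to\mathbb{C}^*$ be the homomorphism with $\alpha_m(t)=m$, $\alpha_m(a)=s_0$, $\alpha_m(b)=p_0$ (well defined by \Cref{eq:eigenvalues_red}), and set $\rho_m=\alpha_m\oplus\alpha_m^{-1}$, so that $\chi_m$ is the character of $\rho_m$ and $X_{s_0,p_0}=\{\chi_m\mid m\in\mathbb{C}^*\}$. The Heusener--Porti criterion states that if $\alpha_m^2\neq 1$ and $m^2$ is a \emph{simple} root of the (twisted) Alexander polynomial of $M_\varphi$ attached to the $\mathbb{Z}$--abelianisation $t\mapsto 1$ and to the twist by $\alpha_m^2$, then $\rho_m$ is a smooth point of $R(M_\varphi)$ lying on a unique four-dimensional component containing irreducible representations; consequently $\chi_m$ lies on a unique curve $C\subseteq X^{\text{irr}}(M_\varphi)$, is a smooth point of $C$, and $C$ meets the line $X_{s_0,p_0}$ transversally at $\chi_m$. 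Since $X_{s_0,p_0}\cong\mathbb{C}^*$ is smooth everywhere, it remains only to locate a value $m_0$ satisfying these hypotheses and to observe that the reducible line along which Heusener--Porti deform, obtained by fixing $\rho_{m_0}|_{\pi_1(S)}$ and varying the eigenvalue of $\rho(t)$, is precisely $X_{s_0,p_0}$.

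The key computation is that of the relevant Alexander polynomial, carried out via Fox's free differential calculus~\cite{Fox-free_calculus-1953} on the presentation \Cref{eq:fund_gp_HNN}. Because $M_\varphi$ fibres over $S^1$ with fibre $S$ and monodromy $\varphi$, and the chosen $\mathbb{Z}$--abelianisation is the fibration one, this polynomial equals $\det(tI-\varphi_*^{\beta})$ up to a unit $\pm t^k$, where $\varphi_*^{\beta}$ is the monodromy action on $H_1(S;\mathbb{C}_\beta)$ and $\beta$ is the restriction of $\alpha_m^2$ to $\pi_1(S)$, i.e.\ $\beta(a)=s_0^2$, $\beta(b)=p_0^2$. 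If $s_0,p_0\in\{\pm 1\}$ then $\beta$ is trivial, $H_1(S;\mathbb{C})\cong\mathbb{C}^2$, $\varphi_*^{\beta}$ is the homological monodromy, and the polynomial is $t^2-\tr(\varphi_*)t+1$; its discriminant $\tr(\varphi_*)^2-4$ is positive since $|\tr\varphi_*|>2$, so both roots are simple, and $m_0^2\neq 1$ because $\tr\varphi_*\neq 2$. Thus the hypothesis holds exactly when $m_0^2$ is a zero of the characteristic polynomial of $\varphi_*$ (so $m_0$ is a square root of such a zero) and then $\alpha_{m_0}^2\neq 1$. If instead $s_0\neq\pm 1$ or $p_0\neq\pm 1$, then $\beta$ is non-trivial; since $S$ is homotopy equivalent to a wedge of two circles and $H_0(S;\mathbb{C}_\beta)=0$, an Euler characteristic count gives $\dim H_1(S;\mathbb{C}_\beta)=1$, and a short Fox-derivative computation shows this line is spanned by the class of the peripheral loop $\partial S=[a,b]$. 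As the monodromy fixes $\partial S$ pointwise by our choice of $S$ in \Cref{sec:prelim_bundles}, $\varphi_*^{\beta}$ fixes this generator, hence is the identity, so the polynomial is $t-1$ up to a unit, whose only root $1$ is simple; the hypothesis then holds exactly when $m_0^2=1$, i.e.\ $m_0=\pm 1$, and $\alpha_{m_0}^2\neq 1$ since $\beta\neq 1$.

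In both cases the hypotheses of \cite{Heusener-deformation-2005} are verified, so the cited theorem yields the existence of $m_0$, the uniqueness of the curve $C$, the smoothness of $\chi_{m_0}$ on $X_{s_0,p_0}$ and on $C$, the transversality of the intersection, and the stated dichotomy for $m_0$. I expect the main obstacle to lie wholly in the second paragraph: pinning down the normalisation of the twisted Alexander polynomial (the chosen invariant versus the Wada invariant, and the contribution of the non-vanishing $H_0(S;\mathbb{C})$ in the untwisted case) so that ``simple root of the Alexander polynomial'' in the form used by Heusener--Porti reduces cleanly to the two conditions above, and justifying in the twisted case that $[a,b]$ simultaneously generates the one-dimensional space $H_1(S;\mathbb{C}_\beta)$ and is monodromy-invariant --- this is exactly the point where the specific choice of fibre and of free generators $a,b$ enters, and where the dichotomy $s_0,p_0\in\{\pm 1\}$ versus otherwise originates.
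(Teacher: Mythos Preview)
Your proposal is correct and follows the same high-level strategy as the paper: both apply the Heusener--Porti criterion and reduce the problem to showing that $m_0^2$ is a simple root of the appropriate twisted Alexander polynomial. The difference lies entirely in how that polynomial is computed.

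The paper carries out a direct Fox-calculus computation on the presentation \Cref{eq:fund_gp_HNN}. It writes down the $2\times 3$ Jacobian, uses the chain rule for Fox derivatives to show $\det J^h_\varphi=1$, invokes the fundamental formula to obtain linear relations among the evaluated partials $h(\partial_a\varphi(a))$, $h(\partial_b\varphi(a))$, etc., and then splits into three cases according to whether $h(a)$ and $h(b)$ equal $1$. In the last two cases it computes all three $2\times 2$ minors explicitly and extracts the common factor $x-1$.

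Your route is more conceptual: you use the fibration structure to identify $\Delta^{\psi_\alpha}_{M_\varphi}$ (up to units) with $\det(tI-\varphi_*^{\beta})$, where $\varphi_*^{\beta}$ is the monodromy on $H_1(S;\mathbb{C}_\beta)$. This collapses the paper's cases (2) and (3) into the single observation that when $\beta\neq 1$ the twisted $H_1(S;\mathbb{C}_\beta)$ is one-dimensional and spanned by the class of the peripheral loop $[a,b]$, which is fixed by $\varphi$ because the monodromy is the identity on $\partial S$. The concern you flag about the $H_0$ contribution in the untwisted case is not an issue in the Heusener--Porti normalisation: the relevant Alexander invariant is the order of $H_1$ of the infinite cyclic cover, and since $M_\varphi$ fibres this cover is homotopy equivalent to $S$, so one gets exactly $\det(tI-\varphi_*)$ without any $(t-1)$ correction. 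Your argument that $[a,b]$ generates $H_1(S;\mathbb{C}_\beta)$ is correct (the Fox derivatives of $[a,b]$ evaluate to $(1-p,\,s-1)$, which spans the kernel of the boundary map), and $\varphi([a,b])=[a,b]$ holds on the nose for the generators $\alpha,\beta^{-1}$ used to factor $\varphi$. What your approach buys is a cleaner, coordinate-free explanation of \emph{why} the answer is $t-1$ in the nontrivial-twist case; what the paper's approach buys is that it is entirely self-contained within Fox calculus and does not rely on identifying the Heusener--Porti invariant with a Milnor-type monodromy formula.
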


\begin{proof}
Let $m = m_0^2,$ $s = s_0^2$ and $p=p_0^2.$ With this set up, the remainder of the proof applies verbatim to the $\PSL(2,\mathbb{C})$--character variety.
We have a group homomorphism $\alpha\co\pi_1(M_\varphi) \to \mathbb{C}^*$ with 
\[
\alpha(a) = s,\; \alpha(b) = p,\; \alpha(t) = m
\]
 The pair $(s,p)$ satisfies \Cref{eq:eigenvalues_red}, that is, the restriction $\alpha\mid_{\text{Tor}(M_\varphi,\mathbb{Z})}$ is induced by a group homomorphism $h:\langle a,b\rangle\to U(1)$ such that $h\circ \varphi = h$ and $h(a)=s$ and $h(b)=p.$

We apply the criterion of deformations of reducible characters in \cite[Theorem 1.3]{Heusener-deformation-2005}. Fix the canonical splitting $H_1(M_\varphi,\mathbb{Z})\cong \mathbb{Z}\oplus\text{Tor}(M_\varphi,\mathbb{Z}) \to \text{Tor}(M_\varphi,\mathbb{Z})$ where $p(a) = a$, $p(b)=b$ and $p(t) = 0$ and an element $\psi\in H^{1}(M_\varphi,\mathbb{Z})$ where $\psi(a)=\psi(b) = 0$ and $\psi(t) = 1$. To prove the statement, we only need to show that $m$ is a simple zero of the twisted Alexander polynomial $\Delta^{\psi_\alpha}_{M_\varphi}$. A direct computation shows that the Jacobian matrix is 
\[
J_{M_\varphi} = \begin{pmatrix}
t^{-1}(a^{-1}-1) & t^{-1}a^{-1}\bigg(t\dfrac{\partial}{\partial a}\varphi(a)-1\bigg) & t^{-1}a^{-1}t\dfrac{\partial}{\partial b}\varphi(a)\\
t^{-1}(b^{-1}-1) & t^{-1}b^{-1}t\dfrac{\partial}{\partial a}\varphi(b)& t^{-1}b^{-1}\bigg( t\dfrac{\partial}{\partial b}\varphi(b)-1\bigg)
\end{pmatrix}
\]
Each $\varphi\in\Aut(\langle a,b\rangle)$ linearly extends to a ring homomorphism of $\mathbb{Z}\langle a,b\rangle$, also denoted by $\varphi$. Under the abelianisation homomorphism, it also induces a homomorphism $\varphi^\text{ab}:\mathbb{Z}[a,b]\to \mathbb{Z}[a,b]$. The Jacobian of $\varphi$ is
\[
J_\varphi = \begin{pmatrix}
\frac{\partial}{\partial a}  \varphi(a) & \frac{\partial}{\partial b} \varphi(a)\\
\frac{\partial }{\partial a}\varphi(b) & \frac{\partial }{\partial b}\varphi(b)
\end{pmatrix}
\]
Let $J^\text{ab}_\varphi$ denotes its image under the abelianisation homomorphism. By the chain rule~\cite{Fox-free_calculus-1953}, $J_{\varphi_1\varphi_2} = \varphi_1(J_{\varphi_2}) J_{\varphi_1}$ for any $\varphi_1,\varphi_2\in \Aut(F_2)$. Applying the abelianisation homomorphism and taking determinants on both sides, we have
\begin{align*}
    \det J_{\varphi_1\varphi_2}^\text{ab} &= \det \varphi_1^\text{ab}(J_{\varphi_2}^\text{ab}) \det J_{\varphi_1}^\text{ab}\\
    &=  \varphi_1^\text{ab}(\det J_{\varphi_2}^\text{ab}) \det J_{\varphi_1}^\text{ab}
\end{align*}
We may assume that $\varphi_*$ is given as in \Cref{eq:positive_standard_word}. Then $\det J_\alpha^\text{ab} = \det J_{\beta^{-1}}^\text{ab} =1$ implies that $\det J_{\varphi}^\text{ab} = 1$. Let $J^h_\varphi$ denotes the image of $J_\varphi$ under the map $h$. It is clear that $h(J_{\varphi}^\text{ab}) = J^h_\varphi$. Hence $\det J^h_\varphi = 1$.\\
By fundamental formula of free calculus~\cite{Fox-free_calculus-1953}, we have 
\[
\varphi(a) - 1 = \bigg(\dfrac{\partial}{\partial a}\varphi(a)\bigg)(a-1) + \bigg(\dfrac{\partial}{\partial b}\varphi(a)\bigg)(b-1)
\] Applying $h$ on both sides, we have
\[ (h\bigg(\dfrac{\partial}{\partial a}\varphi(a)\bigg)-1)(s-1) + h\bigg(\dfrac{\partial}{\partial b}\varphi(a)\bigg)(p-1) = 0\]
since $h\circ \varphi = h$. By symmetry,
\[ (h\bigg(\dfrac{\partial}{\partial b}\varphi(b)\bigg)-1)(p-1) + h\bigg(\dfrac{\partial}{\partial a}\varphi(b)\bigg)(s-1) = 0\]

Define $\psi_{\alpha}:\pi_1(M_\varphi)\to \mathbb{C}[x^{\pm 1}]$ where $\psi_{\alpha}(\gamma) = \alpha(\gamma)x^{\psi(\gamma)}$. The Jacobian $J^{\psi_\alpha}_{M_\varphi}$ is given by $(J^{\psi_\alpha}_{M_\varphi})_{ij} = \psi_{\alpha}((J_{M_\varphi})_{ij})$. Since Alexander invariants are obtained from determinants, we may multiply each row by an unit and work with the following matrix:
\[
 \begin{pmatrix}
1-s & h\bigg(\dfrac{\partial \varphi(a)}{\partial a}\bigg)x-1 & h\bigg(\dfrac{\partial }{\partial b}\varphi(a)\bigg)x\\
1-p & h\bigg(\dfrac{\partial \varphi(b)}{\partial a}\bigg)x & h\bigg(\dfrac{\partial }{\partial b}\varphi(b)\bigg)x-1
\end{pmatrix}
\]
We now have the following cases:
\begin{enumerate}
    \item If $h(a)=h(b) = 1$, a direct computation shows that $J^h_\varphi = \varphi_*.$ So $\Delta^{\psi_\alpha}_{M_\varphi}$ is the characteristic polynomial of $\varphi_*$, which has $2$ distinct simple zeroes since the trace is distinct from $\pm 2.$ Hence $m$ equals one of these roots.
    \item If $h(a) = 1$ and $h(b)\neq 1$, from the above equations, we have $h\bigg(\dfrac{\partial}{\partial b} \varphi(a)\bigg) = 0$ and $h\bigg(\dfrac{\partial }{\partial b}\varphi(b)\bigg) =1$. 
    Combined with $\det J^h_\varphi = 1$, we have $h\bigg(\dfrac{\partial }{\partial a}\varphi(a)\bigg) =1$ so $\Delta^{\psi_\alpha}_{M_\varphi} = x-1$. By symmetry, $\Delta^{\psi_\alpha}_{M_\varphi} = x-1$ if $h(a)\neq 1$ and $h(b)=1$. Hence in each of these  cases, $m=1.$
    \item If $h(a)\neq 1$ and $h(b)\neq 1$, we have 
    \[
    (h\bigg(\dfrac{\partial }{\partial a}\varphi(a)\bigg)-1)(h\bigg(\dfrac{\partial }{\partial b}\varphi(b)\bigg)-1) 
    = h\bigg(\dfrac{\partial }{\partial b}\varphi(a)\bigg)h\bigg(\dfrac{\partial }{\partial a}\varphi(b)\bigg)
    \]
     by multiplying the two equations. Rearranging the above equation and substitute $\det J^h_\varphi =1$ into it, we have
     $h\bigg(\dfrac{\partial }{\partial a}\varphi(a)\bigg) + h\bigg(\dfrac{\partial }{\partial b}\varphi(b)\bigg) = 2$. Then $h\bigg(\dfrac{\partial }{\partial b}\varphi(b)\bigg) - 1 =  1 - h\bigg(\dfrac{\partial }{\partial a}\varphi(a)\bigg)$ and we can rewrite the second equation as
     \[ (h\bigg(\dfrac{\partial }{\partial a}\varphi(a)\bigg)-1)(p-1) = h\bigg(\dfrac{\partial}{\partial a}\varphi(b)\bigg)(s-1)\]
     This gives us that the determinant of the first two columns of $J_{M_\varphi}^{\psi_\alpha}$ is $(1-p)(x-1)$. We can perform similar computation for the other two determinants which both have a factor $x-1$. This implies $\Delta^{\psi_\alpha}_{M_\varphi} = x-1$ and $m=1.$
\end{enumerate}
This completes the proof.
\end{proof}
A direct consequence of the above proposition is that $r(X(M_\varphi)) = r(X^{\text{irr}}(M_\varphi)).$ We now show that there are no ramification points of $r\co X^{\text{irr}}(M_\varphi) \to X_\varphi(S)$ at reducible characters contained on $X^{\text{irr}}(M_\varphi)$.

Consider the action of $H^1(M_\varphi,\mathbb{Z}_2 )$ on $X^{\text{red}}(M_\varphi)$. We first characterise the points in $X^{\text{red}}(M_\varphi)$ with non-trivial stabilisers under this action.
Suppose $h\in H^1(M_\varphi,\mathbb{Z}_2 )$ with $h(t)=\epsilon_t, h(a)=\epsilon_a, h(b)=\epsilon_b.$ Then a direct calculation shows that $S(h)\cap X^{\text{red}}(M_\varphi)$ consists of all points in $X^{\text{red}}(M_\varphi)$ satisfying 
\[m^2=\epsilon_t, \; s^2=\epsilon_a, \; p^2=\epsilon_b\]
In particular, the trace functions take values in $\{-2, 0, 2\}.$ 

We now focus on $S(\varepsilon)$. Recall that $\varepsilon(t)=-1, \varepsilon(a)=\varepsilon(b)=1.$
In this case, 
\[m^2=-1, \; s^2=1, \; p^2=1\]
and hence $s, p \in \{\pm 1\}.$
If such a character is in the intersection of 
$X^{\text{red}}(M_\varphi) \cap X^{\text{irr}}(M_\varphi),$ then \Cref{lem:porti} implies that there is such a character of a non-abelian reducible representation $\rho.$
Up to conjugacy and the action of $\varepsilon$ on $R(M_\varphi),$ we may assume that 
$$
\begin{matrix}
  \rho(t) = \begin{pmatrix}i&0\\0&-i\end{pmatrix} 
& 
  \rho(a) = \begin{pmatrix}s& u \\ 0 &s\end{pmatrix}   
&  
  \rho(b) = \begin{pmatrix}p &v\\0&p\end{pmatrix}
\end{matrix}
$$ where $s, p \in \{\pm 1\}$ and $(u,v)\neq (0,0)$.
Then
$$
\begin{pmatrix}s& -u \\ 0 &s\end{pmatrix}  
= \rho(t^{-1}at)
= \rho(\varphi(a))
= \begin{pmatrix}
s^{k_1}\epsilon_b^{k_3}& k_3s^{k_1}p^{k_3-1}v+  k_1s^{k_1-1}p^{k_3}u\\ 
0 &s^{k_1}p^{k_3}
\end{pmatrix}  
$$
and
$$
\begin{pmatrix}p& -v \\ 0 &p\end{pmatrix}  
= \rho(t^{-1}bt)
= \rho(\varphi(b))
= \begin{pmatrix}
s^{k_2}p^{k_4}& k_4s^{k_2}p^{k_4-1}v+  k_2s^{k_2-1}p^{k_4}u\\ 
0 &s^{k_2}p^{k_4}
\end{pmatrix}  
$$
This implies in particular the equations $u = -sp(k_3v+k_1u)$ and $v = -sp(k_4v+k_2u).$
Now $sp= \pm 1$, and hence these two equations give
\[
\text{transpose}[\varphi_*] \begin{pmatrix}u \\ v\end{pmatrix}   
=
\begin{pmatrix}k_1& k_3 \\ k_2 &k_4\end{pmatrix}   \begin{pmatrix}u \\ v\end{pmatrix}   
= \pm \begin{pmatrix}u \\ v\end{pmatrix}   \neq \begin{pmatrix}0 \\ 0\end{pmatrix}  
\]
But then $\varphi_*$ has eigenvalue $\pm 1$ and hence
$\tr(\varphi_*) = \pm 2,$ which is a contradiction. This shows that there are no ramification points of $r\co X^{\text{irr}}(M_\varphi) \to X_\varphi(S)$ at reducible characters contained on $X^{\text{irr}}(M_\varphi)$.


\subsection{Irreducible characters}

The following lemma proves that every irreducible character in $X_\varphi(S)$ is the restriction of two distinct irreducible characters in $X(M_\varphi)$, and hence is not a branch point of $r\co X(M_\varphi) \to X_\varphi(S).$ 
Moreover, \Cref{exa:A2B-2} gives examples where these distinct characters lie in different Zariski components of $X(M_\varphi)$ and where they lie in the same Zariski component.

\begin{lemma}
\label{lem:irreps in fixed point set}
For any irreducible character $\chi_\rho$ of $S$ such that $\chi_\rho\in X_\varphi(S)$, there exists  $T\in \SL(2,\mathbb{C})$ such that $T^{-1}\rho(\gamma)T = \rho(\varphi_*(\gamma))$ for all $\gamma\in \pi_1\, S$. Moreover, $T$ is unique up to sign and at least one of $\tr\,T$, $\tr\,(\rho(a)T)$, $\tr\,(\rho(b)T)$ and $\tr\,(\rho(ab)T)$ is nonzero.
\end{lemma}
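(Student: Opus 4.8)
The plan is to read off existence and uniqueness of $T$ from the description of $X_\varphi(S)$ together with \Cref{lem:CS irrep lemma}, and then to obtain the non-vanishing of the four traces by playing the two possible signs of $T$ against each other inside $X(M_\varphi)$.

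First I would settle existence. The hypothesis $\chi_\rho\in X_\varphi(S)$ says precisely that $\rho$ and the representation $\rho'\colon\gamma\mapsto\rho(\varphi_*(\gamma))$ agree on the functions $\tau_a,\tau_b,\tau_{ab}$; since these functions are the coordinates identifying $X(S)$ with $\mathbb{C}^3$, this forces $\chi_{\rho'}=\chi_\rho$. As $\rho$ is irreducible, \Cref{lem:CS irrep lemma}(2) then provides $T\in\SL(2,\mathbb{C})$ with $T^{-1}\rho(\gamma)T=\rho(\varphi_*(\gamma))$ for all $\gamma\in\pi_1(S)$. For uniqueness, if $T_1$ and $T_2$ both satisfy this identity, then $T_1T_2^{-1}$ commutes with every element of $\rho(\pi_1(S))$; irreducibility of $\rho$ forces $T_1T_2^{-1}$ to be scalar, and the determinant condition then gives $T_1=\pm T_2$.

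Next I would prove the trace statement by contradiction. Assume $\tr T=\tr(\rho(a)T)=\tr(\rho(b)T)=\tr(\rho(ab)T)=0$, and define representations $\widetilde\rho_{\pm}\colon\pi_1(M_\varphi)\to\SL(2,\mathbb{C})$ by $\widetilde\rho_{\pm}|_{\pi_1(S)}=\rho$ and $\widetilde\rho_{\pm}(t)=\pm T$. These respect the relations of \Cref{eq:fund_gp_HNN} because $(\pm T)^{-1}\rho(\gamma)(\pm T)=T^{-1}\rho(\gamma)T=\rho(\varphi_*(\gamma))$, and each $\widetilde\rho_{\pm}$ is irreducible since its restriction to $\pi_1(S)$ already is. In the coordinates $(x,y,z,u,v,w,q)$ on $X(M_\varphi)$ introduced in \Cref{sec:Restriction}, the character of $\widetilde\rho_{\pm}$ is $(\tau_a(\rho),\tau_b(\rho),\tau_{ab}(\rho),\pm\tr T,\pm\tr(\rho(a)T),\pm\tr(\rho(b)T),\pm\tr(\rho(ab)T))$, which under our assumption equals $(\tau_a(\rho),\tau_b(\rho),\tau_{ab}(\rho),0,0,0,0)$ for both signs. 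Hence $\chi_{\widetilde\rho_+}=\chi_{\widetilde\rho_-}$, so by \Cref{lem:CS irrep lemma}(2) the irreducible representations $\widetilde\rho_{\pm}$ are conjugate; but a conjugating matrix centralises $\rho(\pi_1(S))$, hence is scalar and therefore central, which would force $-T=\widetilde\rho_-(t)=\widetilde\rho_+(t)=T$, contradicting $\det T=1$.

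The routine ingredients are the linear algebra (scalar centralisers of irreducible $\SL(2,\mathbb{C})$ representations) and the bookkeeping of the seven coordinate functions. The step I expect to be the main obstacle is the last one: the point is to recognise that the two choices of sign for $T$ yield two genuinely distinct representations of $\pi_1(M_\varphi)$ — indeed they differ by the action of $\varepsilon$ — so that demanding that their characters coincide produces the contradiction. This observation will also make transparent the later identification of the fibres of $r$ with the orbits of $\langle\varepsilon\rangle$.
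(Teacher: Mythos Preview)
Your proof is correct and the first two parts (existence and uniqueness of $T$) match the paper verbatim. For the trace statement, however, you take a genuinely different route.

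The paper's proof is a direct matrix computation: it puts $\rho$ in the normal form of \Cref{eq:standard_irrep}, writes $T=\begin{pmatrix}m_1&m_2\\m_3&m_4\end{pmatrix}$, imposes the four trace conditions as a linear system in the $m_i$, solves for $m_2,m_3,m_4$ in terms of $m_1$, and then checks that the remaining equation forces $\det T=0$. Your argument instead lifts to $X(M_\varphi)$: you observe that the two extensions $\widetilde\rho_\pm$ (differing by the action of $\varepsilon$) would have identical characters under the seven-coordinate embedding of \Cref{sec:Restriction}, and then derive a contradiction from \Cref{lem:CS irrep lemma}(2) since any conjugating matrix must centralise the irreducible restriction $\rho$ and hence be $\pm E$. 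This is more conceptual and, as you note, makes the later identification of the fibres of $r$ with $\langle\varepsilon\rangle$--orbits (\Cref{cor:irreps in fixed point set}) essentially automatic. The paper's computation, on the other hand, is entirely self-contained and does not rely on the fact that the seven traces $\tau_a,\tau_b,\tau_{ab},\tau_t,\tau_{at},\tau_{bt},\tau_{abt}$ separate points of $X(M_\varphi)$ via \cite{Gonzalez-character-1993}. One small wording issue: in your final line you should write $-T=\widetilde\rho_-(t)=C^{-1}\widetilde\rho_+(t)C=\widetilde\rho_+(t)=T$ to make the role of the central $C$ explicit.
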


\begin{proof}
Let $\rho\in{R}(S)$ be an irreducible representation with $\chi_\rho\in X_{\varphi}S$. We define $\rho_1\in{R}(S)$ by $\rho_1(\gamma) = \rho(\varphi_*(\gamma))$ for $\gamma\in \pi_1 S$. According to the definition of $X_\varphi(S)$, $\chi_\rho = \chi_{\rho_1}$. Since $\chi_\rho$ is the character of an irreducible representation, it follows that $\rho$ and $\rho_1$ are conjugate. 
Hence there exists $T\in SL(2,\mathbb{C})$ such that $T^{-1}\rho T = \rho_1$. Since $\chi$ is irreducible, the centraliser of $\rho$ is $\pm E,$ where $E$ is the identity matrix. Hence $T$ is unique up to sign. This proves the first part of the lemma, and it remains to show that at least one of the stated traces is non-zero.

Up to conjugation, we may assume that
$$\begin{matrix}
  \rho(a) = \begin{pmatrix}s&0\\1&s^{-1}\end{pmatrix}  & \rho(b) = \begin{pmatrix}p&u\\0&p^{-1}\end{pmatrix}
\end{matrix}$$ where $u\neq 0$.
Since $\chi_\rho\in X_{\varphi}S$, we have 
$$\begin{matrix}
  \rho(\varphi_*(a)) = \begin{pmatrix}s_1&s_2\\s_3&s+s^{-1}-s_1\end{pmatrix}  & \rho(\varphi_*(b)) = \begin{pmatrix}p_1&p_2\\p_3&p+p^{-1}-p_1\end{pmatrix}
\end{matrix}$$
for some $s_1, s_2, s_3, p_1, p_2, p_3 \in \mathbb{C}.$
Suppose $T=\begin{pmatrix}m_1&m_2\\m_3&m_4\end{pmatrix}$ for some $m_1, m_1, m_3, m_4 \in \mathbb{C},$ and that $\tr\,T = \tr\,(\rho(a)T)=\tr\,(\rho(b)T)=\tr\,(\rho(ab)T) = 0$ holds. Then, we have 
$$\begin{cases}
\;0=m_1+m_4 \\
\;0=sm_1 + m_2 + s^{-1}m_4 \\
\;0=pm_1 + um_3 + p^{-1}m_4 \\
\;0=spm_1 + pm_2 + usm_3 + (u+s^{-1}p^{-1})m_4 
\end{cases}$$
Note that the equations do not change if one replaces $T$ with $-T.$
From the first three equations, we have $m_4 = -m_1$, $m_2 = (s^{-1}-s)m_1$ and $m_3 = \frac{(p^{-1}-p)m_1}{u}$, which gives
$$T=m_1\begin{pmatrix}1&s^{-1}-s\\\frac{p^{-1}-p}{u}&-1\end{pmatrix}$$
Substituting $m_2,m_3,m_4$ into the fourth equation, we have 
$$(u + sp + s^{-1}p^{-1} - s^{-1}p - sp^{-1})m_1 = 0$$
However, this implies the contradiction
$$\det T = -\dfrac{(u + sp + s^{-1}p^{-1} - s^{-1}p - sp^{-1})m_1}{u} = 0$$
Hence at least one of $\tr\,T$,$\tr\,(\rho(a)T)$,$\tr\,(\rho(b)T)$ and $\tr\,(\rho(ab)T)$ is nonzero.
\end{proof}

\begin{proposition}
\label{cor:irreps in fixed point set}
The fibres of $r\co X^{\text{irr}}(M_\varphi)\to X_\varphi(S)$ are the orbits of $\varepsilon$ and the branch points for $r$ are contained in the set of reducible characters in $X_\varphi(S).$ Moreover, the ramification points are simple points of 
$X^{\text{irr}}(M_\varphi)$ and are precisely the binary dihedral characters in $X(M_\varphi(\lambda))\subset X(M_\varphi).$
\end{proposition}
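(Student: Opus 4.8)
The plan is to compute the fibre $r^{-1}(\chi)$ separately according to whether $\chi\in X_\varphi(S)$ is the character of an irreducible or of a reducible representation of $S$; by \Cref{lem:im(r)} only finitely many reducible characters occur. Suppose first $\chi=\chi_\rho$ with $\rho\in R(S)$ irreducible. By \Cref{lem:irreps in fixed point set} there is $T\in\SL(2,\mathbb{C})$, unique up to sign, with $T^{-1}\rho(\gamma)T=\rho(\varphi_*(\gamma))$ for all $\gamma\in\pi_1(S)$, so $t\mapsto\pm T$, $a\mapsto\rho(a)$, $b\mapsto\rho(b)$ defines representations $\rho_{\pm T}\in R(M_\varphi)$ that are irreducible because their restrictions to $\pi_1(S)$ are. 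Conversely, if $\sigma\in R(M_\varphi)$ has $r(\chi_\sigma)=\chi_\rho$, then the second part of \Cref{lem:CS irrep lemma} lets us conjugate $\sigma$ so that $\sigma|_{\pi_1(S)}=\rho$; then $\sigma(t)$ conjugates $\rho$ to $\gamma\mapsto\rho(\varphi_*(\gamma))$, forcing $\sigma(t)=\pm T$, so $r^{-1}(\chi_\rho)=\{\chi_{\rho_T},\chi_{\rho_{-T}}\}$. These two characters are interchanged by $\varepsilon$ (which negates the image of $t$ and fixes those of $a,b$) and are distinct, since by \Cref{lem:irreps in fixed point set} one of $\tr T$, $\tr(\rho(a)T)$, $\tr(\rho(b)T)$, $\tr(\rho(ab)T)$ is nonzero while all four change sign under $T\mapsto-T$. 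Hence $r^{-1}(\chi_\rho)$ is a free $\langle\varepsilon\rangle$-orbit of size two, so $r\co X^{\text{irr}}(M_\varphi)\to X_\varphi(S)$ has degree two and is unramified over every irreducible character; it is moreover finite, since a collapsed component of $X^{\text{irr}}(M_\varphi)$ would map to a reducible character of $X_\varphi(S)$ (as fibres over irreducible characters are finite) and hence, by \Cref{schol:binary_dihedral}, carry a dense set of binary dihedral characters, of which there are only finitely many by the finiteness statement in \Cref{lem:im(r)}. In particular every branch point of $r$ lies among the finitely many reducible characters of $X_\varphi(S)$.

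Next let $\chi\in X_\varphi(S)$ be reducible and $P\in r^{-1}(\chi)\cap X^{\text{irr}}(M_\varphi)$. If $P$ is the character of an irreducible representation of $M_\varphi$, then by \Cref{schol:binary_dihedral} it is a binary dihedral character in $X(M_\varphi(\lambda))$, and from the normal form $\rho_0(a)=\Diag(s,s^{-1})$, $\rho_0(b)=\Diag(p,p^{-1})$, $\rho_0(t)=\begin{pmatrix}0&-1\\1&0\end{pmatrix}$ one reads off, in the coordinates of \Cref{sec:Restriction}, that $u=v=w=q=0$; equivalently, $\Diag(i,-i)$ conjugates $\rho_0$ to $\varepsilon\cdot\rho_0$ (the diagonal matrices are fixed and the image of $t$ is negated), so either way $P$ is fixed by $\varepsilon$. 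If instead $P$ is the character of a reducible representation of $M_\varphi$, then by \Cref{lem:porti} it is the character of a non-abelian reducible representation on one of the lines $X_{s,p}$ of \Cref{subsec:Reducible characters}; it was shown there that no such character is fixed by $\varepsilon$, nor is it a ramification point of $r$. Hence every ramification point of $r$ is a binary dihedral character.

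It remains to show the converse, that each binary dihedral character $\chi_0$ is a ramification point, which also identifies the fibres. The $\langle\varepsilon\rangle$-action preserves every fibre of $r$; by the preceding paragraph it fixes $\chi_0$ and pairs up the remaining points of $r^{-1}(r(\chi_0))\cap X^{\text{irr}}(M_\varphi)$, while the local degrees of the finite degree-two morphism $r$ along a fibre sum to two. Together these force $r^{-1}(r(\chi_0))\cap X^{\text{irr}}(M_\varphi)=\{\chi_0\}$ with local degree two; so $\chi_0$ is a ramification point and the fibres of $r$ are exactly the $\langle\varepsilon\rangle$-orbits. (This in particular shows that a reducible character of $S$ is not simultaneously the restriction of a binary dihedral and of a non-abelian reducible character of $M_\varphi$, which is also visible from the equations of \Cref{subsec:Reducible characters}, requiring $(s^{k_1}p^{k_3},s^{k_2}p^{k_4})$ to equal $(s,p)$ in the reducible case and $(s^{-1},p^{-1})$ in the binary dihedral case.) Finally, Boyer, Luft and Zhang~\cite{Boyer-algebraic-2002} show each binary dihedral character is a simple point of $X(M_\varphi)$; since it is the character of an irreducible representation it lies on a Zariski component of $X^{\text{irr}}(M_\varphi)$, one-dimensional by \Cref{pro:dimension_bound}, and simplicity then gives that it lies on a unique such component and is a smooth point of it.

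I expect the main obstacle to lie in this last step: a priori $\chi_0$ could be a smooth point of $X^{\text{irr}}(M_\varphi)$ at which two distinct branches happen to lie over the same point of $X_\varphi(S)$, and excluding this is exactly what the $\varepsilon$-invariance of binary dihedral characters --- the vanishing of all the ``$t$-odd'' trace coordinates --- is for, used together with conservation of local degree along a fibre and the already-established fact that no non-abelian reducible character on $X^{\text{irr}}(M_\varphi)$ is $\varepsilon$-fixed. A subsidiary point needing care is that over a single reducible base point at most one binary dihedral character occurs and it does not coexist there with the non-abelian reducible characters on $X^{\text{irr}}(M_\varphi)$; this follows by contrasting the two eigenvalue conditions above.
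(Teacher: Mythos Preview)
Your proof is correct and follows essentially the same route as the paper's: identify the action of $\varepsilon$ with the sign change on the $t$-coordinates, use \Cref{lem:irreps in fixed point set} to see that fibres over irreducible characters are free $\varepsilon$-orbits of size two, invoke \Cref{schol:binary_dihedral} and the analysis of \Cref{subsec:Reducible characters} to pin down what happens over reducible characters, and cite \cite{Boyer-algebraic-2002} for simplicity. Your treatment is somewhat more explicit than the paper's in two places --- you spell out why $r$ is finite on $X^{\text{irr}}(M_\varphi)$, and you give a local-degree argument for the converse inclusion where the paper just says ``by continuity'' --- but these are elaborations of the same argument rather than a different approach.
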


\begin{proof}
The action of $\varepsilon$ with respect to the affine coordinate $(x,y,z,u,v,w,q)$ in $\mathbb{C}^{7}$ is given by the the involution $(x,y,z,u,v,w,q)\to (x,y,z,-u,-v,-w,-q).$ It was shown in \Cref{lem:irreps in fixed point set} that the preimage under $r$ of each irreducible character in $X_\varphi(S)$ is the orbit under this involution and has precisely two elements. It follows by continuity that each fixed point of $\varepsilon$ is a ramification point for $r.$
Hence the branch points for $r$ are contained in the set of reducible characters in $X_\varphi(S).$

It was shown in \Cref{subsec:Reducible characters} that there are no ramification points of $r$ in $X^{\text{red}}(M_\varphi) \cap X^{\text{irr}}(M_\varphi).$ Hence all ramification points correspond to irreducible characters in $X^{\text{irr}}(M_\varphi)$  that restrict to reducible characters in $X_\varphi(S)$ and are fixed by $\varepsilon.$ It was noted in \Cref{schol:binary_dihedral} that the irreducible characters $X^{\text{irr}}(M_\varphi)$  that restrict to reducible characters in $X_\varphi(S)$ are precisely the binary dihedral characters in $X(M_\varphi(\lambda))\subset X(M_\varphi)$, and it is an elementary calculation to verify that they satisfy $u=v=w=q=0.$

It is shown in \cite[Proposition 5.3]{Boyer-algebraic-2002} that these binary dihedral characters are simple points or $X(M_\varphi).$
\end{proof}

\begin{proof}[Proof of \Cref{thm:main}]
First note that \Cref{pro:intersection_red_irrep} implies that $r(X(M_\varphi)) = r(X^{\text{irr}}(M_\varphi)).$
The main statement of \Cref{thm:main} is now the content of \Cref{pro:dimension_bound,cor:irreps in fixed point set}. The additional information about the number of branch points follows from \cite{Boyer-algebraic-2002}, but the formula is different. We now justify our formulation.
The number of binary dihedral characters in $X(M_\varphi)$ is given by \cite[Propositions 5.3 and 4.5]{Boyer-algebraic-2002} as
\[
\frac{1}{2}\big( |2 + \tr(\varphi_*)| - \zeta_{\varphi_*} \big)
\]
where $\zeta_{\varphi_*}\in \{1, 2, 4\}$ is the order of  $\Hom(\,\coker(\varphi_*+1_{H_1(S^1\times S^1)},\, \mathbb{Z}_2).$ Here, the once-punctured torus is identified with the complement of a point in $S^1\times S^1.$ A direct calculation shows that 
\begin{align*}
\zeta_{\varphi_*}= 4 \iff o(\varphi_2)=1 \iff b_1(\varphi) = 3\\
\zeta_{\varphi_*}= 2 \iff o(\varphi_2)=2 \iff b_1(\varphi) = 2\\
\zeta_{\varphi_*}= 1 \iff o(\varphi_2)=3 \iff b_1(\varphi) = 1\\
\end{align*}
Hence the identity $\zeta_{\varphi_*} = 2^{3 - o(\varphi_2)} = 2^{b_1(\varphi) -1}$ gives
\[
\frac{1}{2}\big( |2 + \tr(\varphi_*)| - \zeta_{\varphi_*} \big) = \frac{1}{2} |2 + \tr(\varphi_*)| - 2^{{b_1(\varphi)-2}}
\]
as claimed in \Cref{eq:binary_count}.
\end{proof}

The relationship between Euler characteristic and genus of curves gives the following.

\begin{corollary}
\label{cor:genus_bounds}
Suppose $X_\varphi(S)$ and $X^{\text{irr}}(M_\varphi)$ are irreducible, and that $X_\varphi(S)$ is a non-singular affine curve. Let $g_0$ and $g_1$ denote the genera of the smooth projective models of $X^{\text{irr}}(M_\varphi)$ and $X_\varphi(S)$ respectively. Then $$g_0 = 2g_1 - 1 + \frac{e_1+e_\infty}{2}$$ where $e_1= \frac{1}{2}|2 + \tr(\varphi_*)| - 2^{{b_1(\varphi)}-2}$ is the number of branch points in $X_\varphi(S)$ and $e_\infty$ is number of branch points at ideal points of $X_\varphi(S)$. In particular, if $i_\infty$ is the total number of ideal points of $X_\varphi(S)$, then $g_0$ is bounded by
    $$2g_1 - 1 + \frac{e_1}{2} \leq g_0 \leq 2g_1 - 1 + \frac{e_1}{2} +\frac{i_\infty}{2}$$
\end{corollary}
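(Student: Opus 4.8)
The plan is to read off the formula from the Riemann--Hurwitz formula applied to the two-fold branched covering $r\co X^{\text{irr}}(M_\varphi)\to X_\varphi(S)$ of \Cref{thm:main}. First I would pass to the smooth projective models $\hat X_0$ and $\hat X_1$ of $X^{\text{irr}}(M_\varphi)$ and $X_\varphi(S)$ (well defined since both curves are assumed irreducible). The restriction map induces a degree-two inclusion of function fields $\mathbb{C}(X_\varphi(S))\hookrightarrow \mathbb{C}(X^{\text{irr}}(M_\varphi))$ and hence a finite morphism $\hat r\co\hat X_0\to\hat X_1$ of degree two, the degree being two because the generic fibre of $r$ is an $\varepsilon$-orbit of two distinct points by \Cref{cor:irreps in fixed point set}. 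Since $X_\varphi(S)$ is non-singular, its smooth projective model is its projective closure normalised at infinity, so $X_\varphi(S)$ sits as an open subset of $\hat X_1$ whose complement is the set of $i_\infty$ ideal points, and $\hat r$ restricts to $r$ over $X_\varphi(S)$.

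Next I would determine the branch locus of $\hat r$. By \Cref{thm:main} the ramification points of $r$ over $X_\varphi(S)$ are exactly the $e_1 = \tfrac12|2+\tr(\varphi_*)| - 2^{b_1(\varphi)-2}$ binary dihedral characters in $X(M_\varphi(\lambda))$, and these are simple points of $X^{\text{irr}}(M_\varphi)$; as $X_\varphi(S)$ is non-singular there, passing to normalisations does not change the local picture, so each contributes a branch point of $\hat r$ with a single preimage and ramification index two. Any remaining branch points of $\hat r$ must lie among the $i_\infty$ ideal points of $X_\varphi(S)$, and I would \emph{define} $e_\infty$ to be the number of these that are branch points of $\hat r$.

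For a degree-two cover of smooth projective curves each branch point has a unique preimage and contributes $1$ to the ramification term, so Riemann--Hurwitz gives
\[
2-2g_0 = 2(2-2g_1) - (e_1+e_\infty),
\]
which rearranges to $g_0 = 2g_1 - 1 + \tfrac{e_1+e_\infty}{2}$ (forcing $e_1+e_\infty$ to be even, as it must be for a double cover), and the two-sided bound follows from $0\le e_\infty\le i_\infty$. The step requiring genuine care is the identification of the affine branch locus: one must verify that $\hat r$ has no ramification over $X_\varphi(S)$ beyond the binary dihedral characters and that these contribute ramification index two after passing to normalisations. Both are consequences of the precise statement of \Cref{thm:main} (the ramification points are \emph{exactly} the binary dihedral characters, and they are simple points) together with the non-singularity hypothesis on $X_\varphi(S)$; with that in hand, the only remaining unknown --- the ramification behaviour of $\hat r$ at the ideal points --- is exactly what is packaged into $e_\infty$, producing the gap between the lower and upper bounds.
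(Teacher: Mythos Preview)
Your proposal is correct and follows the same approach as the paper, which simply remarks that the corollary follows from ``the relationship between Euler characteristic and genus of curves'' applied to the two-fold branched cover of \Cref{thm:main}. Your version is considerably more detailed---you spell out the passage to smooth projective models, justify that the affine ramification survives normalisation via the simple-point and non-singularity hypotheses, and make explicit that $e_\infty$ absorbs exactly the unknown ramification at infinity---but the underlying idea (Riemann--Hurwitz for a degree-two map) is identical.
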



\section{Projective characters}
\label{sec:Projective characters}

Following the blueprint of \Cref{sec: char of once-punctured torus bundles}, we now analyse the $\PSL(2, \mathbb{C})$--character varieties of once-punctured torus bundles. We make frequent use of Heusener and Porti~\cite{Heusener-varieties-2004} and refer the reader to original sources therein.


\subsection{The restriction map}

Let $\overline{X}_\varphi(S) = \{ \chi_{\prho} \in \overline{X}(S) \mid \chi_{\prho} = \chi_{\prho\varphi} \}$. 
We first show that the natural restriction map $\overline{r} \co \overline{X}(M_\varphi) \to \overline{X}(S)$ again satisfies $\im (\overline{r})=\overline{X}_\varphi(S).$ 
Recall that we have the four-fold branched covering map $X(S)\to \overline{X}(S)$ from \Cref{sec:charF2},
\[
\overline{X}(S) \ni (x, y, z) \mapsto (x^2, y^2, z^2, xyz) \in \overline{X}(S)
\]
Let $\overline{\rho}\co \pi_1(M_\varphi)\to \PSL(2,\mathbb{C})$ be a representation. Let $T, A, B\in \SL(2,\mathbb{C})$ satisfying $\overline{\rho}(t) = \pm T,$ $\overline{\rho}(a) = \pm A,$ $\overline{\rho}(b) = \pm B$. To account for all possible choices of sign, we write $\varphi(a) = \varphi_a(a,b)$ for a fixed word given for $\varphi(a)$ in terms of the generators. Similarly for $b.$
Then
\begin{align}\label{eq:PSL_setup_A}
T^{-1}AT&= \varepsilon_a(A,B)\; \varphi_a(A, B) \\
\label{eq:PSL_setup_B}
T^{-1}BT&= \varepsilon_b(A,B)\; \varphi_b(A,B)
\end{align}
where $\varepsilon_a(A,B), \varepsilon_b(A,B)\in \{\pm 1\}$. This implies $\im (\overline{r}) \subseteq \overline{X}_\varphi(S).$ 

Conversely, let $\prho \co \pi_1(S) \to \pi_1(S)$ such that $\chi_{\prho} \in \overline{X}_\varphi(S)$. Suppose $\prho(a) = \pm A$ and $\prho(b) =\pm B,$ where $A, B\in \SL(2,\mathbb{C}).$
Since $\chi_{\prho} = \chi_{\prho\varphi}$, the first three coordinates of the corresponding points in $ \overline{X}(S)$ imply that
there are $\varepsilon_a, \varepsilon_b, \varepsilon_{ab}\in \{\pm 1\}$ such that
\[
\tr A = \varepsilon_a\tr (\varphi_a(A,B)), \qquad \tr B = \varepsilon_b \tr (\varphi_b(A,B))), \qquad
\tr AB = \varepsilon_{ab} \tr(\; \varphi_a(A,B))\varphi_b(A,B)))
\]
Now the fourth coordinate implies that $\varepsilon_a\varepsilon_b\varepsilon_{ab}=1$ and hence $\varepsilon_{ab} = \varepsilon_a\varepsilon_b$. Hence
\[
\tr A = \tr (\varepsilon_a\varphi_a(A,B)), \qquad \tr B =  \tr (\varepsilon_b\varphi_b(A,B))), \qquad
\tr AB =  \tr(\; \varepsilon_a\varphi_a(A,B))\cdot \varepsilon_b\varphi_b(A,B)))
\]
As in the proof of \Cref{lem:im(r)} there is $T \in \SL(2,\mathbb{C})$ with 
\[
T^{-1}AT= \varepsilon_a\;\varphi_a(A, B) \qquad\text{and}\qquad
T^{-1}BT= \varepsilon_b\;\varphi_b(A,B)
\]
This shows that $\chi_{\prho} \in \im (\overline{r}).$ Hence $\im (\overline{r})=\overline{X}_\varphi(S)$ as claimed.

It follows as in the proof of \Cref{lem:irreps in fixed point set} that for an irreducible representation 
$\prho \co \pi_1(M_\varphi) \to \PSL(2,\mathbb{C})$ 
\begin{equation}
\label{eq:PSL_characters}
(\; (\tr\overline{\rho}(t))^2, \quad
(\tr\overline{\rho}(ta))^2, \quad 
 (\tr\overline{\rho}(tb))^2, \quad
 (\tr\overline{\rho}(tab))^2 \;)\neq (0,0,0,0)
\end{equation}

As in \Cref{subsec:Reducible characters}, it is easy to show that there are finitely many characters in $\overline{X}_\varphi(S)$ that arise from reducible representations of $\pi_1(M_\varphi).$ The proof of \Cref{pro:intersection_red_irrep} shows that $\overline{r} \co \overline{X}^{irr}(M_\varphi) \to \overline{X}_\varphi(S)$ is surjective.

The proof of \Cref{cor:iso for b1=1} and the statement of \Cref{pro:dimension_bound} now give the following:

\begin{proposition}
\label{pro:diagram_rank1}
Suppose $M_\varphi$ is a hyperbolic once-punctured torus bundle with $b_1(\varphi)=1.$ 
Then we have the following commutative diagram of surjective maps whose degrees are indicated in the diagram:
\[
\xymatrix{
X^{\text{irr}}(M_\varphi) \ar[d]^{2:1}_{q_1} \ar[r]^{r}_{2:1} 	& X_\varphi(S) \ar[d]^{1:1}_{q_2} \ar@{}[r]|-*[@]{\subset} 	& X(S) \ar[d]^{4:1}_{q_2} \ar@{}[r]|-*[@]{\cong} & \mathbb{C}^3\\
\overline{X}^{\text{irr}}(M_\varphi) \ar[r]^{\overline{r}}_{1:1} 		& \overline{X}_\varphi(S)  \ar@{}[r]|-*[@]{\subset}	& \overline{X}(S)	
}		
\]
In particular, every component of $\overline{X}^{\text{irr}}(M_\varphi)$ is one-dimensional.
\end{proposition}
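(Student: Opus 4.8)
The plan is to reuse the argument already given in the proof of \Cref{cor:iso for b1=1} to pin down every map in the square, and then to read off the dimension claim from \Cref{pro:dimension_bound}. The hypothesis $b_1(\varphi)=1$ enters in exactly three places, one for each of the maps $q_1$, $q_2|_{X_\varphi(S)}$ and $\overline{r}$ whose degree drops relative to the general situation of \Cref{cor:diagram}.

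First I would record the consequences of $b_1(\varphi)=1$. Since $H_1(M_\varphi;\mathbb{Z}_2)\cong\mathbb{Z}_2$, the discussion in \Cref{sec:basic_char} gives $H^2(\pi_1(M_\varphi);\mathbb{Z}_2)\cong 0$, so every $\PSL(2,\mathbb{C})$--representation of $\pi_1(M_\varphi)$ lifts to $\SL(2,\mathbb{C})$; hence $q_\star\co X(M_\varphi)\to\overline{X}(M_\varphi)$ is surjective, and so is its restriction $q_1\co X^{\text{irr}}(M_\varphi)\to\overline{X}^{\text{irr}}(M_\varphi)$. Moreover $b_1(\varphi)=1$ forces $H=0$ and hence $H^1(M_\varphi,\mathbb{Z}_2)=\langle\varepsilon\rangle$, so $q_1$ is the quotient by $\langle\varepsilon\rangle$; by \Cref{thm:main} the fibres of $r$ are exactly these $\langle\varepsilon\rangle$--orbits and $r$ has degree $2$, whence a generic $\langle\varepsilon\rangle$--orbit has two elements and $q_1$ also has degree $2$. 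The degree of $q_2\co X(S)\to\overline{X}(S)$ is $4$ by \Cref{sec:charF2}, and --- applying \Cref{cor:diagram} with $b_1(\varphi)=1\neq 2$ --- the orbits of $H^1(S,\mathbb{Z}_2)$ on $X_\varphi(S)$ coincide with the orbits of the trivial group $H$, so $q_2$ restricted to $X_\varphi(S)$ has degree $1$.

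Next I would assemble the square. Commutativity of $\overline{r}\circ q_1=q_2\circ r$ is inherited from the naturality of the quotient maps, and surjectivity of $\overline{r}$ onto $\overline{X}_\varphi(S)$ was noted before the statement; together with the surjectivity of $q_1$, $r$ and $q_2|_{X_\varphi(S)}$ this makes all four labelled maps surjective. Comparing degrees around the square, $\deg\overline{r}\cdot\deg q_1=\deg(q_2|_{X_\varphi(S)})\cdot\deg r$ forces $\deg\overline{r}=1\cdot 2/2=1$; equivalently, since the fibres of $r$ are the $\langle\varepsilon\rangle$--orbits collapsed by $q_1$ and $q_2|_{X_\varphi(S)}$ is injective, $\overline{r}$ is the bijection $X^{\text{irr}}(M_\varphi)/\langle\varepsilon\rangle\to\overline{X}_\varphi(S)$ induced by $r$. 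This gives the diagram with the stated degrees. For the final assertion, $q_1$ is a finite surjective ($2:1$) morphism and every Zariski component of $X^{\text{irr}}(M_\varphi)$ is one-dimensional by \Cref{pro:dimension_bound}; hence $\overline{X}^{\text{irr}}(M_\varphi)=q_1(X^{\text{irr}}(M_\varphi))$ is a finite union of irreducible closed sets of dimension one, and so is equidimensional of dimension one.

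I expect the only genuinely delicate input to be the degree of $q_2|_{X_\varphi(S)}$, i.e. the claim that the Klein four-group $H^1(S,\mathbb{Z}_2)$ acts on the fixed-point set $X_\varphi(S)$ with singleton orbits when $b_1(\varphi)=1$. I would deduce this from \Cref{cor:diagram}; it can also be checked directly, by observing that a nontrivial element of $H^1(S,\mathbb{Z}_2)$ fixing an irreducible character of $S$ would force that character to be binary dihedral, and then ruling out by a short computation in $\mathbb{C}^3=X(S)$ that two distinct points of $X_\varphi(S)$ share an $H^1(S,\mathbb{Z}_2)$--orbit.
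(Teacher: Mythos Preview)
Your proposal is correct and follows essentially the same route as the paper: the paper simply notes that the proposition is obtained from the proof of \Cref{cor:iso for b1=1} (which uses the vanishing obstruction when $b_1(\varphi)=1$ together with the diagram of \Cref{cor:diagram}) and the dimension bound of \Cref{pro:dimension_bound}. Your write-up spells out the degree count around the square and the surjectivity of $q_1$ in more detail than the paper does, but the ingredients and their assembly are the same.
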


For the bundles with $b_1(\varphi)>1,$ we have the following to consider.
Let $\overline{\rho}\co \pi_1(S) \to \PSL(2, \mathbb{C})$ be a representation with $\chi_{\prho} \in \overline{X}_\varphi(S).$ We call the characters in $\overline{r}^{-1}(\chi_{\overline{\rho}})$ \textbf{extensions} of $\chi_{\overline{\rho}},$ and similar for representations. Any two extensions of $\prho$ to $\pi_1(M_\varphi)$ differ by an element in the centraliser $C(\im(\overline{\rho})).$
It is well-known (see \cite{Heusener-varieties-2004}) that if $\prho$ is irreducible then
the centraliser $C(\im(\overline{\rho}))$ is either trivial, or cyclic of order two, or isomorphic to the Klein four group. This is unlike the situation for $\SL(2,\mathbb{C}),$ where the centraliser of an irreducible representation is always central. 
In terms of our coordinate system for $\overline{X}(S)$ these possibilities are given in \Cref{sec:PSLorigin_axes}.
We start with the following preliminary observation.

\begin{lemma}
\label{lem:H-orbits or not}
Suppose $\chi_\rho, \chi_\sigma \in X_\varphi(S)$ are in the same $H^1(S, \mathbb{Z}_2)$--orbit, and that $\chi_\rho$ is irreducible. 
If $\chi_\rho$ and $\chi_\sigma$ are in the same $H$--orbit, then $q_1r^{-1}(\chi_\rho)=q_1r^{-1}(\chi_\sigma).$ If they are in distinct $H$--orbits, then $q_1r^{-1}(\chi_\rho)\neq q_1r^{-1}(\chi_\sigma).$ Moreover, $q_1r^{-1}(\chi_\rho)$ and $q_1r^{-1}(\chi_\sigma)$ are the characters of representations into $ \PSL(2,\mathbb{C})$ that agree on $\pi_1(S)$ and whose image of $t$ differs by a non-trivial element in the centraliser of the image of $\pi_1(S).$
\end{lemma}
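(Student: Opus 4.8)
The plan is to work entirely on the level of representations, using the identifications established in \Cref{sec:basic_char} between $q_1$ and the quotient by $H^1(M_\varphi,\mathbb{Z}_2)=\langle\varepsilon\rangle\oplus H$, together with the analysis of the fibres of $r$ from \Cref{lem:irreps in fixed point set} and \Cref{cor:irreps in fixed point set}. Fix an irreducible representation $\rho\colon\pi_1(S)\to\SL(2,\mathbb{C})$ with $\chi_\rho\in X_\varphi(S)$. By \Cref{lem:irreps in fixed point set} there is a $T\in\SL(2,\mathbb{C})$, unique up to sign, with $T^{-1}\rho(\gamma)T=\rho(\varphi_*(\gamma))$ for all $\gamma$, and the two extensions of $\chi_\rho$ over $\pi_1(M_\varphi)$ are the characters of the representations $\tilde\rho_{\pm}$ defined by $\tilde\rho_{\pm}(t)=\pm T$, $\tilde\rho_\pm|_{\pi_1(S)}=\rho$; these two characters form a single $\langle\varepsilon\rangle$-orbit. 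Suppose now $\chi_\sigma$ lies in the same $H^1(S,\mathbb{Z}_2)$-orbit as $\chi_\rho$, say $\sigma=h\cdot\rho$ for some $h\in H^1(S,\mathbb{Z}_2)$. The first step is to observe that $\sigma$ also satisfies $\sigma(\varphi_*(\gamma))=T_\sigma^{-1}\sigma(\gamma)T_\sigma$ for the \emph{same} matrix $T$ (since the twisting character $h$ is $\varphi_*$-invariant when $\chi_\sigma\in X_\varphi(S)$), so the extensions of $\chi_\sigma$ are the characters of $\widetilde\sigma_\pm$ with $\widetilde\sigma_\pm(t)=\pm T$ and $\widetilde\sigma_\pm|_{\pi_1(S)}=\sigma$.

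The key step is then to compare $\widetilde\rho_\pm$ and $\widetilde\sigma_\pm$ as representations of $\pi_1(M_\varphi)$. If $h$ extends to an element $\hat h\in H\le H^1(M_\varphi,\mathbb{Z}_2)$ (equivalently, $\chi_\rho$ and $\chi_\sigma$ lie in the same $H$-orbit), then $\widetilde\sigma_+ = \hat h\cdot\widetilde\rho_+$, so $\chi_{\widetilde\sigma_+}$ and $\chi_{\widetilde\rho_+}$ lie in the same $H^1(M_\varphi,\mathbb{Z}_2)=\langle\varepsilon\rangle\oplus H$-orbit, giving $q_1r^{-1}(\chi_\rho)=q_1r^{-1}(\chi_\sigma)$. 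Conversely, if $\chi_\rho$ and $\chi_\sigma$ lie in distinct $H$-orbits but the same $H^1(S,\mathbb{Z}_2)$-orbit, then $h\in H^1(S,\mathbb{Z}_2)$ does \emph{not} lift to $H$; using the splitting $H^1(M_\varphi,\mathbb{Z}_2)=\langle\varepsilon\rangle\oplus H$ and the fact that restriction to $\pi_1(S)$ sends $\varepsilon$ to the trivial character and sends $H$ onto the image of $H^1(M_\varphi,\mathbb{Z}_2)$ in $H^1(S,\mathbb{Z}_2)$, one checks that no element of $H^1(M_\varphi,\mathbb{Z}_2)$ carries $\widetilde\rho_\pm$ to $\widetilde\sigma_\pm$ (any such element would restrict to $h$ on $\pi_1(S)$, forcing it to lie in an $H$-coset mapping to $h$, which is empty). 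Hence $q_1r^{-1}(\chi_\rho)\neq q_1r^{-1}(\chi_\sigma)$.

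For the final sentence, one reads off from the construction that $\widetilde\rho_+$ and $\widetilde\sigma_+$ descend to $\PSL(2,\mathbb{C})$-representations $\overline{\widetilde\rho}_+,\overline{\widetilde\sigma}_+$ whose restrictions to $\pi_1(S)$ are the projectivisation of $\rho$ and $\sigma$; but $\sigma$ differs from $\rho$ only by the $\mathbb{Z}_2$-valued character $h$, so projectively $\overline{\widetilde\sigma}_+|_{\pi_1(S)}=\overline{\widetilde\rho}_+|_{\pi_1(S)}$. They send $t$ to $\pm T$ and $\pm h(t)T$ respectively; when $\chi_\rho,\chi_\sigma$ are in distinct $H$-orbits the element realizing the difference is a non-central element of $\SL(2,\mathbb{C})$ commuting with $\rho(\pi_1(S))$ — explicitly one of the order-four elements $\kappa_i$ of the quaternionic group when the image of $\pi_1(S)$ is the Klein four group (cf.\ \Cref{sec:PSLorigin_axes}) — whose image in $\PSL(2,\mathbb{C})$ is a nontrivial element of the centraliser of $\overline\rho(\pi_1(S))$. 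The main obstacle I anticipate is bookkeeping the sign ambiguities carefully: verifying that the \emph{same} $T$ works for $\sigma$ as for $\rho$, and that the passage between $\SL$- and $\PSL$-twisting does not introduce a spurious extra orbit, which is exactly the subtlety that produces the degree-two phenomenon when $b_1(\varphi)=2$ flagged after \Cref{cor:diagram}.
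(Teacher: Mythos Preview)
Your overall architecture matches the paper's proof: choose $T$ for $\rho$, verify that for $h\in H$ the same $T$ extends $\sigma=h\cdot\rho$, and for the converse argue that any element of $H^1(M_\varphi,\mathbb{Z}_2)$ identifying the extensions would restrict on $\pi_1(S)$ to an element of $H$ sending $\chi_\rho$ to $\chi_\sigma$. That part is fine.

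There are two genuine slips. First, the parenthetical ``since the twisting character $h$ is $\varphi_*$-invariant when $\chi_\sigma\in X_\varphi(S)$'' is not a valid justification. From $\chi_\rho,\chi_\sigma\in X_\varphi(S)$ you only get $h(a)\chi_\rho(a)=h(\varphi(a))\chi_\rho(a)$ and the analogues for $b,ab$; this forces $h=h\circ\varphi$ only when none of $\chi_\rho(a),\chi_\rho(b),\chi_\rho(ab)$ vanishes, i.e.\ when $\chi_\rho$ lies off the coordinate axes. In the same-$H$-orbit case this is harmless because you may simply \emph{choose} $h\in H$, which is $\varphi$-invariant by definition, and then your computation $T^{-1}\sigma T=\sigma\circ\varphi$ goes through verbatim (this is exactly what the paper does). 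In the distinct-$H$-orbit case the same $T$ need not work for $\sigma$, but your argument in the second paragraph does not actually require it: whatever $T'$ extends $\sigma$, the restriction-to-$\pi_1(S)$ trick still rules out an $H^1(M_\varphi,\mathbb{Z}_2)$-identification.

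Second, in your final paragraph the expression ``$h(t)$'' is meaningless: $h$ lives in $H^1(S,\mathbb{Z}_2)$, not $H^1(M_\varphi,\mathbb{Z}_2)$. You cannot read off the difference of the images of $t$ from $h$ alone. The clean way (which the paper uses) is to argue via the commutative diagram: since $q_2(\chi_\rho)=q_2(\chi_\sigma)$, one has $\overline r\big(q_1r^{-1}(\chi_\rho)\big)=\overline r\big(q_1r^{-1}(\chi_\sigma)\big)$, so the two $\PSL$-representations may be taken to literally agree on $\pi_1(S)$; then both images of $t$ conjugate $\overline\rho$ to $\overline{\rho\circ\varphi}$, hence differ by an element of the $\PSL$-centraliser of $\overline\rho(\pi_1(S))$, and this element is non-trivial because the two $\PSL$-characters were already shown to be distinct. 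Your attempt to name the centraliser element explicitly as a $\kappa_i$ only covers the origin and is not needed.
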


\begin{proof}
Let $\rho\co\pi_1(S)\to \SL(2,\mathbb{C})$ be a representation with character $\chi_\rho.$ Then there is $h\in H^1(S, \mathbb{Z}_2)$ such that the character of $\sigma(\gamma) =  h(\gamma) \rho(\gamma)$ is $\chi_\sigma.$

Since $\chi_\rho \in X_\varphi(S),$ there is $T \in \SL(2,\mathbb{C})$ such that $T^{-1}\rho(\gamma)T = \rho(\varphi(\gamma))$ for all $\gamma \in \pi_1(S).$ In particular, $\rho$ extends to a representation of $\pi_1(M_\varphi)$ into $\SL(2,\mathbb{C})$ by letting $\rho(t) = T$ or $\rho(t) = -T$ and these are the only extensions according to \Cref{lem:irreps in fixed point set}. Note that these extensions are in the same $\langle \varepsilon \rangle$--orbit.

If $h \in H,$ then $h(\gamma) = h(\varphi(\gamma))$ for all $\gamma \in \pi_1(S).$ Hence 
\[
T^{-1}\sigma(\gamma)T = T^{-1}h(\gamma)\rho(\gamma)T =h(\gamma)T^{-1}\rho(\gamma)T =  h(\gamma)\rho(\varphi(\gamma))
=  h(\varphi(\gamma))\rho(\varphi(\gamma)) = \sigma(\varphi(\gamma))
\]
for all $\gamma \in \pi_1(S).$ So as above, $\sigma$ extends to a representation of $\pi_1(M_\varphi)$ by letting $\sigma(t) = T$ or $\sigma(t) = -T$. Hence the extensions of $\rho$ are in the same  $H^1(M_\varphi, \mathbb{Z}_2)$--orbit as the extensions of $\sigma.$ This implies 
$q_1r^{-1}(\chi_\rho)=q_1r^{-1}(\chi_\sigma).$

Suppose there is no $h\in H^1(S, \mathbb{Z}_2)$ such that the character of $\sigma(\gamma) =  h(\gamma) \rho(\gamma)$ is $\chi_\sigma.$ Then $q_1r^{-1}(\chi_\rho)\neq q_1r^{-1}(\chi_\sigma).$ Since 
$q_2(\chi_\rho)=q_2(\chi_\sigma),$ we have 
$\overline{r}q_1r^{-1}(\chi_\rho)= q_2(\chi_\rho)=q_2(\chi_\sigma)=\overline{r}q_1r^{-1}(\chi_\sigma).$
So $q_1r^{-1}(\chi_\rho)$ and $q_1r^{-1}(\chi_\sigma)$ are the characters of non-conjugate representations that agree on $\pi_1(S).$ Hence the conclusion.
\end{proof}


\subsection{The origin and three coordinate axes (revisited)}
\label{sec:PSLorigin_axes}

Let $\overline{\rho}\co \pi_1(S) \to \PSL(2, \mathbb{C})$ be an irreducible representation.
We have $C(\im(\overline{\rho}))\cong \mathbb{Z}_2\oplus\mathbb{Z}_2$ if and only if $\chi_{\prho} = (0,0,0,0)\in \overline{X}(S).$ For every $\varphi$, we have $(0,0,0) \in X_\varphi(S)$ and hence $(0,0,0,0)\in \overline{X}_\varphi(S).$
Up to conjugation, we may assume
\begin{equation}\label{eq:paramK4}
    \overline{\rho}(a) = \pm\begin{pmatrix}
    0&1\\-1&0
    \end{pmatrix} = \kappa_1
    \qquad
    \overline{\rho}(b) = \pm\begin{pmatrix}
    0&i\\i&0
    \end{pmatrix}= \kappa_2
     \qquad\
    \overline{\rho}(ab) = \pm\begin{pmatrix}
    i&0\\0&-i
    \end{pmatrix}= \kappa_3
\end{equation}
These M\"obius transformations represent rotations by $\pi$ with respective axes $[-i, i]$, $[-1,1]$ and $[0,\infty],$ and we have  $C(\im(\overline{\rho})) = \im(\overline{\rho}).$

\begin{lemma}
\label{lem:K4_number_of_lifts}
If $\overline{\rho}\co \pi_1(S) \to \PSL(2, \mathbb{C})$ is irreducible and
$|C(\im(\overline{\rho}))|=4,$ then 
$|\overline{r}^{-1}(\chi_{\overline{\rho}})| = 2^{b_1(\varphi)-1}$. 
Moreover, exactly of the characters in $\overline{r}^{-1}(\chi_{\overline{\rho}})$ lifts to $X(M_\varphi).$
For each representation of $\pi_1(M_\varphi)$ with character in $\overline{r}^{-1}(\chi_{\overline{\rho}}),$ the image of the longitude is always trivial; the image of the meridian has order two or four if $b_1(\varphi)=2$ and it has order two or is trivial if $b_1(\varphi)=3.$
\end{lemma}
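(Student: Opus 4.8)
The plan is to work with the normal form \Cref{eq:paramK4}, so that $G:=\im\overline{\rho}$ is the Klein four group $\{1,\kappa_1,\kappa_2,\kappa_3\}$ and $C(G)=G$. Since the longitude is $\lambda=[a,b]$ and $G$ is abelian, every representation of $\pi_1(M_\varphi)$ restricting (up to conjugacy) to $\overline{\rho}$ kills $\lambda$, which gives the statement about the longitude at once. Throughout write $\varphi_V\in\operatorname{GL}(2,\mathbb{Z}_2)\cong\operatorname{Aut}(G)$ for the automorphism of $G\cong H_1(S;\mathbb{Z}_2)$ induced by $\varphi$; its order is $o(\varphi_2)$, and recall $b_1(\varphi)=4-o(\varphi_2)$.

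First I would parametrise the extensions. As $\chi_{\overline{\rho}}\in\overline{X}_\varphi(S)$ and $\overline{\rho}$ is irreducible, there is $\overline{T}\in\PSL(2,\mathbb{C})$ with $\overline{T}^{-1}\overline{\rho}(\gamma)\overline{T}=\overline{\rho}(\varphi(\gamma))$ for all $\gamma\in\pi_1(S)$ (as in \Cref{lem:irreps in fixed point set}); conjugation by $\overline{T}^{-1}$ realises $\varphi_V$ on $G$. A representation of $\pi_1(M_\varphi)$ restricting to $\overline{\rho}$ is determined by the image of $t$, which must lie in $\overline{T}\,C(G)=\overline{T}G$, so the extensions are exactly $\overline{\rho}_\kappa$ ($\kappa\in G$) with $\overline{\rho}_\kappa(t)=\kappa\overline{T}$. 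These are irreducible, so two have the same character iff conjugate, and the conjugating element must lie in $C(G)=G$; a short computation shows $\overline{\rho}_\kappa\sim\overline{\rho}_{\kappa'}$ iff $\kappa\kappa'^{-1}\in B:=\operatorname{im}(\varphi_V-1)=\operatorname{im}(\varphi_V^{-1}-1)\subseteq G$. Hence $\overline{r}^{-1}(\chi_{\overline{\rho}})$ is in bijection with $G/B$, of order $|G|/|B|=|\ker(\varphi_V-1)|=|\operatorname{Fix}(\varphi_V)|$. Running through the three conjugacy classes of $\operatorname{GL}(2,\mathbb{Z}_2)\cong\operatorname{Sym}(3)$ gives $|\operatorname{Fix}(\varphi_V)|\in\{2^2,2^1,2^0\}$ according as $o(\varphi_2)\in\{1,2,3\}$, i.e. $|\overline{r}^{-1}(\chi_{\overline{\rho}})|=2^{\,b_1(\varphi)-1}$.

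For the lifting count, fix a lift $\rho_S\co\pi_1(S)\to Q_8\subset\SL(2,\mathbb{C})$ of $\overline{\rho}|_{\pi_1(S)}$ (possible since $\pi_1(S)$ is free) and a lift $\widetilde{T}\in\SL(2,\mathbb{C})$ of $\overline{T}$. Then $\widetilde{T}^{-1}\rho_S(\gamma)\widetilde{T}=\epsilon(\gamma)\rho_S(\varphi(\gamma))$ defines a homomorphism $\epsilon\in H^1(S;\mathbb{Z}_2)=G^\ast$, and conjugation of $Q_8$ by a fixed lift of $\kappa_i$ defines a homomorphism $\delta_{\kappa_i}\in G^\ast$; one checks $\kappa\mapsto\delta_\kappa$ is the isomorphism $\Phi\co G\xrightarrow{\sim}G^\ast$ induced by the nondegenerate alternating mod-$2$ intersection form on $H_1(S;\mathbb{Z}_2)$. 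Replacing $\rho_S$ by $h\rho_S$ ($h\in G^\ast$) changes $\epsilon$ by $(\varphi_V^\ast-1)h$, and a direct calculation with the HNN relation shows $\overline{\rho}_\kappa$ lifts to $\SL(2,\mathbb{C})$ iff $\delta_\kappa\equiv\epsilon\pmod{\operatorname{im}(\varphi_V^\ast-1)}$ (the coset $\epsilon+\operatorname{im}(\varphi_V^\ast-1)$ being independent of the choices made). Now $\operatorname{GL}(2,\mathbb{Z}_2)=\operatorname{Sp}(2,\mathbb{Z}_2)$, so $\varphi_V$ preserves the intersection form, $\Phi$ conjugates $\varphi_V$ to $(\varphi_V^\ast)^{-1}$, and hence $\Phi(B)=\operatorname{im}(\varphi_V^\ast-1)$. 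Therefore the set of $\kappa\in G$ for which $\overline{\rho}_\kappa$ lifts is exactly the $B$-coset $\Phi^{-1}(\epsilon)+B$, which is nonempty since $\Phi$ is onto. Thus exactly one $B$-coset, i.e. exactly one character in $\overline{r}^{-1}(\chi_{\overline{\rho}})$, lifts. I expect this step — identifying the sign homomorphism with the intersection form and matching the ``liftable coset'' with the ``conjugacy coset'' $B$ — to be the main obstacle; it is what pins the number of liftable characters to exactly one rather than merely at most $2^{\,b_1(\varphi)-1}$.

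Finally, for the order of the meridian when $b_1(\varphi)\in\{2,3\}$ (equivalently $o(\varphi_V)\mid 2$) I would use $(\kappa\overline{T})^2=\kappa\,\varphi_V^{-1}(\kappa)\,\overline{T}^2$. Since $\overline{T}^2$ centralises $G$ (conjugation by it is $\varphi_V^{-2}=\mathrm{id}$) we have $\overline{T}^2\in C(G)=G$, and as it commutes with $\overline{T}$ it lies in $\operatorname{Fix}(\varphi_V)$; likewise $\kappa\varphi_V^{-1}(\kappa)\in\operatorname{Fix}(\varphi_V)$, so $(\kappa\overline{T})^2\in\operatorname{Fix}(\varphi_V)$. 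If $b_1(\varphi)=3$ then $\varphi_V=\mathrm{id}$, so $\overline{T}\in C(G)=G$ and $\overline{\rho}_\kappa(t)=\kappa\overline{T}$ runs through all of $G$ as $\kappa$ does; it has order $1$ or $2$, and the trivial value is attained. If $b_1(\varphi)=2$ then $\varphi_V$ is a transposition, so $\operatorname{Fix}(\varphi_V)=\{1,\nu\}$ for a single non-trivial $\nu$, hence $(\kappa\overline{T})^2\in\{1,\nu\}$; moreover $\kappa\overline{T}=1$ is impossible (otherwise $\varphi_V$ would be trivial), so $\overline{\rho}_\kappa(t)$ has order $2$ or $4$, and checking the two $B$-cosets shows both values occur. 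This completes the plan.
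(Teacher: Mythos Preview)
Your argument is correct and takes a genuinely different, more structural route than the paper. Both proofs note immediately that $\lambda=[a,b]$ dies since $G$ is abelian. After that the two diverge.

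The paper proceeds case by case on $o(\varphi_2)\in\{1,2,3\}$. For the lifting claim it uses the one-line observation that the preimage of $(0,0,0,0)$ under $q_2\co X(S)\to\overline X(S)$ is the single point $(0,0,0)\in X_\varphi(S)$, so any $\SL(2,\mathbb{C})$--lift of an extension has character $(0,0,0)$ and hence is determined up to sign by \Cref{lem:irreps in fixed point set}; this pins down a unique liftable $\PSL(2,\mathbb{C})$--character without any further work. The counts and the meridian orders are then read off from explicit matrices for $\overline{\rho}(t)$ and the quadruples in \Cref{eq:PSL_characters}.

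Your proof instead packages everything uniformly through $B=\operatorname{im}(\varphi_V-1)\le G$: the extension characters are $G/B$, giving $|\operatorname{Fix}(\varphi_V)|=2^{\,b_1(\varphi)-1}$ in one stroke; the liftable extensions form a single $B$-coset via the symplectic self-duality $\Phi\co G\xrightarrow{\sim}G^\ast$ carrying $B$ to $\operatorname{im}(\varphi_V^\ast-1)$; and the meridian order is read from $(\kappa\overline T)^2=\kappa\,\varphi_V^{-1}(\kappa)\,\overline T^2\in\operatorname{Fix}(\varphi_V)$. One small point worth making explicit in your write-up for $b_1(\varphi)=2$: with $\varphi_V$ the transposition fixing $\nu$, one has $\kappa\,\varphi_V^{-1}(\kappa)=1$ for $\kappa\in B$ and $=\nu$ for $\kappa\notin B$, so the two $B$-cosets give $(\kappa\overline T)^2=\overline T^2$ and $\nu\overline T^2$ respectively; since these are distinct elements of $\{1,\nu\}$, exactly one coset yields order $2$ and the other order $4$. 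With that line added, your ``both values occur'' is fully justified.

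What each approach buys: your argument exposes why $B=\operatorname{im}(\varphi_V-1)$ governs both the conjugacy classes of extensions and the lifting obstruction, and avoids any coordinate computations; the paper's approach is shorter on the lifting step (your symplectic-form calculation is doing more work than needed, given that \Cref{lem:irreps in fixed point set} already supplies the unique $\SL(2,\mathbb{C})$--extension) and gives concrete values of the invariant quadruples that are reused in \Cref{lem:lines_in_PSL} and \Cref{pro:degrees_PSLrestriction}.
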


\begin{proof}
Since $(0,0,0)\in {X}_\varphi(S)$ is the unique preimage of $\chi_{\prho}$ in ${X}_\varphi(S)$, it follows that any two lifts  to $\SL(2, \mathbb{C})$ of the Klein four group are conjugate in $\SL(2, \mathbb{C})$. Hence the uniqueness up to sign in \Cref{lem:irreps in fixed point set} implies that exactly one of the characters in $\overline{r}^{-1}(\chi_{\overline{\rho}})$ lifts to $X(M_\varphi).$ 

Note that the image of the longitude is the commutator of $\kappa_1$ and $\kappa_2$ and hence trivial.

If $b_1(\varphi)=3,$ then we have $\varphi(\overline{\rho}(a)) = \overline{\rho}(a)$ and 
$\varphi(\overline{\rho}(b)) = \overline{\rho}(b)$ and hence 
$\overline{\rho}(t) \in C(\im(\overline{\rho})).$ This implies that $\overline{\rho}$ extends to $\pi_1(M_\varphi)$ 
with 
$\overline{\rho}(t)$ any element in the centraliser.
The values taken in \Cref{eq:PSL_characters} are $(4, 0, 0, 0)$, $(0, 4, 0, 0)$, $(0, 0, 4, 0)$, $(0, 0, 0, 4)$. Hence, no two of these four extensions are conjugate, but all have image equal to $\im(\overline{\rho}).$ \Cref{exa:A2B-2} shows that the character that lifts depends on $\varphi.$ The four extensions are characterised by $\prho(t) = 1,$ $\prho(t) = \prho(a),$ $\prho(t) = \prho(b),$ and $\prho(t) = \prho(ab)$ respectively. This proves the claim about the order.
We call these representations the \textbf{four extensions of the Klein four group} and denote them respectively by $\prho_0,$  $\prho_1,$  $\prho_2,$  $\prho_{3}$ and their characters by $\overline{\chi}_0,$  $\overline{\chi}_1,$  $\overline{\chi}_2,$  $\overline{\chi}_{3}.$ Note that there is a natural action of the Klein four group on these characters defined by $\kappa_i \cdot \overline{\chi}_0 = \overline{\chi}_i.$

If $b_1(\varphi)=2,$ then $\varphi$ interchanges two of $\overline{\rho}(a)$, $\overline{\rho}(b)$, $\overline{\rho}(ab)$ and fixes the third. Without loss of generality, assume $\varphi(\overline{\rho}(a))=\overline{\rho}(b).$ Then 
a direct calculation shows that $\overline{\rho}$ extends to a representation of $\pi_1(M_\varphi)$ by letting
	\begin{equation}
		\overline{\rho}(t) = \pm\begin{pmatrix}
			    0&\sqrt{i}\\i \sqrt{i}&0
    		\end{pmatrix},
   		 \qquad
    \overline{\rho}(a) = \kappa_1
    \qquad
    \overline{\rho}(b) = \kappa_2 
    \end{equation}
    The remaining extensions are obtained from this by twisting by the elements $\kappa_i.$
The above extension is conjugate to the twisted representation $t \mapsto \kappa_3\overline{\rho}(t)$ via $\kappa_1$, and their values taken in \Cref{eq:PSL_characters} are $(0, 2, 2, 0).$ Similarly, the two representations obtained by twisting $\prho$ by $\kappa_1$ or $\kappa_2$ are conjugate via $\kappa_1,$ and their values taken in \Cref{eq:PSL_characters} are $(2, 0, 0, 2);$ for instance
\[
t \mapsto \kappa_1\overline{\rho}(t) = 
\pm\begin{pmatrix}
			   i\sqrt{i}&0\\0&-\sqrt{i}
    		\end{pmatrix}
\]
Hence we obtain two non-conjugate extensions and the order of the meridian is as claimed.

If $b_1(\varphi)=1,$ then any extension lifts to $\SL(2, \mathbb{C})$ and hence any two extensions are conjugate. 
\end{proof}

Let $\overline{\rho}\co \pi_1(S) \to \PSL(2, \mathbb{C})$ be an irreducible representation.
We have $C(\im(\overline{\rho}))\cong \mathbb{Z}_2$ if and only if $0 \neq \chi_{\prho}\in\overline{L}_i,$ where 
\begin{align}
\overline{L}_1 &= \{ (x^2, 0, 0, 0 ) \mid x \in \mathbb{C} \} \subset \overline{X}(S)\\
\overline{L}_2 &= \{ (0, y^2, 0, 0 ) \mid y \in \mathbb{C} \} \subset \overline{X}(S)\\
\overline{L}_3 &= \{ (0, 0, z^2, 0 ) \mid z \in \mathbb{C} \} \subset \overline{X}(S)
\end{align}
are the images of the coordinate axes in $X(S).$ The points $(4, 0, 0, 0 )$, $(0, 4, 0, 0 )$ and $(0, 0, 4, 0 )$ are the only reducible characters on these lines. 
In $\overline{R}(S)$ we may choose for each $\overline{L}_i$ a curve of representations that have constant centraliser, and such that the curves meet in the representation given in \Cref{eq:paramK4}.

Write $(4p^2, 0, 0, 0 ) = (x, 0, 0, 0 )\in \overline{L}_1$ and let $q\in \mathbb{C}$ such that $p^2+q^2=1$. We may then choose
\begin{align}\label{eq:param_of_PL1}
    \overline{\rho}_1(a) = \begin{pmatrix}
    q&-p\\p&q
    \end{pmatrix}\kappa_1,
    \qquad
    \overline{\rho}_1(b) = \kappa_2,
    \qquad\text{and hence}\qquad
    C(\im(\overline{\rho}_1)) = \langle \kappa_1 \rangle
 \end{align}
Write $(0,-4p^2, 0, 0, 0 ) = (0, y, 0, 0 )\in \overline{L}_2$ and let $q\in \mathbb{C}$ such that $p^2-q^2=1$. We may then choose
\begin{align}\label{eq:param_of_PL2}
    \overline{\rho}_2(a) = \kappa_1
    \qquad
    \overline{\rho}_2(b) =  \kappa_2\begin{pmatrix}
    q&p\\p&q
    \end{pmatrix},
    \qquad\text{and hence}\qquad
    C(\im(\overline{\rho}_3)) = \langle \kappa_2 \rangle
 \end{align}
Let $(0, 0, (p+p^{-1})^2, 0 ) = (0, 0, z, 0 )\in \overline{L}_3.$
We may choose
\begin{align}\label{eq:param_of_PL3}
    \overline{\rho}_3(a) = \kappa_1
    \qquad
    \overline{\rho}_3(b) =  \kappa_2\begin{pmatrix}
    ip&0\\0&-ip^{-1}
    \end{pmatrix},
    \qquad\text{and hence}\qquad
    C(\im(\overline{\rho}_3)) = \langle \kappa_3 \rangle
 \end{align}

\begin{lemma}
\label{lem:centraliser_order2}
Suppose $\overline{\rho}\co \pi_1(S) \to \PSL(2, \mathbb{C})$ is irreducible.
If $\chi_{\overline{\rho}} \in \im (\overline{r})$ and $|C(\im(\overline{\rho}))|=2$, then $|\overline{r}^{-1}(\chi_{\overline{\rho}})| = 2$ and $b_1(\varphi)>1.$
Moreover, either no character in $\overline{r}^{-1}(\chi_{\overline{\rho}})$ lifts to $X(M_\varphi)$; or exactly one lifts and $b_1(\varphi)=3;$ or two of them lift and $b_1(\varphi)=2$.
\end{lemma}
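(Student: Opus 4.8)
The plan is to mirror the structure of the proof of \Cref{lem:K4_number_of_lifts}, working with the explicit parametrisations in \Cref{eq:param_of_PL1}, \Cref{eq:param_of_PL2}, \Cref{eq:param_of_PL3}. Since $|C(\im(\overline{\rho}))| = 2$, we have $0 \neq \chi_{\overline{\rho}} \in \overline{L}_i$ for exactly one $i$, and we may assume $\overline{\rho} = \overline{\rho}_i$ as above, so that $C(\im(\overline{\rho})) = \langle \kappa_i\rangle$. Because $\chi_{\overline{\rho}} \in \im(\overline{r})$, the representation $\overline{\rho}$ extends to $\pi_1(M_\varphi)$; any two extensions differ by an element of $C(\im(\overline{\rho})) = \{1, \kappa_i\}$, so $|\overline{r}^{-1}(\chi_{\overline{\rho}})| \le 2$. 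First I would show the fibre has size exactly $2$, by checking that the two extensions $\overline{\rho}(t) = \pm T$ and $\overline{\rho}(t) = \pm \kappa_i T$ are not conjugate: their characters are distinguished by the values taken in \Cref{eq:PSL_characters}, exactly as in the $b_1(\varphi)=2$ case of \Cref{lem:K4_number_of_lifts} — twisting by $\kappa_i$ permutes the four squared traces $(\tr\overline{\rho}(t))^2, (\tr\overline{\rho}(ta))^2, (\tr\overline{\rho}(tb))^2, (\tr\overline{\rho}(tab))^2$ nontrivially, and one checks directly from the matrix forms that the two resulting $4$-tuples differ. This also forces $b_1(\varphi) > 1$: if $b_1(\varphi)=1$ then $H^2(\pi_1(M_\varphi);\mathbb{Z}_2) = 0$ (see \Cref{sec:basic_char}), so every projective representation lifts to $\SL(2,\mathbb{C})$, and by \Cref{lem:irreps in fixed point set} the lift's extension is unique up to sign, hence $|\overline{r}^{-1}(\chi_{\overline{\rho}})| = 1$, a contradiction.

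Next I would analyse which extensions lift to $X(M_\varphi)$. A character $\overline{\chi} \in \overline{r}^{-1}(\chi_{\overline{\rho}})$ lifts if and only if the corresponding Stiefel–Whitney obstruction $w_2 \in H^2(\pi_1(M_\varphi);\mathbb{Z}_2)$ vanishes; since $M_\varphi$ is aspherical with torus boundary, $H^2(\pi_1(M_\varphi);\mathbb{Z}_2) \cong H_1(M_\varphi, \partial M_\varphi; \mathbb{Z}_2)$, and this group has rank $b_1(\varphi) - 1$ (this can be read off from $b_1(\varphi) = 4 - o(\varphi_2)$ together with the half-lives-half-dies principle for the torus boundary, or computed directly from \Cref{eq:fund_gp_HNN}). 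So if $b_1(\varphi) = 1$ the obstruction group is trivial (already excluded above), if $b_1(\varphi) = 2$ it is $\mathbb{Z}_2$, and if $b_1(\varphi) = 3$ it is $\mathbb{Z}_2 \oplus \mathbb{Z}_2$. The key point is that the two extensions of $\overline{\rho}$ differ by twisting by $\kappa_i$, and since $\kappa_i$ lifts to an order-four element of $\SL(2,\mathbb{C})$ (a square root of $-I$), the two obstruction classes $w_2(\overline{\chi})$ and $w_2(\kappa_i \cdot \overline{\chi})$ differ by the image in $H^2(\pi_1(M_\varphi);\mathbb{Z}_2)$ of the class of $\varepsilon \in H^1(M_\varphi;\mathbb{Z}_2)$ under the Bockstein-type map, which is nonzero precisely when... — here one uses that the natural splitting $H^1(M_\varphi;\mathbb{Z}_2) = \langle \varepsilon\rangle \oplus H$ interacts with cup products.

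The case split then runs as follows. Restricting $\overline{\rho}$ itself to $\pi_1(S)$: $\overline{\rho}|_{\pi_1(S)}$ lifts (as $\pi_1(S)$ is free), and the obstruction to extending a chosen lift to $\pi_1(M_\varphi)$ lives in $H^2(\pi_1(M_\varphi);\mathbb{Z}_2)$. When $b_1(\varphi) = 3$: the two extensions have obstruction classes differing by a nonzero element of $\mathbb{Z}_2 \oplus \mathbb{Z}_2$, so at most one can vanish; and at least one does vanish — this follows because $o(\varphi_2) = 1$ means $\varphi$ fixes $\overline{\rho}(a)$ and $\overline{\rho}(b)$ up to the centraliser, so (as in \Cref{lem:irreps in fixed point set}) we can arrange the lift carefully; hence exactly one lifts. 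When $b_1(\varphi) = 2$: the obstruction group is $\mathbb{Z}_2$; one shows the two obstruction classes are \emph{equal} (their difference is the image of $\varepsilon$, which vanishes in $H^2$ exactly in this intermediate case — this is the delicate cohomological computation), so either both lift or neither lifts; and a dimension/counting argument against \Cref{thm:main} — or a direct examination of the binary dihedral characters, which are exactly the $\SL(2,\mathbb{C})$-characters restricting to reducible $\overline{X}(S)$-characters and here $\chi_{\overline{\rho}}$ is irreducible on $S$ — shows which alternative occurs, giving "two of them lift".

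The main obstacle I anticipate is the precise cohomological bookkeeping: identifying, for each value of $b_1(\varphi)$, whether the two obstruction classes $w_2(\overline{\chi})$ and $w_2(\kappa_i \cdot \overline{\chi})$ coincide or differ. This amounts to understanding the image of $\langle\varepsilon\rangle$ under the relevant comparison map $H^1(M_\varphi;\mathbb{Z}_2) \to H^2(\pi_1(M_\varphi);\mathbb{Z}_2)$ governing how twisting a projective representation changes its second Stiefel–Whitney class — concretely, $w_2(h \cdot \overline{\chi}) - w_2(\overline{\chi})$ is (up to sign) the cup square $h \smile h$ or a Bockstein $\beta(h)$, and one must evaluate this on the fundamental class in terms of the matrix $[\varphi_*] \in \SL(2,\mathbb{Z})$ and its reduction mod $2$. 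Everything else is either a routine trace computation with the explicit $2\times 2$ matrices above or a direct appeal to \Cref{lem:irreps in fixed point set} and the obstruction-theory discussion in \Cref{sec:basic_char}.
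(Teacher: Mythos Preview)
There is a genuine gap in your lifting analysis, stemming from a misreading of the trichotomy. The lemma does \emph{not} say that $b_1(\varphi)=3$ forces exactly one lift and $b_1(\varphi)=2$ forces two lifts; it says that \emph{if} exactly one lifts then $b_1(\varphi)=3$, and \emph{if} both lift then $b_1(\varphi)=2$, while the ``no character lifts'' option remains available for both values of $b_1(\varphi)$. Your assertions ``at least one does vanish'' (for $b_1=3$) and ``giving `two of them lift''' (for $b_1=2$) are therefore false in general. Concretely, with $\chi_{\overline{\rho}}=(0,0,z_0^2,0)\in\overline{L}_3$, the polynomial automorphism $\overline{\varphi}$ either fixes $L_3$ pointwise or acts there by $z\mapsto -z$; in the latter case neither preimage $(0,0,\pm z_0)$ lies in $X_\varphi(S)$ and hence neither extension lifts, irrespective of $b_1(\varphi)$. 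Your cohomological set-up is also off: the two extensions differ by $\overline{\rho}(t)\mapsto \kappa_i\,\overline{\rho}(t)$, which is \emph{not} a twist by any $h\in H^1(M_\varphi;\mathbb{Z}_2)$ (twisting by $\varepsilon$ is the identity in $\PSL(2,\mathbb{C})$), so the Bockstein/cup-square formula for $w_2(h\cdot\overline{\chi})-w_2(\overline{\chi})$ does not apply.

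The paper's argument avoids cohomology entirely. For $b_1(\varphi)>1$ it uses the action of $\overline{\varphi}$ on the coordinate axes (when $b_1=1$ the three axes are cyclically permuted, so $\overline{L}_i\cap\overline{X}_\varphi(S)=\{0\}$). For the fibre size it sets up the system from \Cref{eq:PSL_characters} directly and shows $z_0\notin\{-2,0,2\}$ forces all entries of $T$ to vanish --- your ``$\kappa_i$ permutes the four squared traces'' is not literally correct, though the intended computation is the same. For the lifting count, once $\overline{\varphi}$ fixes $L_3$ pointwise the two preimages $(0,0,\pm z_0)\in X_\varphi(S)$ are compared via \Cref{lem:H-orbits or not}: when $b_1(\varphi)=3$ one has $H=H^1(S;\mathbb{Z}_2)$, which contains the sign change on the third coordinate, so the two points are in the same $H$--orbit and their extensions have the same image under $q_1$ (one lift); when $b_1(\varphi)=2$ the group $H$ is generated by $(a,b)\mapsto(-1,-1)$, which fixes $L_3$ pointwise, so the two points are in distinct $H$--orbits and yield both projective extensions (two lifts).
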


\begin{proof}
We have $0 \neq \chi_{\prho}\in\overline{L}_i.$ Without loss of generality, we may assume $\overline{L}_i = \overline{L}_3,$ and $\chi_{\prho}= (0,0,z_0^2, 0).$ Hence
\[  
(\tr\overline{\rho}(a))^2 = (\tr\overline{\rho}(b))^2= 0 \neq (\tr\overline{\rho}(ab))^2
\]
In $L_3,$ this character has two pre-images, $\chi_+ = (0,0,z_0)$ and $\chi_-=(0,0,-z_0).$  Recall the action of $\overline{\varphi}$ on the coordinate axes. If $b_1(\varphi)=1,$ then the three axes are in the same orbit under $\overline{\varphi}$. Hence $z_0\neq 0,$ implies $b_1(\varphi)>1.$ 

We next show that $|\overline{r}^{-1}(\chi_{\overline{\rho}})| = 2.$
Up to conjugation, we have
\begin{equation*}
    \overline{\rho}(a) = \pm\begin{pmatrix}
    0&1\\-1&0
    \end{pmatrix},
    \qquad
    \overline{\rho}(b) = \pm\begin{pmatrix}
    0&p\\-p^{-1}&0
    \end{pmatrix},
    \qquad\text{and hence}\qquad
    C(\im(\overline{\rho})) = \langle \pm\begin{pmatrix}
    i&0\\0&-i
    \end{pmatrix}
    \rangle
 \end{equation*}
where $z_0 = -(p+p^{-1})\notin\{-2,0,2\}$ since $\overline{\rho}$ is reducible if and only if $z_0=\pm 2$, and the centraliser has order four if and only if $z_0= 0$. These two cases correspond to the characters $(x^2, y^2, z^2, xyz)=(0,0,4,0)$ and $(0,0,0,0)$ respectively with respect to \Cref{eq:param_overline{X}(S)}. 
Assume that $\overline{\rho}$ is the restriction of a representation of $M_\varphi$ with $\overline{\rho}(t) = \pm T$ for some
\[T = \begin{pmatrix}m_1&m_2\\m_3&m_4\end{pmatrix} \in \SL(2,\mathbb{C})\]
We know that $|\overline{r}^{-1}(\chi_{\overline{\rho}})|\leq | C(\im(\overline{\rho}))| =2$. Suppose that $|\overline{r}^{-1}(\chi_{\overline{\rho}})| = 1$. Then the two possible extensions of the representation of $S$ to $M_\varphi$ defined by $\overline{\rho}(t) = \pm T$ or $\overline{\rho}(t) = \pm \begin{pmatrix}
    i&0\\0&-i
\end{pmatrix}T$ are conjugate. In particular, their values taken in \Cref{eq:PSL_characters} are identical. This gives a system of $4$ equations. A direct computation shows that $z_0 \notin\{-2,0,2\}$ implies that  $m_1 = m_2 = m_3 = m_4 = 0.$ This is a contradiction. Hence $|\overline{r}^{-1}(\chi_{\overline{\rho}})| = 2.$ 

If $b_1(\varphi)=2,$ then $\overline{\varphi}$ stabilises one axis and permutes the other two axes. Since $z_0\neq 0,$ it preserves $L_3.$ We therefore have either $\overline{\varphi}(0,0, z) = (0,0,z)$ for each $z \in \mathbb{C}$ or $\overline{\varphi}(0,0, z) = (0,0, -z)$  for each $z \in \mathbb{C}.$  In the second case, $\chi_{\prho}$ does not have a preimage in $X_\varphi(S)$ and hence no character in $\overline{r}^{-1}(\chi_{\overline{\rho}})$ lifts to $X(M_\varphi).$
In the first case, the points $(0,0,z_0)$ and $(0,0,-z_0)$ are both in $X_\varphi(S)$. Since $\overline{\varphi}$ stabilises $L_3$ and permutes $L_1$ and $L_2,$ the group $H$ is generated by $a \mapsto -1,$ $b\mapsto -1$ and hence the points $(0,0, \pm z_0)$ are fixed by $H$ and thus in distinct $H$--orbits. Hence their extensions map to the two distinct characters in $\overline{r}^{-1}(\chi_{\overline{\rho}}).$

If $b_1(\varphi)=3,$ we also have either $\overline{\varphi}(0,0, z) = (0,0,z)$ for each $z \in \mathbb{C}$ or $\overline{\varphi}(0,0, z) = (0,0, -z)$  for each $z \in \mathbb{C}.$ As above, in the second case, neither character in $\overline{r}^{-1}(\chi_{\overline{\rho}})$ lifts. In the first case, the points $(0,0,z_0)$ and $(0,0,-z_0)$ are in $X_\varphi(S)$ and in the same $H$--orbit since $H =  H^1(\pi_1(S), \mathbb{Z}_2).$ According to
\Cref{lem:H-orbits or not}, the extensions of these characters are in the same $H^1(\pi_1(M), \mathbb{Z}_2)$--orbit and hence have the same image in $\overline{r}^{-1}(\chi_{\overline{\rho}})$. So exactly one of the characters in $\overline{r}^{-1}(\chi_{\overline{\rho}})$ lifts and the other does not lift.
\end{proof}

The proof of \Cref{lem:centraliser_order2} supplied the missing details for the following:

\begin{corollary}
\label{cor:diagram_detailed}
Suppose $M_\varphi$ is a hyperbolic once-punctured torus bundle. 
We have the following commutative diagram of maps whose degrees are indicated in the diagram:
\[
\xymatrix{
X^{\text{irr}}(M_\varphi) \ar[d]^{2^{{b_1(\varphi)}}:1}_{q_1} \ar[r]^{r}_{2:1} 	& X_\varphi(S) \ar[d]^{2^{{b_1(\varphi)}-1}:1}_{q_2} \ar@{}[r]|-*[@]{\subset} 	& X(S) \ar[d]^{4:1}_{q_2} \ar@{}[r]|-*[@]{\cong} & \mathbb{C}^3\\
\overline{X}^{\text{irr}}(M_\varphi) \ar[r]^{\overline{r}}_{} 		& \overline{X}_\varphi(S)  \ar@{}[r]|-*[@]{\subset}	& \overline{X}(S)	
}		
\]
Here, each of the maps $r,$ $q_1,$ $q_2$ is a branched covering map onto its image. 
The map $q_1$ is the quotient map associated with the action of $H^1(M_\varphi,\mathbb{Z}_2 ) = \langle \varepsilon\rangle \oplus H,$ and  the map $q_2$ is the quotient map associated with the action of $H^1(S,\mathbb{Z}_2 )$ on $X(S).$ 
The map $r$ is the quotient map associated with the action of $\langle \varepsilon \rangle.$ 
The restriction of $q_2$ to $X_\varphi(S)$ is the quotient map associated with the action of $H$ unless $b_1(\varphi)=2$ and 
one of the coordinate axes is contained in $X_\varphi(S).$
\end{corollary}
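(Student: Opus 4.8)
The plan is to assemble the commutative square from the three pieces that are already available, and then to isolate the one genuinely new ingredient — the restriction of $q_2$ to $X_\varphi(S)$ — which I would pin down by a direct orbit count. First I would record that $q_1$ is simply the restriction of $q_\star\co X(M_\varphi)\to\overline{X}(M_\varphi)$ to the union of Zariski components $X^{\text{irr}}(M_\varphi)$, so by \Cref{sec:basic_char} it is a branched covering onto its image whose fibres are the orbits of $H^1(M_\varphi,\mathbb{Z}_2)=\langle\varepsilon\rangle\oplus H$, with generic fibre of size $2^{b_1(\varphi)}$ since the action is free on $X(M_\varphi)\setminus\cup_h S(h)$; that $q_2\co X(S)\to\overline{X}(S)$ is the four-fold quotient $(x,y,z)\mapsto(x^2,y^2,z^2,xyz)$ by $H^1(S,\mathbb{Z}_2)$ of \Cref{sec:charF2}; and that $r$ is the two-fold branched quotient by $\langle\varepsilon\rangle$ of \Cref{thm:main}. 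Commutativity of the square is the naturality of $q_\star$ under restriction to $\pi_1(S)$: composing a representation with $\SL(2,\mathbb{C})\to\PSL(2,\mathbb{C})$ and then restricting agrees with restricting and then composing, so $\overline{r}\circ q_1=q_2\circ r$. At that point everything is in place except the description of $q_2|_{X_\varphi(S)}$; the remaining degrees then follow formally from this square together with the surjectivity of $r$ (\Cref{thm:main}) and of $\overline{r}$ established before \Cref{pro:diagram_rank1}.

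The core of the argument is the following. I would first note the compatibility $\overline{\varphi}(h\cdot\chi)=(\varphi_2^{*}h)\cdot\overline{\varphi}(\chi)$, valid for every $h\in H^1(S,\mathbb{Z}_2)$ and $\chi\in X(S)$: twisting a representation of $\pi_1(S)$ by the sign homomorphism of $h$ and then precomposing with $\varphi$ agrees with precomposing with $\varphi$ and then twisting by $\varphi_2^{*}h=h\circ\varphi_2$. Consequently, for $\chi\in X_\varphi(S)$ one has $h\cdot\chi\in X_\varphi(S)$ if and only if $h^{-1}(\varphi_2^{*}h)$ fixes $\chi$ under the $H^1(S,\mathbb{Z}_2)$--action. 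This element is trivial exactly when $h$ lies in the $\varphi_2^{*}$--invariant subgroup of $H^1(S,\mathbb{Z}_2)\cong\mathbb{Z}_2\oplus\mathbb{Z}_2$, which is exactly $H$ under the inclusion of \Cref{sec:prelim_bundles} (a homomorphism trivial on $t$ is $\varphi$--invariant on $\pi_1(S)$, and conversely), of order $2^{b_1(\varphi)-1}$; and a non-trivial element of $H^1(S,\mathbb{Z}_2)$ fixes a point of $X(S)=\mathbb{C}^3$ only on the single coordinate axis it stabilises. Under the resulting bijection between the three non-trivial elements of $H^1(S,\mathbb{Z}_2)$ and the axes $L_1,L_2,L_3$, the action of $\varphi_2^{*}$ matches the action of $\overline{\varphi}$ permuting the $L_i$ described in \Cref{subsec:lines} — this is the same compatibility identity, read off on fixed loci.

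It then remains to run through $o(\varphi_2)=4-b_1(\varphi)$. When $b_1(\varphi)=3$, $\varphi_2^{*}$ is trivial, every $h$ lies in $H=H^1(S,\mathbb{Z}_2)$, and the whole $H^1(S,\mathbb{Z}_2)$--orbit of any $\chi\in X_\varphi(S)$ stays in $X_\varphi(S)$; so $q_2|_{X_\varphi(S)}$ is the quotient by $H$, of degree four. When $b_1(\varphi)=1$, $H$ is trivial, the three axes are cyclically permuted, and by \Cref{subsec:lines} each meets $X_\varphi(S)$ only at the origin (whose orbit is itself), so no non-trivial $h$ carries a point of $X_\varphi(S)$ back into $X_\varphi(S)$ and $q_2|_{X_\varphi(S)}$ is injective, that is, the quotient by the trivial group. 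When $b_1(\varphi)=2$, $\varphi_2^{*}$ is a transposition fixing one non-trivial element $e_k$, so $H=\langle e_k\rangle$ and $e_k$ stabilises the unique axis $L_k$ that $\overline{\varphi}$ stabilises; for each of the two non-trivial $h\notin H$, say $h=e_i$ and $h=e_j$, one computes $h^{-1}(\varphi_2^{*}h)=e_k$, so such an $h$ carries $\chi\in X_\varphi(S)$ back into $X_\varphi(S)$ exactly when $\chi\in L_k$. Since $\overline{\varphi}$ interchanges the other two axes, each of those meets $X_\varphi(S)$ only at the origin, so the only departure from the ``fibres are $H$--orbits'' picture is when $L_k\subseteq X_\varphi(S)$; in that case the fibre of $q_2|_{X_\varphi(S)}$ over $(0,0,z_0^2,0)\in\overline{L}_k$ is $\{(0,0,z_0),(0,0,-z_0)\}$, which is an $\langle e_i\rangle$--orbit rather than an $H$--orbit (as $e_k$ fixes $L_k$ pointwise), while off $L_k$ the fibres are the two-element $H$--orbits, giving the degree $2^{b_1(\varphi)-1}$. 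This is exactly the stated exception. The main obstacle I expect is precisely this $b_1(\varphi)=2$ case: one must keep careful track of $H$ inside $H^1(S,\mathbb{Z}_2)$, of which coordinate axis is stabilised by $\overline{\varphi}$ (and with which sign on it), and check that the anomaly is confined to $L_k$ and occurs precisely when $L_k\subseteq X_\varphi(S)$; the parallel computation on the $\pi_1(M_\varphi)$--side, which also records the lifting behaviour, is the content of \Cref{lem:K4_number_of_lifts} and \Cref{lem:centraliser_order2}.
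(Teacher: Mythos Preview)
Your proof is correct and reaches the same case split as the paper, but by a somewhat more direct route. The paper's argument is essentially a citation: $b_1(\varphi)=1$ is \Cref{pro:diagram_rank1}, $b_1(\varphi)=3$ is immediate since then $H=H^1(S,\mathbb{Z}_2)$, and for $b_1(\varphi)=2$ it invokes \Cref{lem:H-orbits or not} (which compares extensions to $\pi_1(M_\varphi)$ and their images in $\overline{X}(M_\varphi)$) together with the observation that non-trivial centralisers occur only along the coordinate axes. Your argument instead stays entirely inside $X(S)=\mathbb{C}^3$: the equivariance identity $\overline{\varphi}(h\cdot\chi)=(\varphi_2^{*}h)\cdot\overline{\varphi}(\chi)$ lets you determine directly which $H^1(S,\mathbb{Z}_2)$--translates of a fixed point remain fixed, reducing the question to the fixed locus of the single element $h^{-1}\varphi_2^{*}h$, and the $b_1=2$ anomaly on $L_k$ then drops out of the computation $e_i^{-1}\varphi_2^{*}e_i=e_k$. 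This is more self-contained for the statement as written and never needs to leave the fibre; the paper's detour via \Cref{lem:H-orbits or not} buys the extra information of \emph{which} of the two extensions in $\overline{r}^{-1}(\chi_{\overline{\rho}})$ lifts to $\SL(2,\mathbb{C})$, which is what feeds into \Cref{lem:centraliser_order2} and \Cref{lem:lines_in_PSL}.
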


\begin{proof}
If $b_1(\varphi)=1$ this is \Cref{pro:diagram_rank1} and there is nothing to prove. If $b_1(\varphi)=3,$ we have $H = H^1(S,\mathbb{Z}_2 )$ and there also is nothing to prove. Hence suppose $b_1(\varphi)=2$ and that $\overline{L_i}$ is the unique coordinate axis in $\overline{X}_\varphi(S).$ It follows from \Cref{lem:H-orbits or not} and the fact that all characters of representations with non-trivial centraliser are contained on the coordinate axes that if $L_i$ is not contained in ${X}_\varphi(S)$, then $q_2$ is the quotient map of the $H$--action. If $L_i\subset {X}_\varphi(S),$ then for the same reasons, $q_2$ restricted to $\overline{X_\varphi(S) \setminus L_i}$ is the quotient map of the $H$--action. However, restricted to $L_i,$ it is the quotient map associated with the sign change on $L_i,$ this has degree two.
\end{proof}

\begin{proposition}
\label{pro:degrees_PSLrestriction}
Let $M_\varphi$ be a hyperbolic once-punctured torus bundle. Let $X$ be a Zariski component of $\overline{X}(M_\varphi)$ containing the character of an irreducible representation.
If $\overline{r} \co X \to \overline{X}_\varphi(S)$ does not have degree one, then the degree is two and $b_1(\varphi)=2$ and $X$ is the preimage of one of the lines $\overline{L}_i \subset \overline{X}_\varphi(S).$
\end{proposition}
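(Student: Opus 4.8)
The plan is to analyse the fibres of $\overline{r}$ over a generic point of a Zariski component $X$ and to show that the fibre size equals the degree of the restriction to that component, then deduce that degree two forces the exceptional situation. By \Cref{cor:diagram_detailed} the full map $\overline{r}\co \overline{X}^{\text{irr}}(M_\varphi)\to \overline{X}_\varphi(S)$ factors as $X^{\text{irr}}(M_\varphi)\to \overline{X}^{\text{irr}}(M_\varphi)$ composed with $r$ and divided by the appropriate quotients, and the fibre of $\overline{r}$ over $\chi_{\overline{\rho}}$ is $\overline{r}^{-1}(\chi_{\overline{\rho}})$, whose size was computed in \Cref{lem:K4_number_of_lifts} and \Cref{lem:centraliser_order2} according to the order of $C(\im(\overline{\rho}))$. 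First I would observe that a generic character of $X$ restricts to an irreducible character $\chi_{\overline{\rho}}$ of $S$ whose centraliser is trivial — indeed, the locus where $C(\im(\overline{\rho}))$ is non-trivial is contained in the finitely many points $(0,0,0,0)$ and the three coordinate axes $\overline{L}_i$ (by \Cref{sec:PSLorigin_axes}), so unless $X$ is the preimage of one of these axes, the generic restriction has trivial centraliser.

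Next I would treat the two cases. If the generic restriction of $X$ has trivial centraliser, then by the uniqueness-up-to-sign statement (the $\PSL$-analogue of \Cref{lem:irreps in fixed point set}, established in the paragraph following \Cref{eq:PSL_setup_B}) the extension of $\overline{\rho}$ to $\pi_1(M_\varphi)$ is unique: there is a unique $T\in\PSL(2,\mathbb{C})$ with $T^{-1}\overline{\rho}(\gamma)T=\overline{\rho}(\varphi(\gamma))$ because any two differ by an element of $C(\im(\overline{\rho}))=\{1\}$. Hence $|\overline{r}^{-1}(\chi_{\overline{\rho}})|=1$ on a Zariski-dense subset of $X$, so $\overline{r}\co X\to\overline{X}_\varphi(S)$ has degree one. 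Therefore, if the degree is not one, $X$ must be the preimage of some $\overline{L}_i$. Since $X$ is the preimage of $\overline{L}_i$ and the generic centraliser along $\overline{L}_i$ is $\mathbb{Z}_2$, \Cref{lem:centraliser_order2} gives $|\overline{r}^{-1}(\chi_{\overline{\rho}})|=2$ for generic $\chi_{\overline{\rho}}\in\overline{L}_i$ and, moreover, forces $b_1(\varphi)>1$; I would then appeal to the case analysis in the proof of \Cref{lem:centraliser_order2} to rule out $b_1(\varphi)=3$, since in that case the two characters in $\overline{r}^{-1}(\chi_{\overline{\rho}})$ lie in a single $H^1(\pi_1(M),\mathbb{Z}_2)$-orbit (because $H=H^1(\pi_1(S),\mathbb{Z}_2)$), so exactly one lifts and the two points of the fibre belong to the \emph{same} Zariski component only if $\overline{\varphi}$ fixes $\overline{L}_3$ pointwise — but then $r^{-1}(\overline{L}_i)$ in $X(M_\varphi)$ has the same two sheets coming from $\varepsilon$ and these descend to a single component in $\overline{X}$, giving degree one after all. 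Thus the only surviving possibility is $b_1(\varphi)=2$.

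Finally I would assemble these observations: degree $\neq 1$ implies $X=\overline{r}^{-1}(\overline{L}_i)$ for some $i$ and $b_1(\varphi)=2$, and in that situation \Cref{lem:centraliser_order2} shows the two characters in the generic fibre do lie on $X$ (they are fixed by $H$ and lie in distinct $H$-orbits, and by \Cref{lem:H-orbits or not} their extensions are genuinely non-conjugate yet restrict to the same $\chi_{\overline{\rho}}$), so the degree is exactly two. I expect the main obstacle to be the careful bookkeeping in the $b_1(\varphi)=3$ sub-case: one must check that although $\overline{r}^{-1}(\chi_{\overline{\rho}})$ has two elements, they do not both contribute to a single Zariski component of $\overline{X}^{\text{irr}}(M_\varphi)$ — equivalently, that the sign-change action on $\overline{L}_i$ interacts with the quotient $q_1$ in a way that keeps the generic fibre of $\overline{r}$ restricted to each component a singleton. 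This is exactly the point where the splitting $H^1(M_\varphi,\mathbb{Z}_2)=\langle\varepsilon\rangle\oplus H$ and the orbit computation of \Cref{lem:H-orbits or not} must be invoked with care, tracking which of the $2^{b_1(\varphi)}$ sheets of $q_1$ map to which component of $\overline{X}^{\text{irr}}(M_\varphi)$.
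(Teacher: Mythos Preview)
Your outline is correct through the reduction to the case $X=\overline{r}^{-1}(\overline{L}_i)$ and the conclusion for $b_1(\varphi)=1$ and $b_1(\varphi)=2$; this matches the paper. The gap is in your treatment of $b_1(\varphi)=3$.

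Your argument there rests on the lifting obstruction: you observe (correctly, via \Cref{lem:centraliser_order2}) that when $L_i\subset X_\varphi(S)$ exactly one of the two points of $\overline{r}^{-1}(\chi_{\overline{\rho}})$ lifts to $X(M_\varphi)$, and since the obstruction is locally constant the two points lie on different topological components. But the sentence ``the two characters in $\overline{r}^{-1}(\chi_{\overline{\rho}})$ lie in a single $H^1(\pi_1(M),\mathbb{Z}_2)$-orbit'' is not right: only one of the two even comes from $X(M_\varphi)$, so speaking of an $H^1$-orbit containing both is meaningless. More seriously, you have not handled the other sub-case of \Cref{lem:centraliser_order2}: when $\overline{\varphi}$ acts on $L_i$ by sign change (so $L_i\not\subset X_\varphi(S)$), \emph{neither} point of the fibre lifts, and your separation argument via the Stiefel--Whitney class is unavailable. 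Nothing in your sketch shows, in that case, that the two points of $\overline{r}^{-1}(\chi_{\overline{\rho}})$ lie on distinct Zariski components of $\overline{X}(M_\varphi)$.

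The paper closes this gap by an entirely different, topological, argument that does not distinguish the two sub-cases. One notes that $\overline{r}^{-1}(\overline{L}_i)\to\overline{L}_i\cong\mathbb{C}$ is a $2$--fold (a priori branched) cover; since $\mathbb{C}$ is simply connected, if the cover were connected it would have to be branched. There are only two candidate branch values on $\overline{L}_i$: the Klein four character $(0,0,0,0)$ and the unique reducible character. The paper checks explicitly (using the description of the four extensions in \Cref{lem:K4_number_of_lifts} and a direct computation at the reducible point) that each has exactly two preimages, so there is no ramification. Hence $\overline{r}^{-1}(\overline{L}_i)$ is disconnected, and each of its Zariski components maps to $\overline{L}_i$ with degree one. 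This is what you should supply in place of your $H$-orbit bookkeeping; the latter can be made to work only in the sub-case where one fibre point lifts, and not in general.
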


\begin{proof}
The above classification of irreducible characters with non-trivial centraliser shows that $\overline{r} \co X \to \overline{X}_\varphi(S)$ has degree one unless $\overline{L}_i\subset \overline{X}_\varphi(S)$ and $X = \overline{r}^{-1}(\overline{L}_i)$ for some $i \in \{1, 2,3 \}.$ In this case, the degree is two.

The conclusion for $b_1(\varphi) =1$ now follows from the description of the action on the lines in the proof of \Cref{lem:dimension_bound_order1}, which gives $\overline{L}_i \cap \overline{X}_\varphi(S) = \{ (0,0,0,0)\}$ for each $i.$
Alternatively, this follows from the fact that each representation lifts and from the uniqueness up to sign in \Cref{lem:irreps in fixed point set}.

Suppose $b_1(\varphi) =3$ and that  $X = \overline{r}^{-1}(\overline{L}_i).$ It follows from \Cref{lem:centraliser_order2} that $X \to \overline{L}_i$ is a 2--fold (possibly branched) cover. Since $\overline{L}_i\cong \mathbb{C}$ and hence is simply connected it follows that the map $X \to \overline{L}_i$ must have a branch point. \Cref{lem:centraliser_order2} shows that there are only two potential branch points: the Klein four group or the reducible character on $\overline{L}_i$. It follows from the  explicit description of the four extensions in the proof of \Cref{lem:K4_number_of_lifts} and the description of the covering map in the proof of  \Cref{lem:centraliser_order2} that the 
extensions of the Klein four group contained in $\overline{r}^{-1}(\overline{L}_i)$ are not ramification points. Hence it must be the reducible character on $\overline{L}_i.$ Without loss of generality suppose $i=3$ and use the parameterisation from \Cref{eq:param_of_PL3}:
\begin{equation}
    \overline{\rho}(a) = \pm\begin{pmatrix}
    0&1\\-1&0
    \end{pmatrix},
    \qquad
    \overline{\rho}(b) = \pm\begin{pmatrix}
    0&1\\-1&0
    \end{pmatrix}
    \qquad\text{and let}\qquad
    C = \pm\begin{pmatrix}
    i&0\\0&-i
    \end{pmatrix}
 \end{equation}
Since $\overline{\rho}(a) = \overline{\rho}(b)$ and both have order two, the hypothesis that $b_1(\varphi) =3$ implies that 
$\overline{\rho}(a)$ and $\overline{\rho}(t)$ commute. Hence there are two possibilities for $\overline{\rho}(t)=T_k$:
\begin{equation}
    T_1 = \pm\begin{pmatrix}
    m_1&m_2\\-m_2&m_1
    \end{pmatrix}
    \qquad\text{or}\qquad
    T_2= \pm\begin{pmatrix}
    m_4&m_3\\m_3&-m_4
    \end{pmatrix}
 \end{equation}
where $m_1^2+m_2^2=1$ or $m_3^2+m_4^2=-1.$ 

The representation defined by $\overline{\rho}_1(t)=T_1,$ $\overline{\rho}_1(a) = \overline{\rho}_1(b) = \kappa_1$ satisfies $\tr[\overline{\rho}_1(t), \overline{\rho}_1(a) ]=2$ and hence is reducible. Indeed, this gives a 1--dimensional family of reducible representations with the quadruple in \Cref{eq:PSL_characters} equal to
\[ (4m_1^2, 4(1-m_1^2), 4(1-m_1^2), 4m_1^2)\]
The representation defined by $\overline{\rho}_2(t)=T_2,$ $\overline{\rho}_2(a) = \overline{\rho}_2(b) = \kappa_1$ satisfies $\tr[\overline{\rho}_2(t), \overline{\rho}_2(a) ]=-2$ and hence is irreducible. For any choice of $m^2_3+m^2_4=-1$, the quadruple in \Cref{eq:PSL_characters} equals
\[ (0, 0, 0, 0) \]
Indeed, this has image isomorphic with $\mathbb{Z}_2\oplus \mathbb{Z}_2$ 
and lifts to a binary dihedral representation into $\SL(2, \mathbb{C}).$

The representations $\overline{\rho}_1$ and $\overline{\rho}_2$ are related by twisting by $C$ (and possibly another element of the centraliser of $\overline{\rho}(a)$ that fixes the fixed points of $\overline{\rho}(a)$). In particular, the reducible character on $\overline{L}_i$ has exactly two preimages in $X = \overline{r}^{-1}(\overline{L}_i).$ This shows that there are no ramification points.
\end{proof}

Examples for the case in \Cref{pro:degrees_PSLrestriction} where the degree is two are given in \Cref{sec:infinite_no_lifts}. In these examples, one extension of the Klein four character is on this line, and the other extension is on the canonical component.

In the case of maximal rank, we can give a complete picture of the Zariski components that are pre-images of the lines:

\begin{proposition}
\label{lem:lines_in_PSL}
If $H_1(M_\varphi,\mathbb{Z}_2)\cong \mathbb{Z}_2^3$, then $\overline{X}_\varphi (S)$ contains all three of $\overline{L}_1$, $\overline{L}_2$ and $\overline{L}_3$. Moreover, each line $\overline{L}_i$ has two pre-images $\overline{L}'_i$ and $\overline{L}''_i$ in $\overline{X}(M_\varphi)$ that are pairwise disjoint. There is a permutation $\sigma \in \Sym(4)$ such that the labels can be chosen such that $\chi_{\sigma(i)} \in \overline{L}''_i$ for $i\in \{1, 2, 3\}$, and $\chi_{\sigma(0)} = \overline{L}'_1\cap \overline{L}'_2\cap \overline{L}'_3$ is the only pairwise intersection point of the six preimages and is a character that lifts to $\SL(2,\mathbb{C}).$
Up to passing to the degree two cover $M_{\varphi^2}$, we may assume that $\sigma$ is the identity.
\end{proposition}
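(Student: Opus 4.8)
The plan is to bootstrap from the structural facts already established — \Cref{lem:dimension_bound_order1}, \Cref{lem:K4_number_of_lifts}, \Cref{lem:centraliser_order2}, and the $b_1(\varphi)=3$ portion of the proof of \Cref{pro:degrees_PSLrestriction} — together with a bookkeeping of the Klein four action on the four extensions $\overline{\chi}_0,\dots,\overline{\chi}_3$. First I would record that $\overline{L}_1,\overline{L}_2,\overline{L}_3\subseteq\overline{X}_\varphi(S)$: since $b_1(\varphi)=3$ means $o(\varphi_2)=1$, the automorphism $\overline{\varphi}$ stabilises each coordinate axis $L_i$ and acts on it by $x\mapsto(-1)^{n_i}x$ in the notation of \Cref{lem:dimension_bound_order1}, so on $\overline{X}(S)$, where the coordinate on $\overline{L}_i$ is $x^2$, the induced involution fixes $\overline{L}_i$ pointwise. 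Next, for each $i$ I would show $\overline{r}^{-1}(\overline{L}_i)=\overline{L}'_i\sqcup\overline{L}''_i$ with each component mapping isomorphically onto $\overline{L}_i$: since $\overline{r}$ is onto $\overline{X}_\varphi(S)$ and, by \Cref{lem:centraliser_order2}, has generic fibre of size two over $\overline{L}_i$, the set $\overline{r}^{-1}(\overline{L}_i)$ is a degree two cover of $\overline{L}_i\cong\mathbb{C}$; the $b_1(\varphi)=3$ argument inside the proof of \Cref{pro:degrees_PSLrestriction} shows that neither candidate ramification point — the Klein four character $(0,0,0,0)$ and the reducible character on $\overline{L}_i$ — is in fact one, so the cover is étale and, $\mathbb{C}$ being simply connected, trivial; each piece is then a smooth rational Zariski component of $\overline{X}^{\mathrm{irr}}(M_\varphi)$ and $\overline{L}'_i\cap\overline{L}''_i=\emptyset$.

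I would then locate the intersections among the six curves. Since $\overline{L}_i\cap\overline{L}_j=\{(0,0,0,0)\}$ for $i\ne j$, and over any $\overline{\chi}\in\overline{L}_i$ other than $(0,0,0,0)$ the two points of $\overline{r}^{-1}(\overline{\chi})$ lie on the disjoint curves $\overline{L}'_i$ and $\overline{L}''_i$, no two of the six curves meet away from the fibre over $(0,0,0,0)$. That fibre is $\{\overline{\chi}_0,\overline{\chi}_1,\overline{\chi}_2,\overline{\chi}_3\}$ by \Cref{lem:K4_number_of_lifts}, and $\overline{r}^{-1}(\overline{L}_i)$ meets it in exactly two of these; by \Cref{lem:centraliser_order2} the deck involution of the étale cover $\overline{r}^{-1}(\overline{L}_i)\to\overline{L}_i$ acts by $\overline{\rho}(t)\mapsto\kappa_i\,\overline{\rho}(t)$, so those two extensions form a single orbit of $\kappa_i$ for the Klein four action $\kappa_j\cdot\overline{\chi}_0=\overline{\chi}_j$ of \Cref{lem:K4_number_of_lifts}. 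Running through the possible orbit choices, and imposing the constraint recorded below, I would conclude that exactly one extension $\overline{\chi}_{\sigma(0)}$ occurs in all three fibres while each of the other three occurs in precisely one; labelling $\overline{L}''_i$ the component of $\overline{r}^{-1}(\overline{L}_i)$ through the extension that appears on $\overline{r}^{-1}(\overline{L}_i)$ alone then yields $\overline{\chi}_{\sigma(i)}\in\overline{L}''_i$ for $i\in\{1,2,3\}$ and $\overline{\chi}_{\sigma(0)}=\overline{L}'_1\cap\overline{L}'_2\cap\overline{L}'_3$.

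To see that this common point lifts, observe that the unique extension lifting to $\SL(2,\mathbb{C})$ (\Cref{lem:K4_number_of_lifts}) is the character of the representation whose restriction to $\pi_1(S)$ has image the quaternionic group; this restricts on $S$ to the origin $(0,0,0)\in L_1\cap L_2\cap L_3$, so following the image of its $\SL(2,\mathbb{C})$ lift under $q_1$ and using commutativity of the diagram in \Cref{cor:diagram_detailed} places it on $\overline{r}^{-1}(\overline{L}_i)$ for every $i$ — whence it is $\overline{\chi}_{\sigma(0)}$, and this is the constraint used above. Finally, $M_{\varphi^2}$ again satisfies $H_1(M_{\varphi^2},\mathbb{Z}_2)\cong\mathbb{Z}_2^3$, and a short computation — using that $o(\varphi_2)=1$ forces the off-diagonal entries of $[\varphi_*]$ to be even and its diagonal entries odd — shows the quaternionic representation of $M_{\varphi^2}$ sends the meridian of $M_{\varphi^2}$ to the identity of $\PSL(2,\mathbb{C})$, so $\overline{\chi}_{\sigma(0)}=\overline{\chi}_0$ and $\sigma$ may be taken to be the identity.

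The main obstacle will be the intersection analysis in the second paragraph together with its dependence on the lifting input: one must pin down the exact incidence of the four Klein four extensions with the six preimage curves and, in particular, rule out the a priori possible orbit configurations in which no extension is shared by all three preimages. Carrying this out means propagating the Klein four action on $\{\overline{\chi}_0,\dots,\overline{\chi}_3\}$ through the maps $r$, $q_1$, $\overline{r}$ and through the degeneration to the fibre over $(0,0,0,0)$, which is considerably more delicate than the otherwise routine verifications in the first and last steps.
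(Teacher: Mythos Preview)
Your proposal is correct and close to the paper's proof, but organised differently. The paper splits into two cases according to whether $X_\varphi(S)$ contains all three coordinate axes $L_i$ or only one (as dictated by \Cref{lem:dimension_bound_order1}). In the first case it obtains the common intersection point by pushing the three preimage curves $L'_i\subset X(M_\varphi)$ down via $q_1$, and deduces disjointness of $\overline{L}'_i$ and $\overline{L}''_i$ from the fact that the lifting obstruction is constant on topological components while exactly one of the two Klein four extensions on $\overline{r}^{-1}(\overline{L}_i)$ lifts. In the second case it tracks the incidences via the $\kappa_i$--action and appeals back to the no--ramification argument from the proof of \Cref{pro:degrees_PSLrestriction}. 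You instead treat both cases at once: the étale--over--$\mathbb{C}$ argument gives disjointness uniformly, and the single constraint that the unique lifting extension lies in every $\overline{r}^{-1}(\overline{L}_i)$ forces the three $\kappa_i$--orbits to share a point. This is a tidy consolidation of the same ingredients.

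One place where the paper's route is cleaner is the final statement about $M_{\varphi^2}$. Rather than a direct parity computation, the paper uses the restriction map $\overline{X}(M_\varphi)\to\overline{X}(M_{\varphi^2})$: since the meridian of $M_\varphi$ maps under the lifting extension to an element of order at most two (by \Cref{lem:K4_number_of_lifts}), its square --- the meridian of $M_{\varphi^2}$ --- maps to the identity, so the image of $\chi_{\sigma(0)}$ is $\overline{\chi}_0$ for $M_{\varphi^2}$. Your ``short computation'' would need to identify, at the $\SL(2,\mathbb{C})$ level, the conjugating matrix $T$ for the quaternionic representation under $\varphi^2$ and check $\pm T=1$ in $\PSL(2,\mathbb{C})$; this is doable but less transparent than the restriction argument.
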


\begin{proof}
The fact that $\overline{X}_\varphi (S)$ contains all three of $\overline{L}_1$,$\overline{L}_2$ and $\overline{L}_3$
 follows from the observation in the proof of \Cref{lem:dimension_bound_order1}, noting that the sign is now irrelevant, and the fact that irreducible representations of the fibre always extend. We know from \Cref{lem:dimension_bound_order1} that $X_\varphi (S)$ either contains all three lines $L_1,$ $L_2,$ and $L_3$ or it contains exactly one of them.

First assume that $X_\varphi (S)$ contains all three lines.
Then $X(M_\varphi)$ contains a character that extends the quaternionic group and is the common intersection of the pre-images $L'_1,$ $L'_2,$ and $L'_3$ in $X(M_\varphi)$ of the three lines. Under the map to the $\PSL(2,\mathbb{C})$--character variety, these map to Zariski components $\overline{L}'_1,$  $\overline{L}'_2,$ $\overline{L}'_3.$ Hence their common intersection is one of the extensions $\chi_j$ of the Klein four group.

Consider the line $\overline{L}_3$ and the parameterisation given in \Cref{eq:param_of_PL3} that is consistent with \Cref{eq:paramK4}. Each point in $\overline{L}_3$ has precisely two preimages in $\overline{X}(M_\varphi),$ and they are related by the action of $\kappa_3.$ It follows that the preimage $\overline{r}^{-1}(\overline{L}_3)$ either has one or two Zariski components. Since the lifting obstruction is constant on topological components of the character variety and $\overline{r}^{-1}(\overline{L}_3)$ contains two distinct extensions of the Klein four group, namely $\chi_j$ and $\kappa_3\cdot \chi_j$, it follows that $\overline{r}^{-1}(\overline{L}_3)$ is a disjoint union of two components $\overline{L}'_3$ and $\overline{L}''_3$ and we have $\chi_j\in \overline{L}'_3$ and $\kappa_3\cdot \chi_j\in \overline{L}''_3$

The same argument applies to the lines $\overline{L}_1$ and $\overline{L}_2$ where the respective action with respect to the appropriate parameterisations consistent with \Cref{eq:paramK4} is given by
$\kappa_1$ and $\kappa_2.$
It now follows from the action of the Klein four group on the characters $\chi_0,$  $\chi_1,$  $\chi_2,$  $\chi_{3}$ that no two of $\overline{L}''_i$ and $\overline{L}''_k$ have one of these characters in common. This implies that they are pairwise disjoint. 

If $\chi_j\neq \chi_0,$ then the meridian is mapped to an element of order two under the character corresponding to the triple intersection. Hence under the map $\overline{X}(M_\varphi) \to \overline{X}(M_{\varphi^2}),$ the character $\chi_j \in \overline{X}(M_\varphi) $ is mapped to $\chi_0 \in \overline{X}(M_{\varphi^2}),$ and each preimage of one of the lines $\overline{L}_i$ in $\overline{X}(M_\varphi)$ is mapped to the corresponding connected component of the preimage of $\overline{L}_i$ in $\overline{X}(M_{\varphi^2})$ that passes through $\chi_0 \in \overline{X}(M_{\varphi^2}).$ This completes the proof of the lemma in the case where $X_\varphi (S)$ contains all three lines since $\kappa_i \cdot \chi_0 = \chi_i.$

Now suppose that $X_\varphi (S)$ only contains one of the three lines $L_1,$ $L_2,$ and $L_3.$ 
Without loss of generality, we may assume this is $L_1.$ Then in $\overline{X}(M_\varphi)$ there is a component $\overline{L}''_1$ that lifts to $L_1.$ The same argument as above shows that $\overline{r}^{-1}(\overline{L}_1)$ has two disjoint components. Suppose $\chi_j \in \overline{L}'_1.$ Then $\kappa_1\cdot\chi_j \in \overline{L}''_1.$
Since the lifting obstruction is constant on topological components, $\overline{r}^{-1}(\overline{L}_2)$ and $\overline{r}^{-1}(\overline{L}_3)$ do not pass through $\kappa_1\cdot\chi_j.$ Then $\overline{r}^{-1}(\overline{L}_2)$ also does not pass through $\kappa_2\kappa_1 \cdot \chi_j = \kappa_3\cdot \chi_j,$ and $\overline{r}^{-1}(\overline{L}_3)$ does not pass through $\kappa_3\kappa_1 \cdot \chi_j = \kappa_2\cdot \chi_j.$ Hence $\overline{r}^{-1}(\overline{L}_2)$ contains $\chi_j$ and $\kappa_2 \cdot \chi_j;$ and $\overline{r}^{-1}(\overline{L}_3)$ contains $\chi_j$ and $\kappa_3 \cdot \chi_j.$ This proves that each extensions of the Klein four group is contained in at least one of the pre-images of the lines $\overline{L}_i.$ The proof is now completed by the observation in the proof of \Cref{pro:degrees_PSLrestriction} that each $\overline{L}_i$ has preimage in $\overline{X}(M_\varphi)$ consisting of two disjoint Zariski components, each containing one of the two extensions of the Klein four group contained in $\overline{r}^{-1}(\overline{L}_i).$ The last statement now follows as in the previous paragraph.
\end{proof}

\begin{remark}
In the examples we computed explicitly, it is always that case that $\chi_{0} = \overline{L}'_1\cap \overline{L}'_2\cap \overline{L}'_3.$
\end{remark}

\begin{example}
We continue the example of $\varphi = \alpha^2\beta^{-2}$ discussed in \Cref{exa:A2B-2}.  As $H_1(M_\varphi;\mathbb{Z}_2)\cong \mathbb{Z}^3_2$, $\overline{L}_1\cup\overline{L}_2\cup\overline{L}_3\in \overline{X}_\varphi(S)$. 
A direct calculation shows that each line in $\overline{X}_\varphi(S)$  is the preimage of two Zariski components in $\overline{X}(M_\varphi)$ under the map $\overline{r}.$ Consider the preimages $\overline{L}_3',$ $\overline{L}_3''$ of $\overline{L}_3.$ Suppose that the component $L_3'\in X(M_\varphi)$ described in \Cref{exa:A2B-2} is the preimage of the component $\overline{L}_3''$ in $\overline{X}(M_\varphi)$. The values in \Cref{eq:PSL_characters} on $\overline{L}_3''$ are $(0,0,0,4-z^2)$. 
Twisting by the generator of $C(\im(\overline{\rho
}))$, we obtain another component $\overline{L}_3'\subset \overline{X}(M_\varphi)$ where the corresponding values are 
$(4,0,0,z^2)$. 
We may choose labels for the preimages of the other lines such that the values taken in \Cref{eq:PSL_characters} for $\overline{L}_1''$ are $(x^2,(x^2-2)^2,0,0)$, for $\overline{L}_1'$ are $(4-x^2,x^2(4-x^2),0,0)$, for $\overline{L}_2''$ are $(y^2,0,(y^2-2)^2,0)$, and for $\overline{L}_2'$ are $(4-y^2,0,y^2(4-y^2),0)$. Note that $\overline{L}_1'$, $\overline{L}_2'$ and $\overline{L}_3'$ intersect at the character with trivial peripheral holonomy, and each of the other $3$ characters extending the Klein four group is contained in precisely one of $\overline{L}_1''$, $\overline{L}_2''$ and $\overline{L}_3''$.
\end{example}


\subsection{Dimensions and lifting}

As an application of the discussion in the previous section, we have the following:

\begin{proposition}
\label{pro:dimension_bound_PSL}
Let $M_\varphi$ be a hyperbolic once-punctured torus bundle. Then every Zariski component of $\overline{X}(M_\varphi)$ is at most one-dimensional. Moreover, if $\{ \chi_{\prho} \} \subset \overline{X}(M_\varphi)$ is a zero-dimensional component, then this character does not not lift to $\SL(2,\mathbb{C}),$ $b_1(\varphi)\neq 1$, $\prho(\im( \pi_1(T) \to \pi_1(M_\varphi))) \cong \mathbb{Z}_2 \oplus \mathbb{Z}_2,$ and $\prho$ is irreducible with trivial centraliser. Moreover, $\chi_{\prho}$ maps to a character on a one-dimensional component in $\overline{X}(M_{\varphi^2}).$
\end{proposition}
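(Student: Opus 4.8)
The plan is to run the argument in close parallel with the proof of \Cref{pro:dimension_bound}. First I would establish the upper bound: if $\overline{X}\subset\overline{X}(M_\varphi)$ is a Zariski component containing the character of an irreducible representation $\prho$, then $\prho(\im(\pi_1(\partial M_\varphi)\to\pi_1(M_\varphi)))$ is either trivial, or $\cong\mathbb{Z}_2\oplus\mathbb{Z}_2$, or reducible and non-trivial. In the last case \Cref{pro:dimPSL_Thurston} gives $\dim\overline{X}\ge 1$, and since $M_\varphi$ contains no closed essential surface (by \cite{Culler-incompressible-1982,Floyd-incompressible-1982}) the dimension is exactly one; one must observe that the absence of closed essential surfaces bounds the dimension of $\overline{X}(M_\varphi)$ above by one just as it does for $X(M_\varphi)$ — this follows because such a component would give an action on a tree with no fixed point and hence a closed essential surface via Culler–Shalen theory, which carries over verbatim to $\PSL(2,\mathbb{C})$. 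For components containing only reducible characters, $H_1(M_\varphi;\mathbb{Z})\cong\mathbb{Z}\oplus\mathrm{Tor}$ forces one-dimensionality as in \Cref{pro:dimension_bound}. So the only way to get a zero-dimensional component is an isolated character $\chi_{\prho}$ of an irreducible $\prho$ with $\prho(\pi_1(\partial M_\varphi))$ trivial or $\cong\mathbb{Z}_2\oplus\mathbb{Z}_2$.

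Next I would rule out the trivial peripheral holonomy case. If $\prho(\pi_1(\partial M_\varphi))=\{1\}$ then in particular $\prho(\lambda)=1$ where $\lambda=[a,b]$, and $\prho(t)$ is also trivial in $\PSL(2,\mathbb{C})$. If $\prho$ restricted to $\pi_1(S)$ were reducible then $\prho(\lambda)=1$ together with $\prho(t)=1$ would make $\prho$ reducible, contradicting irreducibility; so $\prho|_{\pi_1(S)}$ is irreducible with $\prho[a,b]=1$, which forces (exactly as in \Cref{pro:dimension_bound}) the squared-trace triple $(\tr^2\prho(a),\tr^2\prho(b),\tr^2\prho(ab))=(0,0,0)$, i.e. the image of $\pi_1(S)$ is the Klein four group and $\chi_{\prho|_{\pi_1(S)}}=(0,0,0,0)$. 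But then $\prho(t)$ centralises the Klein four group and is trivial, so $\prho$ is one of the four extensions $\prho_0$ of \Cref{lem:K4_number_of_lifts}; and \Cref{lem:dimension_bound_order1} together with \Cref{lem:lines_in_PSL} shows each such character lies on a one-dimensional component $\overline{L}'_i$ (a preimage of a coordinate axis). So this character is not isolated — contradiction. Hence a zero-dimensional component must have $\prho(\pi_1(\partial M_\varphi))\cong\mathbb{Z}_2\oplus\mathbb{Z}_2$. This also forces $b_1(\varphi)\neq 1$: when $b_1(\varphi)=1$ every projective representation lifts (the lifting obstruction in $H^2(\pi_1(M_\varphi);\mathbb{Z}_2)\cong 0$ vanishes, as in the proof of \Cref{cor:iso for b1=1}), and the lift would have peripheral image the quaternionic group $Q_8$, which is non-abelian — but $\pi_1(\partial M_\varphi)\cong\mathbb{Z}^2$ is abelian, a contradiction. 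The same lifting argument shows $\chi_{\prho}$ itself does not lift to $\SL(2,\mathbb{C})$: a lift would again have peripheral image inside $\SL(2,\mathbb{C})$ covering $\mathbb{Z}_2\oplus\mathbb{Z}_2$, hence non-abelian, impossible.

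For the centraliser claim: since $\prho$ is irreducible its centraliser in $\PSL(2,\mathbb{C})$ is trivial, $\mathbb{Z}_2$, or $\mathbb{Z}_2\oplus\mathbb{Z}_2$. If the centraliser were non-trivial then $\chi_{\prho|_{\pi_1(S)}}$ lies on one of the lines $\overline{L}_i$ or at the origin $(0,0,0,0)$ (by the classification in \Cref{sec:PSLorigin_axes}), and then \Cref{lem:centraliser_order2} / \Cref{lem:K4_number_of_lifts} together with \Cref{pro:degrees_PSLrestriction} and \Cref{lem:lines_in_PSL} place $\chi_{\prho}$ on a preimage $\overline{L}'_i$ or $\overline{L}''_i$, which is one-dimensional — contradicting isolation. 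So $\prho$ has trivial centraliser. Finally, for the passage to the double cover: consider the restriction map $\overline{X}(M_\varphi)\to\overline{X}(M_{\varphi^2})$ induced by the index-two inclusion $\pi_1(M_{\varphi^2})\le\pi_1(M_\varphi)$. The image $\overline{\chi}$ of $\chi_{\prho}$ is the character of $\prho|_{\pi_1(M_{\varphi^2})}$, whose peripheral image is still $\cong\mathbb{Z}_2\oplus\mathbb{Z}_2$ (the peripheral subgroup is unchanged up to finite index and the Klein four image survives) — I would check that restricting to the index-two peripheral subgroup does not kill the Klein four image, which holds because no element of order two in $\mathbb{Z}_2\oplus\mathbb{Z}_2$ generates it, so at least two non-trivial elements survive in any index-two subgroup; hence the image is all of $\mathbb{Z}_2\oplus\mathbb{Z}_2$. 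Then one observes that over $M_{\varphi^2}$ we have $b_1(\varphi^2)=3$ (since $o(\varphi_2^2)=1$ whenever $o(\varphi_2)\in\{2,3\}$, which is exactly the situation $b_1(\varphi)\neq 1$), so $\overline{X}(M_{\varphi^2})$ is in the maximal-rank case, and \Cref{lem:lines_in_PSL} forces the character $\overline{\chi}$ — which still restricts to $(0,0,0,0)$ or to a point of some $\overline{L}_i$ on the fibre — to lie on one of the one-dimensional preimages $\overline{L}'_i$ or $\overline{L}''_i$ in $\overline{X}(M_{\varphi^2})$.

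The main obstacle is the very last point: verifying that the restriction of an \emph{isolated} character to the finite cover genuinely lands on a positive-dimensional component rather than staying isolated. The subtlety is that a priori the restriction map could collapse the component or the peripheral conditions could degenerate. I expect to handle this by pinning down $\chi_{\prho|_{\pi_1(S)}}$: since $\prho$ has trivial centraliser but its image is not the Klein four group, yet its peripheral image \emph{is} $\mathbb{Z}_2\oplus\mathbb{Z}_2$, the restriction to the fibre must have image strictly containing a Klein four group — so $\chi_{\prho|_{\pi_1(S)}}$ need not itself be $(0,0,0,0)$, and one argues instead directly with $M_{\varphi^2}$ where $b_1=3$ and invokes the full force of \Cref{lem:lines_in_PSL} and \Cref{lem:K4_number_of_lifts} to locate the extension on an explicit line component. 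Care is needed that the extension of $\prho|_{\pi_1(S)}$ picked out by $\prho(t)$ is indeed one of the extensions whose character sits on the line, which is precisely what the explicit parameterisations in \Cref{eq:param_of_PL1}–\Cref{eq:param_of_PL3} are designed to verify.
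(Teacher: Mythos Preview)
Your argument closely tracks the paper's proof through the upper bound, the case split on the peripheral image, the elimination of the trivial-peripheral case via the Klein four group and \Cref{lem:lines_in_PSL}, and the non-lifting and $b_1\neq 1$ conclusions. Your centraliser argument (non-trivial centraliser $\Rightarrow$ restriction to the fibre lies on an $\overline{L}_i$ $\Rightarrow$ $\chi_{\prho}$ sits on a one-dimensional line preimage, contradicting isolation) differs slightly from the paper's direct appeal to $\tr\prho[a,b]=0$, but it is valid. One small slip: $b_1(\varphi)\neq 1$ corresponds to $o(\varphi_2)\in\{1,2\}$, not $\{2,3\}$; the relation is $b_1(\varphi)=4-o(\varphi_2)$.

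The genuine gap is in your treatment of the passage to $M_{\varphi^2}$. You claim that $\prho|_{\pi_1(M_{\varphi^2})}$ still has peripheral image $\mathbb{Z}_2\oplus\mathbb{Z}_2$, arguing that ``no element of order two generates $\mathbb{Z}_2\oplus\mathbb{Z}_2$, so at least two non-trivial elements survive in any index-two subgroup.'' Both assertions are wrong. The peripheral subgroup of $M_{\varphi^2}$ is $\langle t^2,\lambda\rangle\le\langle t,\lambda\rangle$, and since $\prho(t)$ has order $2$ in $\PSL(2,\mathbb{C})$ we get $\prho(t^2)=1$; hence the peripheral image collapses to $\langle\prho(\lambda)\rangle\cong\mathbb{Z}_2$. (And an index-two subgroup of $\mathbb{Z}_2\oplus\mathbb{Z}_2$ has order two, so exactly one non-trivial element survives.) This error is precisely what creates the ``main obstacle'' you flag, and your proposed workaround via the line structure of $\overline{X}(M_{\varphi^2})$ cannot work as stated: you have just argued that $\prho|_{\pi_1(S)}$ has trivial centraliser, so its character does \emph{not} lie on any $\overline{L}_i$ or at the origin, and the extensions of the Klein four group in \Cref{lem:lines_in_PSL} are irrelevant to $\chi_{\prho}$.

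The paper's route dissolves the obstacle entirely. Once you observe that the peripheral image in $M_{\varphi^2}$ is $\mathbb{Z}_2$ --- non-trivial and not $\mathbb{Z}_2\oplus\mathbb{Z}_2$ --- \Cref{pro:dimPSL_Thurston} applies directly to $\prho|_{\pi_1(M_{\varphi^2})}$ (still irreducible, since already $\prho|_{\pi_1(S)}$ is) and yields dimension at least one for the component of $\overline{X}(M_{\varphi^2})$ containing the image of $\chi_{\prho}$. No appeal to $b_1(\varphi^2)$ or the line structure is needed.
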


\begin{proof}
The same arguments as in the proof of \Cref{pro:dimension_bound} show that 
every component containing only reducible characters is one-dimensional and of genus zero, and that 
that the dimension of each Zariski component of $\overline{X}(M_\varphi)$ is at most one since $M_\varphi$ does not contain a closed essential surface.
Suppose $X$ is a Zariski component containing the character of an irreducible representation $\prho.$ 
\Cref{pro:dimPSL_Thurston} implies that the dimension of $X$ is at least one unless $\prho(\im( \pi_1(T) \to \pi_1(M_\varphi)))$ is trivial or isomorphic with $\mathbb{Z}_2 \oplus \mathbb{Z}_2.$

If $\prho(\im( \pi_1(T) \to \pi_1(M_\varphi)))$ is trivial, then
$\tr\prho(a^{-1}b^{-1}ab) = \pm 2$ since $a^{-1}b^{-1}ab$ is the longitude. If the trace equals $+2,$ then $\prho(a)$ and $\prho(b)$ have a common fixed point on $P^1(\mathbb{C}).$ But since $\rho(t) = \pm I$ this implies that the representation is reducible. Hence $\tr\prho(a^{-1}b^{-1}ab) = -2.$ In this case, $\prho(\langle a, b \rangle) \cong \mathbb{Z}_2 \oplus \mathbb{Z}_2.$ 
It follows that $H_1(M_\varphi; \mathbb{Z}_2) \cong \mathbb{Z}_2^3,$ and that $\chi_{\prho} = \chi_0$ is the extension of the Klein four group with trivial peripheral holonomy. It now follows from \Cref{lem:lines_in_PSL} that this character is contained on a one-dimensional component.

Hence suppose that $\prho(\im( \pi_1(T) \to \pi_1(M_\varphi))) \cong \mathbb{Z}_2 \oplus \mathbb{Z}_2.$ In this case, $\tr\prho(a^{-1}b^{-1}ab) = 0$ and therefore the representation restricted to $\pi_1(S)$ is irreducible and has trivial centraliser. Since the Klein four group lifts to the quaternionic group, this representation does not lift to $\SL(2,\mathbb{C}).$ In particular, 
$b_1(\varphi)\neq 1.$ Now under the map to $\overline{X}(M_{\varphi^2}),$ $\chi_{\prho}$ maps to a character with the property that the peripheral subgroup has image isomorphic with $\mathbb{Z}_2.$ Hence \Cref{pro:dimPSL_Thurston} implies that the image lies on a one-dimensional component of $\overline{X}(M_{\varphi^2}).$
\end{proof}

\begin{remark}
\label{rem:PSL_dim}
We do not have examples of once-punctured torus bundles with zero-dimensional components in $\overline{X}(M_\varphi)$. The purported examples given in \cite[Theorem 7.6]{baker-character-2013} contradict \Cref{pro:dimPSL_Thurston} and a simple description of one-dimensional components containing them is given in \Cref{exa:BP_dim0is1}.
\end{remark}

Heusener and Porti~\cite[\S4.2]{Heusener-varieties-2004} give examples of hyperbolic once-punctured torus bundles that have arbitrarily many one-dimensional Zariski components in $\overline{X}(M_\varphi)$ that do not lift to $X(M_\varphi).$ All of these components are of genus zero. We remark that all these examples satisfy $b_1(\varphi)=3.$ We give similar examples with $b_1(\varphi)=2$ in \Cref{sec:infinite_no_lifts}, and it was already remarked that if $b_1(\varphi)=1,$ then every representation lifts.

Components containing only reducible characters have an analogous characterisation as given in \Cref{subsec:Reducible characters}. Moreover, the proof of \Cref{pro:intersection_red_irrep} gives the analogous result for the $\PSL(2,\mathbb{C})$--character variety. We state this here for completeness:

\begin{proposition}\label{pro:PSL_intersection_red_irrep}
For every Zariski component $X$ of $\overline{X}^{\text{red}}(M_\varphi)$, the restriction $\overline{r} \co X \to \overline{X}_\varphi(S)$ is constant and $X \cap \overline{X}^{irr}(M_\varphi)\neq \emptyset.$ Each character $\overline{\chi}$ in the intersection is contained on exactly one curve $C$ in $\overline{X}^{\text{irr}}(M_\varphi),$ a smooth point of $X$ and $C$ and the intersection at $\overline{\chi}$ is transverse.
\end{proposition}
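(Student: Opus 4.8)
The plan is to run the proof of \Cref{pro:intersection_red_irrep} essentially verbatim, exploiting the fact that the deformation criterion invoked there, \cite[Theorem 1.3]{Heusener-deformation-2005}, is stated directly for representations into $\PSL(2,\mathbb{C})$, and that the set-up of that proof — working with the homomorphism $\alpha\co\pi_1(M_\varphi)\to\mathbb{C}^*$ sending $a\mapsto s$, $b\mapsto p$, $t\mapsto m$ — is already the $\PSL(2,\mathbb{C})$ datum: the substitution $s=s_0^2$, $p=p_0^2$, $m=m_0^2$ made at the start of that proof is precisely the passage to the weights of the adjoint action of the diagonal torus, so $\alpha$ is the same in both settings and nothing after that substitution refers to $\SL(2,\mathbb{C})$.

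First I would record the structure of $\overline{X}^{\text{red}}(M_\varphi)$, as announced in the text preceding the proposition. Exactly as in \Cref{subsec:Reducible characters}, a reducible $\PSL(2,\mathbb{C})$--representation of $\pi_1(M_\varphi)$ is conjugate to a diagonal one and hence factors through $H_1(M_\varphi;\mathbb{Z})\cong\mathbb{Z}\oplus\mathrm{Tor}(M_\varphi,\mathbb{Z})$; the $\mathrm{Tor}$--part is constrained by the squared version of the eigenvalue equations in \Cref{eq:eigenvalues_red}, which has finitely many solutions, while the free $\mathbb{Z}$--factor is the image of the meridian. Thus $\overline{X}^{\text{red}}(M_\varphi)$ is a finite union of affine lines $\overline{X}_{s,p}$, and since $a$ and $b$ have constant image along such a line, its restriction to $\pi_1(S)$ is a fixed diagonal representation whose character depends only on $(s,p)$. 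Hence $\overline{r}\co\overline{X}_{s,p}\to\overline{X}_\varphi(S)$ is constant, which is the first assertion.

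Next, fix a component $X=\overline{X}_{s_0,p_0}$ and let $\overline{\rho}_{m_0}$ be the diagonal representation with $\overline{\rho}_{m_0}(a)=\pm\Diag(s_0,s_0^{-1})$, $\overline{\rho}_{m_0}(b)=\pm\Diag(p_0,p_0^{-1})$, $\overline{\rho}_{m_0}(t)=\pm\Diag(m_0,m_0^{-1})$, where $m_0$ is a value of the meridian parameter to be determined. Putting $s=s_0^2$, $p=p_0^2$, $m=m_0^2$, the pair $(s,p)$ satisfies \Cref{eq:eigenvalues_red}, so $\alpha$ restricts on $\mathrm{Tor}(M_\varphi,\mathbb{Z})$ to a homomorphism $h\co\langle a,b\rangle\to U(1)$ with $h\circ\varphi=h$, exactly as in the proof of \Cref{pro:intersection_red_irrep}. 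Using the same splitting of $H_1$ and the same class $\psi\in H^1(M_\varphi;\mathbb{Z})$, the identical Fox-calculus computation — the Jacobian $J_{M_\varphi}$, the chain rule $J_{\varphi_1\varphi_2}=\varphi_1(J_{\varphi_2})J_{\varphi_1}$, the identity $\det J_\varphi^{\mathrm{ab}}=1$, and the three cases according to whether $h(a)$ and $h(b)$ are trivial — shows that $m$ is a simple zero of the twisted Alexander polynomial $\Delta^{\psi_\alpha}_{M_\varphi}$. By \cite[Theorem 1.3]{Heusener-deformation-2005}, this gives at once that $\overline{\rho}_{m_0}$ is a smooth point of $\overline{X}^{\text{red}}(M_\varphi)$ lying on a unique curve $C$ of irreducible characters, with $C$ and $X$ meeting transversally at $\overline{\chi}_{m_0}$; in particular $X\cap\overline{X}^{\text{irr}}(M_\varphi)\neq\emptyset$.

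The genuine content of the argument is therefore already present in the proof of \Cref{pro:intersection_red_irrep}, and the only thing I would have to check here is that the simple-zero computation is insensitive to the passage from $\SL(2,\mathbb{C})$ to $\PSL(2,\mathbb{C})$ data — which it is, since that computation only involves $\alpha$, $h$ and $\psi$, all of which are unchanged. So I do not expect a real obstacle: the proposition follows by the same reasoning as \Cref{pro:intersection_red_irrep}, and in writing it up I would simply indicate the substitution $(s,p,m)=(s_0^2,p_0^2,m_0^2)$ and point to that proof. (One could, alternatively, deduce it from \Cref{pro:intersection_red_irrep} itself together with the fact that $X(M_\varphi)\to\overline{X}(M_\varphi)$ is a branched covering that is a local isomorphism at characters of irreducible representations with trivial centraliser, but the direct route above seems cleaner.)
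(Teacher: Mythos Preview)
Your proposal is correct and matches the paper's approach exactly: the paper gives no separate proof of this proposition, instead noting in the proof of \Cref{pro:intersection_red_irrep} that after the substitution $(s,p,m)=(s_0^2,p_0^2,m_0^2)$ the remainder applies verbatim to the $\PSL(2,\mathbb{C})$--character variety, and in the text preceding \Cref{pro:PSL_intersection_red_irrep} that the reducible components have the same structure as in \Cref{subsec:Reducible characters}. Your write-up spells out precisely these two observations.
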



\subsection{An algebraic subset}

For computations, it is useful to identify $\overline{X}_\varphi(S)$ with the image of a suitable algebraic subset of $X(S)$ under the four-fold branched covering map $X(S)\to \overline{X}(S).$ Let
\begin{align}\label{eq:signed_FP_sets}
X_\varphi^{1}(S) =\; &\{(x,y,z) \in \mathbb{C}^3 |(\phantom{-}x,-y,-z)=\overline{\varphi}(x,y,z) \}\\
X_\varphi^{2}(S) =\; &\{(x,y,z) \in \mathbb{C}^3 |(-x,\phantom{-}y,-z)=\overline{\varphi}(x,y,z) \} \\
X_\varphi^{3}(S) =\; &\{(x,y,z) \in \mathbb{C}^3 |(-x,-y,\phantom{-}z)=\overline{\varphi}(x,y,z) \}
\end{align}
Note that the intersection of any two of the sets $X_\varphi(S), X_\varphi^1(S), X_\varphi^2(S), X_\varphi^3(S)$ is contained in a coordinate axis. Recall that the natural map $q_2\co X(S)\to \overline{X}(S)$ is the quotient  map of the $H^1(S,\mathbb{Z}_2 )$ action. The definitions imply that
\[
q_2^{-1}( \overline{X}_\varphi(S)) = X_\varphi(S) \cup X_\varphi^{1}(S) \cup X_\varphi^{2}(S) \cup X_\varphi^{3}(S)
\]
For each $(x,y,z)\in q_2^{-1}( \overline{X}_\varphi(S)),$ there are $A, B \in 
\SL(2,\mathbb{C})$ with $\tr A = x, \tr B = y$ and $\tr AB = z$ and satisfying \Cref{eq:PSL_setup_A,eq:PSL_setup_B}, where $\varepsilon_a, \varepsilon_b\in \{\pm 1\}$ are determined by
\[
(\varepsilon_a \;x,\; \varepsilon_b\; y,\; \varepsilon_a\varepsilon_b\; z)=\overline{\varphi}(x,y,z)
\]
The above sets therefore give us a simple way to compute the representations of $\pi_1(M_\varphi)$ into $\PSL(2,\mathbb{C})$. 
We now determine redundancies that arise from the action of the elements of $H^1(S,\mathbb{Z}_2 )$ that are not in $H.$ 

View $h \in H^1(S,\mathbb{Z}_2 )$ as a homomorphism 
$h\co \pi_1(S) \to \{ \pm 1\}.$
Then $h \in H$ if and only if $h(a) = h(\varphi(a))$ and $h(b) = h(\varphi(b)).$
Let $\overline{\rho}\co \pi_1(M_\varphi) \to \PSL(2,\mathbb{C})$ and use the set up from \Cref{eq:PSL_setup_A,eq:PSL_setup_B}. Then $\prho$ lifts to $\SL(2,\mathbb{C})$ if and only if there is $h \in H^1(S,\mathbb{Z}_2 )$ with
\begin{align*}
h(a) &=  h(\varphi(a))\; \varepsilon_A\\
h(b) &= h(\varphi(b))\; \varepsilon_B
\end{align*}
Note that the signs $\varepsilon_A$ and $\varepsilon_B$ are uniquely determined by $\prho$ if $H = H^1(S,\mathbb{Z}_2 )$ and otherwise they depend on the choice of the matrixes $A$ and $B.$
The action of $h\in H^1(S,\mathbb{Z}_2 )$ is given by:
\[
h\cdot(x,y,z) \mapsto (\;h(a)x,\; h(b)y,\; h(ab)z\;) 
\]
This induces a permutation of the sets $X_\varphi(S)$, $X_\varphi^{1}(S),$ $X_\varphi^{2}(S),$ and $X_\varphi^{3}(S).$
The permutation of the sets is determined by the action on the defining equations:
\[
\overline{\varphi}(\;x,\;y,\;z) = (\;h(a)h(\varphi(a))\varepsilon_A \;x, \;h(b)h(\varphi(b))\varepsilon_B\; y, \;h(ab)h(\varphi(ab))\varepsilon_A\varepsilon_B\; z\;)
\]
Each set is stabilised by $H \le H^1(S,\mathbb{Z}_2 )$ and the complementary elements permute the sets $X_\varphi(S),$ $X_\varphi^1(S),$ $X_\varphi^2(S),$ $X_\varphi^3(S).$ 

If $b_1(\varphi)=3,$ then $H = H^1(S,\mathbb{Z}_2 )$ and hence each of the sets is fixed under this action. In particular, 
$\overline{\rho}$ does not lift if at least one of $\varepsilon_A$ and $\varepsilon_B$ equals $-1.$ 
Define
\[
X_\varphi^\perp (S) = X_\varphi^{1}(S) \cup X_\varphi^{2}(S) \cup X_\varphi^{3}(S)
\]
If $b_1(\varphi)=2,$ then the action of $H^1(S,\mathbb{Z}_2 )$ has two orbits, each containing two sets. Let
\[
X_\varphi^\perp (S) = X_\varphi^{i}(S) 
\]
where $X_\varphi^{i}(S)$ is one of the sets not in the orbit of $X_\varphi(S).$

If $b_1(\varphi)=1,$ then there is just one orbit and we define $X_\varphi^\perp (S) = \emptyset.$ This observation can be viewed as an elementary proof of the fact that every representation into $\PSL(2, \mathbb{C})$ lifts to $\SL(2, \mathbb{C})$ in this case.

These definitions imply that the map 
\[
X_\varphi (S) \cup X_\varphi^\perp (S) \to \overline{X}_\varphi(S)
\]
is surjective, and corresponds to the quotient map of the action of $H^1(S,\mathbb{Z}_2 )$ twisted via $\varphi.$



\section{Examples}
\label{sec:examples}

This section provides details about the three infinite families of once-punctured torus bundles mentioned in the introduction, as well as the beginnings of a census. We begin by stating some general results that will be used repeatedly. The main ingredients in the proofs are properties of a sequence of Fibonacci polynomials, and determining irreducibility and genus of a plane algebraic curve from the Newton polygon of a defining polynomial.


\subsection{The Fibonacci polynomials}
\label{sec:fibonacci}
This section collects some facts about a family of recursive polynomials, which are used throughout the computation in \cite{baker-character-2013}. The Fibonacci polynomials will be used to compute $X_{\varphi}(S)$ for our examples.
\begin{definition}
For every integer $n$, the $n$-th Fibonacci polynomial $f_n(u)$ is defined by the recursive relation
$$f_n(u) = uf_{n-1}(u)-f_{n-2}(u)$$
where $f_0(u) = 0$ and $f_1(u) = 1$.
\end{definition}
A number of useful properties of $f_n$ according to \cite{baker-character-2013} are listed below. 
\begin{lemma}
\label{lem: fibonacci poly}
\begin{enumerate}
    \item If $u = s+s^{-1}$, then 
    \begin{equation*}
        f_n(u) = \begin{cases}
        \dfrac{s^n-s^{-n}}{s-s^{-1}}\text{, if }u \neq \pm 2\\
        n\text{, \hspace{1cm}if } u = 2\\
        (-1)^{n+1}n\text{, if } u = -2
        \end{cases}
    \end{equation*}
    \item If $n\neq 0$, the degree of $f_n(u)$ is $|n|-1$.
    \item $f_n(u)$ is divisible by $u$ if and only if $n$ is even. If $n\neq 0$, $f_n(u)$ is not divisible by $u^2$.
    \item For any integer $n$, $f_n(u)$, $f_{n+1}(u) - f_{n}(u)$, $f_{n+1}(u) + f_{n}(u)$ and $f_{n+2}(u) - f_{n}(u)$ are separable except for $f_0(u) = 0$.
    \item $f_{n+2}(u) - f_{n+1}(u) = 0$ and $f_{n+1}(u) - f_{n}(u) = 0$ do not have a common root.
\end{enumerate}
\begin{proof}
Part (1) follows from the defining relation. Parts (2), (3) and (4) follow from \cite[Lemmas 4.3, 4.4 and 4.11]{baker-character-2013}. A direct calculation using (1) results in (5).
\end{proof}
\end{lemma}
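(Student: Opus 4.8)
The plan is to prove the closed form in part~(1) by induction and then deduce parts~(2)--(5) from it by elementary root counting; throughout it suffices to treat $n\ge 1$, since running the recursion backwards gives $f_{-n}=-f_n$, and $f_0=0$ is excluded in parts~(4)--(5). For part~(1) I would first check that $f_0=0$, $f_1=1$ agree with all three claimed expressions. Fixing $s\neq\pm 1$ and setting $u=s+s^{-1}$, the function $g_n=(s^n-s^{-n})/(s-s^{-1})$ satisfies $g_n=ug_{n-1}-g_{n-2}$ --- this is just $(s+s^{-1})(s^{n-1}-s^{-(n-1)})=(s^n-s^{-n})+(s^{n-2}-s^{-(n-2)})$ --- and $g_0=0$, $g_1=1$, so induction gives $f_n(u)=g_n$ for $u\neq\pm 2$. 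For $u=2$ the recursion reads $f_n=2f_{n-1}-f_{n-2}$, whose solution with the given initial data is $f_n=n$; for $u=-2$ the recursion $f_n=-2f_{n-1}-f_{n-2}$ gives $f_n=(-1)^{n+1}n$ (both are also the $s\to\pm1$ limits of $g_n$).

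Part~(2) is an immediate induction: $f_1$ has degree $0$, and if $\deg f_{n-1}=n-2$ then $\deg(uf_{n-1})=n-1>n-3=\deg f_{n-2}$, so $\deg f_n=n-1$. For part~(3), evaluating the recursion at $u=0$ gives $f_n(0)=-f_{n-2}(0)$ with $f_0(0)=0$, $f_1(0)=1$, whence $u\mid f_n$ iff $n$ is even; differentiating the recursion and evaluating at $u=0$ then shows $f_n'(0)\neq 0$ for even $n\neq 0$ by a parallel induction, so $u^2\nmid f_n$.

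For part~(4), I would use part~(1) to rewrite, for $u=s+s^{-1}$ with $s\neq\pm1$, the four polynomials as
\[
f_n=\frac{s^n-s^{-n}}{s-s^{-1}},\qquad f_{n+1}-f_n=\frac{s^{n+1}+s^{-n}}{s+1},\qquad f_{n+1}+f_n=\frac{s^{n+1}-s^{-n}}{s-1},\qquad f_{n+2}-f_n=s^{n+1}+s^{-(n+1)},
\]
each identity following by collecting geometric terms. Their zero loci in $s$ are $\{s^{2n}=1\}$, $\{s^{2n+1}=-1\}$, $\{s^{2n+1}=1\}$, $\{s^{2(n+1)}=-1\}$, with $s=\pm1$ removed in the first three and automatically absent from the fourth; in each case the admissible $s$ split into pairs $\{s,s^{-1}\}$ with $s\neq s^{-1}$, which project under $s\mapsto s+s^{-1}$ to $n-1$, $n$, $n$, $n+1$ distinct values of $u$ respectively. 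Since $u=\pm2$ is never a root (using $f_n(2)=n$, $f_n(-2)=(-1)^{n+1}n$, and the analogous evaluations of the other three polynomials), these counts equal the degrees from part~(2), so each polynomial is separable. For part~(5), a common root $u_0$ of $f_{n+2}-f_{n+1}$ and $f_{n+1}-f_n$ with $u_0=s_0+s_0^{-1}\neq\pm2$ would force $s_0^{2n+3}=-1$ and $s_0^{2n+1}=-1$, hence $s_0^2=1$, contradicting $s_0\neq\pm1$; the cases $u_0=\pm2$ are excluded directly, e.g.\ $f_{n+2}(2)-f_{n+1}(2)=1\neq 0$.

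The only step requiring genuine care rather than bookkeeping is part~(4): one must confirm in each of the four root sets that the map $s\mapsto s+s^{-1}$ is exactly two-to-one on the relevant $s$ and that no fixed point $s=\pm1$ is accidentally included, so that the number of distinct $u$-roots is exactly half the number of $s$-roots; together with the separate verification that $\pm2$ are not roots, this forces separability. Everything else is a direct consequence of the closed form or of the linear recursion, and parts~(2)--(4) could alternatively be cited from~\cite{baker-character-2013}.
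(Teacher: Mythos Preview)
Your proof is correct and, for parts~(1) and~(5), follows exactly the line the paper indicates (induction from the recursion for~(1), then the closed form in~$s$ to rule out common roots for~(5)). The difference is that for parts~(2)--(4) the paper simply cites \cite[Lemmas~4.3, 4.4, 4.11]{baker-character-2013}, whereas you supply self-contained arguments: the degree by induction, divisibility by~$u$ via evaluating the recursion at~$0$, and separability by counting $s$-roots and checking that $s\mapsto s+s^{-1}$ is exactly two-to-one on each zero locus. This is not a different strategy so much as an unpacking of the cited lemmas; your closing remark already acknowledges this. One small point: your reduction ``it suffices to treat $n\ge 1$'' via $f_{-n}=-f_n$ also needs (and has) the observation that each of the combinations $f_{n+1}\pm f_n$ and $f_{n+2}-f_n$ is sent, up to sign, to the same combination with a nonnegative index under $n\mapsto -n-1$ or $n\mapsto -n-2$; this is implicit but worth stating if you want the argument to be fully airtight.
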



\subsection{Genus and the Newton Polygon}
\label{sec:Genus and the Newton Polygon}

A classical link between the Newton polygon and the genus of an irreducible algebraic curve is known as Baker's formula:
\begin{theorem}[Baker~\cite{Baker-examples-1893}]
Suppose that $F(x,y)=0$ defines an irreducible algebraic curve $X$ in $\mathbb{C}^2$. The genus of $X$ is at most the number of lattice points in the interior of the Newton polygon of $F.$
\end{theorem}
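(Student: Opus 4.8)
The plan is to compactify $X$ inside the toric surface attached to its Newton polygon and to compute its arithmetic genus by a short exact sequence and Serre duality, the interior lattice points appearing as the dimension of a space of sections. Write $\Delta\subset\mathbb R^2$ for the Newton polygon of $F$, a lattice polygon which we take to be two--dimensional (when $\Delta$ is a point or a segment the bound is trivial). Let $Y=Y_\Delta$ be the projective toric surface whose fan is the inner normal fan of $\Delta$, let $(\mathbb C^{*})^2\hookrightarrow Y$ be its dense torus, and let $L$ be the ample Cartier divisor class whose polytope is $\Delta$, so that $H^{0}(Y,\mathcal O_Y(L))$ has the monomials $\{x^{m}:m\in\Delta\cap\mathbb Z^2\}$ as a basis. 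Then $F$ is a nonzero section of $\mathcal O_Y(L)$ and its zero divisor $C\subset Y$ meets the torus precisely along $X$. Since $X$ is irreducible (so $F$ has no monomial factor) and dense in $C$, the curve $C$ is irreducible and $X$ and $C$ share a normalisation, hence the same geometric genus $g$.

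First I would record the standard inequality $g\le p_a(C)$, where $p_a(C)=1-\chi(\mathcal O_C)$ is the arithmetic genus; the difference is a sum of nonnegative local invariants at the singular points of $C$, so this is automatic. Next I would compute $p_a(C)$. The structure sequence $0\to\mathcal O_Y(-C)\to\mathcal O_Y\to\mathcal O_C\to 0$ together with $\mathcal O_Y(-C)\cong\mathcal O_Y(-L)$ gives $\chi(\mathcal O_C)=\chi(\mathcal O_Y)-\chi(\mathcal O_Y(-L))$. Toric varieties are Cohen--Macaulay with $H^{i}(Y,\mathcal O_Y)=0$ for $i>0$, so $\chi(\mathcal O_Y)=1$ and hence $p_a(C)=\chi(\mathcal O_Y(-L))$.

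The key step is to bound $\chi(\mathcal O_Y(-L))$ above by the number of interior lattice points of $\Delta$. Since $L$ is ample, $-L$ has no sections and $h^{0}(\mathcal O_Y(-L))=0$, so $\chi(\mathcal O_Y(-L))=-h^{1}(\mathcal O_Y(-L))+h^{2}(\mathcal O_Y(-L))\le h^{2}(\mathcal O_Y(-L))$. As $Y$ is a Cohen--Macaulay projective surface, Serre duality gives $h^{2}(\mathcal O_Y(-L))=h^{0}(Y,\mathcal O_Y(L+K_Y))$, where $K_Y=-\sum_\rho D_\rho$ is the Weil canonical divisor, the negative sum of the toric boundary divisors. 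Finally, on a toric variety the global sections of a torus-invariant Weil divisor $D$ are indexed by the lattice points of its associated polyhedron; applied to $L+K_Y$, whose polyhedron is obtained from $\Delta$ by pushing each supporting half-plane inward by one lattice unit, this says $h^{0}(\mathcal O_Y(L+K_Y))=\#(\Delta^{\circ}\cap\mathbb Z^2)$, the number of lattice points in the interior of $\Delta$. Stringing these together, $g\le p_a(C)=\chi(\mathcal O_Y(-L))\le\#(\Delta^{\circ}\cap\mathbb Z^2)$, which is the assertion.

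I expect the main obstacle to be making the toric bookkeeping of the last step precise when $Y$ is singular, which happens as soon as $\Delta$ has a non-smooth vertex: one must use the correct notion of $\omega_Y$ for the Cohen--Macaulay but possibly non-Gorenstein surface $Y$, identify it with $\mathcal O_Y(K_Y)$ for the Weil divisor $K_Y=-\sum_\rho D_\rho$, and verify that the lattice-point description of $H^{0}(\mathcal O_Y(L+K_Y))$ selects exactly the interior points of $\Delta$ — the point where ``interior of the Newton polygon'' enters. The remaining ingredients ($\chi(\mathcal O_Y)=1$, Serre duality on a Cohen--Macaulay projective surface, vanishing of $h^{0}$ of an anti-ample bundle, and $g\le p_a$ for an irreducible projective curve) are standard, and one could alternatively sidestep the singular case by first passing to a toric resolution $\widetilde Y\to Y$ and computing on the smooth surface $\widetilde Y$, using that geometric genus is a birational invariant and that the arithmetic genus does not increase under the proper transform.
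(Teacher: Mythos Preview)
The paper does not prove this statement at all: it is quoted as Baker's classical 1893 result and used as a black box, together with the refinement of Beelen--Pellikaan and the hyperellipticity criterion of Castryck--Voight. So there is no ``paper's own proof'' to compare against.

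Your toric argument is correct and is essentially the standard modern proof of Baker's bound. The chain
\[
g(X)\;\le\;p_a(C)\;=\;\chi(\mathcal O_Y(-L))\;\le\;h^2(\mathcal O_Y(-L))\;=\;h^0(\mathcal O_Y(L+K_Y))\;=\;\#(\Delta^\circ\cap\mathbb Z^2)
\]
is valid, and you have identified the one place that needs care, namely the identification of sections of $\mathcal O_Y(L+K_Y)$ with interior lattice points when $Y_\Delta$ is singular. Two small remarks: first, the reason $C$ contains no toric boundary component (so that $C$ is irreducible with $X\cap(\mathbb C^*)^2$ dense) is precisely that $\Delta$ is the \emph{full} Newton polygon of $F$, so the section does not vanish identically along any $D_\rho$; you use this implicitly and it is worth saying. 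Second, for an ample $L$ on a projective toric variety one in fact has $h^1(\mathcal O_Y(-L))=0$ (toric vanishing), so your inequality $\chi(\mathcal O_Y(-L))\le h^2(\mathcal O_Y(-L))$ is an equality and $p_a(C)$ is exactly the number of interior lattice points; this is what underlies the Khovanski\u{\i}/Beelen--Pellikaan refinement the paper also cites. Your suggested alternative of passing to a smooth toric resolution is a clean way to avoid the Cohen--Macaulay/dualising-sheaf bookkeeping.
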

Khovanski\u{\i}~\cite{Hovanskii-genus-1978} showed that one generically has equality instead of an upper bound. We use a version of this result as implied by Beelen and Pellikaan~\cite{Beelen_Pellikaan_NetwonPolygon}. A polynomial $F(x,y)\in \mathbb{C}[x,y]$ is said to be \textbf{nondegenerate} with respect to its Newton polygon if for every edge $\gamma$ of its Newton Polygon with the corresponding polynomial $F_\gamma$, the ideal generated by $F_\gamma$, $x\frac{\partial F_\gamma}{\partial x}$ and $y\frac{\partial F_\gamma}{\partial y}$ has no zero in $(\mathbb{C}/\{0\})^2$.
\begin{corollary}[Beelen-Pellikaan]
\label{lem:Newton polygon}
Suppose that $F(x,y)=0$ defines an irreducible algebraic curve $X$ in $\mathbb{C}^2$. If $F$ is nondegenerate with respect to its Newton polygon and it is smooth at every point $(x,y)\in X$ where $xy\neq 0$, then the genus of its nonsingular model is equal to the number of lattice points in the interior of its Newton polygon.
\end{corollary}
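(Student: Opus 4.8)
The plan is to deduce this directly from the genus formula for nondegenerate curves in a two–dimensional torus, as established by Beelen and Pellikaan~\cite{Beelen_Pellikaan_NetwonPolygon} (building on Baker and Khovanski\u\i). The first step is to reduce to the torus: the geometric genus is a birational invariant, and the curve $X\subset\mathbb{C}^2$ differs from its intersection $X^\circ = X\cap(\mathbb{C}\setminus\{0\})^2$ with the torus only by the finitely many points of $X$ lying on a coordinate axis. Hence $X$ and $X^\circ$ have the same function field, the same nonsingular projective model $\widetilde X$, and the same genus, so it suffices to understand $X^\circ$. In particular the behaviour of $X$ on the coordinate axes, where the hypotheses allow singularities, plays no role.

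The second step is to pass to a toric compactification. Let $\Delta$ be the Newton polygon of $F$ and let $Y_\Delta$ be the complete toric surface attached to the normal fan of $\Delta$; the monomials of $F$ define a section of the line bundle $\mathcal{O}_{Y_\Delta}(\Delta)$, and $\overline X$, the closure of $X^\circ$ in $Y_\Delta$, is its zero locus. The key point, and where the hypotheses enter, is that $\overline X$ is then a smooth projective curve: the assumption that $X$ is smooth at every point with $xy\neq 0$ gives smoothness along $X^\circ\subset\overline X$, while the nondegeneracy of $F$ with respect to each edge $\gamma$ of $\Delta$ forces $\overline X$ to avoid the torus–fixed points of $Y_\Delta$, to meet each boundary divisor transversally, and to be smooth along the boundary — indeed the edge polynomial $F_\gamma$, together with the displayed ideal condition, precisely records that the restriction of $\overline X$ to the corresponding boundary divisor is reduced and avoids the $0$– and $\infty$–sections. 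Therefore $\overline X$ is a smooth projective curve birational to $X$, and by uniqueness of nonsingular projective models of curves, $\overline X = \widetilde X$.

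The final step is the Khovanski\u\i/adjunction genus formula on $Y_\Delta$: for a smooth curve of this shape, $H^0(\overline X,\omega_{\overline X})$ is canonically identified with the span of the Laurent monomials whose exponent vectors lie in the interior $\mathrm{int}\,\Delta$, so $g(\overline X)=\#(\mathrm{int}\,\Delta\cap\mathbb{Z}^2)$. Combining with the first step gives the claim (and when $\Delta$ is a segment both sides are $0$). I expect the main obstacle to be purely bookkeeping rather than conceptual: one must check that the edge-by-edge nondegeneracy condition as phrased in the statement is exactly the hypothesis under which~\cite{Beelen_Pellikaan_NetwonPolygon} proves the genus equality, and that our allowance of singularities on the coordinate axes is harmless for $\widetilde X$. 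Both are immediate from the cited work, so in the write-up I would simply record the birational-invariance observation and then invoke~\cite{Beelen_Pellikaan_NetwonPolygon} for the genus count.
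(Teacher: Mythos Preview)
Your proposal is correct, and both you and the paper ultimately appeal to \cite{Beelen_Pellikaan_NetwonPolygon}, but the route is genuinely different. The paper cites the specific \cite[Theorem~4.2]{Beelen_Pellikaan_NetwonPolygon}, whose extra hypothesis is phrased in terms of the homogenisation $F^*$ in $\mathbb{P}^2$: the singular points of $\{F^*=0\}$ must lie among the three coordinate points $(0{:}0{:}1),\ (0{:}1{:}0),\ (1{:}0{:}0)$. The paper's proof then consists of the one-line check that any other singular point of $F^*$ would descend to an affine singular point with $xy\neq 0$, contradicting the smoothness assumption. Your argument instead bypasses $\mathbb{P}^2$ entirely: you use birational invariance of the genus to pass to $X^\circ\subset(\mathbb{C}^*)^2$, compactify in the toric surface $Y_\Delta$, and note that nondegeneracy plus torus-smoothness makes the closure there a smooth projective model, so the Khovanski\u\i\ count applies. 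Your viewpoint is more geometric and makes transparent why possible singularities on the coordinate axes are irrelevant (they disappear after restricting to the torus), whereas the paper's version is quicker to write down once one has the precise statement of \cite[Theorem~4.2]{Beelen_Pellikaan_NetwonPolygon} at hand. Either is fine here; if you follow your plan of ``record birational invariance, then cite'', you should still point to the specific result in \cite{Beelen_Pellikaan_NetwonPolygon} that matches your toric framing, since that paper contains several related statements.
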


\begin{proof}
We show that this is implied by \cite[Theorem 4.2]{Beelen_Pellikaan_NetwonPolygon}. The only statement that requires proof is that the smoothness hypothesis implies that the singular points of the homogeneous curve with equation $F^*(x, y, z) = 0$ are among $(0 : 0 : 1),$ $(0 : 1 : 0)$ and $(1 : 0 : 0),$ where $F^*$ denotes the homogenisation of $F.$ If $S$ is a singular point other than the three points, it is not in the line of infinity and corresponds to a singular point $s=(x,y)\in X$ such that $xy\neq 0$. This contradicts the smoothness condition.
\end{proof}

We will also appeal to the following result \cite[Lemma 5.1]{Castryck-nondegeneracy-2009}. The authors thank Michael Joswig for pointing them to this reference.

\begin{lemma}[Castryck-Voight~\cite{Castryck-nondegeneracy-2009}]
\label{lem:hyperelliptic_newton}
Suppose that $F(x,y)=0$ defines an irreducible algebraic curve $X$ in $\mathbb{C}^2$. Assume that $F$ is nondegenerate with respect to its Newton polygon, and that there are at least two lattice points in the interior of the Newton polygon of $F.$ Then $X$ is hyperelliptic if and only if the interior lattice points of the Newton polygon are collinear.
\end{lemma}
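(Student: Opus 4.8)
The plan is to read hyperellipticity off the \emph{canonical map} of the smooth projective model $\widetilde X$ of $X$, using the combinatorial (``adjoint'') description of its regular differentials. Write $\Delta$ for the Newton polygon of $F$. By Khovanski\u{\i}'s theorem \cite{Hovanskii-genus-1978} (the circle of ideas behind the genus computation we already use), nondegeneracy of $F$ gives that the geometric genus of $X$ equals $g=\#(\operatorname{int}(\Delta)\cap\mathbb Z^2)\ge 2$ and that a basis of $H^0(\widetilde X,\omega_{\widetilde X})$ is given by the differentials $\omega_P=x^{a}y^{b}\,\frac{dx}{\partial F/\partial y}$ indexed by the lattice points $P=(a+1,b+1)\in\operatorname{int}(\Delta)\cap\mathbb Z^2$. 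Since $g\ge 2$ the canonical system is base-point-free, so the canonical map $\kappa\co\widetilde X\to\mathbb P^{g-1}$ is a morphism with $\kappa^{*}\mathcal O(1)=\omega_{\widetilde X}$ of degree $2g-2$; on the dense torus chart it is $t\mapsto[\,t^{P_1}:\dots:t^{P_g}\,]$, the restriction of the monomial map attached to the point configuration $\operatorname{int}(\Delta)\cap\mathbb Z^2$. If $g=2$ the two interior lattice points are trivially collinear and $\widetilde X$ is automatically hyperelliptic, so I may assume $g\ge 3$; then $\widetilde X$ is hyperelliptic if and only if $\kappa$ fails to be birational onto its image, in which case $\kappa$ is the composite of the degree-two hyperelliptic map with the degree-$(g-1)$ rational normal curve $\mathbb P^1\hookrightarrow\mathbb P^{g-1}$.

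\emph{Collinear $\Rightarrow$ hyperelliptic.} Since $\operatorname{int}(\Delta)$ is convex, $\operatorname{int}(\Delta)\cap\mathbb Z^2$ contains every lattice point on a segment between two of its members; so if these points are collinear they form a maximal arithmetic progression $P_0,P_0+v,\dots,P_0+(g-1)v$ with $v\in\mathbb Z^2$ primitive. Then $\kappa=[\,t^{P_0}:t^{P_0+v}:\dots:t^{P_0+(g-1)v}\,]=[\,1:t^{v}:\dots:t^{(g-1)v}\,]$, so $\kappa=\nu\circ\phi$, where $\nu\co\mathbb P^1\hookrightarrow\mathbb P^{g-1}$ is the degree-$(g-1)$ rational normal curve and $\phi\co\widetilde X\to\mathbb P^1$ is the morphism extending the rational function $t^{v}$ (nonconstant, since otherwise $\kappa$ would be constant, impossible for $g\ge 2$). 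Comparing degrees of $\mathcal O(1)$-pullbacks gives $2g-2=\deg\omega_{\widetilde X}=(g-1)\deg\phi$, hence $\deg\phi=2$ and $\widetilde X$ is hyperelliptic.

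\emph{Hyperelliptic $\Rightarrow$ collinear.} I argue the contrapositive. If the interior lattice points are not collinear, then $Q:=\operatorname{conv}(\operatorname{int}(\Delta)\cap\mathbb Z^2)$ is a two-dimensional lattice polygon, and I claim the subgroup $\Lambda\le\mathbb Z^2$ generated by the differences of points of $Q\cap\mathbb Z^2$ is all of $\mathbb Z^2$. Were $\Lambda$ of index $k\ge 2$, then $Q\cap\mathbb Z^2$ would lie in a single coset $v_0+\Lambda$, so $Q$ would also be a lattice polygon for $\Lambda$, and Pick's formula with respect to $\mathbb Z^2$ and to $\Lambda$ would both compute $\#(Q\cap\mathbb Z^2)$; subtracting them yields $\tfrac12\bigl(\#(\partial Q\cap(v_0+\Lambda))-\#(\partial Q\cap\mathbb Z^2)\bigr)=\tfrac{k-1}{k}\operatorname{Area}(Q)>0$, contradicting $\partial Q\cap(v_0+\Lambda)\subseteq\partial Q\cap\mathbb Z^2$. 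Granting the claim: if $\kappa(p)=\kappa(q)$ for $p,q\in\widetilde X$ lying over points of $X$ with nonzero coordinates, then $t(p)^{w}=t(q)^{w}$ for all $w$ among the differences $P_i-P_j$, hence (these generate $\mathbb Z^2$) for all $w\in\mathbb Z^2$, so $t(p)=t(q)$ and therefore $p=q$, since $X\subseteq(\mathbb C^{*})^{2}$ and $\widetilde X\to X$ is injective over the smooth locus. Such points are dense in $\widetilde X$, so the generic fibre of $\kappa\co\widetilde X\to\kappa(\widetilde X)$ is a single point; thus $\kappa$ is birational onto its image and $\widetilde X$ is not hyperelliptic.

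The step I expect to be the real obstacle is not any single computation but securing the monomial description of $H^0(\widetilde X,\omega_{\widetilde X})$: this rests on Khovanski\u{\i}'s adjoint theory for nondegenerate curves, which is not developed here and must be quoted, and one must take care that the relevant genus formula applies under nondegeneracy alone (rather than with an extra smoothness hypothesis). Once that is in place both implications are short, the only further ingredient being the elementary lattice fact isolated above that the lattice points of a two-dimensional lattice polygon generate $\mathbb Z^2$ as a difference group. (As this is precisely \cite[Lemma 5.1]{Castryck-nondegeneracy-2009}, one could of course also simply cite it.)
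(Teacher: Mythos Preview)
The paper gives no proof of this lemma; it is simply stated with attribution to Castryck--Voight \cite[Lemma~5.1]{Castryck-nondegeneracy-2009}. Your proposal therefore goes beyond what the paper does. The argument you outline --- describing the canonical map via the monomial (adjoint) basis of $H^0(\widetilde X,\omega_{\widetilde X})$ indexed by the interior lattice points of $\Delta$, and then reading (non-)hyperellipticity off whether this map factors through a rational normal curve --- is essentially the Castryck--Voight proof itself, so you have reconstructed the cited source rather than found a genuinely different route.

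One caveat worth recording explicitly. The paper's working definition of ``nondegenerate'' checks only the \emph{edges} of the Newton polygon, whereas the inputs your argument relies on --- the adjoint description of $H^0(\widetilde X,\omega_{\widetilde X})$ and, in the non-collinear direction, the injectivity of $\widetilde X\to X$ over points with $xy\neq 0$ --- require in addition that $X$ be smooth on $(\mathbb{C}^*)^2$, i.e.\ nondegeneracy with respect to the top-dimensional face $\Delta$ as well. You flag this concern yourself, and in every application in the paper this extra smoothness is verified separately (see \Cref{lem:singularity Un}, \Cref{lem: nondegeneracy Nn}, \Cref{lem: nondegeneracy Ln}), so nothing is lost. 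Your Pick's-formula argument that the differences of the lattice points of a two-dimensional lattice polygon generate $\mathbb{Z}^2$ is correct (indeed, once one notes that equality $Q\cap\mathbb{Z}^2=Q\cap(v_0+\Lambda)$ forces the two Pick formulae to give $\operatorname{Area}(Q)=\operatorname{Area}(Q)/k$, the contradiction is immediate).
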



\subsection{The family $M_n$}
\label{sec:first_family}
The genera of the Zariski components of the $\SL(2,\mathbb{C})$-- and $\PSL(2,\mathbb{C})$--character varieties of the infinite family of once-punctured torus bundles $M_n$ with monodromies $\varphi_n = AB^{n+2}$ were determined by Baker and Petersen~\cite[Theorem 5.1]{baker-character-2013} via a birational isomorphism between $X^{irr}(M_{\varphi_n})$ and a family of hyperelliptic curves. In this section, we compute the corresponding varieties $X_{\varphi_{n}}(S)$. Note that $\tr (\varphi_n)=-n$ and $M_n$ is hyperbolic if $n\geq 3$ or $n\leq-3$. For computational simplicity, we only consider the case when $n\geq 3$ and $n$ is odd, and so $b_1({\varphi_n})=1.$

The corresponding family of framings $\varphi_n$ admit the following form:
\begin{equation*}
    {\varphi_n} = \begin{cases}
    a\rightarrow a(a^{-1}b^{-1})^{n+2}\\
    b\rightarrow ba
    \end{cases}
\end{equation*}

By a direct computation as in \Cref{lem:im(r)}, the binary dihedral characters for $X(M_n)$ are as follows. 
\begin{fact}
For every odd integer $n\geq 3$, the ramification points of $r$ are $$(\beta^{-2k}+\beta^{2k},\beta^k+\beta^{-k},\beta^k+\beta^{-k},0,0,0,0)$$ where $\beta = e^{\frac{2\pi i}{n-2}}$, $\beta^{2k}\neq 1$ and $k = 0,1,...,n-3$. There are 
$\frac{n-3}{2} 
= \frac{1}{2}|n-2| - \frac{1}{2}
= \frac{1}{2}|\tr\varphi_n+2| - 2^{{b_1({\varphi_n})-2}}
$ such binary dihedral characters in total, which agrees with 
\Cref{eq:binary_count}.
\end{fact}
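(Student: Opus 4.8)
The plan is to combine \Cref{thm:main}---more precisely \Cref{cor:irreps in fixed point set}---with the explicit description of binary dihedral characters extracted from the proof of \Cref{lem:im(r)}. By that proposition the ramification points of $r$ are exactly the binary dihedral characters in $X(M_{\varphi_n}(\lambda))\subset X(M_{\varphi_n})$, each of which satisfies $u=v=w=q=0$; and by case (2) in the proof of \Cref{lem:im(r)} such a character is the character of an abelian representation $\rho$ with $\rho(a)=\Diag(s,s^{-1})$, $\rho(b)=\Diag(p,p^{-1})$ whose eigenvalue pair satisfies $(s^{k_1}p^{k_3},\,s^{k_2}p^{k_4})=(s^{-1},p^{-1})$, subject to the extra requirement $s\neq\pm1$ or $p\neq\pm1$. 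So the first step is the matrix computation $[\varphi_{n*}]=\A\B^{n+2}=\bigl(\begin{smallmatrix}-n-1&1\\-n-2&1\end{smallmatrix}\bigr)$, which has trace $-n$ and, since $n$ is odd, gives $b_1(\varphi_n)=1$.

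Next I substitute $k_1=-n-1$, $k_2=1$, $k_3=-n-2$, $k_4=1$ into the eigenvalue equations. The equation $s^{k_2}p^{k_4}=p^{-1}$ becomes $sp^2=1$, i.e.\ $s=p^{-2}$; feeding this into $s^{k_1}p^{k_3}=s^{-1}$ yields $p^{n-2}=1$. Hence $p=\beta^k$ with $\beta=e^{2\pi i/(n-2)}$ and $k\in\{0,1,\dots,n-3\}$, $s=\beta^{-2k}$, and therefore $(x,y,z)=(\tr\rho(a),\tr\rho(b),\tr\rho(ab))=(\beta^{-2k}+\beta^{2k},\ \beta^k+\beta^{-k},\ \beta^k+\beta^{-k})$, the stated triple (with $u=v=w=q=0$ by \Cref{cor:irreps in fixed point set}). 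Because $s=p^{-2}$, the requirement ``$s\neq\pm1$ or $p\neq\pm1$'' fails exactly when $p\in\{\pm1\}$; as $n-2$ is odd, $p^2=1$ together with $p^{n-2}=1$ forces $p=1$, i.e.\ $k=0$, and this is precisely the excluded case $\beta^{2k}=1$.

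It remains to count. The involution $k\mapsto n-2-k$ uses $\beta^{n-2}=1$ and leaves each triple $(x,y,z)$ fixed, while having no fixed point on $\{1,\dots,n-3\}$ since $n-2$ is odd; hence the $n-3$ admissible parameters fall into $\tfrac{n-3}{2}$ pairs. These $\tfrac{n-3}{2}$ characters are pairwise distinct: for $k\in\{1,\dots,\tfrac{n-3}{2}\}$ the second coordinate equals $2\cos\!\bigl(\tfrac{2\pi k}{n-2}\bigr)$ with argument in the open interval $(0,\pi)$, on which cosine is injective, so distinct pair-classes yield distinct $y$-coordinates. Finally, since $n\geq3$ is odd, $\tfrac12|2+\tr\varphi_n|-2^{b_1(\varphi_n)-2}=\tfrac12(n-2)-\tfrac12=\tfrac{n-3}{2}$, in agreement with \Cref{eq:binary_count}.

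Everything here is elementary arithmetic; the one point that needs care is the two-to-one redundancy $k\leftrightarrow n-2-k$ in the parametrisation together with the resulting distinctness argument, since mishandling it would change the count by a factor of two.
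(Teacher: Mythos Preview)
Your proof is correct and follows exactly the approach the paper indicates: the paper's justification is simply ``By a direct computation as in \Cref{lem:im(r)}'', and you have carried out precisely that computation, invoking \Cref{cor:irreps in fixed point set} for the characterisation of ramification points and then solving the eigenvalue equations from case~(2) of the proof of \Cref{lem:im(r)} with the explicit matrix $[\varphi_{n*}]=\bigl(\begin{smallmatrix}-n-1&1\\-n-2&1\end{smallmatrix}\bigr)$. Your treatment of the two-to-one redundancy $k\leftrightarrow n-2-k$ and the distinctness of the resulting $\tfrac{n-3}{2}$ characters is a detail the paper omits but which is needed for the count to be rigorous.
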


\subsubsection{Topology of $X_{\varphi_n}(S)$} 
In this subsection, we compute the fixed point set $X_{\varphi_n}(S)$. Substituting the automorphism $\varphi_n$ into the definition of $X_{\varphi_n}(S)$ in \Cref{sec:Restriction}, we obtain the following 3 equations
\begin{align*}
    \tr\rho(a) &= \tr \rho(a(a^{-1}b^{-1})^{n+2})\\
    \tr\rho(b) &= \tr \rho(ba)\\
    \tr\rho(ab) &= \tr \rho(a(a^{-1}b^{-1})^{n+1})
\end{align*}
The second equation gives $y=z$. Define polynomials $P_n(x,y,z) = \tr \rho(a(a^{-1}b^{-1})^n)$ for $n\in\mathbb{Z}$.
Using the trace identities in \Cref{sec:Preliminaries}, $P_n$ satisfies the recursive relation  $P_n(x,y,z) = zP_{n-1}(x,y,z)-P_{n-2}(x,y,z)$ with $P_0(x,y,z) = x$ and $P_1(x,y,z) = y$. Then $X_{\varphi_n}(S)$ is generated by $P_{n+2}-x,P_{n+1}-z$ and $y-z$. Recall the Fibonacci polynomial $f_n$ in \Cref{sec:fibonacci}. We observe that 
\begin{equation}\label{eq: Pn}
    P_n(x,y,z) = y f_n(z) - x f_{n-1}(z)\text{, for }n\in\mathbb{Z}
\end{equation}
To simplify the defining equations of $X_{\varphi_n}(S)$, we use the idea of Buchberger's algorithm~\cite{Cox-ideal-2015} which is used to compute the Groebner basis of an ideal of a polynomial ring. Denote $X_{n+2} = P_{n+2}-x = P_{n+2}-P_{0}$, $X_{n+1}=P_{n+1}-P_{1}$. We compute the S-polynomial of the first two polynomials recursively to summarise $X_{n+2}$ and $X_{n+1}$ into one single polynomial.
\begin{lemma}\label{lem: S-polynomials}
Define $X_{i+2} = zX_{i+1}-X_{i}$ for $i\geq 0$. When $n=2k+1$ is a positive odd integer, $\langle X_{n+2},X_{n+1}\rangle = \langle P_{k+2}-P_{k+1}\rangle$. When $n=2k$ is a positive even integer, $\langle X_{n+2},X_{n+1}\rangle = \langle P_{k+2}-P_{k}\rangle$.
\end{lemma}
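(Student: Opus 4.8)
The plan is to prove both statements by induction, using the recursion $X_{i+2} = zX_{i+1} - X_i$ together with the closed form $P_n = y f_n(z) - x f_{n-1}(z)$ from \eqref{eq: Pn} and the Fibonacci recursion $f_n = z f_{n-1} - f_{n-2}$. First I would observe that each $X_i = P_i - P_{i-n}$ (reading indices so that $X_{n+2} = P_{n+2} - P_0$ and $X_{n+1} = P_{n+1} - P_1$), which follows immediately from the fact that both $P_m$ and $X_m$ satisfy the same three-term recursion $w_{m+2} = z w_{m+1} - w_m$ and agree for the two initial indices. Consequently the S-polynomial computation amounts to tracking the pair $(X_{m}, X_{m+1})$ as $m$ decreases and showing that the ideal they generate is unchanged at each step. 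Concretely, if $X_{m+2} = z X_{m+1} - X_m$, then $\langle X_{m+2}, X_{m+1} \rangle = \langle X_{m}, X_{m+1} \rangle$, so by a downward induction $\langle X_{n+2}, X_{n+1}\rangle = \langle X_{j+1}, X_{j}\rangle$ for any $j$ we can reach; the content of the lemma is to identify the value of $j$ at which one of the two generators vanishes (or becomes redundant) and to read off what the surviving generator is.

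The key step is to make the index bookkeeping explicit. Writing $X_m = P_m - P_{m-n}$ and using $P_m - P_{m'} = y(f_m(z) - f_{m'}(z)) - x(f_{m-1}(z) - f_{m'-1}(z))$, I would use the identity $f_a - f_b$ for $a,b$ of the same parity is divisible by $f_{(a-b)/2}\cdot(\text{something})$, or more simply the telescoping identity $f_{m} - f_{m-n} = f_{\lceil n/2\rceil}\,(f_{?} \pm f_{?})$ coming from $f_{a+c} - f_{a-c} = f_c(f_{a+1} - f_{a-1})$ and similar. In the odd case $n = 2k+1$, the relevant reduction lands on the pair $(P_{k+2} - P_{k+1}, P_{k+1} - P_k)$, but Lemma \ref{lem: fibonacci poly}(5) says $f_{k+2} - f_{k+1}$ and $f_{k+1} - f_k$ have no common root; combined with the explicit form of the $P$'s this should force $\langle X_{n+2}, X_{n+1}\rangle = \langle P_{k+2} - P_{k+1}\rangle$, i.e. one generator becomes a multiple of the other (or the ideal is radical-principal for the stated reason). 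In the even case $n = 2k$, the symmetric reduction lands on $(P_{k+2} - P_k, P_{k+1} - P_{k-1})$ type pairs, and $P_{k+1} - P_{k-1}$ turns out to be a unit multiple of — or divides — $P_{k+2} - P_k$ after using $f_{m+1} - f_{m-1} = z f_m - 2 f_{m-1} + \ldots$; again Lemma \ref{lem: fibonacci poly} supplies the separability/coprimality needed to collapse the ideal to $\langle P_{k+2} - P_k\rangle$.

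The main obstacle I anticipate is not the induction itself but verifying cleanly that after the reduction the \emph{two} remaining generators really do generate the same ideal as the single stated polynomial — that is, proving the claimed principality rather than merely that the stated polynomial lies in the ideal. This requires showing the second generator is in the ideal generated by the first, which I expect to handle by exhibiting an explicit polynomial cofactor using the Fibonacci identities of Lemma \ref{lem: fibonacci poly}(4)--(5) (separability of $f_{k+1} \pm f_k$, $f_{k+2} - f_k$, and the no-common-root statement), possibly after passing to the substitution $z = s + s^{-1}$ where all the $f_n$ have closed form and the divisibilities become transparent. A secondary care point is the parity/sign bookkeeping in the recursion for $X_i$ versus $P_i$ and making sure the base cases of the downward induction (small $k$, and the degenerate $f_0 = 0$ case flagged in Lemma \ref{lem: fibonacci poly}) are treated separately; these are routine but must be stated.
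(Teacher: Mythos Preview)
Your approach has a genuine gap arising from an incorrect closed form for $X_i$. You write $X_i = P_i - P_{i-n}$, but this does not match the stated initial conditions: for $i = n+2$ it gives $P_{n+2} - P_2$, not $P_{n+2} - P_0$. The correct identity, which the paper proves by the same two-term induction you describe, is
\[
X_i = P_i - P_{\,n+2-i}, \qquad 0 \le i \le n+2.
\]
This symmetry is the heart of the matter. With the correct formula, the downward reduction $\langle X_{m+2}, X_{m+1}\rangle = \langle X_{m+1}, X_m\rangle$ is run to the middle of the index range, where the two generators become \emph{trivially} related: for $n = 2k+1$ one finds $X_{k+2} = P_{k+2} - P_{k+1}$ and $X_{k+1} = P_{k+1} - P_{k+2} = -X_{k+2}$, so the ideal equals $\langle P_{k+2} - P_{k+1}\rangle$ immediately; for $n = 2k$ one finds $X_{k+2} = P_{k+2} - P_k$ and $X_{k+1} = P_{k+1} - P_{k+1} = 0$, so principality is again automatic.

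Your incorrect formula leads you instead to the pair $(P_{k+2} - P_{k+1},\, P_{k+1} - P_k)$ in the odd case, and then to the plan of establishing principality via explicit cofactors and the coprimality and separability facts from Lemma~\ref{lem: fibonacci poly}. That plan cannot succeed: writing $P_m - P_{m-1} = y\,(f_m - f_{m-1}) - x\,(f_{m-1} - f_{m-2})$, the coefficient polynomials in $z$ have the wrong degrees for $P_{k+1} - P_k$ to be a polynomial multiple of $P_{k+2} - P_{k+1}$ in $\mathbb{C}[x,y,z]$, so the ideal $\langle P_{k+2} - P_{k+1},\, P_{k+1} - P_k\rangle$ is strictly larger than $\langle P_{k+2} - P_{k+1}\rangle$. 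The Fibonacci coprimality facts you invoke are simply not needed for this lemma; the symmetry $i \leftrightarrow n+2-i$ does all the work.
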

\begin{proof}
Using induction, we can prove that $X_{i} = P_{i}-P_{n+2-i}$ for $0\leq i\leq n+2$. Since $X_{i+2}$ is a linear combination of $X_{i+1}$ and $X_{i}$, $\langle X_{i+2}$, $X_{i+1}\rangle \subseteq \langle X_{i+1},X_{i}\rangle$. By definition, $\langle X_{i+1}$, $X_{i}\rangle \subseteq \langle X_{i+2},X_{i+1}\rangle$. Thus, $\langle X_{i+2}$, $X_{i+1}\rangle = \langle X_{i+1},X_{i}\rangle$.

Hence when $n=2k+1$,
\begin{align*}
    \langle X_{n+2}, X_{n+1}\rangle &=...=\langle X_{k+2},X_{k+1}\rangle = \langle P_{k+2}-P_{k+1} \rangle
\end{align*}
The even case follows analogously.
\end{proof}

When $n$ is odd, $X_{\varphi_n}(S) = V(\langle P_{k+2}-P_{k+1},y-z\rangle)$. Under the restriction $(x,y,z)\mapsto (x,y)$, $X_{\varphi_n}(S)$ is homeomorphic to its image, denoted by $U_n$. Note that $U_n = V(\langle p_{k+2}-p_{k+1} \rangle)$ where $p_n(x,y) = P_n(x,y,y)$. Combining this with \Cref{eq: Pn}, $U_n$ is generated by \[g_k(x,y) \coloneqq p_{k+2}-p_{k+1} = y(f_{k+2}(y)-f_{k+1}(y))-x(f_{k+1}(y)-f_{k}(y))\] By \Cref{lem: fibonacci poly}, the Newton polygon of $g_k$ is shown in \Cref{fig: newton polygon Mn}. 
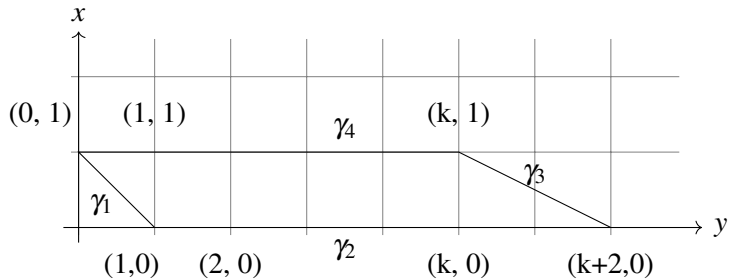
\begin{figure}[htbp]
    \centering
    \begin{tikzpicture}
    \draw[very thin,color=gray] (-0.1,-0.1) grid (7.9,2.5);    
    \draw[->] (-0.2,0) -- (8.2,0) node[right] {$y$};
    \draw[->] (0,-0.2) -- (0,2.6) node[above] {$x$};

    \draw(1,0)--(0,1)--(5,1)--(7,0);
    \node at (0.7,-0.5) {(1,0)};
    \node at (-0.5,1.5) {(0, 1)};
    \node at (2,-0.5) {(2, 0)};
    \node at (1,1.5) {(1, 1)};    
    \node at (5,-0.5) {(k, 0)};
    \node at (5,1.5) {(k, 1)};
    \node at (7,-0.5) {(k+2,0)};
    \node at (0.3,0.3) {$\gamma_1$};
    \node at (3.5,-0.3) {$\gamma_2$};
    \node at (3.5,1.3) {$\gamma_4$};
    \node at (6,0.7) {$\gamma_3$};
\end{tikzpicture}
    \caption{Newton polygon of $g_k$}
    \label{fig: newton polygon Mn}
\end{figure}
\begin{lemma}\label{lem:irreducible Mn}
$g_k(x,y)$ is irreducible in $\mathbb{C}[x,y]$ for any positive integer $k$.
\end{lemma}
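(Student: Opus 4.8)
The plan is to show that $g_k(x,y)$ is irreducible by a combination of degree/Newton-polygon considerations and the separability properties of the Fibonacci polynomials collected in \Cref{lem: fibonacci poly}. First I would record the explicit shape of $g_k$: writing $A_k(y) = f_{k+2}(y)-f_{k+1}(y)$ and $B_k(y) = f_{k+1}(y)-f_k(y)$, we have $g_k(x,y) = yA_k(y) - xB_k(y)$, so $g_k$ is \emph{linear in $x$}. Consequently, if $g_k = PQ$ is a nontrivial factorisation in $\mathbb{C}[x,y]$, then exactly one factor, say $P$, involves $x$ (and does so to the first power) and the other factor $Q = Q(y)$ lies in $\mathbb{C}[y]$. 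Thus $Q(y)$ must divide both coefficients $yA_k(y)$ and $B_k(y)$ of $g_k$ viewed as a polynomial in $x$; that is, $Q(y) \mid \gcd\bigl(yA_k(y),\,B_k(y)\bigr)$ in $\mathbb{C}[y]$. So the whole problem reduces to proving that $\gcd\bigl(yA_k(y),\,B_k(y)\bigr) = 1$, i.e. that $B_k(y) = f_{k+1}(y)-f_k(y)$ is nonzero at $y=0$ and shares no root with $A_k(y) = f_{k+2}(y)-f_{k+1}(y)$.

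The second of these is exactly part (5) of \Cref{lem: fibonacci poly}: $f_{k+2}(u)-f_{k+1}(u)=0$ and $f_{k+1}(u)-f_k(u)=0$ have no common root. For the first, I would use part (3) together with the recursion, or simply evaluate directly: from \Cref{lem: fibonacci poly}(1) with $u = 2$ one gets $f_n(2) = n$, but more to the point $B_k(0) = f_{k+1}(0) - f_k(0)$, and $f_n(0)$ is $0$ for $n$ even and $\pm 1$ for $n$ odd (immediate from the recursion $f_n(u) = u f_{n-1}(u) - f_{n-2}(u)$ with $f_0=0$, $f_1=1$, so $f_n(0) = -f_{n-2}(0)$), hence $f_{k+1}(0)$ and $f_k(0)$ are never both zero and differ by a unit, giving $B_k(0) \neq 0$. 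Therefore $\gcd\bigl(yA_k(y), B_k(y)\bigr)=1$, so $Q(y)$ is a nonzero constant and the factorisation is trivial. (One should also note $g_k$ is not itself a polynomial purely in $y$: the coefficient $B_k(y)$ of $x$ is $f_{k+1}(y)-f_k(y)$, which is nonzero since $f_{k+1}$ has degree $k$ and $f_k$ degree $k-1$ by \Cref{lem: fibonacci poly}(2); so $g_k$ genuinely involves $x$ and the case analysis above is exhaustive.)

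The only mild subtlety — and the step I would watch most carefully — is making sure the degenerate small cases are handled: one needs $k \geq 1$ so that $B_k(y)$ is a genuine nonzero polynomial (for $k=0$, $B_0 = f_1 - f_0 = 1$ and $g_0 = yA_0(y) - x = y\cdot 0 - x$? — here $A_0 = f_2 - f_1 = y - 1$, so $g_0 = y(y-1) - x$, still irreducible, being linear in $x$ with constant $x$-coefficient $-1$). In fact the argument "$g_k$ is linear in $x$ with $x$-coefficient a polynomial in $y$ having nonzero constant term and coprime to the constant term" works uniformly; the content is entirely in \Cref{lem: fibonacci poly}(5) and the computation of $f_n(0)$. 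I would present the proof in essentially this order: (i) rewrite $g_k = yA_k - xB_k$ and observe linearity in $x$; (ii) reduce irreducibility to $\gcd(yA_k, B_k) = 1$ in $\mathbb{C}[y]$; (iii) compute $f_n(0) \in \{0, \pm 1\}$ to get $B_k(0)\neq 0$; (iv) invoke \Cref{lem: fibonacci poly}(5) to rule out common roots of $A_k$ and $B_k$; (v) conclude.
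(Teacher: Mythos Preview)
Your proposal is correct and follows essentially the same approach as the paper: both exploit that $g_k$ is linear in $x$ to reduce irreducibility to showing that the two $y$-coefficients $yA_k(y)$ and $B_k(y)$ are coprime in $\mathbb{C}[y]$, and both finish via $B_k(0)\neq 0$ together with \Cref{lem: fibonacci poly}(5). The only cosmetic difference is that the paper picks a root $\alpha$ of the hypothetical common factor and derives a contradiction, whereas you phrase it as $\gcd(yA_k,B_k)=1$; these are the same argument.
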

\begin{proof}
If $g_k(x,y)$ is reducible, then we can factorise it into $g_k(x,y) = s_k(x,y)t_k(x,y)$, where $t_k,s_k$ are non-constant polynomials in $\mathbb{C}[x,y]$. Since $g_k(x,y)$ is linear in $x$, without loss of generality, we assume that $t_k(x,y) = t_k(y)$ and $s_k(x,y) = x \;s^{(1)}_k(y) + s^{(2)}_k(y)$, where $s^{(i)}_k(y)$ and $t_k(y)$ are polynomial in $\mathbb{C}[y]$. Then
\begin{equation*}
\begin{cases}
    y(f_{k+2}(y) - f_{k+1}(y)) = s^{(2)}_k(y)t_k(y)\\
            -(f_{k+1}(y)-f_{k}(y)) = s^{(1)}_k(y)t_k(y)
\end{cases}
\end{equation*}
Since $t_k(y)$ is non-constant, by the fundamental theorem of algebra, $t_n(y)=0$ has a solution $\alpha\in\mathbb{C}$. \Cref{lem: fibonacci poly}  tells us that $f_{k+1}(y)-f_{k}(y)$ has a nonzero constant term, which implies that $\alpha\neq 0$. Then $\alpha$ is a common zero of $f_{k+2}(y) - f_{k+1}(y)$ and $f_{k+1}(y) - f_{k}(y)$. This contradicts \Cref{lem: fibonacci poly}~(5). Hence $g_k(x,y)$ is irreducible.
\end{proof}
\begin{lemma}\label{lem:singularity Un}
For $n=2k+1\geq 3$, there is no singular point in $U_n=V(g_{k})$ and $g_{k}$ is nondegenerate.
\end{lemma}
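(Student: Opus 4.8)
The plan is to prove both claims — absence of singular points on $U_n = V(g_k)$ and nondegeneracy of $g_k$ with respect to its Newton polygon — by direct computation, exploiting the facts about Fibonacci polynomials collected in \Cref{lem: fibonacci poly} and the very simple shape of the Newton polygon in \Cref{fig: newton polygon Mn}. Since $g_k(x,y) = y(f_{k+2}(y)-f_{k+1}(y)) - x(f_{k+1}(y)-f_k(y))$ is linear in $x$, the smoothness analysis is essentially one-dimensional. First I would compute $\partial g_k/\partial x = -(f_{k+1}(y)-f_k(y))$ and observe that a singular point $(x_0,y_0)\in U_n$ must satisfy $f_{k+1}(y_0) - f_k(y_0) = 0$; since $g_k(x_0,y_0)=0$ and the coefficient of $x$ vanishes there, we also get $y_0(f_{k+2}(y_0)-f_{k+1}(y_0))=0$. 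If $y_0 = 0$, then from \Cref{lem: fibonacci poly}(3) and the fact that $f_{k+1}-f_k$ has a nonzero constant term (the same computation used in the proof of \Cref{lem:irreducible Mn}), we get $f_{k+1}(0)-f_k(0)\neq 0$, a contradiction. If $y_0\neq 0$, then $y_0$ is a common root of $f_{k+1}(y)-f_k(y)$ and $f_{k+2}(y)-f_{k+1}(y)$, contradicting \Cref{lem: fibonacci poly}(5). Hence $U_n$ has no singular point, and in particular $g_k$ is smooth at every point, which already covers the hypothesis "smooth at every $(x,y)$ with $xy\neq 0$" needed to invoke \Cref{lem:Newton polygon}.

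For nondegeneracy I would go through the four edges $\gamma_1,\gamma_2,\gamma_3,\gamma_4$ of the Newton polygon in \Cref{fig: newton polygon Mn} and check that for each edge $\gamma$, the system $F_\gamma = x\,\partial F_\gamma/\partial x = y\,\partial F_\gamma/\partial y = 0$ has no solution in $(\mathbb{C}^\ast)^2$. The key point is that an edge polynomial $F_\gamma$ is a sum of the monomials of $g_k$ lying on $\gamma$, so it inherits the structure of $g_k$: each $F_\gamma$ is either a monomial times a polynomial in one variable, or a binomial. Concretely, $\gamma_1$ (from $(1,0)$ to $(0,1)$) gives a binomial in which both monomials are nonzero on $(\mathbb{C}^\ast)^2$, so $F_{\gamma_1}$ vanishes only on a curve where neither logarithmic derivative can simultaneously vanish; $\gamma_2$ (the bottom edge, on the $y$-axis) gives $F_{\gamma_2} = -x\,(\text{leading term of } f_{k+1}-f_k) \cdot y^{\deg}$ up to sign — a single monomial, hence nonvanishing on $(\mathbb{C}^\ast)^2$; similarly $\gamma_3$ and $\gamma_4$ consist of the top-degree-in-$y$ terms of the two coefficient polynomials and reduce to monomials or binomials with the same property. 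In each case the explicit leading-coefficient data from \Cref{lem: fibonacci poly}(2) shows the relevant coefficients are nonzero, so the ideal has no zero in $(\mathbb{C}^\ast)^2$.

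The main obstacle I anticipate is purely bookkeeping: correctly identifying which lattice points $(i, \varepsilon)$ actually occur in $g_k$ (some intermediate coefficients $f_{j+1}-f_j$ could in principle vanish, changing the Newton polygon) and then writing down each edge polynomial precisely. This is handled by \Cref{lem: fibonacci poly}: part (2) pins down the degrees (so the extreme vertices $(1,0)$, $(0,1)$, $(k+2,0)$, $(k,1)$ are genuinely present), part (4) gives separability of $f_{k+1}-f_k$ and $f_{k+2}-f_{k+1}$, and part (5) gives that they share no root — these together guarantee the Newton polygon is exactly the quadrilateral drawn and that the edge polynomials behave as claimed. Once the edges are correctly described, the nonvanishing checks are short: a monomial never vanishes on the torus, and for the one genuinely binomial edge $\gamma_1$ one checks that requiring $F_{\gamma_1}=0$ forces a relation between $x$ and $y$ under which $x\partial_x F_{\gamma_1}$ and $y\partial_y F_{\gamma_1}$ are nonzero multiples of $F_{\gamma_1}$'s monomials and cannot both vanish. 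I would close by remarking that, together with \Cref{lem:irreducible Mn}, this puts us exactly in the situation of \Cref{lem:Newton polygon}, so the genus of the smooth model of $U_n \cong X_{\varphi_n}(S)$ equals the number of interior lattice points of the Newton polygon — which, from \Cref{fig: newton polygon Mn}, is zero, giving genus $0$ as asserted in the introduction.
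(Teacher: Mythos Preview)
Your smoothness argument is correct and in fact slightly more careful than the paper's, which simply writes down $g_k=0$ and $\partial g_k/\partial x=0$ and appeals to \Cref{lem: fibonacci poly}(5) without separating out the case $y_0=0$ that you handle explicitly.

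The nondegeneracy argument, however, has a real gap: you have misidentified the edge polynomials for the two long horizontal edges. In the convention of \Cref{fig: newton polygon Mn} (horizontal axis $=$ exponent of $y$, vertical axis $=$ exponent of $x$), the bottom edge $\gamma_2$ runs from $(1,0)$ to $(k+2,0)$ and therefore picks up \emph{every} monomial of $g_k$ with $x$-exponent $0$; hence
\[
F_{\gamma_2}(x,y)=y\big(f_{k+2}(y)-f_{k+1}(y)\big),
\]
a polynomial of degree $k+2$ in $y$, not a single monomial. Likewise the top edge $\gamma_4$ from $(0,1)$ to $(k,1)$ gives $F_{\gamma_4}(x,y)=-x\big(f_{k+1}(y)-f_k(y)\big)$. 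For these edges $x\,\partial_x F_\gamma$ is either identically zero ($\gamma_2$) or equal to $F_\gamma$ itself ($\gamma_4$), so the nondegeneracy condition reduces to showing that $f_{k+2}-f_{k+1}$ (respectively $f_{k+1}-f_k$) has no repeated root in $\mathbb{C}^\ast$. That is exactly the separability statement in \Cref{lem: fibonacci poly}(4), which you mention only as a bookkeeping check on the shape of the Newton polygon rather than at the point where it is actually needed. The paper makes precisely this reduction for $\gamma_2$ and then, rather than citing part~(4), verifies the absence of a repeated root by a direct computation in the parameter $p$ with $y=p+p^{-1}$ via \Cref{lem: fibonacci poly}(1); the argument for $\gamma_4$ is declared analogous. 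The short edges $\gamma_1$ and $\gamma_3$ really are binomials, and for those your treatment is fine.
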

\begin{proof}
Suppose $(x,y)$ is a singular point. Then
\begin{equation*}
    \begin{cases}
        y(f_{k+2}(y)-f_{k+1}(y))-x(f_{k+1}(y)-f_{k}(y)) = 0 \\
        \frac{\partial g_k}{\partial x}(x,y) = f_{k+1}(y)-f_{k}(y) = 0 
    \end{cases}
\end{equation*}
This clearly contradicts \Cref{lem: fibonacci poly}~(5). The nondegeneracy condition for edges $\gamma_1$ and $\gamma_3$ in \Cref{fig: newton polygon Mn} is obvious. For the edge $\gamma_2$, $F_{\gamma_2}(x,y) = y(f_{k+2}(y)-f_{k+1}(y))$. Suppose that $(x,y)\in(\mathbb{C}/\{0\})^2$ is a common zero of $F_{\gamma_2}$, $x\frac{\partial F_{\gamma_2}}{\partial x}$ and $y\frac{\partial F_{\gamma_2}}{\partial y}$. This results in 
$$\begin{cases}
    f_{k+2}(y)-f_{k+1}(y) = 0\\
    f_{k+2}'(y)-f_{k+1}'(y) = 0
\end{cases}$$
There is no such $y\neq 0$ satisfying the above two equations by \Cref{lem: fibonacci poly}~(1). In detail, it is clear that $y=\pm 2$ are not zeros. When $y \neq \pm 2$, the first equation implies that $p^{2k+3}+1=0$ and $p\neq -1$ given $y=p+p^{-1}$. Using change of variable formula $\frac{df_n}{dp} = \frac{df_n}{dy}\frac{dy}{dp}$, the second equation implies that 
$$\dfrac{p^{-k}(k(p+1)(p^{2k+3}-1)-1-2p+2p^{2k+3}+p^{2k+4})}{(p^{2}-1)(1+p)^2} = 0$$
Since $y\neq 0$,
$$k(p+1)(p^{2k+3}-1)-1-2p+2p^{2k+3}+p^{2k+4} = 0$$
Combining the above equation with $p^{2k+3}=-1$ implies $p=-1$, which is a contradiction. The nondegeneracy condition for edge $\gamma_4$ can be checked using a similar argument.
\end{proof}
\begin{proposition}\label{pro:genus Mn}
If $n=2k+1\geq3$, then
the variety $\overline{X}^{\text{irr}}(M_\varphi)\cong X_{\varphi_n}(S)$ is irreducible and a curve of genus 0.
\begin{proof}
By \Cref{lem:irreducible Mn} and \Cref{lem:singularity Un}, $g_{k}(x,y)$ satisfies  assumptions in \Cref{lem:Newton polygon}, which implies that the genus of $U_n$ is $0$. As $U_n\cong X_{\varphi_n}(S),$ this proves the statement by \Cref{cor:iso for b1=1}.
\end{proof}
\end{proposition}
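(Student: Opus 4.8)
The plan is to reduce the statement to a question about a single plane algebraic curve and then invoke the Newton polygon machinery of \Cref{lem:Newton polygon}. First I would unpack the definition of $X_{\varphi_n}(S)$ from \Cref{sec:Restriction} using the explicit form of $\varphi_n$: the relation $\tr\rho(b)=\tr\rho(ba)$ gives $y=z$ at once, while the other two relations are controlled by the traces $P_n(x,y,z)=\tr\rho(a(a^{-1}b^{-1})^n)$, which obey the recursion $P_n=zP_{n-1}-P_{n-2}$ with $P_0=x$, $P_1=y$. A straightforward induction identifies these with the Fibonacci polynomials of \Cref{sec:fibonacci} via $P_n=yf_n(z)-xf_{n-1}(z)$. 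The two remaining relations $P_{n+2}=x$ and $P_{n+1}=z$ are then collapsed, when $n=2k+1$, to the single equation $P_{k+2}-P_{k+1}=0$ by an S-polynomial reduction in the style of Buchberger's algorithm (\Cref{lem: S-polynomials}). Projecting away the coordinate $z$ (permissible since $y=z$) exhibits $X_{\varphi_n}(S)$ as homeomorphic to the plane curve $U_n=V(g_k)$ with
\[
g_k(x,y)=y\bigl(f_{k+2}(y)-f_{k+1}(y)\bigr)-x\bigl(f_{k+1}(y)-f_{k}(y)\bigr).
\]

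The next step is to show $g_k$ is irreducible. Since $g_k$ is linear in $x$, any nontrivial factorisation would have the shape $g_k=t(y)\cdot\bigl(x\,s^{(1)}(y)+s^{(2)}(y)\bigr)$ with $t$ nonconstant; a root of $t$ would then be a common root of $f_{k+2}-f_{k+1}$ and $f_{k+1}-f_{k}$ (and is nonzero, because $f_{k+1}-f_k$ has nonzero constant term by \Cref{lem: fibonacci poly}), contradicting part (5) of \Cref{lem: fibonacci poly}. With irreducibility established I would check the hypotheses of \Cref{lem:Newton polygon}: smoothness of $U_n$ at all points with $xy\neq 0$, and nondegeneracy of $g_k$ relative to its Newton polygon. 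Smoothness is cheap: $\partial g_k/\partial x=f_{k+1}-f_{k}$, so a singular point would force both $f_{k+1}-f_k=0$ and (from $g_k=0$) $f_{k+2}-f_{k+1}=0$, again contradicting part (5). Nondegeneracy is handled edge by edge along the Newton polygon drawn in \Cref{fig: newton polygon Mn}: the two horizontal edges $\gamma_1,\gamma_3$ are immediate, and the slanted edges $\gamma_2,\gamma_4$ reduce to showing that $f_{k+2}-f_{k+1}$ shares no nonzero root with its derivative, which I would settle by the substitution $y=p+p^{-1}$ and the closed forms in \Cref{lem: fibonacci poly}(1).

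Finally I would read off the genus. Every monomial of $g_k$ has $x$-exponent $0$ or $1$, so the Newton polygon lies in the strip where the $x$-exponent is between $0$ and $1$ and hence contains no interior lattice point; by \Cref{lem:Newton polygon} the smooth projective model of $U_n$ has genus $0$. Since $U_n\cong X_{\varphi_n}(S)$ and $b_1(\varphi_n)=1$ for odd $n$, \Cref{cor:iso for b1=1} yields the birational isomorphism $\overline{X}^{\text{irr}}(M_n)\cong X_{\varphi_n}(S)$, so this variety is irreducible and of genus $0$.

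I expect the main obstacle to be the nondegeneracy verification on the slanted edges $\gamma_2$ and $\gamma_4$: this is where the Fibonacci recursion must be pushed hardest, translating the edge polynomials into rational functions of $p$ via $y=p+p^{-1}$ and confirming that the relevant polynomial and its derivative have no common root away from $p=\pm 1$. The remaining ingredients — the reduction to a single plane curve, irreducibility, the smoothness check, and the interior-lattice-point count — are comparatively formal once the identities of \Cref{lem: fibonacci poly} are in place.
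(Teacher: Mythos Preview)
Your proposal is correct and follows essentially the same route as the paper: reduce $X_{\varphi_n}(S)$ to the plane curve $U_n=V(g_k)$, prove irreducibility from the linearity in $x$ together with \Cref{lem: fibonacci poly}(5), check smoothness and nondegeneracy, and read off genus $0$ from the empty interior of the Newton polygon before invoking \Cref{cor:iso for b1=1}. One small slip: in the paper's labelling of \Cref{fig: newton polygon Mn} the edges $\gamma_1,\gamma_3$ are the slanted ones (these are the ``obvious'' cases) and $\gamma_2,\gamma_4$ are the horizontal edges whose nondegeneracy reduces to the separability of $f_{k+2}-f_{k+1}$ and $f_{k+1}-f_k$; you have these roles reversed, but the underlying verification you describe is the right one.
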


According to the Newton polygon~\Cref{fig: newton polygon Mn} and the theory of Puiseux expansions, there are $k$ ideal points such that $x\rightarrow\infty, y\rightarrow c$ (a nonzero constant) and one ideal point such that $x\rightarrow\infty,y\rightarrow\infty$. Hence there are $k+1 = \frac{n+1}{2}$ ideal points. Since the number $\frac{n-3}{2}$ of branch points is positive if $n>3,$ the variety $X^{\text{irr}}(M_n)$ is also irreducible, and the same can be confirmed by direct calculation if $n=3.$ Hence
\Cref{cor:genus_bounds} implies:
\begin{theorem}
Suppose that $g_n$ denote the genus of $X^{\text{irr}}(M_n)$. When $n$ is a positive odd integer and $n\geq 3$, $g_n$ is bounded by
    $$\frac{n-7}{4} \leq g_n \leq \frac{n-3}{2}$$
\end{theorem}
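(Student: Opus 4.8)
The plan is to apply \Cref{cor:genus_bounds} directly, using the facts about $X_{\varphi_n}(S)$ established in this subsection together with the count of ideal points. We have shown in \Cref{pro:genus Mn} that $X_{\varphi_n}(S)$ is an irreducible non-singular affine curve (this is exactly \Cref{lem:singularity Un}), and that $X^{\text{irr}}(M_n)$ is irreducible: for $n>3$ this follows since the number $e_1 = \frac{n-3}{2}$ of branch points is positive and $r$ is a connected two-fold branched cover onto the irreducible curve $X_{\varphi_n}(S)$, so its total space is connected; the case $n=3$ is checked by direct calculation. Thus the hypotheses of \Cref{cor:genus_bounds} are met, and writing $g_1$ for the genus of the smooth projective model of $X_{\varphi_n}(S)$ we get
\[
2g_1 - 1 + \frac{e_1}{2} \le g_n \le 2g_1 - 1 + \frac{e_1}{2} + \frac{i_\infty}{2}.
\]

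Next I would substitute the computed invariants. By \Cref{pro:genus Mn}, $g_1 = 0$, so the $2g_1$ term vanishes. By \Cref{eq:binary_count} (equivalently the Fact stated above), since $\tr\varphi_n = -n$ and $b_1(\varphi_n)=1$, the number of branch points is
\[
e_1 = \frac{1}{2}|{-n}+2| - 2^{b_1(\varphi_n)-2} = \frac{n-2}{2} - \frac{1}{2} = \frac{n-3}{2},
\]
valid since $n \ge 3$ so $|{-n+2}| = n-2$. Finally, the count of ideal points comes from the Newton polygon in \Cref{fig: newton polygon Mn} and Puiseux theory, as noted just before the theorem: there are $k$ ideal points with $x\to\infty$, $y\to c\neq 0$, and one with $x,y\to\infty$, giving $i_\infty = k+1 = \frac{n+1}{2}$. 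Plugging in,
\[
-1 + \frac{1}{2}\cdot\frac{n-3}{2} \le g_n \le -1 + \frac{1}{2}\cdot\frac{n-3}{2} + \frac{1}{2}\cdot\frac{n+1}{2},
\]
and simplifying the left side gives $\frac{n-3}{4} - 1 = \frac{n-7}{4}$, while the right side gives $\frac{n-3}{4} + \frac{n+1}{4} - 1 = \frac{2n-2}{4} - 1 = \frac{n-3}{2}$, which is exactly the claimed bound.

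The only genuinely nontrivial input is the determination of $i_\infty$ from the Newton polygon, since the two slanted edges $\gamma_1$ and $\gamma_2$ of the polygon control the branches at infinity: one must check that along $\gamma_2$ (the long lower edge from $(2,0)$ to $(k+2,0)$) the $k$ roots of the associated edge polynomial $f_{k+2}(y)-f_{k+1}(y)$ are distinct and nonzero (which follows from \Cref{lem: fibonacci poly}(4)–(5) and (3)) so each contributes a single simple branch with $x\to\infty$, $y\to c$, while the edge $\gamma_1$ from $(1,0)$ to $(0,1)$ contributes the single branch with both coordinates tending to infinity. I would expect this to be the main obstacle, but it has essentially been dispatched already by the separability statements in \Cref{lem: fibonacci poly} and the nondegeneracy verification in \Cref{lem:singularity Un}; everything else is the arithmetic above.
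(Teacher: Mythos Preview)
Your argument is correct and is exactly the paper's own proof: verify the hypotheses of \Cref{cor:genus_bounds} (irreducibility and smoothness of $X_{\varphi_n}(S)$ from \Cref{pro:genus Mn}/\Cref{lem:singularity Un}, irreducibility of $X^{\text{irr}}(M_n)$ from $e_1>0$ for $n>3$ and direct check for $n=3$), then plug in $g_1=0$, $e_1=\frac{n-3}{2}$, and $i_\infty=\frac{n+1}{2}$. One small slip in your final exploratory paragraph: the $k$ ideal points with $y\to c$ finite come from the $k$ roots of $f_{k+1}(y)-f_k(y)$ (the denominator when solving for $x$), not of $f_{k+2}(y)-f_{k+1}(y)$, and the relevant Newton-polygon edges facing infinity are $\gamma_4$ and $\gamma_3$ rather than $\gamma_2$ and $\gamma_1$; but this does not affect the proof since you correctly invoke $i_\infty=k+1$.
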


Baker and Petersen~\cite[Theorem 5.1]{baker-character-2013} show that $g_n = \frac{n-3}{2}$ for $n$ odd. So there are $n-1$ branch points in total. Hence in addition to the $\frac{n-3}{2}$ branch points at the binary dihedral characters in $X(M_\varphi(\lambda))\subset X(M_\varphi)$, there are $\frac{n+1}{2}$ branch points at ideal points. In particular, every ideal point is a branch point (and in the case $n=3$ these are the only branch points).


\subsection{The family $N_n$}
\label{sec:second_family}

Let $N_n$ be the once-punctured torus bundle with monodromy $\psi_n = AB^{n+2}A$. Note that $\tr (\psi_n)=-2n-2$ and $N_n$ is hyperbolic when $n\geq 1$ or $n\leq -3$. The corresponding family of framings $\psi_n$ admit the following form
\begin{equation*}
    {\psi_n} = \begin{cases}
    a\rightarrow a(a^{-1}b^{-1})^{n+2}\\
    b\rightarrow ba^2(a^{-1}b^{-1})^{n+2}
    \end{cases}
\end{equation*}
We restrict to $n=2k+1\ge 1$ odd and hence $b_1(\psi_n)=2.$

The corresponding fixed-point set $X_{\psi_n}(S)$ is generated by the following equations
\begin{align*}
    \tr\rho(a) &= \tr \rho(a(a^{-1}b^{-1})^{n+2})\\
    \tr\rho(b) &= \tr \rho(ba^2(a^{-1}b^{-1})^{n+2})\\
    \tr\rho(ab) &= \tr \rho(a(a^{-1}b^{-1})^{n+2}ba^2(a^{-1}b^{-1})^{n+2})
\end{align*}
Using the trace identities in \Cref{sec:Preliminaries},
the above equations can be simplified to $x=P_{n+2}(x,y,z)$, $y=P_{n+1}(x,y,z)$ and $z =xy-z$, where $P_n$ is defined by \Cref{eq: Pn}. Then $X_{\psi_n}(S) = V(\langle P_{n+2}-x,P_{n+1}-y,xy-2z\rangle)$. By \Cref{lem: S-polynomials}, $X_{\psi_n}(S) = V(\langle P_{k+2}-P_{k+1},xy-2z\rangle)$ when $n=2k+1$.

If $x=0,$ then $z=0$ by second equation, and then $0=P_{k+2}-P_{k+1}=y (f_{k+2}(z) -f_{k+1}(z))$ which implies $y=0$ by \Cref{lem: fibonacci poly}~(3). In particular, there is a unique point in $X_{\psi_n}(S)$ where $x=0.$

When $x\neq0$, we substitute $y=\frac{2z}{x}$ into $P_{k+2}(x,y,z)-P_{k+1}(x,y,z)=0$ and multiply by $x$ on both sides, giving
\begin{equation*}
    h_k(x,z) \coloneqq 2(f_{k+2}(z)-f_{k+1}(z))z-(f_{k+1}(z)-f_{k}(z))x^2=0
\end{equation*}

Define the variety $V_n$ as the variety in $\mathbb{C}^2$ generated by $h_k\in\mathbb{C}[x,z]$. Then $X_{\varphi_n} S$ and $V_n$ are birational via rational maps 
\begin{align*}
&
\begin{aligned}
r_1:X_{\psi_n} S&\dashrightarrow V_n\\
    (x,y,z)&\rightarrow(x,z)
\end{aligned}
&
\begin{aligned}
 r_2:V_n&\dashrightarrow X_{\psi_n} S\\
    (x,z)&\rightarrow(x,\frac{2z}{x},z)
\end{aligned}
\end{align*}

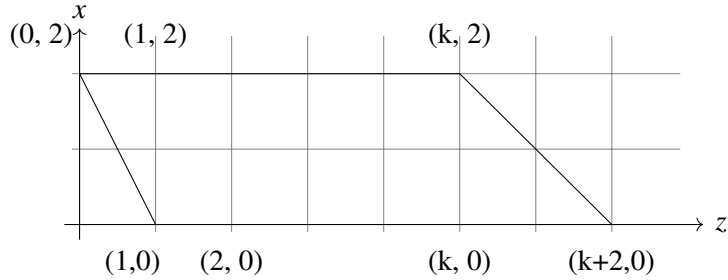
\begin{figure}[htbp]
    \centering
    \begin{tikzpicture}
    \draw[very thin,color=gray] (-0.1,-0.1) grid (7.9,2.5);    
    \draw[->] (-0.2,0) -- (8.2,0) node[right] {$z$};
    \draw[->] (0,-0.2) -- (0,2.6) node[above] {$x$};

    \draw(1,0)--(0,2)--(5,2)--(7,0);
    \node at (0.7,-0.5) {(1,0)};
    \node at (-0.5,2.5) {(0, 2)};
    \node at (2,-0.5) {(2, 0)};
    \node at (1,2.5) {(1, 2)};    
    \node at (5,-0.5) {(k, 0)};
    \node at (5,2.5) {(k, 2)};
    \node at (7,-0.5) {(k+2,0)};
\end{tikzpicture}
    \caption{Newton polygon of $h_k(x,z)$}
    \label{fig: Newton polygon Nn}
\end{figure}

\begin{lemma}\label{lem: irreducible Nn}
$h_k(x,z)$ is an irreducible polynomial in $\mathbb{C}[x,z]$ for every positive integer $k$.
\end{lemma}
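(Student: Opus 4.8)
The plan is to mimic the argument of \Cref{lem:irreducible Mn}, adapted to the new Newton polygon in \Cref{fig: Newton polygon Nn}. Since $h_k(x,z) = 2(f_{k+2}(z)-f_{k+1}(z))z - (f_{k+1}(z)-f_k(z))x^2$ is quadratic in $x$ with no linear term in $x$, any factorisation over $\mathbb{C}[x,z]$ must be of one of two shapes: either (i) $h_k = t(z)\, s(x,z)$ where $t(z)$ is a non-constant polynomial dividing both coefficients, or (ii) $h_k$ factors as a product of two polynomials each genuinely of degree one in $x$, say $h_k = (a(z)x + b(z))(c(z)x + d(z))$ with $a,c$ not both zero.

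First I would rule out case (i): if $t(z)$ divides both $2z(f_{k+2}(z)-f_{k+1}(z))$ and $(f_{k+1}(z)-f_k(z))$, then any root $\alpha$ of $t$ is a root of $f_{k+1}(z)-f_k(z)$; by \Cref{lem: fibonacci poly}~(4) this polynomial is separable and by part~(1) (or part~(3), noting $f_{k+1}-f_k$ has nonzero constant term) $\alpha\neq 0$, so $\alpha$ is also a root of $f_{k+2}(z)-f_{k+1}(z)$, contradicting \Cref{lem: fibonacci poly}~(5) which says $f_{k+2}-f_{k+1}$ and $f_{k+1}-f_k$ share no root. Hence no such non-constant $t(z)$ exists.

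Next I would rule out case (ii). Comparing coefficients of $x^2$, $x^1$, $x^0$ gives $a(z)c(z) = -(f_{k+1}(z)-f_k(z))$, $a(z)d(z) + b(z)c(z) = 0$, and $b(z)d(z) = 2z(f_{k+2}(z)-f_{k+1}(z))$. The cleanest route is to evaluate at a root $\alpha$ of $f_{k+1}(z)-f_k(z)$ (which exists and is nonzero as above): then $a(\alpha)c(\alpha)=0$, so WLOG $a(\alpha)=0$; the middle equation forces $b(\alpha)c(\alpha)=0$, and since $\deg_x$ of one factor is exactly $1$ we cannot have $a\equiv 0$, so in fact $c(\alpha)\neq 0$ for generic such $\alpha$ unless $c(\alpha)=0$ too — in either sub-case one is pushed to conclude $b(\alpha)=0$ or that $\alpha$ is a root of both Fibonacci differences. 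Tracking this carefully, using separability of $f_{k+1}-f_k$ so that its roots are simple, forces $b(\alpha)d(\alpha)=0$ as well, whence $\alpha$ is a zero of $2z(f_{k+2}(z)-f_{k+1}(z))$; since $\alpha\neq0$ this again contradicts \Cref{lem: fibonacci poly}~(5). I would also dispose of the degenerate possibility that a factor is a pure polynomial in $z$ times a linear-in-$x$ piece by absorbing it into case~(i). Therefore $h_k$ is irreducible.

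The main obstacle I anticipate is case~(ii): unlike the $M_n$ situation where $g_k$ was \emph{linear} in $x$ and reducibility collapsed immediately, here $h_k$ is genuinely quadratic in $x$, so one must handle the possibility of two linear-in-$x$ factors, and the bookkeeping with the coefficient equations and multiplicities of roots of $f_{k+1}-f_k$ is where care is needed. A slicker alternative that avoids this entirely: specialise $z$ to a well-chosen numerical value (or work over $\mathbb{C}(z)$) and show $h_k(x,z)$, viewed as a quadratic in $x$ over $\mathbb{C}(z)$, has discriminant $8z(f_{k+2}-f_{k+1})(f_{k+1}-f_k)$ which is not a square in $\mathbb{C}(z)$ — it has simple zeros at the (nonzero, distinct) roots of $f_{k+1}-f_k$ by \Cref{lem: fibonacci poly}~(4),(5) — so $h_k$ is irreducible over $\mathbb{C}(z)$, hence (being primitive in $x$ by case~(i)) irreducible over $\mathbb{C}[x,z]$ by Gauss's lemma. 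I would likely present this discriminant argument as the cleanest proof.
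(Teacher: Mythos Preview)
Your proposal is correct, but the paper takes a somewhat different and more geometric route to eliminate your case~(ii). Rather than splitting into the two shapes $0{+}2$ versus $1{+}1$ for the $x$-degree and then grinding through coefficient equations or a discriminant computation, the paper observes that the Newton polygon of a product is the Minkowski sum of the Newton polygons of the factors, and hence the boundary edge-vector sequence of $h_k$ must partition into two nonempty subsequences each summing to zero. Since the only vectors with nonzero second coordinate are $(1,-2)$, $(-1,1)$, $(-1,1)$, they are forced into the same subsequence; this immediately shows that one factor has $x$-degree $0$, i.e.\ lies in $\mathbb{C}[z]$, collapsing everything to your case~(i) and the same \Cref{lem: fibonacci poly}~(5) contradiction. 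The advantage of the paper's approach is that it avoids the coefficient bookkeeping entirely and generalises cleanly to the cubic-in-$y$ polynomial $r_k$ in \Cref{sec:third_family}. Your discriminant argument, on the other hand, is arguably the slickest self-contained proof for this particular lemma: the discriminant $8z(f_{k+2}-f_{k+1})(f_{k+1}-f_k)$ is squarefree by parts~(4) and~(5) of \Cref{lem: fibonacci poly} together with the nonvanishing of the constant terms, so $h_k$ is irreducible over $\mathbb{C}(z)$, and Gauss's lemma finishes it.
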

\begin{proof}
If $h_k(x,z)$ is reducible, then we can factorise it into $h_k(x,z) = s_k(x,z)t_k(x,z)$, where $t_k,s_k$ are non-constant polynomials in $\mathbb{C}[x,z]$. Then the Newton polygon of $h_k(x,z)$ is the Minkowski sum of the Newton polygons of $s_k(x,z)$ and $t_k(x,z)$. The Newton polygon of $h_n(x,z)$ is shown in \Cref{fig: Newton polygon Nn}. Travelling along the boundary of the Newton polygon, we obtain the boundary vector sequence
$$v(g_k) = \bigg( (1,-2),(1,0),...,(1,0),(-1,1),(-1,1),(-1,0),...,(-1,0) \bigg)$$ where the number of repetitions of $(1,0)$ and $(-1,0)$ in $v(h_k)$ are $k+1$ and $k$ respectively.
The sequence $v(h_k)$ can be partitioned into two disjoint nonempty subsequences $v(s_k)$ and $v(t_k)$, each of which sums to zero. Given $(1,-2)$, $(-1,1)$ and $(-1,1)$ are the only three vectors with nonzero second components, they must be in the same sequence. Without loss of generality, we assume that they are in $v(s_k)$. Noting that $h_k$ has no linear term in $x$, we may write $t_k(x,z) = t_k(z)$ and $s_k(x,z) = x^2s^{(1)}_k(z) + s^{(2)}_k(z)$, where $s^{(i)}_k(z)$ and $t_k(z)$ are polynomials in $\mathbb{C}[z]$. 
Hence \begin{equation*}
\begin{cases}
    2(f_{k+2}(z) - f_{k+1}(z))z = s^{(2)}_k(z)t_k(z)\\
            -(f_{k+1}(z)-f_{k}(z)) = s^{(1)}_k(z)t_k(z)
\end{cases}
\end{equation*}
The remainder of the proof is analogous to the proof of \Cref{lem:irreducible Mn}, where one obtains a contradiction to \Cref{lem: fibonacci poly}~(5).
\end{proof}

\begin{lemma}\label{lem: nondegeneracy Nn}
When $n=2k+1\geq 1$, $h_k$ is nondegenerate and there is no singular point in $V_n=V(h_k)$.
\begin{proof}
The proof is almost identical to \Cref{lem:singularity Un}.
\end{proof}
\end{lemma}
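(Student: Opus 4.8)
The plan is to imitate the proof of \Cref{lem:singularity Un} line by line; the only genuinely new feature is that $h_k$ is quadratic rather than linear in $x$. Write $\delta_m(z) = f_{m+1}(z) - f_m(z)$, so that $h_k(x,z) = 2z\,\delta_{k+1}(z) - \delta_k(z)\,x^2$ and $\partial h_k/\partial x = -2x\,\delta_k(z)$. The three facts I will use are: $\delta_k$ and $\delta_{k+1}$ have nonzero constant term (\Cref{lem: fibonacci poly}~(3)); each is separable (\Cref{lem: fibonacci poly}~(4), which gives separability of $f_{n+1}-f_n$); and $\delta_k,\delta_{k+1}$ have no common root (\Cref{lem: fibonacci poly}~(5)).

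First I would show that $V_n = V(h_k)$ has no singular point. Suppose $(x_0, z_0)$ is such a point, so that $h_k = \partial h_k/\partial x = \partial h_k/\partial z = 0$ at $(x_0, z_0)$. From $\partial h_k/\partial x = 0$ either $\delta_k(z_0) = 0$ or $x_0 = 0$. In the first case $z_0 \neq 0$ (nonzero constant term of $\delta_k$), and $h_k(x_0, z_0) = 2z_0\,\delta_{k+1}(z_0) = 0$ forces $\delta_{k+1}(z_0) = 0$, contradicting \Cref{lem: fibonacci poly}~(5) --- this is exactly the argument in \Cref{lem:singularity Un}. In the second case, $h_k(0, z_0) = 2z_0\,\delta_{k+1}(z_0) = 0$ and $\partial h_k/\partial z(0, z_0) = 2(\delta_{k+1}(z_0) + z_0\,\delta_{k+1}'(z_0)) = 0$; if $z_0 = 0$ the latter equals $2\delta_{k+1}(0) \neq 0$, and if $z_0 \neq 0$ then $\delta_{k+1}(z_0) = 0$ and the latter gives $\delta_{k+1}'(z_0) = 0$, making $z_0$ a repeated root of the separable polynomial $\delta_{k+1}$. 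Either way we reach a contradiction, so $V_n$ is smooth.

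Next I would verify nondegeneracy on the four edges of the Newton polygon of \Cref{fig: Newton polygon Nn}, the trapezoid with vertices $(1,0), (k+2,0), (0,2), (k,2)$ in the $(z,x)$-plane. On the left edge the edge polynomial is $2\delta_{k+1}(0)\,z - \delta_k(0)\,x^2$ and on the right edge it is $z^k(2z^2 - x^2)$; in both cases $x$ times the $x$-derivative is a nonzero monomial, hence never vanishes on $(\mathbb{C}^*)^2$, so there is nothing to check. On the bottom edge the edge polynomial is $2z\,\delta_{k+1}(z)$, and a common zero in $(\mathbb{C}^*)^2$ of it and $z$ times its $z$-derivative would satisfy $\delta_{k+1}(z) = 0$ and then $\delta_{k+1}'(z) = 0$, contradicting separability; this is the analogue of the treatment of the edge $\gamma_2$ in \Cref{lem:singularity Un}, and here one may simply invoke \Cref{lem: fibonacci poly}~(4) in place of a Puiseux computation. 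On the top edge the edge polynomial is $-\delta_k(z)\,x^2$, and a common zero in $(\mathbb{C}^*)^2$ again forces $\delta_k(z) = \delta_k'(z) = 0$, contradicting separability. This establishes nondegeneracy and finishes the proof.

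The only step that is not already contained in the proof of \Cref{lem:singularity Un} is the analysis of the locus $x = 0$ in the singular-point argument, which is the price of $h_k$ being quadratic in $x$; I expect this --- checking that the finitely many points of $V_n$ over $x=0$ are smooth --- to be the only place needing any thought, and, as indicated above, it is disposed of by the nonvanishing of $\delta_{k+1}(0)$ together with the separability of $\delta_{k+1}$.
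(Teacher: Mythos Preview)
Your proposal is correct and is exactly what the paper intends: its proof is the single sentence ``almost identical to \Cref{lem:singularity Un},'' and you have faithfully carried out that transfer, correctly isolating the only new wrinkle (the case $x_0=0$ coming from $h_k$ being quadratic in $x$) and disposing of it via the nonvanishing of $\delta_{k+1}(0)$ and separability of $\delta_{k+1}$. Your observation that the edge conditions on $\gamma_2,\gamma_4$ follow directly from separability (\Cref{lem: fibonacci poly}(4)) is in fact a slight streamlining of the paper's treatment of the analogous edges in \Cref{lem:singularity Un}, where an explicit computation with the formula $f_n(u)=(s^n-s^{-n})/(s-s^{-1})$ is used instead.
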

Combining \Cref{lem: irreducible Nn,lem: nondegeneracy Nn} with 
\Cref{lem:Newton polygon} and \Cref{lem:hyperelliptic_newton} implies:
\begin{lemma}
The curve $V_n\cong X_\varphi(S)$ is hyperelliptic of genus $k$ for each $n=2k+1\ge 3$.
\end{lemma}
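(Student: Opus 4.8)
The plan is to combine the three preparatory lemmas with the two genus criteria stated in \Cref{sec:Genus and the Newton Polygon}. First I would invoke \Cref{lem: irreducible Nn} to know that $h_k(x,z)$ is irreducible, so that $V_n = V(h_k)$ is an irreducible affine plane curve and the Beelen--Pellikaan corollary applies. Next I would use \Cref{lem: nondegeneracy Nn}: it gives simultaneously that $h_k$ is nondegenerate with respect to its Newton polygon and that $V_n$ has no singular point $(x,z)$ with $xz \neq 0$ (indeed no singular point at all). These are exactly the hypotheses of \Cref{lem:Newton polygon}, so the genus of the nonsingular model of $V_n$ equals the number of interior lattice points of the Newton polygon depicted in \Cref{fig: Newton polygon Nn}.

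The second step is the lattice-point count. The Newton polygon of $h_k$ is the quadrilateral with vertices $(1,0)$, $(0,2)$, $(k,2)$ and $(k+2,0)$. Its interior lattice points all lie on the single horizontal line $x=1$: these are the points $(z,1)$ with $1 \le z \le k$, of which there are exactly $k$. (One checks that the two slanted edges, from $(1,0)$ to $(0,2)$ and from $(k,2)$ to $(k+2,0)$, pass through no lattice point with $x=1$ strictly between them at the boundary, so the count on the line $x=1$ is $k$, and there are no interior lattice points on the lines $x=0$ or $x=2$ since those contain only boundary points.) Hence the genus of the smooth projective model of $V_n$ is $k$. Since $k \ge 1$ for $n = 2k+1 \ge 3$, there are at least two interior lattice points when $k \ge 2$, and they are collinear (all on $x=1$), so \Cref{lem:hyperelliptic_newton} gives that $V_n$ is hyperelliptic; for $k=1$ a genus-one curve is automatically (hyper)elliptic. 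Finally, the birational maps $r_1, r_2$ constructed just before \Cref{fig: Newton polygon Nn} show $V_n \cong X_{\psi_n}(S)$ birationally, so $X_{\psi_n}(S)$ is hyperelliptic of genus $k$ as claimed.

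The only genuinely delicate point is making sure the birational equivalence $X_{\psi_n}(S) \dashrightarrow V_n$ is handled correctly: the map $r_1\co (x,y,z)\mapsto (x,z)$ and its inverse $r_2\co (x,z)\mapsto (x, 2z/x, z)$ are only defined away from $x=0$, and we observed separately that $X_{\psi_n}(S)$ contains exactly one point with $x=0$ (namely the origin, forced by \Cref{lem: fibonacci poly}~(3)). So $r_1$ restricts to an isomorphism between the Zariski-open sets $X_{\psi_n}(S)\setminus\{x=0\}$ and $V_n \setminus \{x=0\}$, which is all that is needed for birationality; the genus is a birational invariant of the smooth projective model, so the value is unaffected by these finitely many removed points. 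I expect the Newton-polygon lattice-point count and the verification that exactly $k$ interior points occur to be the main thing to get right, but this is entirely routine once the polygon's vertices are pinned down from \Cref{lem: fibonacci poly}~(2),(3); the substantive analytic input (irreducibility, nondegeneracy, smoothness) has already been dispatched in the preceding lemmas.
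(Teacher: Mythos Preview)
Your proposal is correct and follows essentially the same approach as the paper: the paper's proof is simply the one-line statement that combining \Cref{lem: irreducible Nn}, \Cref{lem: nondegeneracy Nn}, \Cref{lem:Newton polygon}, and \Cref{lem:hyperelliptic_newton} gives the result, and you have supplied exactly those details (the lattice-point count on the line $x=1$, the collinearity check, and the observation that the birational map $r_1$ is defined away from a single point). Your separate handling of the $k=1$ case is a nice touch that the paper leaves implicit.
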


\subsubsection{$\PSL(2,\mathbb{C})$--character variety}
\label{sec:infinite_no_lifts}

Since $n$ is odd, $b_1(\psi_n) = 2$. If a representation $\overline{\rho}\in\overline{R}(N_n)$ lifts to $\SL(2,\mathbb{C})$, then it generically has $4$ lifts. Let $\overline{X}_0(N_n)$ be the subvariety of $\overline{X}^{\text{irr}}(N_n)$ consisting of the characters of all $\PSL(2,\mathbb{C})$--representations that lift to representations into $\SL(2,\mathbb{C})$. By \Cref{cor:diagram}, $\overline{X_0}(N_n)\cong q_2({X}_{\psi_n}(S))$.
We have the following proposition. 
\begin{proposition}
For every positive odd integer $n=2k+1$, the subvariety $\overline{X}_0(N_n)$ is birational to the affine line.
\end{proposition}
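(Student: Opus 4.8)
The plan is to realise $\overline{X}_0(N_n)$ as the quotient of the curve $V_n$ by an explicit involution whose quotient is visibly rational. As recorded just before the statement, \Cref{cor:diagram} gives $\overline{X}_0(N_n)\cong q_2(X_{\psi_n}(S))$, where $q_2\colon X(S)\to\overline{X}(S)$ is the quotient by the action of $H^1(S,\mathbb{Z}_2)$. I would first check that no coordinate axis lies in $X_{\psi_n}(S)$: the axes $L_2$ and $L_3$ sit inside $\{x=0\}$, which meets $X_{\psi_n}(S)$ only in the origin by the computation already carried out, and on $L_1$ the first defining equation becomes $x=P_{n+2}(x,0,0)=-x\,f_{n+1}(0)=0$ since $n+1$ is even (\Cref{lem: fibonacci poly}(3)), forcing $x=0$. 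Hence, using $b_1(\psi_n)=2$, \Cref{cor:diagram} tells us that the restriction of $q_2$ to $X_{\psi_n}(S)$ is exactly the quotient map for the action of the order-two subgroup $H\le H^1(S,\mathbb{Z}_2)$.

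The next step is to pin down $H$ and its action. Let $h_0\colon\pi_1(S)\to\{\pm1\}$ be the homomorphism with $h_0(a)=h_0(b)=-1$, so $h_0(w)=(-1)^{e(w)}$ for the exponent sum $e(w)$ of a word $w$. Since $\psi_n(a)=a(a^{-1}b^{-1})^{n+2}$ and $\psi_n(b)=ba^2(a^{-1}b^{-1})^{n+2}$ have exponent sums $1-2(n+2)$ and $3-2(n+2)$, both odd, we have $h_0\circ\psi_n=h_0$ on the generators, i.e.\ $h_0\in H$; as $|H|=2^{b_1(\psi_n)-1}=2$, this forces $H=\langle h_0\rangle$. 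On the coordinates $(x,y,z)$ of $X(S)=\mathbb{C}^3$, the element $h_0$ acts by $(x,y,z)\mapsto(-x,-y,z)$.

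I would then transport this involution along the birational maps $r_1\colon X_{\psi_n}(S)\dashrightarrow V_n$, $(x,y,z)\mapsto(x,z)$, and $r_2\colon V_n\dashrightarrow X_{\psi_n}(S)$, $(x,z)\mapsto(x,2z/x,z)$, of the previous subsection: the $h_0$-action becomes the involution $\iota\colon(x,z)\mapsto(-x,z)$ of $V_n$, with $y=2z/x$ changing sign automatically. The key structural point is that
\[
h_k(x,z)=2z\bigl(f_{k+2}(z)-f_{k+1}(z)\bigr)-\bigl(f_{k+1}(z)-f_k(z)\bigr)x^2
\]
involves $x$ only through $x^2$, so over the field $\mathbb{C}(z)$ it factors as $-\bigl(f_{k+1}(z)-f_k(z)\bigr)\bigl(x^2-r(z)\bigr)$ with $r(z)=2z(f_{k+2}(z)-f_{k+1}(z))/(f_{k+1}(z)-f_k(z))\in\mathbb{C}(z)$; here $f_{k+1}(z)-f_k(z)$ has nonzero constant term while $z\nmid(f_{k+2}(z)-f_{k+1}(z))$ (one of $k+1,k+2$ is odd), so $r(z)$ has a simple zero at $z=0$ and is therefore not a square in $\mathbb{C}(z)$. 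Consequently $\mathbb{C}(V_n)=\mathbb{C}(z)[x]/\bigl(x^2-r(z)\bigr)$ is a quadratic extension of $\mathbb{C}(z)$ whose non-trivial automorphism is exactly $\iota$ and whose fixed field is $\mathbb{C}(z)$. Hence
\[
\overline{X}_0(N_n)\cong q_2\bigl(X_{\psi_n}(S)\bigr)=X_{\psi_n}(S)/\langle h_0\rangle\cong V_n/\langle\iota\rangle
\]
has function field $\mathbb{C}(z)$, and so is birational to the affine line.

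The substantive ingredient is the observation that $h_k$ is even in $x$, which makes $\iota$ the hyperelliptic projection of $V_n$ onto the $z$-line; the genus computations of \Cref{lem: irreducible Nn} and \Cref{lem: nondegeneracy Nn} are not needed beyond ensuring $\iota$ is non-trivial, and that is already guaranteed by the odd vanishing order of $r(z)$ at $z=0$. The part that requires care — and the only place I expect friction — is the appeal to \Cref{cor:diagram} to ensure that $q_2$ on $X_{\psi_n}(S)$ identifies exactly the $H$-orbits rather than orbits of a larger subgroup of $H^1(S,\mathbb{Z}_2)$, which is precisely why the coordinate-axis exclusion is the first thing to establish.
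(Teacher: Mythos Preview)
Your proof is correct and follows essentially the same route as the paper: identify $H=\langle h_0\rangle$ with $h_0(a)=h_0(b)=-1$, transport the involution to $(x,z)\mapsto(-x,z)$ on $V_n$, and observe that the quotient is rational. The only difference is cosmetic: the paper simply substitutes $X=x^2$, $Z=z$ to get the explicit quotient curve $\overline V_n=V\bigl(2Z(f_{k+2}(Z)-f_{k+1}(Z))-(f_{k+1}(Z)-f_k(Z))X\bigr)$, which is linear in $X$ and hence visibly rational, whereas you phrase the same observation in function-field language. Your explicit verification that no coordinate axis lies in $X_{\psi_n}(S)$ is a point the paper leaves implicit when invoking \Cref{cor:diagram}, so your version is in fact slightly more careful there.
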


\begin{proof}
Since $\overline{X}_0(N_n)$ is birational to $q_2({X}_{\varphi_n}(S))$, it suffices to show that the latter has genus $0$.

Take a homomorphism $h\in \Hom(\pi_1 (N_n),\mathbb{Z}_2)$. A direct calculation shows that either $(h(a),h(b)) = (1,1)$ or $(h(a),h(b)) = (-1,-1)$ since $n$ is odd. Hence $q_2({X}_{\varphi_n}(S))$ is the quotient of ${X}_{\varphi_n}(S)$ by the involution $(x,y,z)\rightarrow(-x,-y,z).$

Recall that $X_{\psi_n}(S)$ is birational to its restriction $V_n$. Let $\overline{V}_n$ be the variety corresponding to the identification of $V_n$ under the involution $(x,z)\rightarrow(-x,z)$.  Letting $(X,Z) = (x^2,z)$, $\overline{V}_n$ is generated by \begin{equation*}
    \overline{h}_k(X,Z) \coloneqq 2(f_{k+2}(Z)-f_{k+1}(Z))Z-(f_{k+1}(Z)-f_{k}(Z))X
\end{equation*}
The genus of $\overline{V}_n$ is seen to be $0$ by a simple modification of the argument of \Cref{pro:genus Mn}.
\end{proof}

For each irreducible $\PSL(2,\mathbb{C})$--representation $\overline{\rho}$ which does not lift, we may choose $T, A, B\in \SL(2,\mathbb{C})$ with the properties $\overline{\rho}(t) = \{\pm T\},$ $\overline{\rho}(a) = \{\pm A\},$ $\overline{\rho}(b) = \{\pm B\}$ and
\begin{align*}
T^{-1}AT&=- \psi_n(A) \\
T^{-1}BT&= \psi_n(B)
\end{align*}
Using the convention from \Cref{eq:signed_FP_sets}, the traces of these representatives under the restriction map $r$ are contained in the set:
\begin{align*}
    X_{\psi_n}^{2} (S) &= \{(x,y,z) \in \mathbb{C}^3 |(-x,y,-z)=\overline{\psi_n}(x,y,z) \}\\
   &= V(P_{n+2}+x,P_{n+1}-y, xy)
\end{align*}
This implies that the components of $X_{\psi_n}^{2} (S)$ are:
\begin{align}
    \label{eq:coord_line_psi_n}&L_3 = \{(0,0,z)\mid z\in\mathbb{C}\}\\
    \label{eq:alpha_line_psi_n}&C_\xi = \{(0,y,\xi)\mid y\in\mathbb{C}\}\text{ where }\xi
    \text{ is a root of }f_{k+2}(z)-f_{k+1}(z)=0\\
    \label{eq:beta_line_psi_n}&C_\zeta = \{(x,0,\zeta)\mid x\in\mathbb{C}\}\text{ where }\zeta
    \text{ is a root of }f_{k+1}(z)-f_{k}(z)=0
\end{align}
For each of the components $C_z \subset X_{\psi_n}^{2} (S)$, the map  
$\overline{X}(N_n) \supset \overline{r}^{-1}q_2(C_z) \to \overline{X}_{\psi_n}(S)$ is a birational equivalence onto its image since the irreducible characters in \Cref{eq:alpha_line_psi_n,eq:beta_line_psi_n} arise from $\PSL(2,\mathbb{C})$--representations of $\pi_1(S)$ with trivial centraliser.

Points in $X_{\psi_n}(S)$ satisfy $2z=xy,$ and none of the roots $\xi$ or $\zeta$ equal zero. Hence the only point in $X_{\psi_n}(S)\cap X_{\psi_n}^{2} (S)$ is $(0,0,0),$ and the existence of this intersection point is due to the fact that we are not working with $\overline{X}(N_n),$ but with $\overline{X}_{\psi_n}(S).$ The two representations of $N_{n}$ predicted by \Cref{lem:K4_number_of_lifts} are given by letting
\begin{equation*}
    \overline{\rho}(a) = \pm\begin{pmatrix}
    0&1\\-1&0
    \end{pmatrix}
    \qquad\qquad
    \overline{\rho}(b) = \pm\begin{pmatrix}
    i&0\\0&-i
    \end{pmatrix}
\end{equation*}
and 
\[
T_1 =  \pm\begin{pmatrix}
    \frac{-i}{\sqrt{2}}&\frac{-1}{\sqrt{2}}\\ \frac{1}{\sqrt{2}}&\frac{i}{\sqrt{2}}
    \end{pmatrix}
      \qquad\qquad
      T_2  = \overline{\rho}(a) \; T_1 = 
      \pm\begin{pmatrix}
    \frac{1}{\sqrt{2}}&\frac{i}{\sqrt{2}}\\ \frac{i}{\sqrt{2}}&\frac{1}{\sqrt{2}}
    \end{pmatrix}
   \]
Then $\overline{\rho}$ extends to a representation into $\PSL(2,\mathbb{C})$ by letting either 
$\overline{\rho}=T_1$ or $\overline{\rho}=T_2,$ and the former lifts to $\SL(2,\mathbb{C})$ whilst the latter doesn't. Denote the characters in $\overline{X}(N_{n})$ of the $2$ extensions as $\overline{\chi_1}$ and $\overline{\chi_2}$. According to the discussion in \Cref{sec:basic_char}, $\overline{\chi_1}$ and $\overline{\chi_2}$ are on different topological components of $\overline{X}(N_{n}).$ Note that $\overline{\chi_1}$ is in $\overline{X_0}(N_n)$ while $\overline{\chi_2}$ is in $\overline{r}^{-1}q_2(L_3)$.

For the component $L_3\subset X_{\psi_n}^{2} (S)$, we follow the same parametrisation of $\overline{L_3}$ in \Cref{eq:param_of_PL3}. For any point $(0,0,(p+\frac{1}{p})^2,0)\in \overline{L_3}$, a direct computation shows that it extends to $2$ characters in $\overline{X}(N_n)$ where the quadruples in \Cref{eq:PSL_characters} are  $(2-p^{n+2}-\frac{1}{p^{n+2}},0,0,2-p^{n+4}-\frac{1}{p^{n+4}})$ and $(2+p^{n+2}+\frac{1}{p^{n+2}},0,0,2+p^{n+4}+\frac{1}{p^{n+4}})$. Thus $\overline{r}^{-1}q_2(L_3) \to \overline{L_3}$ is a $2$-fold branched cover with the only ramification point $\overline{\chi_2}$.


\subsection{The family $L_n$}
\label{sec:third_family}

Let $L_n$ be the once-punctured torus bundle with monodromy $\omega_n = A^2B^{n+2}A$. Note that $\tr (\omega_n) = -3n-4$ so $L_n$ is hyperbolic when $n$ is positive. When $n$ is odd, $b_1(\omega_n)=1$ and hence $\overline{X}^{irr}(L_n)\cong X_{\omega_n}(S)$ according to \Cref{cor:iso for b1=1}. We now show:

\begin{theorem}\label{thm: unbounded psl canonical curve}
Suppose $n=2k+1\geq 3$ is odd. The $\PSL(2,\mathbb{C})$--character variety $\overline{X}^{\text{irr}}(L_{n})$ consists of $k+2$ components. The canonical curve $\overline{X}_0(L_n)\subset \overline{X}^{\text{irr}}(L_{n})$ is birationally equivalent to a hyperelliptic curve of genus $k$ while all the other Zariski components are birational to the affine line. 
\end{theorem}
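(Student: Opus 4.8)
The plan is to compute the fixed-point set $X_{\omega_n}(S)\subset\mathbb{C}^3$ explicitly, decompose it into exactly $k+2$ irreducible components, and then read off the statement for $\overline{X}^{\text{irr}}(L_n)$ from the birational isomorphism $\overline{X}^{\text{irr}}(L_n)\cong X_{\omega_n}(S)$ of \Cref{cor:iso for b1=1}, which applies because $b_1(\omega_n)=1$ for $n$ odd. Throughout I would work with the polynomials $P_m(x,y,z)=y f_m(z)-x f_{m-1}(z)$ of \Cref{eq: Pn} and the Fibonacci polynomials $f_m$ of \Cref{sec:fibonacci}, exactly as in the treatment of $M_n$ in \Cref{sec:first_family} and $N_n$ in \Cref{sec:second_family}.

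First I would write $\omega_n=\A^2\B^{n+2}\A$ as an explicit framing of $\pi_1(S)=\langle a,b\rangle$ and substitute it into the three defining trace conditions $\tr\rho(\gamma)=\tr\rho(\omega_n(\gamma))$ for $\gamma\in\{a,b,ab\}$. Using the trace identities of \Cref{sec:Preliminaries} and the S-polynomial reduction of \Cref{lem: S-polynomials} to fold the ``$a$'' and ``$b$'' equations into a single relation built from $P_{k+2}-P_{k+1}$, I expect the system to simplify to one equation expressing $z$ (or $y$) as a low-degree polynomial in the other two variables, together with a second equation $Q_k=0$ which \emph{factors} over $\mathbb{C}[x,y,z]$. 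By analogy with the factorisation of $X_{\psi_n}^{2}(S)$ in \Cref{sec:infinite_no_lifts}, I expect $Q_k$ to split as one ``large'' irreducible factor $G_k$ times a product of linear factors of the form $z-c$, where $c$ runs over the roots of an appropriate Fibonacci difference polynomial of degree $k+1$ (each such root giving, once the other equation is imposed, a line in $\mathbb{C}^3$); these $k+1$ roots are distinct and nonzero by \Cref{lem: fibonacci poly}~(4), and the degree count is \Cref{lem: fibonacci poly}~(2). Counting gives $1+(k+1)=k+2$ components. Note that since $b_1(\omega_n)=1$ the coordinate axes meet $X_{\omega_n}(S)$ only at the origin (see \Cref{subsec:lines}), so none of the line components is a coordinate axis, but being lines in $\mathbb{C}^3$ they are all rational. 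One should also separately examine the locus where the birational plane-curve model of $G_k$ degenerates (such as $x=0$), as in the opening of \Cref{sec:second_family}, to confirm it produces nothing beyond the components already listed, so that ``$k+2$'' is exact and not merely a lower bound.

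For the distinguished factor $G_k$ I would run the same three-step argument used for $M_n$ and $N_n$. Irreducibility of $G_k$ follows from the Minkowski-sum / boundary-vector-sequence argument of \Cref{lem:irreducible Mn} and \Cref{lem: irreducible Nn}, which reduces a hypothetical nontrivial factorisation to a common root of two consecutive Fibonacci differences and contradicts \Cref{lem: fibonacci poly}~(5). Nondegeneracy of $G_k$ with respect to its Newton polygon and absence of affine singular points away from the coordinate axes are checked edge by edge, using the closed forms of \Cref{lem: fibonacci poly}~(1) and the separability statements (4), (5), as in \Cref{lem:singularity Un} and \Cref{lem: nondegeneracy Nn}. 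Then \Cref{lem:Newton polygon} identifies the genus of the smooth model of $G_k$ with the number of interior lattice points of its Newton polygon --- a trapezoid analogous to \Cref{fig: Newton polygon Nn}, whose interior lattice points lie on a single horizontal line and number $k$ --- and \Cref{lem:hyperelliptic_newton} gives hyperellipticity (for $k=1$ the curve is automatically elliptic). Finally, to see that $G_k$ carries the canonical curve: the discrete and faithful holonomy character of $L_n$ restricts on the fibre to an irreducible representation with trivial centraliser, hence its image under $r$ is not the origin and lies on no coordinate axis; since every line component consists of characters whose fibre restriction is reducible or lies on a coordinate axis, the holonomy character must lie on $G_k$. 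Combined with the fact (recalled in \Cref{sec:basic_char}) that it is a simple point of its canonical component, and with \Cref{cor:iso for b1=1}, this yields the theorem, the remaining $k+1$ components being birational to the affine line.

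The main obstacle will be the edge-by-edge nondegeneracy and smoothness verification for $G_k$. In the $N_n$ case this was dispatched with ``the proof is almost identical to \Cref{lem:singularity Un}'', but the extra leading $\A^2$ in $\omega_n=\A^2\B^{n+2}\A$ shifts the monomials appearing in $G_k$, so the Puiseux/valuation computations on the two slanted edges of the Newton polygon have to be redone, and one must verify that the resulting systems --- of the form $f_{k+2}(z)-f_{k+1}(z)=f_{k+2}'(z)-f_{k+1}'(z)=0$ and its companion --- have no common root with $z\neq 0$, again via the explicit formulas in \Cref{lem: fibonacci poly}~(1). A secondary but genuine difficulty, as noted above, is pinning down the exact factorisation of $Q_k$ and ruling out extra hidden components, so that the count ``$k+2$'' and the identification of $G_k$ as the unique positive-genus component are both exact.
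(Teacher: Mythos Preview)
Your plan has the right overall shape, but two of its key expectations about the algebra are wrong, and following them as written would not reach the paper's conclusion.

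First, the recursion: because $\omega_n$ sends $a\mapsto a(a^{-2}b^{-1})^{n+2}$ (note $a^{-2}$, not $a^{-1}$), the relevant polynomials $Q_m(x,y,z)=\tr\rho(a(a^{-2}b^{-1})^m)$ satisfy $Q_m=(xz-y)Q_{m-1}-Q_{m-2}$, so the recursion variable is $xz-y$, not $z$. You cannot simply reuse $P_m(x,y,z)=yf_m(z)-xf_{m-1}(z)$; rather $Q_m=zf_m(xz-y)-xf_{m-1}(xz-y)$. More importantly, the decomposition into $k+2$ components does not arise from a single $Q_k$ factoring as one large piece times linear factors $z-c$. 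The third trace equation collapses to $(y-z)(x+1)=0$, and \emph{this} is what splits $X_{\omega_n}(S)$ into two pieces: $W_n$ (where $y=z$), which is the single irreducible component you call $G_k$, and $R_n$ (where $x=-1$), which breaks into $k+1$ lines of the form $\{x=-1,\ y+z+\alpha=0\}$ indexed by the roots of $f_{k+2}(u)+f_{k+1}(u)$. So the line components live in the hyperplane $x=-1$, not on slices $z=c$.

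Second, your argument that the canonical component is $G_k$ is incorrect. The line components are neither on coordinate axes nor do their fibre restrictions become reducible: a generic point with $x=-1$, $y+z+\alpha=0$ is an irreducible character of $S$ with trivial centraliser. The actual obstruction is arithmetic: $x=\tr\rho(a)=-1$ forces $\rho(a)^3=I$ (characteristic polynomial $t^2+t+1$), which is impossible for the discrete faithful representation since $a$ has infinite order in $\pi_1(L_n)$. With these two corrections in place, your Newton-polygon programme (irreducibility via boundary-vector sequences and \Cref{lem: fibonacci poly}~(5), nondegeneracy edge-by-edge, genus and hyperellipticity via \Cref{lem:Newton polygon} and \Cref{lem:hyperelliptic_newton}) goes through for $W_n$ after the birational change $(x,y)\mapsto(xy,y)$, and the count $1+(k+1)=k+2$ follows.
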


We remark that if $n=1$, then $\overline{X}^{\text{irr}}(L_{n})$ has $3$ components, each of genus zero.

The framing $\omega_n$ of $L_n$ is
\begin{equation*}
    {\omega_n} = \begin{cases}
    a\rightarrow a(a^{-2}b^{-1})^{n+2}\\
    b\rightarrow ba^3(a^{-2}b^{-1})^{n+2}
    \end{cases}
\end{equation*}

Then $X_{\omega_n}(S)$ is generated by
\begin{align*}
    \tr\rho(a) &= \tr \rho(a(a^{-2}b^{-1})^{n+2})\\
    \tr\rho(b) &= \tr \rho(ba^3(a^{-2}b^{-1})^{n+2})\\
    \tr\rho(ab) &= \tr \rho(a(a^{-2}b^{-1})^{n+2}ba^3(a^{-2}b^{-1})^{n+2})
\end{align*}
Using the trace identities in \Cref{sec:Preliminaries}, the second equation is $\tr\rho(b) = \tr \rho(a(a^{-2}b^{-1})^{n+1})$ and the third equation is given by $z = xy-zx + y$. Let $Q_n(x,y,z)\coloneqq \tr\rho(a(a^{-2}b^{-1})^{n})$. We have $Q_0=x,Q_1=z$ and $Q_{n} = (xz-y)Q_{n-1}-Q_{n-2}$. Note that the third equation is equivalent to $(y-z)(x+1)=0$ and hence naturally splits the variety $X_{\omega_n}(S)$ into the union of two algebraic sets as follows. 
\[X_{\omega_n}(S) = V(\langle Q_{n+2}-x,Q_{n+1}-y, x+1\rangle)\cup V(\langle Q_{n+2}-x,Q_{n+1}-y, y-z\rangle)\]
We denote the two algebraic sets in this union by $R_n$ and $W_n$ respectively.
\begin{lemma}
Each canonical component of $X^{irr}(L_n)$ is contained in the preimage of $W_n$.
\begin{proof}
We only need to show that each discrete and faithful character $\chi_{\rho_0}\in X^{irr}(L_n)$ is not in the preimage of $R_n$. Clearly $\rho_0$ is not binary dihedral, so we can write $\rho_0(a)$ and $\rho_0(b)$ as in \Cref{eq:standard_irrep}. If $x=-1$, a simple calculation shows that $\rho_0(a)$ is of order 3, which contradicts the faithfulness of $\rho_0$ since $a$ has infinite order in $\pi_1(M_{\omega_n}).$
\end{proof}
\end{lemma}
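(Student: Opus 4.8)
The plan is to exploit the decomposition $X_{\omega_n}(S)=R_n\cup W_n$ recorded above, together with the fact that $R_n$ lies entirely in the affine hyperplane $\{x=-1\}\subseteq\mathbb{C}^3$, since $x+1$ belongs to the ideal defining $R_n$. Fix a canonical component $X_0$ of $X^{\text{irr}}(L_n)$. By \Cref{pro:dimension_bound} it is an irreducible affine curve, and by the definition of canonical component it contains the character $\chi_{\rho_0}$ of a discrete and faithful representation $\rho_0\co\pi_1(M_{\omega_n})\to\SL(2,\mathbb{C})$. Since $\im(r)=X_{\omega_n}(S)$ by \Cref{lem:im(r)}, we have
\[
X_0\;\subseteq\; r^{-1}(R_n)\cup r^{-1}(W_n),
\]
a union of two Zariski-closed subsets of $X^{\text{irr}}(L_n)$; irreducibility of $X_0$ then forces $X_0\subseteq r^{-1}(R_n)$ or $X_0\subseteq r^{-1}(W_n)$. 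So it suffices to rule out the first alternative, and for that it is enough to exhibit one point of $X_0$ not lying over $R_n$ — the obvious candidate being $\chi_{\rho_0}$. Thus the whole statement reduces to checking $x=\tau_a(\rho_0)=\tr\rho_0(a)\neq -1$.

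To verify this I would argue by contradiction. A discrete and faithful character is irreducible (indeed Zariski dense), hence not binary dihedral, so after conjugation $\rho_0(a),\rho_0(b)$ may be taken in the normal form of \Cref{eq:standard_irrep}; but in any case $\rho_0(a)$ is an $\SL(2,\mathbb{C})$-matrix with $\det=1$, so if $\tr\rho_0(a)=-1$ its characteristic polynomial is $\lambda^2+\lambda+1$, whose two roots are the distinct primitive cube roots of unity. Hence $\rho_0(a)$ is diagonalisable and $\rho_0(a)^3=I$, i.e.\ $\rho_0(a)$ has order exactly three. On the other hand the fibre inclusion $S\hookrightarrow L_n$ is $\pi_1$-injective, so $\pi_1(S)=\langle a,b\rangle$ embeds in $\pi_1(M_{\omega_n})$ and the nontrivial element $a$ of this free group has infinite order in $\pi_1(M_{\omega_n})$. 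Faithfulness of $\rho_0$ then makes $\rho_0(a)$ an element of infinite order in $\SL(2,\mathbb{C})$, contradicting the previous sentence. Therefore $\tr\rho_0(a)\neq -1$, so $\chi_{\rho_0}\notin r^{-1}(R_n)$, and the dichotomy above gives $X_0\subseteq r^{-1}(W_n)$.

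The argument is elementary, and I do not expect a genuine obstacle. The two places that need a moment's care are the eigenvalue bookkeeping — one must note that trace $-1$ together with determinant $1$ forces \emph{distinct} eigenvalues, hence diagonalisability and order exactly three, rather than a parabolic element of infinite order — and the invocation that the fibre of a surface bundle is incompressible, which is exactly what transports the infinite order of $a$ from $\pi_1(S)$ up to $\pi_1(M_{\omega_n})$.
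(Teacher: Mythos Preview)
Your proof is correct and follows essentially the same route as the paper's: reduce to showing the discrete and faithful character does not lie over $R_n$, then observe that $\tr\rho_0(a)=-1$ would force $\rho_0(a)$ to have order three, contradicting faithfulness. You make the logical structure (irreducibility of $X_0$ forcing the dichotomy) and the eigenvalue computation more explicit than the paper does, but the argument is the same.
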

We will analyse $W_n$ and $R_n$ separately. Before doing so, we define a family of auxiliary polynomials $Q_n'\in\mathbb{C}[x,y,z]$, which follow the same recursive relation as $Q_n$ but with $Q_0' = x$ and $Q_1'=y$. When $n=2k+1$ is odd, a similar argument as in \Cref{lem: S-polynomials} shows that

\[\langle Q_{n+2}-Q_{0}',Q_{n+1}-Q_1'\rangle=\langle Q_{k+2}-Q_{k+1}',Q_{k+1}-Q_{k+2}'\rangle\]

In addition, we observe that 
\[Q_n = zf_n(xz-y)-xf_{n-1}(xz-y)\quad\text{ and }\quad Q_n' = yf_n(xz-y)-xf_{n-1}(xz-y)\]
\subsubsection{The canonical component}
Under the map $(x,y,z)\mapsto(x-1,y)$, $W_n$ is birationally equivalent with its image, denoted by $W_n'\subset\mathbb{C}^2$. As $Q_n$ and $Q_n'$  coincide when $y=z$, $W_n'$ is generated by the single element $q_{k+2}(x,y)-q_{k+1}(x,y)\in\mathbb{C}[x,y]$, where 
\[q_n(x,y) = yf_n(xy) - (x+1)f_{n-1}(xy)\] 
Let $\mathcal{W}_n$ be the variety in $\mathbb{C}^2$ generated by the polynomial 
\begin{align*}
    r_k(x,y) \coloneqq y^2(f_{k+2}(x)-f_{k+1}(x))-(x+y)(f_{k+1}(x)-f_{k}(x))
\end{align*}
There exists a birational isormorphism between $W_n'$ and $\mathcal{W}_n$ via 
\begin{align*}
&
\begin{aligned}
r_1:W_n'&\rightarrow \mathcal{W}_n\\
(x,y)&\rightarrow({xy},y)
\end{aligned}
&
\begin{aligned}
 r_2:\mathcal{W}_n&\rightarrow W_n'\\
     (x,y)&\rightarrow(\frac{x}{y},y)
\end{aligned}
\end{align*}

\begin{lemma}
$r_k(x,y)$ defined as above is an irreducible polynomial in $\mathbb{C}[x,y]$ for every positive odd integer $n=2k+1\geq 3$.
\begin{proof}
If $r_{k}(x,y)$ is reducible, then we can factorise it into $r_{k}(x,y) = s_k(x,y)t_k(x,y)$, where $t_n,s_n$ are non-constant polynomials in $\mathbb{C}[x,y]$. The Newton polygon of $r_k$ is shown in \Cref{fig: Newton polygon Ln}. The boundary vector sequence of $r_k(x,y)$ is
$$v(r_k) = \bigg( (1,0),...,(1,0),(0,1),(0,1),(-1,0),(-1,0),...,(-1,0),(0,-1),(1,-1)\bigg)$$ 
where the number or repetitions of of $(1,0)$ and $(-1,0)$ in the sequence are $k$ and $k+1$ respectively. Given $(0,1)$, $(0,1)$, $(0,-1)$ and $(1,-1)$ are the only vectors with nonzero horizontal component, there are two cases to split $v(r_k)$ into two disjoint nonempty subsequences $v(s_k)$ and $v(t_k)$, each of which sums to zero. \begin{enumerate}
    \item All the four vectors with nonzero horizontal component are in the same subsequence. Without loss of generality, let $s_k(x,y) = s_k(x)$ and $t_k(x,y) = y^2t^{(2)}_k(x)+yt^{(1)}_k(x)+t^{(0)}_k(x)$. 
    Then 
\begin{equation*}
\begin{cases}
    f_{k+2}(x) - f_{k+1}(x) = t^{(2)}_k(x)s_k(x)\\
    -(f_{k+1}(x) - f_{k}(x)) = t^{(1)}_k(x)s_k(x)\\
    -x(f_{k+1}(x) - f_{k}(x)) = t^{(0)}_k(x)s_k(x)\\
\end{cases}
\end{equation*}
This is impossible due to \Cref{lem: fibonacci poly}~(5).
\item Hence $(1,-1)$,$(0,1)$ belong to one sequence, and $(0,1)$,$(0,-1)$ belong to the other sequence. Let $r_k(x,y) = (a(x)y+b(x))(c(x)y+d(x))$ where $a,b,c,d$ are polynomials in $\mathbb{C}[x]$. Then
\begin{equation*}
    \begin{cases}
    f_{k+2}(x) - f_{k+1}(x) = a(x)c(x)\\
    -(f_{k+1}(x) - f_{k}(x)) = a(x)d(x)+b(x)c(x)\\
    -x(f_{k+1}(x) - f_{k}(x)) = b(x)d(x)
\end{cases}
\end{equation*}
Without loss of generality, take a root $\alpha$ of $f_{k+1}(x) - f_{k}(x)$ which is also a root of $b(x)$. 
The third equation and \Cref{lem: fibonacci poly}~(4) imply $d(\alpha) \neq 0$. 
The first equation and \Cref{lem: fibonacci poly}~(5) imply $a(\alpha)\neq 0$.
But this gives a contradiction to the second equation.
\end{enumerate}
Hence $r_k(x,y)$ is irreducible.
\end{proof}
\end{lemma}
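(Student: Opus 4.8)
The plan is to mimic the Newton-polygon factorisation arguments already used for \Cref{lem:irreducible Mn} and \Cref{lem: irreducible Nn}, exploiting that $r_k$ has $y$-degree two and that every relevant one-variable polynomial is controlled by \Cref{lem: fibonacci poly}. Suppose for contradiction that $r_k=s_kt_k$ with $s_k,t_k\in\mathbb{C}[x,y]$ non-constant. Since the Newton polygon of a product is the Minkowski sum of the Newton polygons of its factors, the cyclic boundary-vector sequence $v(r_k)$ recorded in \Cref{fig: Newton polygon Ln} is an order-preserving shuffle of the boundary sequences of $s_k$ and $t_k$, and each of the latter must sum to the zero vector. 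The only vectors occurring in $v(r_k)$ with non-zero second coordinate are $(0,1)$, $(0,1)$, $(0,-1)$, $(1,-1)$, with second coordinates $+1,+1,-1,-1$. Requiring both subsequences to have vanishing total second coordinate, one checks that there are exactly two combinatorial possibilities: either all four of these vectors lie in one subsequence, or the split pairs them as $\{(0,1),(1,-1)\}$ and $\{(0,1),(0,-1)\}$.

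In the first case, say all four lie in $v(s_k)$; then $t_k=t_k(x)$ depends only on $x$ while $s_k=y^2s_k^{(2)}(x)+ys_k^{(1)}(x)+s_k^{(0)}(x)$. Comparing the coefficients of $y^2$, $y^1$, $y^0$ in $r_k=s_kt_k$ shows that the non-constant polynomial $t_k(x)$ divides both $f_{k+2}(x)-f_{k+1}(x)$ and $f_{k+1}(x)-f_{k}(x)$; hence any root of $t_k$ is a common root of these two polynomials, contradicting \Cref{lem: fibonacci poly}~(5).

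In the second case both factors are linear in $y$, so I would write $r_k=(a(x)y+b(x))(c(x)y+d(x))$ with $a,b,c,d\in\mathbb{C}[x]$ and compare coefficients to obtain $ac=f_{k+2}-f_{k+1}$, $ad+bc=-(f_{k+1}-f_{k})$ and $bd=-x(f_{k+1}-f_{k})$. Pick a root $\alpha$ of $f_{k+1}-f_{k}$. Then $\alpha\neq0$, since exactly one of $k,k+1$ is even and so by \Cref{lem: fibonacci poly}~(3) exactly one of $f_k,f_{k+1}$ vanishes at $0$, leaving $f_{k+1}-f_k$ with non-zero constant term; and $\alpha$ is a simple root of $x(f_{k+1}-f_{k})$, because $f_{k+1}-f_{k}$ is separable by \Cref{lem: fibonacci poly}~(4). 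Thus $bd$ has a simple zero at $\alpha$, so exactly one of $b(\alpha),d(\alpha)$ vanishes; say $b(\alpha)=0$ and $d(\alpha)\neq0$. The relation $ac=f_{k+2}-f_{k+1}$ together with \Cref{lem: fibonacci poly}~(5) forces $a(\alpha)\neq0$, since $\alpha$ cannot also be a root of $f_{k+2}-f_{k+1}$. Evaluating $ad+bc=-(f_{k+1}-f_{k})$ at $x=\alpha$ now yields $a(\alpha)d(\alpha)=0$, contradicting $a(\alpha)\neq0$ and $d(\alpha)\neq0$. Hence $r_k$ is irreducible.

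I expect the point needing genuine care to be the combinatorial set-up rather than the algebra: one must verify that $v(r_k)$ is exactly as drawn in \Cref{fig: Newton polygon Ln} — which uses \Cref{lem: fibonacci poly}~(2) for the degrees of $f_{k+2}-f_{k+1}$ and $f_{k+1}-f_k$ and the non-vanishing of their constant terms for the lower corners of the polygon — and that a Minkowski decomposition of this boundary into two non-trivial zero-sum subsequences really does collapse to the two cases above. Once that bookkeeping is in place, each case is a one- or two-line coefficient comparison feeding directly into parts (4) and (5) of \Cref{lem: fibonacci poly}.
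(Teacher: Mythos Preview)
Your proposal is correct and follows essentially the same route as the paper's proof: the same Newton-polygon Minkowski-sum reduction to the two combinatorial splittings, the same coefficient comparison in each case, and the same appeals to parts (3)--(5) of \Cref{lem: fibonacci poly}. Your treatment of Case~2 is in fact slightly more explicit than the paper's --- you justify $\alpha\neq 0$ and the simplicity of the zero of $bd$ before invoking the WLOG, whereas the paper jumps straight to ``take a root $\alpha$ of $f_{k+1}-f_k$ which is also a root of $b$'' --- but the logic is the same.
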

\begin{remark}
When $n=1$, $W_1 = V(q_2-q_1) = V(y+1)\cup V(xy-x-1)$. Both components have genus $0$.
\end{remark}

\begin{figure}[h]
    \centering
    \begin{tikzpicture}
    \draw[very thin,color=gray] (-0.1,-0.1) grid (5.5,3);    
    \draw[->] (-0.2,0) -- (5.6,0) node[right] {$x$};
    \draw[->] (0,-0.2) -- (0,3.6) node[above] {$y$};

    \draw[color = blue](0,2)--(5,2)--(5,0)--(1,0)--(0,1)--(0,2);
    \node at (-0.5,2) {(0,0)};
    \node at (1,-0.5) {(1,0)};
    \node at (5,-0.5) {(k+1,0)};
    \node at (5,2.5) {(k+1,2)};
    \node at (-0.5,1) {(0,1)};
    \node at (0.3,0.3) {$\gamma_1$};
    \node at (3.5,-0.3) {$\gamma_2$};
    \node at (3.5,2.3) {$\gamma_4$};
    \node at (5.3,0.7) {$\gamma_3$};
    \node at (-0.3,1.5) {$\gamma_5$};
\end{tikzpicture}
    \caption{The Newton polygon of $r_k$}
    \label{fig: Newton polygon Ln}
\end{figure}
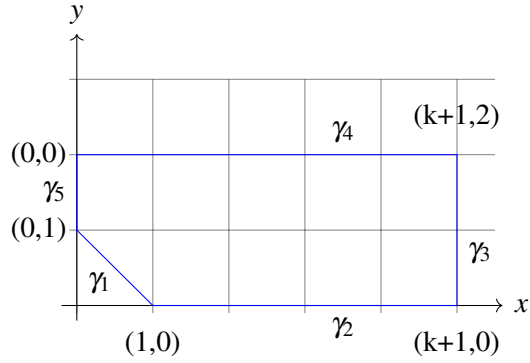

\begin{lemma}\label{lem: nondegeneracy Ln}
When $n=2k+1\geq 3$, $r_k$ is nondegenerate and there is no singular point in $V_n=V(r_k)$. Hence $V_k$ is a hyperelliptic curve of genus $k.$
\begin{proof}
The proof is straight forward using the same arguments as in the proof of \Cref{lem:singularity Un}.
\end{proof}
\end{lemma}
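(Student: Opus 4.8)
The plan is to follow the template established in the proof of \Cref{lem:singularity Un}, since the structure of $r_k$ with respect to its Newton polygon in \Cref{fig: Newton polygon Ln} is analogous to that of $g_k$ and $h_k$. First I would establish that $\mathcal W_n = V(r_k)$ has no singular points. A point $(x,y)$ is singular precisely when $r_k(x,y)=\partial_x r_k=\partial_y r_k=0$. Computing $\partial_y r_k = 2y(f_{k+2}(x)-f_{k+1}(x)) - (f_{k+1}(x)-f_k(x))$ and combining with $\partial_x r_k$, one is led to a system forcing $f_{k+2}(x)-f_{k+1}(x)$ and $f_{k+1}(x)-f_k(x)$ to share a root, or to have a root at a value already excluded; either way one invokes \Cref{lem: fibonacci poly}(4) and (5) to derive a contradiction. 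One must also check the point at $x=0$ (or wherever $\partial_x r_k$ might vanish identically in a coordinate) separately, exactly as in \Cref{lem:singularity Un}; a short direct substitution handles this.

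Next I would verify nondegeneracy with respect to the Newton polygon, i.e. for each of the five edges $\gamma_1,\dots,\gamma_5$ in \Cref{fig: Newton polygon Ln}, that the ideal generated by $F_\gamma$, $x\,\partial_x F_\gamma$ and $y\,\partial_y F_\gamma$ has no zero in $(\mathbb C\setminus\{0\})^2$. For the two ``slanted'' edges $\gamma_1$ and $\gamma_5$ the edge polynomials are binomials, so nondegeneracy is immediate. For the horizontal edges $\gamma_2$ and $\gamma_4$, the edge polynomial is (up to a monomial) $x(f_{k+1}(x)-f_k(x))$ respectively $y^2(f_{k+2}(x)-f_{k+1}(x))$ — essentially univariate — and the condition reduces to $f_{k+1}-f_k$ (resp.\ $f_{k+2}-f_{k+1}$) having no nonzero multiple root, which is \Cref{lem: fibonacci poly}(4); here the substitution $x = p+p^{-1}$ and the explicit formulas in \Cref{lem: fibonacci poly}(1) make the computation completely parallel to the treatment of edge $\gamma_2$ in the proof of \Cref{lem:singularity Un}. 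The vertical edge $\gamma_3$ has edge polynomial a constant times $y^2(f_{k+2}(x)-f_{k+1}(x)) - (\dots)$ evaluated at the top corner; it is handled the same way using \Cref{lem: fibonacci poly}(5).

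Finally, the genus count. Having shown $r_k$ is irreducible (previous lemma), nondegenerate, and smooth at every point with $xy\neq 0$, \Cref{lem:Newton polygon} gives that the genus of the nonsingular model of $\mathcal W_n$ equals the number of interior lattice points of the Newton polygon in \Cref{fig: Newton polygon Ln}. That polygon has vertices $(0,1),(0,2),(k+1,2),(k+1,0),(1,0)$; a direct count of interior lattice points (those with $x$-coordinate between $1$ and $k$ and $y$-coordinate $1$) gives exactly $k$, and these lie on the line $y=1$, hence are collinear, so \Cref{lem:hyperelliptic_newton} shows $\mathcal W_n$ is hyperelliptic. Since $\mathcal W_n \cong W_n' \cong$ (a birational model of) the relevant part of $X_{\omega_n}(S)$ by the explicit maps $r_1,r_2$ above, and $V_k$ in the statement is $\mathcal W_n$, this completes the proof.

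The main obstacle I anticipate is the nondegeneracy check on the horizontal edges $\gamma_2,\gamma_4$: after the substitution $x=p+p^{-1}$ one gets a somewhat involved rational expression for the derivative (as in \Cref{lem:singularity Un}, where a polynomial like $k(p+1)(p^{2k+3}-1)-1-2p+\dots$ appears), and one must argue carefully — using that the relevant root satisfies $p^{m}=\pm1$ for the appropriate $m$ and ruling out $p=\pm1$ — that no simultaneous vanishing occurs. Everything else is bookkeeping.
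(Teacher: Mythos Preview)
Your proposal is correct and follows exactly the approach the paper intends: the paper's own proof is the single sentence ``straight forward using the same arguments as in the proof of \Cref{lem:singularity Un},'' and you have accurately spelled out what those arguments are (smoothness via \Cref{lem: fibonacci poly}(4)--(5), edge-by-edge nondegeneracy, then \Cref{lem:Newton polygon} and \Cref{lem:hyperelliptic_newton} on the $k$ collinear interior lattice points). A minor slip: $\gamma_5$ is vertical rather than slanted, and your description of $F_{\gamma_3}$ is a bit garbled (it is $x^{k+1}(y^2-1)$, handled directly rather than via \Cref{lem: fibonacci poly}(5)), but neither affects the argument.
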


\subsubsection{Other components}
When $n=2k+1$, $R_n = V(Q_{k+2}-Q_{k+1}',Q_{k+1}-Q_{k+2}', x+1)$ is birationally equivalent with its image, denoted by $R_n'$, under the restriction $(x,y,z)\mapsto(y,z)$. The variety $R_n'$ is generated by $q_{k+2}'-q_{k+1}$ and $q_{k+2}-q_{k+1}'$, where 
\begin{align*}
    q_n(y,z) &= zf_n(-z-y)+f_{n-1}(-z-y)\\
    q_n'(y,z) &= yf_n(-z-y)+f_{n-1}(-z-y)
\end{align*}
By a direct computation using the definition of $f_n$, 
\[
    R_n = \{(-1,-1)\}\cup V(f_{k+2}(-y-z)+f_{k+1}(-y-z))
\]
The point $(-1,-1,-1)$ is already contained in $W_n$ while the second algebraic set gives us $k+1$ components 
\[\{(x,y,z)\mid y+z+\alpha=0,x+1=0\}\]
as $\alpha$ ranges over the $k+1$ distinct roots of $f_{k+2}(u) + f_{k+1}(u)=0$. All these components are birationally equivalent with affine lines.


\subsection{Experimental results}
\label{sec:computations}

As stated in \Cref{sec:prelim_bundles}, the choice of a monodromy as a positive word in $\A$ and $\B$ is not unique. Also, there  is no simple criterion to ensure that the absolute value of the trace is greater than two. We list in \Cref{tab:comp1} below the shortest words with this property. The computations were executed with \texttt{Singular}~\cite{Singular} using the following code template, and $\# X_\varphi(S)$ denotes the number of Zariski components of $X_\varphi(S).$ The map \texttt{m} below represents $\overline{\varphi}$ and is given as a composition of maps \texttt{alpha} and \texttt{betainv}.\\

\begin{lstlisting}
    ring r=0,(x,y,z),dp;
    map alpha=r,x,z,xz-y;
    map betainv=r,z,y,yz-x;
    map m = [composition of alpha and betainv];
    ideal I = m[1]-x,m[2]-y,m[3]-z;
    def S = absPrimdecGTZ(I);
    setring S;
    absolute_primes;
\end{lstlisting}

\begin{table}[h!]
\caption{Short words in $\A$ and $\B$ giving hyperbolic once-punctured torus bundles}
\begin{center}
\label{tab:comp1}
\begin{tabular}{| l | l | l | l | l |}
$\varphi_*$ & $\tr(\varphi_*)$ & $o(\varphi_2)$ & $\# X_\varphi(S)$ & genera \\
\hline
\hline
$A^2B^3$ & $-4$ 	& 2& 1	& 0	\\
\hline
$A^3B^2$ &$-4$ &	 2	& 1 & 0	\\
\hline
\hline
$AB^5$ & $-3$	& 3& 1	& 0	\\
\hline
$A^2B^4$ & $-6$	&1	& 2 & 0,0 	\\
\hline
$A^3B^3$ & $-7$	&3	& 3 & 0,0,0	\\
\hline
$A^4B^2$ & $-6$	& 1	& 2 & 0,0	\\
\hline
\end{tabular}
\end{center}
\end{table}

The representative
\[ \pm A^{a_1}B^{-b_1}A^{a_2}B^{-b_2} \cdots A^{a_n}B^{-b_n}\]
where $n>0$, the $a_i$ and $b_i$ are positive integers, and the sign equals the sign of the trace of $\varphi_*,$ allows us to build up a census of examples more efficiently. As explained in \Cref{sec:mutation}, if one is only interested in the topology of the fixed-point set, then it suffices to restrict to the case of positive trace. We summarise our computational results in \Cref{tab:comp2}.

\begin{table}[h!]
\caption{Short words in $A$ and $B^{-1}$ giving hyperbolic once-punctured torus bundles}
\begin{center}
\label{tab:comp2}
\begin{tabular}{| l | r | c | l | l |}
$\varphi_*$ & $\tr(\varphi_*)$ & $o(\varphi_2)$ & $\# X_\varphi(S)$ & genera \\
\hline
\hline
$AB^{-1}$ &3 & 3 &	 1	& 0	\\
\hline
\hline
$AB^{-2}$ &4&2 & 	1 	& 0	\\
\hline
$A^2B^{-1}$ &4& 2& 	 1	& 0	\\
\hline
\hline
$AB^{-3}$. & 5 &3 & 1		& 0	\\
\hline
$A^2B^{-2}$ & 6& 1& 2	 	& 0,0	\\
\hline
$A^3B^{-1}$& 5 &3 & 1	 	& 0	\\
\hline
$AB^{-1}AB^{-1}$& 7 &3  	& 3	& 0,0,0	\\
\hline
\hline
\hline
$AB^{-4}$ &6 &2 	& 2	& 0,0	\\
\hline
$A^2B^{-3}$ & 8& 2	& 1	& 1	\\
\hline
$A^3B^{-2}$ &8 &2 	& 1	& 1	\\
\hline
$A^4B^{-1}$ & 6& 2	& 2	& 0,0	\\
\hline
$AB^{-1}AB^{-2}$ & 10& 2	& 3	& 0,0,0	\\
\hline
$AB^{-1}A^2B^{-1}$ &10 & 2	& 3	& 0,0,0	\\
\hline
\hline
\hline
$AB^{-5}$ & 7&3 	& 1	& 0	\\
\hline
$A^2B^{-4}$ &10&1 & 	2 	& 1,0	\\
\hline
$A^3B^{-3}$ &11 &3 	& 2	& 1,0	\\
\hline
$A^4B^{-2}$ &10 &1 	& 2	& 1,0	\\
\hline
$A^5B^{-1}$ &7 & 3	& 1	& 0	\\
\hline
$AB^{-1}AB^{-3}$ &13 &3 	& 3	& 1,0,0	\\
\hline
$AB^{-1}A^2B^{-2}$ &15 &3 	& 1	& 2	\\
\hline
$AB^{-1}A^3B^{-1}$ &13 &3 	& 3	& 1,0,0	\\
\hline
$AB^{-2}AB^{-2}$ & 14& 1	& 5	& 0,0,0,0,0	\\
\hline
$AB^{-2}A^2B^{-1}$ &15 &3 	& 1	& 2	\\
\hline
$AB^{-1}AB^{-1}AB^{-1}$ &18 &1 	& 7	& 0,0,0,0,0,0,0	\\
\hline
\hline

\end{tabular}
\end{center}
\end{table}



\bibliographystyle{plain}
\bibliography{references}

\address{Stephan Tillmann\\School of Mathematics and Statistics F07, The University of Sydney, NSW 2006 Australia\\{stephan.tillmann@sydney.edu.au\\-----}}

\address{Youheng Yao\\School of Mathematics and Statistics F07, The University of Sydney, NSW 2006 Australia\\{yyao3610@uni.sydney.edu.au}}

\Addresses                                       
\end{document}